\newcommand{\Obj}{\mathrm{Obj}}
\newcommand{\Mor}{\mathrm{Mor}}
\newcommand{\Hom}{\mathrm{Hom}}
\newcommand{\TL}{\mathrm{TL}}
\newcommand{\id}{\mathrm{id}}
\newcommand{\tr}{\mathrm{tr}}
\newcommand{\Kar}{\mathcal{K}ar\,}
\newcommand{\End}{\mathrm{End}}
\newcommand{\spn}{\mathrm{span}}
\newcommand{\Mat}{\mathcal{M}at\,}
\newcommand{\TLJ}{\mathrm{TLJ}}
\newcommand{\Net}{\mathrm{Net}}
\mathchardef\mhyphen="2D 
\def \dcup{
	\setbox0 \hbox{$\,\cup\,$}
	\setbox1 \vbox {\hbox {\hss\raisebox{0.75ex}[6pt][0pt]{\kern-0.1ex\textemdash}\hss}}
	\rlap{\box1}{\box0}
}
\def \dcap{
	\setbox0 \hbox{$\,\cap\,$}
	\setbox1 \vbox {\hbox {\hss\raisebox{-0.7ex}[0pt][0pt]{\kern-0.1ex\textemdash}\hss}}
	\rlap{\box1}{\box0}
}
\def\sigmatwo[#1][#2]{
	\setbox0 \hbox{$\displaystyle\sum_{#1}^{#2}$}
	\setbox1 \hbox to \wd0 {\hss$\ \, \scriptstyle 2$\hss}
	\rlap{\box1}\box0
}
\newcommand{\sumtwo}[2]{\sigmatwo[#1][#2]}
\DeclarePairedDelimiter\abs{\lvert}{\rvert}
\DeclareMathOperator{\plus}{+}
\newcommand{\nplus}{\plus_n}
\def\qjpre[#1,#2,#3][#4,#5,#6]{\left\{\begin{matrix}#1&#2&#3\\#4&#5&#6\end{matrix}\right\}}
\newcommand{\qj}[2]{\qjpre[#1][#2]}
\theoremstyle{definition}	\newtheorem{defn}{Definition}[section]
\theoremstyle{plain}		\newtheorem{prop}[defn]{Proposition}
\theoremstyle{plain}		\newtheorem{lemma}[defn]{Lemma}
\theoremstyle{plain}		\newtheorem{theorem}[defn]{Theorem}
\theoremstyle{plain}		\newtheorem{corollary}[defn]{Corollary}
\theoremstyle{remark}		\newtheorem{remark}[defn]{Remark}
\theoremstyle{remark}		\newtheorem*{remark*}{Remark}
\renewcommand{\baselinestretch}{1.2}
\title{The Temperley-Lieb categories and skein modules}
\begin{document}
\begin{titlepage}
\begin{center}
\vspace*{\fill} \Huge
                        The Temperley-Lieb categories and skein modules
\\ \vfill
\Large
                          Joshua Chen
\\
                          May 2014
\\
\vfill\vfill \normalsize
         A thesis submitted for the degree of Bachelor of Science (Hons) \\
         of the Australian National University
\end{center}
\end{titlepage}
\pagestyle{empty}
\frontmatter
\newpage
\newpage
\vspace*{\fill}
\hfill\Large\emph{
Ars longa, vita brevis.
}
\vfill\vfill\vfill

\newpage
\normalsize
\chapter*{Acknowledgements}
\addcontentsline{toc}{chapter}{Acknowledgements}
With deepest acknowledgement and heartfelt thanks to
\begin{itemize}
\item[---] Scott, for being such a kind and patient supervisor and for teaching me everything in this thesis. It's been a blast learning from you.
\item[---] Vigleik, Dennis and David, who I probably exasperated by being such a slow learner in their classes. Thank you for challenging me to aim higher and work harder!
\item[---] Joan, without whom the proof of the spine theorem would be in even worse shape than it currently is.
\item[---] Pete, Benedict, Chris, Yitao, Hannah, Hannah, Angus, Chat, Seb, Ben, Mark and Michael. Good company, stimulating conversation, loads of maths and laughs. What more could a friend ask for?
\item[---] Mum, Dan, Sam and Ben, my loving and supportive mum and brothers! Thank you for your love and prayers.
\item[---] My God and saviour Jesus, for blessing and sustaining me every step of the long way. \emph{Soli Deo gloria}, to him be the glory.
\end{itemize}

\tableofcontents

\mainmatter
\pagestyle{fancy}
\setcounter{secnumdepth}{-1}
\chapter{Introduction}
In this thesis we develop the theory of diagrammatic Temperley-Lieb categories in order to construct examples of spherical fusion categories, which we then use to define Turaev-Viro skein modules for $n$-holed disks.

The basic Temperley-Lieb category contains as endomorphism spaces the Temperley-Lieb algebras, which have many surprising links to statistical mechanics, knot theory and representation theory.
From this basic definition one constructs the Temperley-Lieb-Jones categories, which have very nice structure: at roots of unity $q$ they are equivalent as braided spherical tensor categories to the semisimplified category $\mathcal{R}ep\,U_q(\mathfrak{sl}_2)$ of representations of the quantum algebra $U_q(\mathfrak{sl}_2)$.
(We do not address this any further in this thesis, the interested reader may see for example \cite{ST2008} or Chapter XII of \cite{Turaev1994}).
Further, the Temperley-Lieb-Jones categories are examples of spherical fusion categories, that is, semisimple spherical linear categories with additional nice properties, and one of the reasons these categories are interesting is that they allow us to construct skein modules.

A skein module is a module associated to a $2$-manifold, and is the first step towards constructing a $(2+1)$-dimensional \emph{topological quantum field theory} (TQFT).
Briefly speaking, a $(n+1)$-dimensional TQFT is a functor from $(n+1)$-$\mathbf{Cob}$ to $\mathbf{FdVect}$, assigning to every $n$-manifold a finite-dimensional vector space and to every $(n+1)$-cobordism between $n$-manifolds a linear transformation between the corresponding vector spaces, in a manner that respects composition and the rigid symmetric monoidal structure of the categories.
These were first used to construct topological invariants by Witten in a seminal paper in 1989 \cite{Witten1989}, and were axiomatized by Atiyah \cite{Atiyah1988} around the same time.
(For a general introduction to topological quantum field theory from the algebraic point of view see for instance \cite{Atiyah1988} or the first section of \cite{Baez1995}.)

Following Witten's work, one of the next TQFTs to be discovered was the Turaev-Viro TQFT \cite{TV1992}, which takes as input a spherical fusion category in order to construct vector spaces (free modules) for $2$-surfaces and linear maps for $3$-cobordisms.
In this thesis we deal only with the $2$-dimensional aspect of the theory and use Temperley-Lieb-Jones at roots of unity to construct skein module invariants associated to a specific class of surfaces, namely $n$-holed disks.

The outline of this thesis is as follows.
Chapter 1 begins with some preliminary definitions and results.
In Chapter 2 we define generic Temperley-Lieb, introduce the all-important Jones-Wenzl idempotents and use them to construct the generic Temperley-Lieb-Jones categories $\TLJ$.
In Chapter 3 we take a necessary detour into some Temperley-Lieb skein theory, proving the results we will need in order to show that generic $\TLJ$ is semisimple, which we do in Chapter 4.
In Chapter 5 we consider what happens for $\TLJ$ evaluated at a root of unity, and show that after taking the quotient by the negligible ideal we obtain a semisimple category with finitely many simple objects, which is also spherical fusion.
Finally, in Chapter 6 we present an alternative approach to constructing the Turaev-Viro skein modules for $n$-holed disks.

\setcounter{secnumdepth}{1}
\chapter{Preliminaries}
In this chapter we introduce some basic notions and notation that will be used throughout the rest of this thesis.

For a category $\mathcal{C}$ let $\Obj(\mathcal{C})$ denote the set of objects of $\mathcal{C}$ and $\Mor(\mathcal{C})$ the set of all morphisms in $\mathcal{C}$.
We write $\mathbbm{1}_a$ for the identity morphism on $a$.
All our categories are small, that is, $\Obj(\mathcal{C})$ and $\Mor(\mathcal{C})$ are sets.

\begin{defn}
A \textbf{monoidal category} $(\mathcal{C}, \otimes, e)$ is a category $\mathcal{C}$ together with a \emph{tensor product} bifunctor $\otimes \colon \mathcal{C} \times \mathcal{C} \rightarrow \mathcal{C}$ and a distinguished object $e \in \Obj(\mathcal{C})$ satisfying the following axioms:
\begin{enumerate}
\item (Identity).
There are natural isomorphisms $\lambda$ and $\rho$ with components
\[ \lambda_a \colon e \otimes a \cong a \]
and
\[ \rho_a \colon a \otimes e \cong a \]
for each $a \in \Obj(\mathcal{C})$.

\item (Associativity).
There is a natural isomorphism $\alpha$ with components
\[ \alpha_{a,b,c} \colon (a \otimes b)\otimes c \cong a\otimes(b\otimes c) \]
for all $a,b,c \in \Obj(\mathcal{C})$.

\item (Coherence).
The triangle diagram
\[
\begin{tikzcd}[column sep=small]
(a\otimes e)\otimes b \arrow[swap]{rd}{\rho_a\otimes\mathbbm{1}_b} \arrow{rr}{\alpha_{a,e,b}} & & a\otimes(e\otimes b) \arrow{ld}{\mathbbm{1}_a\otimes\lambda_b} \\
& a\otimes b &
\end{tikzcd}
\]
and the pentagon diagram
\[
\begin{tikzcd}[column sep=small]
& ((a\otimes b)\otimes c)\otimes d \arrow[swap]{dl}{\alpha_{a,b,c}\otimes\mathbbm{1}_d} \arrow{dr}{\alpha_{a\otimes b,c,d}} \\
(a\otimes(b\otimes c))\otimes d \arrow{d}{\alpha_{a,b\otimes c,d}} & & (a\otimes b)\otimes(c\otimes d) \arrow{d}{\alpha_{a,b,c\otimes d}} \\
a\otimes((b\otimes c)\otimes d) \arrow[swap]{rr}{\mathbbm{1}_a\otimes\alpha_{b,c,d}} & & a\otimes(b\otimes(c\otimes d))
\end{tikzcd}
\]
are commutative for all $a,b,c,d \in \Obj(\mathcal{C})$.
This implies that the order in which we parenthesize a tensor product of $a_1, \dotsc, a_n$ (with arbitrary insertions of the tensor identity $e$) does not matter: any two such parenthesized tensor products $x$ and $y$ are isomorphic via a sequence of morphisms $\lambda, \rho, \alpha$ and their inverses, and furthermore any two such sequences beginning at $x$ and ending at $y$ in fact yield the same isomorphism $x\cong y$.
(See Chapter VII of \cite{MacLane1998} or Section 1.9 of \cite{Etingof2009} for more information.)
\end{enumerate}

Note that $\otimes$ being a bifunctor means that in particular the ``exchange relation'' $(f\otimes g)\circ (h\otimes k) = (f\circ h)\otimes(g\circ k)$ holds for all morphisms $f,g,h,k \in \Mor(\mathcal{C})$.

A monoidal category is called \textbf{strict} if the natural isomorphisms $\lambda$, $\rho$ and $\alpha$ are in fact identities.
\end{defn}

We will construct skein modules using monoidal categories that have some additional structure.
Here we introduce the first of these.

\begin{defn}
A \textbf{linear category} is a category enriched over the category of vector spaces.
Explicitly, a $\mathbbm{F}$-linear category is a category whose hom-sets are vector spaces over some field $\mathbbm{F}$, in which composition of morphisms is bilinear with respect to addition in the hom-space.
A \textbf{linear monoidal category} is category that is linear, monoidal, and whose tensor product is bilinear with respect to addition.
\end{defn}

We then have the following easy fact, stated without proof:

\begin{prop}
The endomorphism spaces of $\mathbbm{F}$-linear categories are in fact $\mathbbm{F}$-algebras, with multiplication given by composition of morphisms.
\label{prop:endo_alg}
\todo{Also every hom-space $\Hom(x,y)$ is a $\Hom(y,y),\Hom(x,x)$ bimodule in the obvious way, but I'm not sure this needs to be mentioned.}
\end{prop}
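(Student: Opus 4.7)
The plan is to unpack the definitions of ``$\mathbbm{F}$-algebra'' and ``$\mathbbm{F}$-linear category'' and verify that the endomorphism space $\End(a) = \Hom(a,a)$ satisfies each of the algebra axioms with multiplication given by $f \cdot g := f \circ g$. An $\mathbbm{F}$-algebra is an $\mathbbm{F}$-vector space equipped with an associative, bilinear, unital multiplication, so there are essentially four things to check: vector space structure, bilinearity of composition, associativity, and the existence of a unit.

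First, $\End(a)$ is already an $\mathbbm{F}$-vector space by the definition of a linear category, since every hom-set is. Second, bilinearity of the multiplication is exactly the statement that composition is bilinear with respect to addition in the hom-space, which is again built into the definition of a linear category (here specialized to the case where source and target objects coincide with $a$). Third, associativity of $\circ$ on $\End(a)$ is immediate from associativity of composition in the ambient category $\mathcal{C}$. Finally, the identity morphism $\mathbbm{1}_a \in \End(a)$ satisfies $\mathbbm{1}_a \circ f = f = f \circ \mathbbm{1}_a$ for every $f \in \End(a)$ by the category axioms, so it serves as the multiplicative unit.

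There is no real obstacle here; the proposition is essentially a definition-chase. The only mild subtlety is to note that the bilinearity axiom for a linear category is stated for general hom-spaces, and one must restrict attention to the case $\Hom(a,a) \times \Hom(a,a) \to \Hom(a,a)$ in order to recover the usual bilinearity of algebra multiplication. Everything else follows immediately from the axioms of a category together with those of a linear enrichment.
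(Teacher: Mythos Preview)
Your proof is correct; the paper itself states this proposition without proof, calling it an ``easy fact'', so there is nothing to compare against. Your definition-chase is exactly the routine verification the paper omits.
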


\noindent For this reason $\mathbbm{F}$-linear categories are often known as \textit{$\mathbbm{F}$-algebroids} in the literature.

Until stated otherwise, throughout this paper we will take our ground field to be $\mathbbm{F} = \mathbbm{C}(q)$, the fraction field of the ring of complex polynomials in a formal parameter $q$.

\begin{defn}
\label{defn:quantumint}
The \textbf{$n$-th quantum integer} $[n]$ is the element of $\mathbbm{F}$ given by
\begin{align*}
[n] & \coloneqq \frac{q^n - q^{-n}}{q-q^{-1}} \\
& = q^{-n+1} + q^{-n+3} + \dotsb + q^{n-3} + q^{n-1} \quad \text{when} \ n \neq 0.
\end{align*} 
\end{defn}

An important relation we will make use of is the following, whose proof follows easily from the definitions.

\begin{prop}[Recursion relation for ${[}n{]}$]
\label{prop:quantumint_relation}
For all integers $n>0$,
\[ [n+1] = [2][n] - [n-1]. \]
\end{prop}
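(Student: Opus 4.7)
The plan is to reduce the identity to a direct algebraic verification using the closed form in Definition \ref{defn:quantumint}. First I would observe that $[2] = \frac{q^2 - q^{-2}}{q - q^{-1}} = q + q^{-1}$, since $q^2 - q^{-2} = (q - q^{-1})(q + q^{-1})$. This simplifies the right-hand side considerably.

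Next, I would compute
\[ [2]\,[n] = (q + q^{-1}) \cdot \frac{q^n - q^{-n}}{q - q^{-1}} = \frac{q^{n+1} + q^{n-1} - q^{-(n-1)} - q^{-(n+1)}}{q - q^{-1}}, \]
and then subtract
\[ [n-1] = \frac{q^{n-1} - q^{-(n-1)}}{q - q^{-1}}. \]
The middle two pairs of terms cancel, leaving exactly $\frac{q^{n+1} - q^{-(n+1)}}{q - q^{-1}} = [n+1]$, as required.

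Alternatively, one could use the polynomial expansion $[n] = q^{-n+1} + q^{-n+3} + \dotsb + q^{n-1}$ given in Definition \ref{defn:quantumint} and argue that multiplying by $[2] = q + q^{-1}$ shifts the two copies of this sum by $\pm 1$, so that all but the extreme terms overlap with the terms of $[n-1]$; subtracting $[n-1]$ then leaves precisely the expansion of $[n+1]$.

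There is no real obstacle here: both approaches are a short direct manipulation, and the only thing to be careful about is that the hypothesis $n > 0$ ensures $[n-1]$ is well-defined (the case $n = 0$ would formally give $[-1]$, for which the polynomial expansion in Definition \ref{defn:quantumint} is not stated, though the rational expression still evaluates correctly to $-[1] = -1$).
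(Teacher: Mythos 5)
Your computation is correct and is exactly the short verification the paper has in mind — the paper simply remarks that the relation "follows easily from the definitions" and omits the details. Both of your routes (the rational-function form and the Laurent-polynomial expansion) are valid; either would serve as the omitted proof.
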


Next we define the Temperley-Lieb diagrams.

\begin{defn}
Let $m,n$ be non-negative integers, $I$ the unit interval $[0,1]$ and consider the unit square $I \times I$ with $m$ and $n$ points distinguished on the bottom and top edges $I\times \{0\}$ and $I\times \{1\}$ respectively.
For $m+n$ even, a \textbf{simple Temperley-Lieb (TL) diagram} from $m$ to $n$ points consists of smooth arcs with endpoints on a top or bottom edge connecting pairs of distinguished points, together with finitely many (possibly zero) loops drawn in the unit square.
All arcs and loops are mutually disjoint, and planar isotopic diagrams are to be considered equal.
If $m+n$ is odd, there are no simple TL diagrams from $m$ to $n$ points.
\end{defn}

Arcs connecting two points on the top edge are known as \emph{cups}, those connecting points on the bottom edge \emph{caps}, and arcs connecting a point on the top edge with a point on the bottom edge are called \emph{through-strings}.
For convenience we call points connected by cups and caps, \emph{capped points} and \emph{cupped points} respectively.

There is a composition rule for simple TL diagrams: if $f \colon m \rightarrow n$ and $g \colon n \rightarrow k$ are diagrams from $m$ to $n$ and $n$ to $k$ points respectively, their composition $g\circ f \colon m \rightarrow k$ is the diagram obtained by stacking $g$ on top of $f$, joining the ends of the corresponding arcs and rescaling the diagram into the unit square (Fig. \ref{fig:TL_composition}).

Note also that $m$ and $n$ are allowed to be zero, in which case the simple diagrams consist of zero or more disjoint loops in the square.

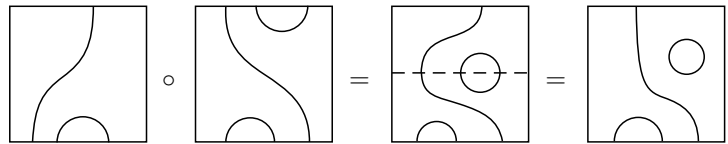
\begin{figure}[htb]
\[
\begin{gathered}
\psscalebox{1.0 1.0} 
{
\begin{pspicture}(0,-0.9090909)(1.8181823,0.9090909)
\rput{-180.0}(1.8181823,0.0){\psframe[linecolor=black, linewidth=0.02, dimen=outer](1.8181821,0.9090907)(0.0,-0.9090911)}
\rput{0.0}(0.0,0.0){\psarc[linecolor=black, linewidth=0.02, dimen=outer](0.9713553,-0.90566057){0.33962265}{0.0}{180.0}}
\psbezier[linecolor=black, linewidth=0.02](0.30909118,-0.9000002)(0.33676416,-0.40377378)(0.4424245,-0.23619232)(0.7090912,-0.03619232)(0.97575784,0.16380768)(1.1090912,0.36380768)(1.1090912,0.8999998)
\end{pspicture}
}
\end{gathered}
\ \circ \ 
\begin{gathered}
\psscalebox{1.0 1.0} 
{
\begin{pspicture}(0,-0.9090912)(1.8181823,0.9090912)
\psframe[linecolor=black, linewidth=0.02, dimen=outer](1.8181821,0.90909094)(0.0,-0.90909094)
\rput{-180.0}(2.293654,1.8113208){\psarc[linecolor=black, linewidth=0.02, dimen=outer](1.146827,0.9056604){0.33962265}{0.0}{180.0}}
\psbezier[linecolor=black, linewidth=0.02](0.40909117,0.9)(0.38141823,0.40377358)(0.60909116,0.2)(0.9090912,0.0)(1.2090912,-0.2)(1.5090911,-0.4)(1.5090911,-0.9)
\psarc[linecolor=black, linewidth=0.02, dimen=outer](0.72909117,-0.9){0.32}{0.0}{180.0}
\end{pspicture}
}
\end{gathered}
\ = \  
\begin{gathered}
\psscalebox{1.0 1.0} 
{
\begin{pspicture}(0,-0.9090912)(1.8181823,0.9090912)
\psframe[linecolor=black, linewidth=0.02, dimen=outer](1.8181821,0.90909094)(0.0,-0.90909094)
\psline[linecolor=black, linewidth=0.02, linestyle=dashed, dash=0.17638889cm 0.10583334cm](0.012316993,0.016129032)(1.7865106,0.016129032)
\psarc[linecolor=black, linewidth=0.02, dimen=outer](1.1736073,0.016129032){0.2580645}{0.0}{180.0}
\psarc[linecolor=black, linewidth=0.02, dimen=outer](0.59941375,-0.88709676){0.2580645}{0.0}{180.0}
\psbezier[linecolor=black, linewidth=0.02](0.39941376,0.016129032)(0.40909117,-0.3)(0.60909116,-0.3)(0.9090912,-0.4)(1.2090912,-0.5)(1.4090912,-0.6)(1.4639299,-0.88709676)
\psbezier[linecolor=black, linewidth=0.02](0.39480546,0.014285714)(0.40909117,0.3)(0.5090912,0.4)(0.8090912,0.5)(1.1090912,0.6)(1.1805197,0.7)(1.1948055,0.9)
\rput{-180.0}(2.3453965,0.034076247){\psarc[linecolor=black, linewidth=0.02, dimen=outer](1.1726983,0.017038124){0.2580645}{0.0}{180.0}}
\end{pspicture}
}
\end{gathered}
\ = \ 
\begin{gathered}
\psscalebox{1.0 1.0} 
{
\begin{pspicture}(0,-0.9090912)(1.8181823,0.9090912)
\psframe[linecolor=black, linewidth=0.02, dimen=outer](1.8181821,0.90909094)(0.0,-0.90909094)
\psarc[linecolor=black, linewidth=0.02, dimen=outer](0.6733864,-0.8898365){0.31559876}{0.0}{180.0}
\psbezier[linecolor=black, linewidth=0.02](0.64623266,0.891989)(0.6559101,-0.10875542)(0.75967073,-0.22370917)(1.0089858,-0.31159115)(1.2583009,-0.39947313)(1.4529268,-0.5287671)(1.4639299,-0.88709676)
\pscircle[linecolor=black, linewidth=0.02, dimen=outer](1.3090912,0.22876713){0.24109589}
\end{pspicture}
}
\end{gathered}
\]
\caption{Composition of TL diagrams. \label{fig:TL_composition}}
\end{figure}

\chapter{The Temperley-Lieb categories}
\section{Generic Temperley-Lieb}
As indicated in the previous section, we will until otherwise stated work over $\mathbbm{F} = \mathbbm{C}(q)$ where $q$ is a formal parameter.

\begin{defn}	
The \textbf{generic Temperley-Lieb category} $\TL$ has as objects non-negative integers $n \in \mathbbm{N}$, with morphisms defined as follows.
$\Hom(m,n)$ is defined to be the $\mathbbm{F}$-linear span of all simple TL diagrams from $m$ to $n$ points, modulo the \emph{$d$-equivalence relation} --- a diagram with a loop is equal to $d$ times the same diagram without the loop, where $d = [2] = q+q^{-1}$ is the \emph{loop variable} (Fig. \ref{fig:TL_dequiv}).
Composition of morphisms is given by the composition of simple TL diagrams extended linearly, and $\mathbbm{1}_n$ is the diagram containing exactly $n$ through-strings.
\label{defn:TL_category}
\end{defn}

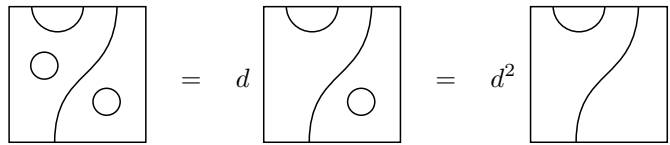
\begin{figure}[htb]
\[
\begin{gathered}
\psscalebox{1.0 1.0} 
{
\begin{pspicture}(0,-0.9090912)(1.8181823,0.9090912)
\psframe[linecolor=black, linewidth=0.02, dimen=outer](1.8181821,0.90909094)(0.0,-0.90909094)
\rput{-180.0}(1.2936541,1.8113208){\psarc[linecolor=black, linewidth=0.02, dimen=outer](0.64682704,0.9056604){0.33962265}{0.0}{180.0}}
\pscircle[linecolor=black, linewidth=0.02, dimen=outer](0.47324213,0.116981134){0.18867925}
\pscircle[linecolor=black, linewidth=0.02, dimen=outer](1.2921101,-0.36226416){0.18867925}
\psbezier[linecolor=black, linewidth=0.02](1.4298459,0.8962264)(1.4090912,0.4)(1.2090912,0.2)(1.0090911,0.0)(0.8090912,-0.2)(0.60909116,-0.4)(0.60909116,-0.9)
\end{pspicture}
}
\end{gathered}
\quad = \quad d \ 
\begin{gathered}
\psscalebox{1.0 1.0} 
{
\begin{pspicture}(0,-0.9090912)(1.8181823,0.9090912)
\psframe[linecolor=black, linewidth=0.02, dimen=outer](1.8181821,0.90909094)(0.0,-0.90909094)
\rput{-180.0}(1.2936541,1.8113208){\psarc[linecolor=black, linewidth=0.02, dimen=outer](0.64682704,0.9056604){0.33962265}{0.0}{180.0}}
\pscircle[linecolor=black, linewidth=0.02, dimen=outer](1.2921101,-0.36226416){0.18867925}
\psbezier[linecolor=black, linewidth=0.02](1.4298459,0.8962264)(1.4090912,0.4)(1.2090912,0.2)(1.0090911,0.0)(0.8090912,-0.2)(0.60909116,-0.4)(0.60909116,-0.9)
\end{pspicture}
}
\end{gathered}
\quad = \quad d^2 \ 
\begin{gathered}
\psscalebox{1.0 1.0} 
{
\begin{pspicture}(0,-0.9090912)(1.8181823,0.9090912)
\psframe[linecolor=black, linewidth=0.02, dimen=outer](1.8181821,0.90909094)(0.0,-0.90909094)
\rput{-180.0}(1.2936541,1.8113208){\psarc[linecolor=black, linewidth=0.02, dimen=outer](0.64682704,0.9056604){0.33962265}{0.0}{180.0}}
\psbezier[linecolor=black, linewidth=0.02](1.4298459,0.8962264)(1.4090912,0.4)(1.2090912,0.2)(1.0090911,0.0)(0.8090912,-0.2)(0.60909116,-0.4)(0.60909116,-0.9)
\end{pspicture}
}
\end{gathered}
\]
\caption{$d$-equivalence of TL diagrams. \label{fig:TL_dequiv}}
\end{figure}

We call the morphisms in $\TL$ \emph{formal diagrams} to distinguish them from the simple TL diagrams, though in the interests of brevity we will often refer to them as \emph{diagrams}.
Whenever we really mean simple TL diagrams this will be stated explicitly.

\begin{prop}
$\TL$ is a strict linear monoidal category: it has tensor product given by $a \otimes b = a+b$ for objects $a$, $b$.
For simple diagrams $f \colon m \rightarrow n$, $g \colon m^\prime \rightarrow n^\prime$, the tensor product $f \otimes g \colon m \otimes m^\prime \rightarrow n \otimes n^\prime$ is the simple diagram formed by juxtaposition, placing $f$ to the left of $g$ (Fig. \ref{fig:tensor_diag}) and rescaling into the unit square.
Extending bilinearly gives the tensor product of formal diagrams.
\end{prop}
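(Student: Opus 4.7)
The plan is to verify, in order, (i) that juxtaposition gives a well-defined bilinear operation on morphisms that descends to the $d$-equivalence classes, (ii) that this operation is bifunctorial (i.e.\ preserves identities and satisfies the exchange relation), and (iii) that the resulting monoidal structure is strict with unit object $0 \in \mathbbm{N}$.

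For (i), I would first define $f \otimes g$ on simple TL diagrams literally as juxtaposition followed by a horizontal rescaling, and then extend bilinearly using the specified formula for linear combinations. Bilinearity is then automatic from how we extend. The main well-definedness check is with respect to $d$-equivalence: if $f$ contains a loop, then so does its juxtaposition with any $g$, in exactly the same region; removing that loop commutes with juxtaposition. Hence replacing $f$ with its $d$-equivalence class representatives yields the same element of $\Hom(m+m', n+n')$, and similarly for $g$. Thus $\otimes$ is well-defined on formal diagrams.

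For (ii), the identity-preservation $\mathbbm{1}_m \otimes \mathbbm{1}_n = \mathbbm{1}_{m+n}$ is immediate from placing $m$ through-strings next to $n$ through-strings. The exchange relation $(f \otimes g) \circ (h \otimes k) = (f\circ h) \otimes (g \circ k)$ reduces by bilinearity to the case of simple diagrams; for simple diagrams one observes that both sides are obtained from the same picture, namely $h$ stacked under $f$ on the left, juxtaposed with $k$ stacked under $g$ on the right, with no arcs crossing between the two halves. The two prescribed constructions produce diagrams that differ only by a planar isotopy (and a global rescaling into the unit square), so they are equal as morphisms in $\TL$. This is the step most likely to require care, since it is where the identification of diagrams up to planar isotopy and rescaling is actually doing work.

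For (iii), the object $0$ corresponds to the empty edge, and $\id_0$ is the empty diagram (or any collection of loops, which by $d$-equivalence reduce to scalar multiples of the empty diagram; the identity is the strictly empty one). Juxtaposing the empty diagram with any simple $f$ literally reproduces $f$ up to rescaling, so $\lambda_a = \rho_a = \mathbbm{1}_a$ on objects and identities on morphisms. Associativity is equally strict: $(a+b)+c = a+(b+c)$ as integers, and juxtaposing three simple diagrams yields the same planar isotopy class regardless of how we parenthesize. Therefore all the coherence isomorphisms can be taken to be identities, establishing strictness. The main obstacle, as noted, is being precise about the isotopy/rescaling conventions when asserting the exchange relation; once one has committed to ``diagrams up to planar isotopy inside the unit square,'' the rest is bookkeeping.
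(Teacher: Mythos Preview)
Your proposal is correct and follows the same axiom-verification approach as the paper, which simply notes that $\otimes$ on objects is integer addition (hence strictly associative with unit $0$), that $\lambda,\rho,\alpha$ are identities so coherence is trivial, and that bilinearity holds by construction. You are more thorough than the paper in explicitly checking well-definedness under $d$-equivalence and the exchange relation (bifunctoriality), both of which the paper passes over in silence; these are genuine points worth spelling out, but the underlying strategy is identical.
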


\begin{proof}
This is a simple exercise in verifying the axioms:
since $\otimes$ on objects is addition of integers, the tensor product is strictly associative with tensor identity $0$, and the components of $\lambda$ and $\rho$ are equalities.
Since $\lambda, \rho, \alpha$ are all equalities the triangle and pentagon diagrams are trivially commutative.
Finally, by construction the hom-sets of $\TL$ are $\mathbbm{F}$-vector spaces, and tensor product and composition distribute over addition.
\end{proof}

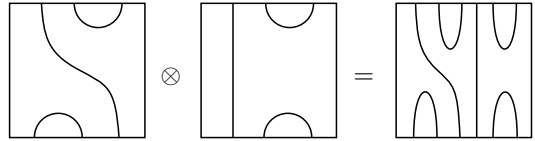
\begin{figure}[hbt]
\[
\begin{gathered}
\psscalebox{1.0 1.0} 
{
\begin{pspicture}(0,-0.9)(1.8,0.9)
\psframe[linecolor=black, linewidth=0.02, dimen=outer](1.8,0.9)(0.0,-0.9)
\psarc[linecolor=black, linewidth=0.02, dimen=outer](0.6509804,-0.88235295){0.3137255}{0.0}{180.0}
\psarc[linecolor=black, linewidth=0.02, dimen=outer](1.1745098,0.88235295){0.3137255}{180.0}{360.0}
\psbezier[linecolor=black, linewidth=0.02](0.42941177,0.88235295)(0.45882353,0.46470588)(0.5,0.23921569)(0.9,0.019607844)(1.3,-0.2)(1.4,-0.2)(1.4490196,-0.88235295)
\end{pspicture}
}
\end{gathered}
\ \otimes \ 
\begin{gathered}
\psscalebox{1.0 1.0} 
{
\begin{pspicture}(0,-0.9)(1.8,0.9)
\psframe[linecolor=black, linewidth=0.02, dimen=outer](1.8,0.9)(0.0,-0.9)
\psarc[linecolor=black, linewidth=0.02, dimen=outer](1.1509804,-0.88235295){0.3137255}{0.0}{180.0}
\psarc[linecolor=black, linewidth=0.02, dimen=outer](1.1745098,0.88235295){0.3137255}{180.0}{360.0}
\psline[linecolor=black, linewidth=0.02](0.42941177,0.88235295)(0.42941177,-0.88235295)
\end{pspicture}
}
\end{gathered}
\ = \ 
\begin{gathered}
\psscalebox{1.0 1.0} 
{
\begin{pspicture}(0,-0.9)(1.8,0.9)
\psframe[linecolor=black, linewidth=0.02, dimen=outer](1.8,0.9)(0.0,-0.9)
\psbezier[linecolor=black, linewidth=0.02](0.2653906,0.88235295)(0.28222606,0.46470588)(0.3057957,0.23921569)(0.53475785,0.019607844)(0.76372,-0.2)(0.8209605,-0.2)(0.8490196,-0.88235295)
\psline[linecolor=black, linewidth=0.02](1.0694118,0.88001525)(1.0694118,-0.8846906)
\psbezier[linecolor=black, linewidth=0.02](1.2844156,0.8805195)(1.2844156,0.08051948)(1.5909091,0.08571429)(1.5909091,0.8857143)
\psbezier[linecolor=black, linewidth=0.02](1.6012987,-0.8805195)(1.6012987,-0.08051948)(1.2844156,-0.09090909)(1.2844156,-0.8909091)
\psbezier[linecolor=black, linewidth=0.02](0.57272726,0.8805195)(0.57272726,0.08051948)(0.874026,0.08571429)(0.874026,0.8857143)
\psbezier[linecolor=black, linewidth=0.02](0.5467532,-0.8857143)(0.5467532,-0.08571429)(0.24025974,-0.08051948)(0.24025974,-0.8805195)
\end{pspicture}
}
\end{gathered}
\]
\caption{Tensor product of TL diagrams. \label{fig:tensor_diag}}
\end{figure}

\begin{remark}
\label{rem:loopless-diagrams}
The $d$-equivalence relation means that all morphisms $f\in\Hom(m,n)$ can be written canonically as a linear combination of simple TL diagrams, each having no closed loops.
Thus the set of all simple TL diagrams from $m$ to $n$ points without loops forms a basis for $\Hom(m,n)$, and it is not hard to show that there are exactly the Catalan number $c_k = \frac{1}{k+1}\binom{2k}{k}$ of these diagrams, where $k = \frac{m+n}{2}$. (See for instance Proposition 1.12 of \cite{Wang2010}.)
Hence in particular $\Hom(m,n)$ is finite-dimensional.
\end{remark}

\begin{remark}
\label{rem:identifying-Hom(0,0)-with-F}
Since the only diagram in $\Hom(0,0)$ that does not contain any closed loops is the empty diagram $\mathbbm{1}_0$, this means that every morphism $f\in\Hom(0,0)$ is a $\mathbbm{F}$-multiple of $\mathbbm{1}_0$.
Thus we may canonically identify $\Hom(0,0)$ with the field $\mathbbm{F}$.
\end{remark}

We define the following operations on morphisms in $\TL$:

\begin{defn}
There is an \textbf{anti-involution} $\overline{\phantom{f}}$ of formal diagrams which takes $f \colon n \rightarrow m$ to the diagram $\overline{f} \colon m \rightarrow n$ obtained by complex-conjugating the coefficients of $f$, sending $q \mapsto q^{-1}$ and reflecting every simple diagram in $f$ in the horiontal line $I\times \frac{1}{2}$ through the middle of the diagram (Fig. \ref{fig:antiinv-dual-TL_diags}).
\end{defn}

\begin{defn}
\label{defn:TL-dual}
The \textbf{dual} of a simple TL diagram $f \colon m \rightarrow n$ is the diagram $f^\ast \colon n \rightarrow m$ obtained by rotating $f$ around its center by $180^\circ$ (Fig. \ref{fig:antiinv-dual-TL_diags}); this is extended to all formal diagrams by complex-conjugating coefficients and sending $q \mapsto q^{-1}$.
\end{defn}

\begin{figure}[ht]
\begin{align*}
\overline{
\left((1+i)\,
\begin{gathered}
\psscalebox{1.0 1.0} 
{
\begin{pspicture}(0,-0.60821617)(1.2164323,0.60821617)
\psframe[linecolor=black, linewidth=0.02, dimen=outer](1.2164325,0.6082162)(0.0,-0.6082162)
\psarc[linecolor=black, linewidth=0.02, dimen=outer](0.46016893,-0.5993919){0.21225}{0.0}{180.0}
\psbezier[linecolor=black, linewidth=0.02](0.98766893,-0.5936419)(0.9813847,-0.30199093)(0.78240997,-0.12934202)(0.61266893,-0.012391891)(0.44292787,0.10455824)(0.24810052,0.26305276)(0.24391891,0.6063581)
\psarc[linecolor=black, linewidth=0.02, dimen=outer](0.7601689,0.5943581){0.21225}{180.0}{360.0}
\end{pspicture}
}
\end{gathered} \ 
\right)
}
\ &= \ 
(1-i)\,
\begin{gathered}
\psscalebox{1.0 1.0} 
{
\begin{pspicture}(0,-0.60821617)(1.2164323,0.60821617)
\psframe[linecolor=black, linewidth=0.02, dimen=outer](1.2164325,0.6082162)(0.0,-0.6082162)
\psbezier[linecolor=black, linewidth=0.02](0.23766892,-0.5936419)(0.23138472,-0.30199093)(0.44490996,-0.060592018)(0.61266893,0.018858109)(0.7804279,0.098308235)(0.98560053,0.2534142)(0.9814189,0.59671956)
\psarc[linecolor=black, linewidth=0.02, dimen=outer](0.7695213,-0.59868705){0.20722891}{0.0}{180.0}
\psarc[linecolor=black, linewidth=0.02, dimen=outer](0.44662976,0.60131294){0.21686748}{180.0}{360.0}
\end{pspicture}
}
\end{gathered} \\\\
\begin{gathered}
\psscalebox{1.0 1.0} 
{
\begin{pspicture}(0,-0.60821617)(1.2164323,0.60821617)
\psframe[linecolor=black, linewidth=0.02, dimen=outer](1.2164325,0.6082162)(0.0,-0.6082162)
\psarc[linecolor=black, linewidth=0.02, dimen=outer](0.43217194,-0.59868705){0.20722891}{0.0}{180.0}
\psarc[linecolor=black, linewidth=0.02, dimen=outer](0.44662976,0.60131294){0.21686748}{180.0}{360.0}
\psline[linecolor=black, linewidth=0.02](0.938196,0.5916744)(0.938196,-0.59868705)
\end{pspicture}
}
\end{gathered}^{\displaystyle\,\ast}
&= \ 
\begin{gathered}
\psscalebox{1.0 1.0} 
{
\begin{pspicture}(0,-0.6082162)(1.2164322,0.6082162)
\psframe[linecolor=black, linewidth=0.02, dimen=outer](1.2164323,0.6082163)(0.0,-0.6082161)
\psarc[linecolor=black, linewidth=0.02, dimen=outer](0.75661623,-0.598687){0.20722891}{0.0}{180.0}
\psarc[linecolor=black, linewidth=0.02, dimen=outer](0.75774074,0.601313){0.21686748}{180.0}{360.0}
\psline[linecolor=black, linewidth=0.02](0.25375146,0.59167445)(0.25375146,-0.598687)
\end{pspicture}
}
\end{gathered}
\end{align*}
\caption{Anti-involution and dual of TL diagrams.}\label{fig:antiinv-dual-TL_diags}
\end{figure}

\begin{defn}
\label{defn:TL-trace}
Let $f \in \Hom(n,n)$ be a simple diagram on $n$ points.
The \textbf{trace} of $f$ is the diagram $\tr(f)$ obtained by joining corresponding points on the top and bottom edges by $n$ disjoint arcs drawn around the outside of $f$ (Fig. \ref{fig:TL_trace}).
Extending linearly gives the trace of any endomorphism on $n$ points.
\end{defn}

\begin{remark}
\label{rem:trace-map}
By Remark \ref{rem:identifying-Hom(0,0)-with-F}, we consider the trace as a map $\tr\colon\Hom(n,n)\rightarrow\mathbbm{F}$.
\end{remark}

\begin{figure}[htb]
\[
\tr\left(
\begin{gathered}
\psscalebox{1.0 1.0} 
{
\begin{pspicture}(0,-0.6075906)(1.2151812,0.6075906)
\psframe[linecolor=black, linewidth=0.02, dimen=outer](1.2151812,0.6075906)(0.0,-0.60759044)
\psarc[linecolor=black, linewidth=0.02, dimen=outer](0.42679742,-0.5933381){0.1952381}{0.0}{180.0}
\psarc[linecolor=black, linewidth=0.02, dimen=outer](0.8077498,0.5971381){0.1952381}{180.0}{360.0}
\psbezier[linecolor=black, linewidth=0.02](0.22679743,0.60190004)(0.23235384,0.11520752)(0.5226748,0.039045878)(0.62679744,0.0066619436)(0.73092,-0.02572199)(1.0030128,-0.0562186)(1.0125117,-0.6028619)
\end{pspicture}
}
\end{gathered}
\right) \ = \ 
\begin{gathered}
\psscalebox{1.0 1.0} 
{
\begin{pspicture}(0,-1.5816131)(2.1817126,1.5816131)
\psframe[linecolor=black, linewidth=0.02, dimen=outer](1.2151812,0.61079055)(0.0,-0.6043905)
\psarc[linecolor=black, linewidth=0.02, dimen=outer](1.1962291,0.6038711){0.19354838}{0.0}{180.0}
\psarc[linecolor=black, linewidth=0.02, dimen=outer](1.1962291,0.6038711){0.58064514}{0.0}{180.0}
\psarc[linecolor=black, linewidth=0.02, dimen=outer](1.1962291,0.6038711){0.9677419}{0.0}{180.0}
\rput{-180.0}(2.407942,-1.2077417){\psarc[linecolor=black, linewidth=0.02, dimen=outer](1.203971,-0.60387087){0.19354838}{0.0}{180.0}}
\rput{-180.0}(2.407942,-1.2077417){\psarc[linecolor=black, linewidth=0.02, dimen=outer](1.203971,-0.60387087){0.58064514}{0.0}{180.0}}
\rput{-180.0}(2.407942,-1.2077417){\psarc[linecolor=black, linewidth=0.02, dimen=outer](1.203971,-0.60387087){0.9677419}{0.0}{180.0}}
\psline[linecolor=black, linewidth=0.02](1.3887022,0.6051)(1.3934641,-0.6044238)
\psline[linecolor=black, linewidth=0.02](1.7744164,0.6057349)(1.7839403,-0.60855085)
\psline[linecolor=black, linewidth=0.02](2.1601307,0.6095444)(2.1696546,-0.5999794)
\psarc[linecolor=black, linewidth=0.02, dimen=outer](0.42679742,-0.59013814){0.1952381}{0.0}{180.0}
\psarc[linecolor=black, linewidth=0.02, dimen=outer](0.8077498,0.60033804){0.1952381}{180.0}{360.0}
\psbezier[linecolor=black, linewidth=0.02](0.22679743,0.6051)(0.23235384,0.11840744)(0.5226748,0.042245798)(0.62679744,0.009861864)(0.73092,-0.022522068)(1.0030128,-0.05301868)(1.0125117,-0.59966195)
\end{pspicture}
}
\end{gathered}
\ \ = \ \ d
\]
\caption{Trace of TL diagrams. \label{fig:TL_trace}}
\end{figure}

\section{Hom-spaces and ideals in Temperley-Lieb}
Recall from Proposition \ref{prop:endo_alg} that the endomorphism spaces of $\mathbbm{F}$-linear categories are in fact $\mathbbm{F}$-algebras, with multiplication in the algebra given by composition in the category.
Because of this we will often abuse notation and write $gf = g\circ f$ for the composition of arbitrary morphisms $f,g \in \Mor(\TL)$.

\begin{defn}
The \textbf{$n$-th Temperley-Lieb algebra} $\TL_n$ is the endomorphism space $\Hom(n,n)$ in $\TL$ consisting of all formal diagrams on $n$ points.
\end{defn}

$\TL_n$ is finitely generated as an algebra by the $n$ simple diagrams $\id_n, U_1, \dotsc, U_{n-1}$, where $\id_n=\mathbbm{1}_n$ is the identity diagram with $n$ through-strings, and $U_i$ is the simple diagram with a cup joining the $i$-th and $(i+1)$-th points on the top edge, a cap joining the $i$-th and $(i+1)$-th points on the bottom edge and through-strings connecting the remaining points (Fig. \ref{fig:TL_gen}).
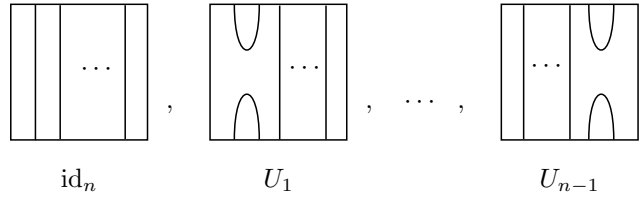
\begin{figure}[ht]
\[
\begin{gathered}
\psscalebox{1.0 1.0} 
{
\begin{pspicture}(0,-0.9090912)(1.8181823,0.9090912)
\psframe[linecolor=black, linewidth=0.02, dimen=outer](1.8181821,0.90909094)(0.0,-0.90909094)
\psline[linecolor=black, linewidth=0.02](0.6596288,0.88709676)(0.6596288,-0.88709676)
\psline[linecolor=black, linewidth=0.02](1.5147157,0.88709676)(1.5147157,-0.88709676)
\rput[bl](0.92976946,-0.03664185){$\cdots$}
\psline[linecolor=black, linewidth=0.02](0.3342017,0.8895668)(0.3342017,-0.9035367)
\end{pspicture}
} \\
\id_n
\end{gathered}\ \ ,\quad
\begin{gathered}
\psscalebox{1.0 1.0} 
{
\begin{pspicture}(0,-0.9090912)(1.8181823,0.9090912)
\psframe[linecolor=black, linewidth=0.02, dimen=outer](1.8181821,0.90909094)(0.0,-0.90909094)
\psline[linecolor=black, linewidth=0.02](1.5352285,0.88196856)(1.5352285,-0.89222497)
\rput[bl](1.0477182,-0.031513646){$\cdots$}
\psline[linecolor=black, linewidth=0.02](0.92629546,0.88196856)(0.92629546,-0.89222497)
\psbezier[linecolor=black, linewidth=0.02](0.329604,0.8948718)(0.329604,0.0948718)(0.65780914,0.08974359)(0.65780914,0.88974357)
\psbezier[linecolor=black, linewidth=0.02](0.65780914,-0.88974357)(0.65780914,-0.08974359)(0.329604,-0.0948718)(0.329604,-0.8948718)
\end{pspicture}
} \\
U_1
\end{gathered}\ \ ,\quad \cdots \ \ ,\quad
\begin{gathered}
\psscalebox{1.0 1.0} 
{
\begin{pspicture}(0,-0.9090912)(1.8181823,0.9090912)
\rput{-180.0}(1.8181823,0.0){\psframe[linecolor=black, linewidth=0.02, dimen=outer](1.8181821,0.90909094)(0.0,-0.90909094)}
\psline[linecolor=black, linewidth=0.02](0.3034667,-0.88709676)(0.3034667,0.88709676)
\psline[linecolor=black, linewidth=0.02](0.9123997,-0.88709676)(0.9123997,0.88709676)
\psbezier[linecolor=black, linewidth=0.02](1.4885783,-0.8948718)(1.4885783,-0.0948718)(1.1603732,-0.08974359)(1.1603732,-0.88974357)
\psbezier[linecolor=black, linewidth=0.02](1.1603732,0.88974357)(1.1603732,0.08974359)(1.4885783,0.0948718)(1.4885783,0.8948718)
\rput[bl](0.39627066,-0.0025641026){$\cdots$}
\end{pspicture}
} \\
U_{n-1}
\end{gathered}
\]
\caption{Generators of $\TL_n$. \label{fig:TL_gen}}
\end{figure}

If we can express any simple diagram without loops as a product of these generators then we can add in any number of loops by multiplying by appropriate powers of $d$, and take linear combinations to obtain any formal diagram on $n$ points.
So to show that these diagrams generate $\TL_n$ it suffices to show how to write any simple diagram without loops as a product of the generators.
We can do this by ``wriggling the strings'' of the diagram to obtain an isotopic diagram for which the decomposition is obvious.
This is illustrated with a particular example beneath (Fig. \ref{fig:generator_eg}), for further details see Section 3 of \cite{Kauffman1990}.

\begin{figure}[ht]
\[
\begin{gathered}
\psscalebox{1.0 1.0} 
{
\begin{pspicture}(0,-0.9)(1.8,0.9)
\psframe[linecolor=black, linewidth=0.02, dimen=outer](1.8,0.9)(0.0,-0.9)
\psarc[linecolor=black, linewidth=0.02, dimen=outer](0.4904762,-0.8825397){0.24126984}{0.0}{180.0}
\psarc[linecolor=black, linewidth=0.02, dimen=outer](1.0873016,-0.8888889){0.24126984}{0.0}{180.0}
\psarc[linecolor=black, linewidth=0.02, dimen=outer](1.0619048,0.8888889){0.4952381}{180.0}{360.0}
\psarc[linecolor=black, linewidth=0.02, dimen=outer](1.0555556,0.8952381){0.24126984}{180.0}{360.0}
\psbezier[linecolor=black, linewidth=0.02](0.30634922,0.8888889)(0.31530154,0.2455707)(0.66272616,0.07004297)(0.88412696,-0.025396826)(1.1055279,-0.12083662)(1.4945313,-0.12153985)(1.5253968,-0.8825397)
\end{pspicture}
}
\end{gathered}
\quad = \quad
\begin{gathered}
\psscalebox{1.0 1.0} 
{
\begin{pspicture}(0,-2.00625)(3.6609313,2.00625)
\psline[linecolor=black, linewidth=0.02, linestyle=dashed, dash=0.17638889cm 0.10583334cm](0.0028959515,-0.40091714)(3.6251183,-0.40091714)
\psline[linecolor=black, linewidth=0.02, linestyle=dashed, dash=0.17638889cm 0.10583334cm](0.0011764945,0.39628014)(3.6322875,0.39628014)
\psline[linecolor=black, linewidth=0.02, linestyle=dashed, dash=0.17638889cm 0.10583334cm](0.0,1.1934775)(3.6666667,1.1934775)
\psline[linecolor=black, linewidth=0.02, linestyle=dashed, dash=0.17638889cm 0.10583334cm](0.012971367,-1.1981142)(3.6263046,-1.1981142)
\psarc[linecolor=black, linewidth=0.02, dimen=outer](0.9121569,-1.2077206){0.28235295}{180.0}{360.0}
\rput{-180.0}(1.8164706,-3.9919116){\psarc[linecolor=black, linewidth=0.02, dimen=outer](0.9082353,-1.9959558){0.28235295}{180.0}{360.0}}
\psline[linecolor=black, linewidth=0.02](1.7592157,-1.2077206)(1.7592157,-1.9920343)
\psline[linecolor=black, linewidth=0.02](2.9090197,-1.2116953)(2.9090197,-1.9960091)
\psline[linecolor=black, linewidth=0.02](2.3239217,-1.1998775)(2.3239217,-1.9841912)
\psline[linecolor=black, linewidth=0.02](0.6337255,-0.3959559)(0.6337255,-1.1802696)
\psline[linecolor=black, linewidth=0.02](1.1984314,-0.4194853)(1.1984314,-1.203799)
\psarc[linecolor=black, linewidth=0.02, dimen=outer](2.0454903,-0.41164216){0.28235295}{180.0}{360.0}
\rput{-180.0}(4.0831375,-2.399755){\psarc[linecolor=black, linewidth=0.02, dimen=outer](2.0415688,-1.1998775){0.28235295}{180.0}{360.0}}
\psline[linecolor=black, linewidth=0.02](2.905098,-0.42811275)(2.905098,-1.2124264)
\psarc[linecolor=black, linewidth=0.02, dimen=outer](2.6219609,0.37659314){0.28235295}{180.0}{360.0}
\rput{-180.0}(5.2360783,-0.8232843){\psarc[linecolor=black, linewidth=0.02, dimen=outer](2.6180391,-0.41164216){0.28235295}{180.0}{360.0}}
\psarc[linecolor=black, linewidth=0.02, dimen=outer](1.4847059,1.1844363){0.28235295}{180.0}{360.0}
\rput{-180.0}(2.9615686,0.79240197){\psarc[linecolor=black, linewidth=0.02, dimen=outer](1.4807843,0.39620098){0.28235295}{180.0}{360.0}}
\psline[linecolor=black, linewidth=0.02](1.1945099,0.40012255)(1.1945099,-0.3841912)
\psline[linecolor=black, linewidth=0.02](1.7570869,0.40583682)(1.7570869,-0.3784769)
\psline[linecolor=black, linewidth=0.02](2.337367,1.188694)(2.337367,0.40438026)
\psline[linecolor=black, linewidth=0.02](2.899944,1.1829797)(2.899944,0.39866596)
\psline[linecolor=black, linewidth=0.02](0.63444334,0.3916945)(0.63444334,-0.39261922)
\psline[linecolor=black, linewidth=0.02](0.63730145,1.1684974)(0.63730145,0.3841837)
\psarc[linecolor=black, linewidth=0.02, dimen=outer](2.0613093,1.9956692){0.28235295}{180.0}{360.0}
\rput{-180.0}(4.1147757,2.4148679){\psarc[linecolor=black, linewidth=0.02, dimen=outer](2.0573878,1.2074339){0.28235295}{180.0}{360.0}}
\psline[linecolor=black, linewidth=0.02](1.2046794,1.989953)(1.2046794,1.2056394)
\psline[linecolor=black, linewidth=0.02](0.6378332,1.9883046)(0.6378332,1.2039909)
\psline[linecolor=black, linewidth=0.02](2.9017081,1.9888364)(2.9017081,1.2045227)
\psframe[linecolor=black, linewidth=0.02, dimen=outer](3.6609313,2.00625)(0.010931397,-2.00625)
\end{pspicture}
}
\end{gathered}
\quad = \quad U_3 U_2 U_4 U_3 U_1
\]
\caption{A simple diagram as a product of generators. \label{fig:generator_eg}}
\end{figure}
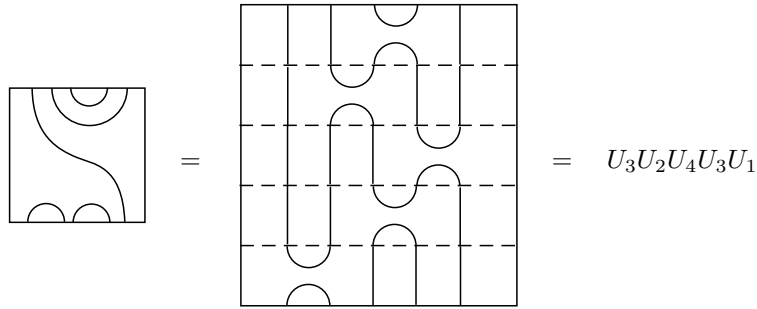

\begin{remark}
In general there is more than one way to express a simple diagram as a product of generators; see the next remark about relations in $\TL_n$.
\end{remark}
\begin{remark}
$\TL_n$ is often presented as an abstract $\mathbbm{F}$-algebra in terms of the generators $\id_n, U_1, \dotsc, U_{n-1}$ and relations
\begin{gather*}
U_i^2 = d U_i \\
U_i U_j = U_j U_i, \quad \abs{i-j}>1 \\
U_i U_{i+1} U_i = U_i \ \text{and}\ U_{i+1} U_i U_{i+1} = U_{i+1}, \quad 1 \leq i < n
\end{gather*}
Our diagrammatic definition is due to Kauffman, and in this setting it is easy to see that the above relations hold.
(Proving that only these relations hold is slightly less trivial, see Theorem 4.3 of \cite{Kauffman1990}.)
\end{remark}

%

\begin{defn}
An \textbf{ideal} $\mathcal{I}$ in a category $\mathcal{C}$ is a collection of morphisms that is closed under composition by arbitrary morphisms in $\mathcal{C}$ (whenever such a composition is defined).
If $\mathcal{C}$ is a linear monoidal category, $\mathcal{I}$ is known as a \textbf{tensor ideal} if it is further closed under tensor product with arbitrary morphisms, and for all pairs of objects $x,y \in \Obj(\mathcal{C})$ the subset $\mathcal{I} \cap \Hom(x,y) \subset \mathcal{I}$ is a linear subspace of $\Hom(x,y)$.
\end{defn}

The following result, though simple, has a very useful corollary that we will make use of in many proofs to come.

\begin{lemma}
Suppose $f \colon m \rightarrow n,\ g \colon n \rightarrow k$ are simple TL diagrams with $b$ and $c$ through-strings respectively.
Then the composite diagram $gf$ has at most $\min(b,c)$ through-strings.
\end{lemma}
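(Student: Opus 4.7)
The plan is to track what happens to a through-string of $gf$ when it is restricted to the two halves of the composite diagram. By construction, $gf$ is formed by stacking $g$ above $f$, identifying the $n$ distinguished points on the top edge of $f$ with those on the bottom edge of $g$, and rescaling back into the unit square. Thus every arc of $gf$ is a concatenation of maximal sub-arcs of $f$ and $g$, joined at shared distinguished points on the ``middle line''.

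First I would consider an arbitrary through-string $\gamma$ of $gf$, which connects some distinguished point on the bottom edge to one on the top edge. Tracing $\gamma$ from bottom to top, it may cross the middle line several times, but its very first segment (the one starting at the bottom edge) is an arc of $f$ running from the bottom edge of $f$ to the middle line, i.e.\ a through-string of $f$. Symmetrically, the final segment of $\gamma$ is a through-string of $g$. This gives canonical assignments from through-strings of $gf$ to through-strings of $f$ and to through-strings of $g$.

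The second step is to check that these assignments are injective. Distinct through-strings of $gf$ are disjoint in the composite square, so in particular their first segments meet the middle line at distinct points; hence they are distinct through-strings of $f$. The same argument applies to the final segments in $g$. Therefore the number of through-strings of $gf$ is bounded above by both $b$ and $c$, yielding the claim.

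The only subtlety to watch is that $\gamma$ might re-enter $f$ after leaving it; but this does not affect the argument because we only use the first segment (and separately the last segment) of $\gamma$, not the whole trajectory. Modulo this observation the proof is essentially a bookkeeping exercise in planar arc topology, and I do not expect any serious obstacle.
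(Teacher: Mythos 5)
Your proof is correct and is essentially the paper's argument in dual form: the paper notes that the non-through (capped/cupped) endpoints of $f$ on the bottom and of $g$ on the top remain non-through in $gf$, whereas you build an explicit injection from through-strings of $gf$ to through-strings of $f$ (and of $g$) by taking first and last segments. These are the same counting observation, just phrased from the through-string side rather than the complementary side.
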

\begin{proof}
Since $f$ has $b$ through-strings it must have $m-b$ points on its bottom edge not connected by through-strings --- that is, it has that many capped points.
Similarly since $g$ has $c$ through-strings it must have $k-c$ points on its top edge not connected by through-strings (that is, cupped points).
It is clear that capped and cupped points remain capped and cupped after composition of diagrams, so composition never decreases the number of capped or cupped points.
Thus $gf \colon m \rightarrow k$ has at least $m-b$ capped points and $k-c$ cupped points, hence at most $\min(b,c)$ points connected by through-strings.
\end{proof}

We say that a formal diagram $f$ has $c$ through-strings if $c$ is the greatest number of through-strings possessed by any simple diagram in $f$.
By the above lemma and bilinearity of composition we get the following result.

\begin{corollary}
The set of all formal diagrams $f \in \Mor(\TL)$ with at most $c$ through-strings forms an ideal $\mathcal{I}_c$ in $\TL$.
\label{cor:thru-string_ideal}
\end{corollary}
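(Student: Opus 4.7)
The plan is to deduce the corollary directly from the lemma plus bilinearity. First I would fix notation: call a formal diagram $f\in\Hom(m,n)$ \emph{at most $c$ through-strings} if, when expanded in the basis of loopless simple diagrams guaranteed by Remark \ref{rem:loopless-diagrams}, every simple diagram appearing with nonzero coefficient has at most $c$ through-strings. This way the property is well-defined (since the loopless expansion is canonical), and a priori closed under taking linear combinations: if $f_1,f_2\in\mathcal{I}_c\cap\Hom(m,n)$ and $\alpha,\beta\in\mathbbm{F}$, then $\alpha f_1+\beta f_2$ remains a linear combination of loopless simple diagrams with at most $c$ through-strings (after collecting terms). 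So $\mathcal{I}_c\cap\Hom(m,n)$ is a linear subspace of each hom-space for free.

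Next, I would verify closure under composition with an arbitrary morphism. Given $f\in\mathcal{I}_c\cap\Hom(m,n)$ and any $g\in\Hom(n,k)$, expand $f=\sum_i\alpha_if_i$ and $g=\sum_j\beta_jg_j$ in the loopless simple-diagram bases, where each $f_i$ has at most $c$ through-strings. By bilinearity of composition,
\[ g\circ f \;=\; \sum_{i,j}\alpha_i\beta_j\,(g_j\circ f_i). \]
Applying the preceding lemma to each $g_j\circ f_i$, the composite has at most $\min(b_j,c_i)\le c_i\le c$ through-strings. The composite may contain closed loops, but applying the $d$-equivalence relation to strip these off only multiplies by scalars; it never changes the number of through-strings in any surviving simple diagram. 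Hence each $g_j\circ f_i$, after normalisation, lies in $\mathcal{I}_c$, and so does $g\circ f$. An entirely symmetric argument handles composition $f\circ h$ on the other side for $h\in\Hom(\ell,m)$.

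This already gives what the corollary asks for. One minor point worth spelling out is the well-definedness in the previous paragraph: stripping loops from each $g_j\circ f_i$ produces an element of the loopless basis (times a scalar power of $d$), so the resulting expansion of $g\circ f$ in the loopless basis only involves simple diagrams with at most $c$ through-strings. I do not expect any real obstacle here --- the lemma does all the combinatorial work, and the remaining content is just bookkeeping with the linear structure. If one also wanted $\mathcal{I}_c$ to be a tensor ideal, the same style of argument works since juxtaposition of simple diagrams adds their through-string counts, so tensoring a simple diagram of through-string count $\le c$ with an arbitrary simple diagram only increases the count by a fixed amount, but strictly speaking that upgrade is not claimed in the corollary as stated.
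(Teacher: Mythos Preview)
Your main argument is correct and is exactly what the paper intends: its entire proof is the single sentence ``By the above lemma and bilinearity of composition we get the following result,'' and you have simply spelled out the bookkeeping behind that line.

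One correction to your closing remark: $\mathcal{I}_c$ is \emph{not} a tensor ideal, and the paper says so immediately after the corollary. Your own observation that juxtaposition \emph{adds} through-string counts is precisely why: tensoring $f\in\mathcal{I}_c$ with $\id_n$ produces a diagram with $n$ additional through-strings, which in general exceeds $c$. So the ``same style of argument'' does not upgrade the result; it refutes the upgrade.
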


Note that this is not however a tensor ideal, since we can always tensor with an identity diagram $\id_n$ to add $n$ more through-strings.

\section{Jones-Wenzl idempotents}
One of our main goals will be to show that certain categories built from generic Temperley-Lieb are semisimple (definitions to come), which will enable us to construct finite-dimensional vector spaces associated to surfaces.
To this end we introduce an important class of endomorphisms in $\TL$, first discovered by Jones \cite{Jones1983} in the context of subfactors of von Neumann algebras.
The inductive definition we use here is due to Wenzl \cite{Wenzl1987}.

First, some notation:

\begin{defn}
Let $n \geq 2$ and $1 \leq i < n$.
Define the \textbf{cup} $\cup_{i,n}$ to be the simple TL diagram from $n-2$ to $n$ points having $i-1$ through-strings connecting corresponding points on the top and bottom edges, followed by a cup connecting the $i$-th and $i+1$-th points on the top edge, followed by through-strings connecting the remaining points (reading from left to right).
That is,
\begin{equation*}
\cup_{i,n} = \ 
\begin{gathered}
\psscalebox{1.0 1.0} 
{
\begin{pspicture}(0,-1.2553478)(3.440982,1.2553478)
\psarc[linecolor=black, linewidth=0.02, dimen=outer](1.7150847,0.86036295){0.42}{180.0}{360.0}
\psframe[linecolor=black, linewidth=0.02, dimen=outer](3.4409823,0.87521887)(0.0,-0.92990935)
\rput[bl](1.2284564,1.0153478){$i$}
\rput[bl](1.8367347,0.9872189){$i+1$}
\rput[bl](0.7412167,-0.032760445){$\scriptstyle\cdots$}
\rput[bl](2.5,-0.043236636){$\scriptstyle\cdots$}
\psbezier[linecolor=black, linewidth=0.02](0.7969216,0.8601359)(0.867366,0.4054418)(0.9869811,0.27199215)(1.1456395,0.029366683)(1.304298,-0.21325879)(1.4510474,-0.6081435)(1.4608603,-0.92196125)
\psbezier[linecolor=black, linewidth=0.02](0.25506315,0.8563849)(0.30247793,0.5101009)(0.43107215,0.22967732)(0.5264917,0.07067056)(0.6219113,-0.088336185)(0.82833993,-0.24622673)(0.9191154,-0.92296153)
\psbezier[linecolor=black, linewidth=0.02](2.6243727,0.86797905)(2.5531678,0.35262477)(2.3830504,0.11435683)(2.277706,-0.07602095)(2.1723616,-0.26639873)(2.0853245,-0.34156963)(2.0216794,-0.9238471)
\psbezier[linecolor=black, linewidth=0.02](3.1812353,0.86797905)(3.1514633,0.48997474)(3.0251765,0.23296292)(2.8985686,0.003979052)(2.7719607,-0.2250048)(2.6484172,-0.30199003)(2.5616276,-0.9124131)
\rput[bl](0.21652947,0.99346924){$1$}
\rput[bl](3.0871177,1.0248418){$n$}
\rput[bl](0.84870785,-1.2553478){$1$}
\rput[bl](2.493056,-1.2505643){$n-2$}
\end{pspicture}
}
\end{gathered}
\end{equation*}
and similarly define the \textbf{cap} $\cap_{i,n}\colon n \rightarrow n-2$ by
\begin{equation*}
\cap_{i,n} = \overline{\cup_{i,n}} = \ 
\begin{gathered}
\psscalebox{1.0 1.0} 
{
\begin{pspicture}(0,-1.2150948)(3.440982,1.2150948)
\rput{-180.0}(3.4517949,-1.7705653){\psarc[linecolor=black, linewidth=0.02, dimen=outer](1.7258974,-0.88528264){0.42}{180.0}{360.0}}
\rput{-180.0}(3.440982,0.00485111){\psframe[linecolor=black, linewidth=0.02, dimen=outer](3.4409823,0.90498966)(0.0,-0.90013856)}
\rput[bl](1.2217914,-1.186966){$i$}
\rput[bl](1.8300695,-1.2150948){$i+1$}
\rput[bl](0.68,-0.08301637){$\scriptstyle\cdots$}
\rput[bl](2.45,-0.08815923){$\scriptstyle\cdots$}
\psbezier[linecolor=black, linewidth=0.02](2.6440604,-0.8850556)(2.573616,-0.43036148)(2.454001,-0.29691184)(2.2953424,-0.05428636)(2.136684,0.1883391)(1.9899348,0.5832239)(1.9801219,0.8970416)
\psbezier[linecolor=black, linewidth=0.02](3.185919,-0.8813045)(3.1385043,-0.5350206)(3.0099099,-0.25459698)(2.9144905,-0.09559024)(2.8190708,0.06341651)(2.6126423,0.22130705)(2.5218666,0.8980419)
\psbezier[linecolor=black, linewidth=0.02](0.8166095,-0.89289874)(0.8878144,-0.37754443)(1.0579318,-0.1392765)(1.1632762,0.05110127)(1.2686206,0.24147905)(1.3556577,0.31664994)(1.4193026,0.8989274)
\psbezier[linecolor=black, linewidth=0.02](0.25974676,-0.89289874)(0.28951883,-0.5148944)(0.41580552,-0.2578826)(0.5424134,-0.028898729)(0.66902137,0.20008513)(0.79256487,0.27707037)(0.8793546,0.88749343)
\rput[bl](0.20986441,-1.2088445){$1$}
\rput[bl](3.0804527,-1.177472){$n$}
\rput[bl](0.819926,0.98031133){$1$}
\rput[bl](2.4642742,0.9850948){$n-2$}
\end{pspicture}
}
\end{gathered}\,.
\end{equation*}
\end{defn}

We will in general write $\cap_i$ and $\cup_i$ for $\cap_{i,n}$ and $\cup_{i,n}$ since $n$ will always be clear from the context.
Observe that $\cup_i \cap_i = U_i$ and $\cap_i \cup_i = d\cdot\id$.

\begin{theorem}
\label{thm:JWP}
For $n = 0, 1, 2, \dotsc$ define $p_n \in \TL_n$ inductively by
\begin{align}
p_0 & = \begin{gathered}
\psscalebox{1.0 1.0} 
{
\begin{pspicture}(0,-0.46)(0.92,0.46)
\psframe[linecolor=black, linewidth=0.02, dimen=outer](0.92,0.46)(0.0,-0.46)
\end{pspicture}
}
\end{gathered} \quad \text{(the empty diagram)}, \nonumber \\
p_1 & = \begin{gathered}
\psscalebox{1.0 1.0} 
{
\begin{pspicture}(0,-0.46)(0.92,0.46)
\psframe[linecolor=black, linewidth=0.02, dimen=outer](0.92,0.46)(0.0,-0.46)
\psline[linecolor=black, linewidth=0.02](0.47142857,0.45285714)(0.47142857,-0.44)
\end{pspicture}
}
\end{gathered} = \id_1 \ , \nonumber \\
p_{n+1} & = \ 
\begin{gathered}
\psscalebox{1.0 1.0} 
{
\begin{pspicture}(0,-1.1136682)(1.8117054,1.1136682)
\psframe[linecolor=black, linewidth=0.02, dimen=outer](1.5,0.40326387)(0.0,-0.39673612)
\psline[linecolor=black, linewidth=0.02](0.3622188,0.39571947)(0.3622188,1.1136682)
\psline[linecolor=black, linewidth=0.02](0.36956573,-1.1136683)(0.36956573,-0.3957196)
\rput[bl](0.6,-0.11525464){$p_n$}
\psline[linecolor=black, linewidth=0.02](1.1608701,-1.1136683)(1.1608701,-0.3957196)
\psline[linecolor=black, linewidth=0.02](1.1535231,0.39571947)(1.1535231,1.1136682)
\rput[bl](0.5739131,0.76948464){$\cdots$}
\rput[bl](0.5739131,-0.7967361){$\cdots$}
\psline[linecolor=black, linewidth=0.02](1.8017051,1.098234)(1.8017051,-1.0978444)
\end{pspicture}
}
\end{gathered}
\quad - \ \frac{[n]}{[n+1]} \ \ 
\begin{gathered}
\psscalebox{1.0 1.0} 
{
\begin{pspicture}(0,-1.7498428)(1.8986871,1.7498428)
\psframe[linecolor=black, linewidth=0.02, dimen=outer](1.5,1.1776017)(0.0,0.3776017)
\psline[linecolor=black, linewidth=0.02](0.26174784,1.1598009)(0.26174784,1.7481275)
\psline[linecolor=black, linewidth=0.02](0.26956573,-0.41279984)(0.26956573,0.37861827)
\rput[bl](0.6,0.6590832){$p_n$}
\psline[linecolor=black, linewidth=0.02](0.9608701,-0.41279984)(0.9608701,0.37861827)
\psline[linecolor=black, linewidth=0.02](1.2514824,1.1615162)(1.2514824,1.7498428)
\rput[bl](0.5739131,1.4611104){$\cdots$}
\rput[bl](0.44314387,-0.055051334){$\cdots$}
\psarc[linecolor=black, linewidth=0.02, dimen=outer](1.5724934,0.390379){0.30204082}{180.0}{360.0}
\psframe[linecolor=black, linewidth=0.02, dimen=outer](1.5122449,-0.40199012)(0.012244951,-1.2019901)
\rput[bl](0.61224496,-0.9205086){$p_n$}
\rput{-180.0}(3.1694765,-0.8274052){\psarc[linecolor=black, linewidth=0.02, dimen=outer](1.5847383,-0.4137026){0.30204082}{180.0}{360.0}}
\psline[linecolor=black, linewidth=0.02](0.26582947,-1.7355052)(0.26582947,-1.1870624)
\psline[linecolor=black, linewidth=0.02](1.255564,-1.7339063)(1.255564,-1.1854633)
\rput[bl](0.5779947,-1.454622){$\cdots$}
\psline[linecolor=black, linewidth=0.02](1.8749068,0.3797085)(1.8749068,1.7440187)
\psline[linecolor=black, linewidth=0.02](1.8886871,-0.39500389)(1.8886871,-1.7498426)
\end{pspicture}
}
\end{gathered}
\label{eq:JWPrelation}
\end{align}

We call $p_n$ the \textbf{\emph{Jones-Wenzl idempotents}}.
They are the unique endomorphisms $p_n \colon n \rightarrow n$ in $\TL$ satisfying the following properties:
\begin{enumerate}[label=\roman{*}., ref=\roman{*}]
\item $p_n$ is nonzero and can be written as $p_n = \id_n + m$ where $m = \sum c_j m_j$ is a $\mathbbm{F}$-linear combination of non-identity diagrams $m_j\colon n\rightarrow n$,\label{eq:JWP1}
\item $\cap_i p_n = p_n \cup_i = 0$ for $i = 1, \dotsc, n-1$, and \label{eq:JWP2}
\item $p_n^2 = p_n$. \label{eq:JWP3}
\end{enumerate}
\end{theorem}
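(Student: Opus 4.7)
The plan is to prove (i), (ii), (iii), and uniqueness by induction on $n$, with the base cases $n = 0, 1$ being immediate. A key auxiliary ingredient, proved in parallel by induction, is the \emph{partial trace identity}
\[
\cap_n(p_n \otimes \id_1) \cup_n = \frac{[n+1]}{[n]}\, p_{n-1} \qquad (\star)
\]
in $\TL_{n-1}$, which expresses the partial trace of $p_n$ over its last strand. To establish $(\star)$, substitute the recursion for $p_n$ into the LHS: the $(p_{n-1} \otimes \id_1)$-term traces to $d \cdot p_{n-1}$ (closing the identity strand into a loop), while the second term of the recursion, involving $U_{n-1}$, traces to $\tfrac{[n-1]}{[n]}\, p_{n-1}$ --- the inner cup-cap of $U_{n-1}$ joins with the outer trace arc via the identity strand to produce an identity-like middle, so the composite collapses to $p_{n-1}\, \id_{n-1}\, p_{n-1} = p_{n-1}^2 = p_{n-1}$ using the inductive (iii). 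The quantum-integer recursion $d[n] = [n+1] + [n-1]$ from Proposition \ref{prop:quantumint_relation} then simplifies the result to $(\star)$.

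For the inductive step on $p_{n+1}$: Property (i) is immediate, since substituting $p_n = \id_n + m$ into the recursion gives $\id_{n+1} + (m \otimes \id_1)$ from the first term, while the second term lies in $\mathcal{I}_{n-1}$ by Corollary \ref{cor:thru-string_ideal} (as $U_n$ has only $n-1$ through-strings) and contributes no identity component. For (ii), the cases $i \leq n-1$ reduce directly to the inductive hypothesis via $\cap_i(p_n\otimes\id_1) = (\cap_i\, p_n)\otimes\id_1 = 0$. The case $i = n$ is the main obstacle: expanding $U_n = \cup_n\, \cap_n$ and substituting $(\star)$ together with $p_n^2 = p_n$ yields
\[
\cap_n\, p_{n+1} = (\id_{n-1} - p_{n-1})\, \cap_n(p_n \otimes \id_1),
\]
so it suffices to check $p_{n-1}\, \cap_n(p_n \otimes \id_1) = \cap_n(p_n \otimes \id_1)$. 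Writing $p_{n-1} = \id_{n-1} + m_{n-1}$, each non-identity diagram in $m_{n-1}$ factors as $A \circ \cap_j$ with $j \leq n-2$; using the commutativity of caps at disjoint positions to migrate $\cap_j$ outward reduces the result to $\cap_j\, p_n = 0$ via the inductive (ii). The $p_{n+1}\cup_i = 0$ side of (ii) follows symmetrically.

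Property (iii) is then automatic: $p_{n+1}^2 = p_{n+1} + m_{n+1}\, p_{n+1}$, and each simple diagram in $m_{n+1}$ factors through a bottom cap, so $m_{n+1}\, p_{n+1} = 0$ by (ii). For uniqueness, if $p'$ also satisfies (i) and (ii), the same factoring-through-caps-and-cups argument (using both halves of (ii) for each of $p$ and $p'$) gives $pp' = p'$ and $p'p = p$, and hence $(p - p')^2 = 0$. To finish, I would iterate the partial trace: $\tr_n(p - p')$ lies in the bi-annihilator of caps and cups in $\TL_n$, which by inductive uniqueness is spanned by $p_n$, and bootstrapping down the induction while comparing identity coefficients forces $p = p'$. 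The principal obstacles throughout are the case $i = n$ of (ii), which pins down the specific coefficient $[n]/[n+1]$ in the recursion, and the uniqueness step, which requires careful control over the through-string filtration.
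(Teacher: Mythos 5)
Your proof of (i), (ii), and the partial-trace identity $(\star)$ mirrors the paper's argument: (i) from Corollary \ref{cor:thru-string_ideal} on through-strings, (ii) split into $i \leq n-1$ (reduces directly to the inductive hypothesis) and the hard case $i = n$ (via $(\star)$ and then absorption of $p_{n-1}$ into $p_n$). Your identity $(\star)$, written algebraically as $\cap_n(p_n\otimes\id_1)\cup_n = \tfrac{[n+1]}{[n]}p_{n-1}$, is exactly the paper's one-strand trace relation $\tr_1(p_n) = \tfrac{[n+1]}{[n]}p_{n-1}$, established by the same substitution of the recursion and the quantum-integer identity. One minor remark: in the step $\cap_n p_{n+1} = (\id_{n-1}-p_{n-1})\cap_n(p_n\otimes\id_1)$ you cite $p_n^2=p_n$, but that step needs only $(\star)$; idempotence of $p_{n-1}$ is what is used inside the proof of $(\star)$.

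Your route to (iii) is genuinely cleaner than the paper's. You observe that once (i) and (ii) hold for $p_{n+1}$, $p_{n+1}^2 = (\id_{n+1}+m_{n+1})p_{n+1} = p_{n+1} + m_{n+1}p_{n+1}$, and each non-identity summand of $m_{n+1}$ has a cap on its bottom edge and hence factors through some $\cap_i$, giving $m_{n+1}p_{n+1}=0$. The paper instead expands $p_{n+1}^2$ directly via the trace relation and verifies by hand that the coefficient $2\mu_n - \mu_n^2(d-\mu_{n-1})$ collapses to $\mu_n$. Your argument sidesteps that calculation entirely and is preferable.

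Your uniqueness argument contains a gap. You establish $pp'=p'$ and $p'p=p$, deduce $(p-p')^2=0$, and then appeal to a vague procedure of iterating the partial trace and ``bootstrapping down the induction.'' Nilpotence of $p-p'$ in $\TL_n$ does not by itself force $p-p'=0$, and what you sketch for the final step is not a proof. The gap is unnecessary: the same factoring argument, applied in the other direction, also gives $pp' = p$ --- write $p'=\id+m'$, where every summand of $m'$ has a cup on its top edge; then $pm' = 0$ because $p\cup_i = 0$. Combined with $pp'=p'$ (from $m$ having bottom caps and $\cap_i p' = 0$) this yields $p = pp' = p'$ at once, which is precisely the paper's one-line argument.
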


\begin{proof}
It is trivial to check that properties \eqref{eq:JWP1} through \eqref{eq:JWP3} hold for $p_0$ and $p_1$, and that $p_1$ satisfies the relation \eqref{eq:JWPrelation}.
Let $n \geq 1$, assume the required properties hold for $p_n$, $p_{n-1}$, and consider $p_{n+1}$.

\begin{enumerate}
\item Since the coefficient of $\id_n$ in $p_n$ is $1$, the first term in \eqref{eq:JWPrelation} contributes an identity diagram with coefficient $1$.
For the second term note that
\begin{center}
\psscalebox{1.0 1.0} 
{

}
\end{gathered} \\
&= 0.
\end{align*}

Lastly observe that by the inductive formula \eqref{eq:JWPrelation} and invariance of the quantum integers under $q \mapsto q^{-1}$ we have that $\overline{p_{n+1}} = p_{n+1}$, hence
\[ p_{n+1} \cup_i = \overline{p_{n+1}}\cdot\overline{\cap_i} = \overline{\cap_i p_{n+1}} = \overline{0} = 0. \]

\item Write $\mu_n$ for $\frac{[n]}{[n+1]}$.
Using the one-strand trace relation and the idempotent property of $p_n$ we calculate that
\begin{align*}
p_{n+1}^2 &= \ 
\begin{gathered}
\psscalebox{1.0 1.0} 
{
\begin{pspicture}(0,-1.1136682)(1.8117054,1.1136682)
\psframe[linecolor=black, linewidth=0.02, dimen=outer](1.5,0.40326387)(0.0,-0.39673612)
\psline[linecolor=black, linewidth=0.02](0.3622188,0.39571947)(0.3622188,1.1136682)
\psline[linecolor=black, linewidth=0.02](0.36956573,-1.1136683)(0.36956573,-0.3957196)
\rput[bl](0.6,-0.11525464){$p_n$}
\psline[linecolor=black, linewidth=0.02](1.1608701,-1.1136683)(1.1608701,-0.3957196)
\psline[linecolor=black, linewidth=0.02](1.1535231,0.39571947)(1.1535231,1.1136682)
\rput[bl](0.5739131,0.76948464){$\cdots$}
\rput[bl](0.5739131,-0.7967361){$\cdots$}
\psline[linecolor=black, linewidth=0.02](1.8017051,1.098234)(1.8017051,-1.0978444)
\end{pspicture}
}
\end{gathered}
\quad - \ 2\mu_n \ \ 
\begin{gathered}
\psscalebox{1.0 1.0} 
{
\begin{pspicture}(0,-1.7498428)(1.8986871,1.7498428)
\psframe[linecolor=black, linewidth=0.02, dimen=outer](1.5,1.1776017)(0.0,0.3776017)
\psline[linecolor=black, linewidth=0.02](0.26174784,1.1598009)(0.26174784,1.7481275)
\psline[linecolor=black, linewidth=0.02](0.26956573,-0.41279984)(0.26956573,0.37861827)
\rput[bl](0.6,0.6590832){$p_n$}
\psline[linecolor=black, linewidth=0.02](0.9608701,-0.41279984)(0.9608701,0.37861827)
\psline[linecolor=black, linewidth=0.02](1.2514824,1.1615162)(1.2514824,1.7498428)
\rput[bl](0.5739131,1.4611104){$\cdots$}
\rput[bl](0.44314387,-0.055051334){$\cdots$}
\psarc[linecolor=black, linewidth=0.02, dimen=outer](1.5724934,0.390379){0.30204082}{180.0}{360.0}
\psframe[linecolor=black, linewidth=0.02, dimen=outer](1.5122449,-0.40199012)(0.012244951,-1.2019901)
\rput[bl](0.61224496,-0.9205086){$p_n$}
\rput{-180.0}(3.1694765,-0.8274052){\psarc[linecolor=black, linewidth=0.02, dimen=outer](1.5847383,-0.4137026){0.30204082}{180.0}{360.0}}
\psline[linecolor=black, linewidth=0.02](0.26582947,-1.7355052)(0.26582947,-1.1870624)
\psline[linecolor=black, linewidth=0.02](1.255564,-1.7339063)(1.255564,-1.1854633)
\rput[bl](0.5779947,-1.454622){$\cdots$}
\psline[linecolor=black, linewidth=0.02](1.8749068,0.3797085)(1.8749068,1.7440187)
\psline[linecolor=black, linewidth=0.02](1.8886871,-0.39500389)(1.8886871,-1.7498426)
\end{pspicture}
}
\end{gathered}
\quad + \ \mu_n^2(d - \mu_{n-1}) \ \ 
\begin{gathered}
\psscalebox{1.0 1.0} 
{
\begin{pspicture}(0,-2.177474)(1.9248853,2.177474)
\psframe[linecolor=black, linewidth=0.02, dimen=outer](1.5042553,-0.8291536)(0.0042552566,-1.6291536)
\psline[linecolor=black, linewidth=0.02](0.26600304,-0.8469544)(0.26600304,-0.25862777)
\rput[bl](0.60425526,-1.3476721){$p_n$}
\psline[linecolor=black, linewidth=0.02](0.26108667,-2.1774743)(0.26108667,-1.6290313)
\psline[linecolor=black, linewidth=0.02](1.2508211,-2.1758752)(1.2508211,-1.6274323)
\rput[bl](0.5627256,-1.92817){$\cdots$}
\rput[bl](0.40159568,-0.5839439){$\cdots$}
\psframe[linecolor=black, linewidth=0.02, dimen=outer](1.1675773,0.25972635)(0.0035772908,-0.26793325)
\psline[linecolor=black, linewidth=0.02](0.9628125,-0.848183)(0.9628125,-0.25985634)
\psline[linecolor=black, linewidth=0.02](0.25583354,0.26209715)(0.25583354,0.8504238)
\rput[bl](0.39142618,0.5251076){$\cdots$}
\psline[linecolor=black, linewidth=0.02](0.9587036,0.25596854)(0.9587036,0.84429514)
\rput[bl](0.23777379,-0.121066965){$p_{n-1}$}
\psframe[linecolor=black, linewidth=0.02, dimen=outer](1.4999999,1.6346762)(0.0,0.8346762)
\rput[bl](0.56595737,1.1161577){$p_n$}
\psline[linecolor=black, linewidth=0.02](0.25683135,1.6225258)(0.25683135,2.1709688)
\psline[linecolor=black, linewidth=0.02](1.2465658,1.6241248)(1.2465658,2.1725676)
\rput[bl](0.5584703,1.8718301){$\cdots$}
\psarc[linecolor=black, linewidth=0.02, dimen=outer](1.5829704,-0.83535516){0.3319149}{0.0}{180.0}
\psarc[linecolor=black, linewidth=0.02, dimen=outer](1.5999917,0.8497512){0.30638298}{180.0}{360.0}
\psline[linecolor=black, linewidth=0.02](1.9063747,0.83272994)(1.8978641,2.1774108)
\psline[linecolor=black, linewidth=0.02](1.9148853,-0.8438658)(1.9148853,-2.1630147)
\end{pspicture}
}
\end{gathered} \\
&= \ 
\begin{gathered}
\psscalebox{1.0 1.0} 
{
\begin{pspicture}(0,-1.1136682)(1.8117054,1.1136682)
\psframe[linecolor=black, linewidth=0.02, dimen=outer](1.5,0.40326387)(0.0,-0.39673612)
\psline[linecolor=black, linewidth=0.02](0.3622188,0.39571947)(0.3622188,1.1136682)
\psline[linecolor=black, linewidth=0.02](0.36956573,-1.1136683)(0.36956573,-0.3957196)
\rput[bl](0.6,-0.11525464){$p_n$}
\psline[linecolor=black, linewidth=0.02](1.1608701,-1.1136683)(1.1608701,-0.3957196)
\psline[linecolor=black, linewidth=0.02](1.1535231,0.39571947)(1.1535231,1.1136682)
\rput[bl](0.5739131,0.76948464){$\cdots$}
\rput[bl](0.5739131,-0.7967361){$\cdots$}
\psline[linecolor=black, linewidth=0.02](1.8017051,1.098234)(1.8017051,-1.0978444)
\end{pspicture}
}
\end{gathered}
\quad - \ (2\mu_n - \mu_n^2(d-\mu_{n-1})) \ \ 
\begin{gathered}
\psscalebox{1.0 1.0} 
{
\begin{pspicture}(0,-1.7498428)(1.8986871,1.7498428)
\psframe[linecolor=black, linewidth=0.02, dimen=outer](1.5,1.1776017)(0.0,0.3776017)
\psline[linecolor=black, linewidth=0.02](0.26174784,1.1598009)(0.26174784,1.7481275)
\psline[linecolor=black, linewidth=0.02](0.26956573,-0.41279984)(0.26956573,0.37861827)
\rput[bl](0.6,0.6590832){$p_n$}
\psline[linecolor=black, linewidth=0.02](0.9608701,-0.41279984)(0.9608701,0.37861827)
\psline[linecolor=black, linewidth=0.02](1.2514824,1.1615162)(1.2514824,1.7498428)
\rput[bl](0.5739131,1.4611104){$\cdots$}
\rput[bl](0.44314387,-0.055051334){$\cdots$}
\psarc[linecolor=black, linewidth=0.02, dimen=outer](1.5724934,0.390379){0.30204082}{180.0}{360.0}
\psframe[linecolor=black, linewidth=0.02, dimen=outer](1.5122449,-0.40199012)(0.012244951,-1.2019901)
\rput[bl](0.61224496,-0.9205086){$p_n$}
\rput{-180.0}(3.1694765,-0.8274052){\psarc[linecolor=black, linewidth=0.02, dimen=outer](1.5847383,-0.4137026){0.30204082}{180.0}{360.0}}
\psline[linecolor=black, linewidth=0.02](0.26582947,-1.7355052)(0.26582947,-1.1870624)
\psline[linecolor=black, linewidth=0.02](1.255564,-1.7339063)(1.255564,-1.1854633)
\rput[bl](0.5779947,-1.454622){$\cdots$}
\psline[linecolor=black, linewidth=0.02](1.8749068,0.3797085)(1.8749068,1.7440187)
\psline[linecolor=black, linewidth=0.02](1.8886871,-0.39500389)(1.8886871,-1.7498426)
\end{pspicture}
}
\end{gathered}
\end{align*}
by the absorption relation, and it is a straightforward calculation that $2\mu_n - \mu_n^2(d-\mu_{n-1}) = \mu_n$.
\end{enumerate}

Finally for uniqueness, let $p_n = \id+m$ and $p_n^\prime = \id + m^\prime$ be endomorphisms on $n$ points satisfying properties \eqref{eq:JWP1} through \eqref{eq:JWP3}.
Then
\[ p_n = \id\,p_n = (\id+m^\prime)p_n = p_n^\prime p_n = p_n^\prime(\id+m) = p_n^\prime\,\id = p_n^\prime. \qedhere \]
\end{proof}

\begin{remark}
The absorption property proved in \eqref{eq:JWP2} holds more generally: if $m<n$ then
\[ (\id \otimes p_m \otimes \id) \circ p_n = p_n = p_n \circ (\id \otimes p_m \otimes \id) \]
whenever the composition is defined.
The proof is essentially the same as that given before.
\end{remark}

\begin{remark}
In property \eqref{eq:JWP1} the requirement that the coefficient of $\id$ is $1$ was included purely as a matter of convenience in simplifying the proof;
it can in fact be shown that this is a necessary consequence of the other properties.
For assume $p_n$ is nonzero and satisfies properties \eqref{eq:JWP2} and \eqref{eq:JWP3}.
Then we may write $p_n = c\cdot\id +m$ for some $c \in\mathbbm{F}$, and
\[ p_n = p_n^2 = p_n(c\cdot\id+m)=c\,p_n+p_n\,m = c\,p_n, \]
so $c=1$.
\end{remark}

\begin{remark}
\label{rem:JWP_lat_refl_dual}
We have seen that $p_n = \overline{p_n}$, in particular the Jones-Wenzl idempotents are invariant under reflection in the horizontal.
In fact they are also invariant under the lateral reflection $p_n \mapsto p_n^r$, where $p_n^r$ is the diagram obtained by reflecting all simple diagrams in $p_n$ in the vertical line $\frac{1}{2} \times I$ through the middle of the square.
This is easily seen as properties \eqref{eq:JWP1} through \eqref{eq:JWP3} are invariant under lateral reflection, and thus satisfied by $p_n^r$, which by uniqueness is equal to $p_n$.
In particular this means that the laterally reflected version of the relation \eqref{eq:JWPrelation} holds.
Furthermore, since the dual $\displaystyle ^\ast$ is the composition of the anti-involution and lateral reflection, we have that $p_n^\ast = p_n$, i.e. the Jones-Wenzl idempotents are also invariant under taking duals.
\end{remark}

In the proof of Theorem \ref{thm:JWP} we saw that the Jones-Wenzl idempotents satisfy the relation
\[ \tr_1(p_n) = \frac{[n+1]}{[n]}p_{n-1}. \]
From this we also have the following fact.

\begin{prop}[Trace of $p_n$]
\label{prop:JWP_trace}
\[ \tr(p_n) = [n+1] \]
\end{prop}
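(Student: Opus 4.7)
The plan is to induct on $n$ using the one-strand trace relation $\tr_1(p_n) = \frac{[n+1]}{[n]} p_{n-1}$ established during the proof of Theorem \ref{thm:JWP}. The key observation is that closing all $n$ strands of $p_n$ around the outside can be performed in two stages: first close one strand (to obtain $\tr_1(p_n) \in \TL_{n-1}$) and then close the remaining $n-1$ strands. In other words, $\tr(p_n) = \tr(\tr_1(p_n))$.

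For the base case, I would take $n=0$: by Remark \ref{rem:identifying-Hom(0,0)-with-F} the empty diagram $p_0$ corresponds to $1 \in \mathbbm{F}$, and tracing it produces no additional loops, so $\tr(p_0) = 1 = [1]$.

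For the inductive step, applying linearity of the trace together with the one-strand trace relation gives
\[
\tr(p_n) = \tr(\tr_1(p_n)) = \tr\!\left(\frac{[n+1]}{[n]} p_{n-1}\right) = \frac{[n+1]}{[n]} \tr(p_{n-1}) = \frac{[n+1]}{[n]} \cdot [n] = [n+1]
\]
using the induction hypothesis $\tr(p_{n-1}) = [n]$.

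There is no real obstacle here since the one-strand trace identity does the heavy lifting; the only thing one must be careful about is that the external closure arcs in the definition of $\tr$ do not cross the $p_n$ box, so it is legitimate to first contract the rightmost arc (yielding $\tr_1$) and then contract the rest. This is a straightforward planar isotopy argument.
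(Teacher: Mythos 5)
Your proposal is correct and follows essentially the same route as the paper: establish the base case $\tr(p_0)=1=[1]$, then use the one-strand trace identity $\tr_1(p_n)=\frac{[n+1]}{[n]}p_{n-1}$ together with $\tr=\tr\circ\tr_1$ and linearity to complete the induction. The only difference is that you spell out explicitly the planar-isotopy justification for $\tr(p_n)=\tr(\tr_1(p_n))$, which the paper leaves implicit.
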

\begin{proof}
This is true for the empty diagram $p_0$ since $\tr(p_0) = d^0 = 1 = [1]$.
Suppose the result holds for $p_{n-1}$ where $n-1\geq 0$.
Then
\begin{align*}
\tr(p_n) &= \tr(\tr_1(p_n)) \\
		 &= \tr\left(\frac{[n+1]}{[n]}p_{n-1}\right) \\
		 &= \frac{[n+1]}{[n]}\tr(p_{n-1}) \\
		 &= \frac{[n+1]}{[n]}[n] \\
		 &= [n+1]. \qedhere
\end{align*}
\end{proof}

\section{Generic Temperley-Lieb-Jones}
We now wish to promote the Jones-Wenzl idempotents to be objects in a new category constructed from $\TL$.
To do this we first need the following notion.

\begin{defn}The \textbf{Karoubi envelope} or \emph{idempotent completion} of a category $\mathcal{C}$ is the category $\Kar\mathcal{C}$ constructed from $\mathcal{C}$ as follows:
\begin{itemize}
\item Objects in $\Kar\mathcal{C}$ are idempotent morphisms $p\colon x \rightarrow x$ in $\Mor(\mathcal{C})$.
\item Let $p\colon x\rightarrow x$ and $q\colon y\rightarrow y$ be objects in $\Kar\mathcal{C}$; a morphism $f\colon p \rightarrow q \in \Mor(\Kar\mathcal{C})$ is a morphism $f \colon x\rightarrow y \in \Mor(\mathcal{C})$, such that $fp = f = qf$ when considered as morphisms in $\mathcal{C}$.
Composition of morphisms $g$ and $f$ in $\Kar\mathcal{C}$ is the same as composition in $\mathcal{C}$, except we now consider $gf$ as a morphism between objects in $\Kar(\mathcal{C})$ instead of the underlying objects in $\mathcal{C}$.
\end{itemize}
\end{defn}

The identity morphism $\mathbbm{1}_p$ on $p\colon x\rightarrow x$ in $\Kar\mathcal{C}$ is then $p$ itself.
The original category $\mathcal{C}$ injects fully and faithfully into $\Kar\mathcal{C}$ via the functor that sends $x$ to the identity morphism $\mathbbm{1}_x$ and $f\colon x\rightarrow y$ to $f\colon\mathbbm{1}_x\rightarrow\mathbbm{1}_y$.

\begin{remark}
The defining property of the Karoubi envelope is that all idempotent morphisms \emph{split} in $\Kar\mathcal{C}$: that is, for every idempotent $f\colon p\rightarrow p$ there is an object $q$ and morphisms $g\colon q\rightarrow p, h\colon p\rightarrow q$ such that $g\circ h = f$ and $h\circ g = \mathbbm{1}_q$.
However the approach we take in our applications to skein modules will not rely on this particular property.
\end{remark}

Let us now take the idempotent completion $\Kar\TL$ of Temperley-Lieb.
This has as objects all idempotent TL diagrams on $n$ points, with morphisms also being TL diagrams $f\colon p\rightarrow q$ that are invariant under pre-composition with their domain and post-composition with their codomain (that is, $fp = f$ and $qf = f$).

Recall that $\TL$ is a strict linear monoidal category with tensor product $\otimes$; this lifts to a tensor product in $\Kar\TL$ as follows.
Observe that the tensor product of idempotents in $\TL$ is also an idempotent, hence an object in $\Kar\TL$.
Furthermore $\otimes$ of diagrams is strictly associative, the empty diagram $p_0$ is the tensor identity, and the coherence conditions hold trivially.
It is also easy to see that if $f,g\colon p\rightarrow q$ are morphisms from $p$ to $q$ in $\Kar\TL$ then so is $cf + g$ for all $c \in \mathbbm{F}$, and $\otimes$ is already bilinear.
Thus $\Kar\TL$ is also a strict $\mathbbm{F}$-linear monoidal category.

The following lemma gives an easy way to determine the hom-sets in the Karoubi envelope of any category.

\begin{lemma}
Let $\mathcal{C}$ be a category.
For all idempotents $p\colon x\rightarrow x$ and $q\colon y \rightarrow y$ in $\Kar\mathcal{C}$ there is a surjection
\[ \phi \colon \Hom_\mathcal{C}(x,y) \rightarrow \Hom_{\Kar\mathcal{C}}(p,q) \]
given by
\[ \phi(f) = qfp. \]
\label{lemma:Kar_hom}
\end{lemma}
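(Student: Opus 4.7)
The plan is to first verify that $\phi$ is well-defined, that is, for any $f \in \Hom_\mathcal{C}(x,y)$, the composite $qfp$ really is a morphism from $p$ to $q$ in $\Kar\mathcal{C}$. This requires checking the two identities $q\cdot(qfp) = qfp$ and $(qfp)\cdot p = qfp$. Both follow immediately from the idempotency of $p$ and $q$: $q\cdot(qfp) = q^2 f p = qfp$ and $(qfp)\cdot p = qf p^2 = qfp$. So $\phi$ is a well-defined map into $\Hom_{\Kar\mathcal{C}}(p,q)$.

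For surjectivity, I would argue essentially tautologically. Suppose $g \in \Hom_{\Kar\mathcal{C}}(p,q)$. Then by definition $g$ is some morphism $x\rightarrow y$ in $\mathcal{C}$ satisfying $qg = g$ and $gp = g$. Viewing $g$ itself as an element of $\Hom_\mathcal{C}(x,y)$, we compute
\[ \phi(g) = qgp = (qg)p = gp = g, \]
so $g$ is in the image of $\phi$.

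I expect no real obstacle here; the lemma is essentially a bookkeeping statement about the definition of $\Kar\mathcal{C}$. The only subtlety worth flagging in the writeup is the conceptual point that $\phi$ is almost never injective --- different $f$'s can give the same $qfp$, which is why the statement is only a surjection and not a bijection. This also shows that $\Hom_{\Kar\mathcal{C}}(p,q)$ can be concretely identified with the quotient of $\Hom_\mathcal{C}(x,y)$ by the relation $f \sim f'$ iff $qfp = qf'p$, a fact we will likely want to invoke later when describing hom-spaces in $\Kar\TL$.
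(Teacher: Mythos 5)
Your proof is correct and takes essentially the same route as the paper: verify well-definedness via idempotency of $p$ and $q$, then observe that any $g\in\Hom_{\Kar\mathcal{C}}(p,q)$ already satisfies $g = qgp$, so it is its own preimage. The closing remark about $\phi$ failing to be injective and the quotient description of $\Hom_{\Kar\mathcal{C}}(p,q)$ is accurate but goes slightly beyond what the lemma claims.
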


\begin{proof}
If $f \in \Hom_{\mathcal{C}}(x,y)$ then $\phi(f) = qfp \in \Hom_{\mathcal{C}}(x,y)$.
Furthermore $\phi(f)$ satisfies
\[ \phi(f)p = qfpp = qfp = \phi(f) \]
and
\[ q\phi(f) = qqfp = qfp = \phi(f). \]
Hence $\phi(f)$ is a morphism from $p$ to $q$ in $\Kar\mathcal{C}$ and $\phi$ defines a map from $\Hom_{\mathcal{C}}(x,y)$ to $\Hom_{\Kar\mathcal{C}}(p,q)$.

For surjectivity observe that if $f\in\Hom_{\Kar\mathcal{C}}(p,q)$ then $f$ is a morphism in $\Hom_{\mathcal{C}}(x,y)$ satisfying $fp = f = qf$, and thus $f = qfp$.
\end{proof}

Next we prove an important property of the Jones-Wenzl idempotents.
We need the following definitions.

\begin{defn}
An object $x$ in a $\mathbbm{F}$-linear category $\mathcal{C}$ is called \textbf{simple} if $\Hom(x,x) = \spn_\mathbbm{F}\{\mathbbm{1}_x\}$.
Let $S \subset \Obj(\mathcal{C})$ be a collection of simple objects, following \cite{Muger2001} we further say that $S$ is a collection of \textbf{disjoint simple objects} if $\Hom(x,y) = \{0\}$ for all $x\neq y$ in $S$.
\end{defn}

\begin{defn}
An integer triple $(a,b,c)$ is called \textbf{admissible} if $a+b+c$ is even and $a+b\geq c$, $b+c\geq a$, and $a+c\geq b$.
\end{defn}

\begin{theorem}
\label{thm:JWP_hom_dim}
Let $p_a, p_b, p_c$ be Jones-Wenzl idempotents in $\Kar\TL$.
The hom-space $\Hom(p_a\otimes p_b, p_c)$ is $1$-dimensional if and only if $(a,b,c)$ is admissible, and zero otherwise.
\end{theorem}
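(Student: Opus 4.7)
The plan is to apply Lemma~\ref{lemma:Kar_hom}, reducing to an analysis of the surjection $\phi \colon \Hom_\TL(a+b, c) \to \Hom_{\Kar\TL}(p_a \otimes p_b, p_c)$, $f \mapsto p_c \circ f \circ (p_a \otimes p_b)$. Since loop-free simple diagrams span $\Hom_\TL(a+b, c)$ by Remark~\ref{rem:loopless-diagrams}, it is enough to understand $\phi(f)$ on these.

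First I would establish the upper bound, which simultaneously handles the non-admissible case. For a simple $f \colon a+b \to c$, any cup on its top edge sits at the bottom of $p_c$, so by property~\eqref{eq:JWP2} the composition contains a factor $p_c \cup_i = 0$; and any cap on its bottom edge lying entirely within the first $a$ points (respectively the last $b$ points) contributes a factor $(\cap_i p_a) \otimes p_b = 0$ (respectively $p_a \otimes (\cap_i p_b) = 0$). Hence $\phi(f) = 0$ unless $f$ has no top cups---equivalently, exactly $c$ through-strings---and every bottom cap straddles between the first $a$ and the last $b$ points. Writing $s := (a+b-c)/2$ for the number of such straddling caps, existence demands that $s$ be a non-negative integer with $s \leq \min(a,b)$, which is precisely the admissibility condition on $(a,b,c)$. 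Thus $\Hom(p_a \otimes p_b, p_c) = 0$ whenever $(a,b,c)$ is not admissible.

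In the admissible case, planarity pins down a \emph{unique} such diagram $f_0$: the leftmost $a-s$ bottom points go straight up to the leftmost $a-s$ top points, the rightmost $b-s$ bottom points to the rightmost $b-s$ top points, and the remaining $2s$ middle bottom points are paired by $s$ nested straddling caps. Consequently $\dim \Hom(p_a \otimes p_b, p_c) \leq 1$.

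The main obstacle is showing $\phi(f_0) \neq 0$. Using property~\eqref{eq:JWP1}, I would write $p_c = \id_c + m'$ and $p_a \otimes p_b = \id_{a+b} + m''$, where $m', m''$ are $\mathbbm{F}$-linear combinations of non-identity simple diagrams, and expand
\[ \phi(f_0) \;=\; f_0 \;+\; m' f_0 \;+\; f_0 m'' \;+\; m' f_0 m''. \]
The claim is that after simplification to the loop-free basis, the coefficient of the simple diagram $f_0$ on the right is exactly $1$. Every summand of $m'$ is a non-identity endomorphism on $c$ points, so has at most $c-2$ through-strings and hence at least one top cup, which survives unchanged at the top of any composition containing it; but $f_0$ has no top cups, so $m' f_0$ and $m' f_0 m''$ contribute nothing to $f_0$. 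Similarly, since $m'' = m_a \otimes \id_b + \id_a \otimes m_b + m_a \otimes m_b$, every summand of $m''$ has at least one bottom cap lying entirely within the first $a$ or the last $b$ points, and this cap survives at the bottom of $f_0 m''$ and $m' f_0 m''$; but every bottom cap of $f_0$ straddles, so these terms do not contribute to $f_0$ either. Hence only the $\id_c \cdot f_0 \cdot \id_{a+b}$ term contributes, the coefficient is $1$, and $\phi(f_0) \neq 0$, giving $\dim \Hom(p_a \otimes p_b, p_c) = 1$.
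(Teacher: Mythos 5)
Your proof is correct and takes essentially the same route as the paper: reduce via Lemma~\ref{lemma:Kar_hom} to analysing $p_c \circ f \circ (p_a\otimes p_b)$ for simple $f$, use property~\eqref{eq:JWP2} of Theorem~\ref{thm:JWP} to kill the composite unless $f$ is the unique diagram with $c$ through-strings and only straddling caps (which exists iff $(a,b,c)$ is admissible), then establish non-vanishing by showing the coefficient of that $f$ in the expansion is $1$ via property~\eqref{eq:JWP1}. Your ``top cup / non-straddling bottom cap'' dichotomy is the same observation the paper packages by partitioning the boundary points into three groups and noting every arc of $f$ joins distinct groups while every $m_a,m_b,m_c$ summand has an arc within a single group.
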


\begin{proof}
Let us first consider the diagrams $g = p_c\circ f \circ (p_a\otimes p_b)$ where $f$ is a simple diagram from $a+b$ to $c$ points (see Fig. \ref{fig:Kar_mor}).
By Remark \ref{rem:loopless-diagrams} we may assume $f$ has no closed loops.
We will prove that if $(a,b,c)$ is not admissible the composite $g$ is zero for all simple diagrams $f$, while for admissible triples it is zero for all but one $f$.
The result will then follow from Lemma \ref{lemma:Kar_hom} and bilinearity of composition.

\begin{figure}
\[
g \ \ = \ \ \begin{gathered}
\psscalebox{1.0 1.0} 
{
\begin{pspicture}(0,-2.092213)(2.9044442,2.092213)
\psline[linecolor=black, linewidth=0.02](1.9740884,2.087108)(1.9740884,1.7735944)
\psline[linecolor=black, linewidth=0.02](0.9524668,2.092213)(0.9524668,1.7570778)
\psline[linecolor=black, linewidth=0.02](2.7470696,-1.7951238)(2.7470696,-2.0877686)
\psline[linecolor=black, linewidth=0.02](1.7080712,-1.8060784)(1.7080712,-2.080069)
\psline[linecolor=black, linewidth=0.02](2.7426252,-0.96575564)(2.7426252,-1.2913415)
\psline[linecolor=black, linewidth=0.02](1.7036269,-0.96607846)(1.7036269,-1.2978468)
\psline[linecolor=black, linewidth=0.02](1.974644,1.26933)(1.974644,0.9358165)
\psline[linecolor=black, linewidth=0.02](0.95302236,1.2655463)(0.95302236,0.93041116)
\psframe[linecolor=black, linewidth=0.02, dimen=outer](2.0912466,1.7790859)(0.8131978,1.2620127)
\psframe[linecolor=black, linewidth=0.02, dimen=outer](2.8564265,-1.2884343)(1.5783777,-1.8055074)
\psframe[linecolor=black, linewidth=0.02, linestyle=dashed, dash=0.17638889cm 0.10583334cm, dimen=outer](2.9044445,0.9486544)(0.0,-0.9713456)
\rput[bl](1.2573835,1.905018){$\cdots$}
\rput[bl](1.2755654,1.0386544){$\cdots$}
\rput[bl](2.0225928,-1.196627){$\cdots$}
\rput[bl](2.0473258,-2.0391955){$\cdots$}
\rput[bl](1.2922221,1.3886544){$p_c$}
\rput[bl](1.3572221,-0.19316378){$f$}
\rput[bl](2.060861,-1.6731349){$p_b$}
\psline[linecolor=black, linewidth=0.02](1.2181808,-1.7995683)(1.2181808,-2.092213)
\psline[linecolor=black, linewidth=0.02](0.1791824,-1.8105229)(0.1791824,-2.0845134)
\psline[linecolor=black, linewidth=0.02](1.2137364,-0.97020006)(1.2137364,-1.295786)
\psline[linecolor=black, linewidth=0.02](0.17473795,-0.9705229)(0.17473795,-1.3022913)
\psframe[linecolor=black, linewidth=0.02, dimen=outer](1.3275377,-1.2928787)(0.04948889,-1.8099519)
\rput[bl](0.48481497,-1.1921825){$\cdots$}
\rput[bl](0.509548,-2.0347512){$\cdots$}
\rput[bl](0.5319723,-1.6775794){$p_a$}
\end{pspicture}
}
\end{gathered}
\]
\caption{A morphism $g\colon p_a\otimes p_b \rightarrow p_c$ in $\Kar\TL$.}\label{fig:Kar_mor}
\end{figure}

If $a+b+c$ is odd then there are no such $f$ and hence no morphisms from $p_a\otimes p_b$ to $p_c$ apart from the zero morphism.
So $\Hom(p_a\otimes p_b,p_c) = \{0\}$ in this case.

Let $a+b+c$ be even.
We have the following cases.

Case 1: $a+b<c$.
In this case any $f\colon a+b\rightarrow c$ must have at most $a+b$ through-strings and hence has a cup on its top edge.
Then $g =(p_c\circ f) \circ (p_a\otimes p_b) = 0 \circ (p_a\otimes p_b)$ is always zero and $\Hom(p_a \otimes p_b, p_c) = \{0\}$.

Case 2: $a+b \geq c$.
Now $f$ must have at most $c$ through-strings, but if it has any fewer then it contains a cup on its top edge and $g = 0$ as before.
Thus we only need consider $f$ with exactly $c$ through-strings; in this case the bottom edge of $f$ has $a+b-c$ points connected by caps.
However, if $f$ caps off either of $p_a$ or $p_b$ then again $g=0$, so to find nonzero morphisms $g$ we may restrict our attention further to those simple diagrams whose caps connect a strand of $p_a$ to a strand of $p_b$.
There is only one possibility for such a diagram $f$: it consists of $k = \frac{a+b-c}{2}$ successively nested caps connecting the $k$ rightmost strands of $p_a$ with the $k$ leftmost strands of $p_b$, and through-strings connecting the remaining $2c$ points (Fig. \ref{fig:nonzero-yielding_f}).

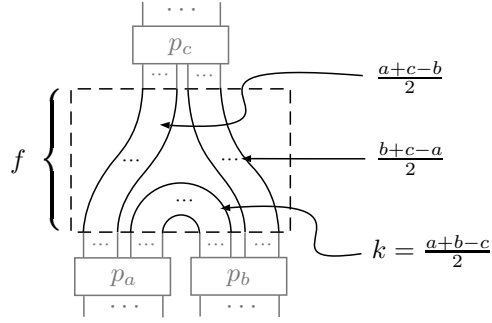
\begin{figure}[htb]
\begin{center}
\psscalebox{1.0 1.0} 
{
\begin{pspicture}(0,-2.0922132)(5.6452446,2.0922132)
\definecolor{colour1}{rgb}{0.46666667,0.46666667,0.46666667}
\psline[linecolor=colour1, linewidth=0.02](2.7622545,2.087108)(2.7622545,1.7735944)
\psline[linecolor=colour1, linewidth=0.02](1.7406329,2.0922132)(1.7406329,1.7742207)
\psline[linecolor=colour1, linewidth=0.02](3.535236,-1.7951237)(3.535236,-2.0877686)
\psline[linecolor=colour1, linewidth=0.02](2.4962373,-1.8060783)(2.4962373,-2.080069)
\psline[linecolor=colour1, linewidth=0.02](3.5307913,-0.9657556)(3.5307913,-1.2913415)
\psline[linecolor=colour1, linewidth=0.02](2.491793,-0.96607834)(2.491793,-1.2978468)
\psline[linecolor=colour1, linewidth=0.02](2.76281,1.2693301)(2.76281,0.93581665)
\psline[linecolor=colour1, linewidth=0.02](1.7411884,1.2655463)(1.7411884,0.9304112)
\psframe[linecolor=colour1, linewidth=0.02, dimen=outer](2.8794127,1.779086)(1.6013638,1.2620128)
\psframe[linecolor=colour1, linewidth=0.02, dimen=outer](3.6445925,-1.2884341)(2.3665438,-1.8055073)
\psframe[linecolor=black, linewidth=0.02, linestyle=dashed, dash=0.17638889cm 0.10583334cm, dimen=outer](3.6926105,0.9486545)(0.78816605,-0.97134554)
\rput[bl](2.0455496,1.9050181){\textcolor{colour1}{$\cdots$}}
\rput[bl](2.835492,-2.0391955){\textcolor{colour1}{$\cdots$}}
\rput[bl](2.0803883,1.3886545){\textcolor{colour1}{$p_c$}}
\rput[bl](0.0,-0.17505312){$f$}
\rput[bl](2.8490272,-1.6731348){\textcolor{colour1}{$p_b$}}
\psline[linecolor=colour1, linewidth=0.02](2.006347,-1.7995682)(2.006347,-2.092213)
\psline[linecolor=colour1, linewidth=0.02](0.96734846,-1.8105228)(0.96734846,-2.0845134)
\psline[linecolor=colour1, linewidth=0.02](2.0019023,-0.9702)(2.0019023,-1.2957859)
\psline[linecolor=colour1, linewidth=0.02](0.962904,-0.9705228)(0.962904,-1.3022912)
\psframe[linecolor=colour1, linewidth=0.02, dimen=outer](2.1157038,-1.2928786)(0.83765495,-1.8099518)
\rput[bl](1.2977141,-2.034751){\textcolor{colour1}{$\cdots$}}
\rput[bl](1.3201383,-1.6775793){\textcolor{colour1}{$p_a$}}
\psarc[linecolor=black, linewidth=0.02, dimen=outer](2.2446406,-0.976194){0.24242425}{0.0}{180.0}
\psarc[linecolor=black, linewidth=0.02, dimen=outer](2.2426653,-0.9692804){0.6597082}{0.0}{180.0}
\psbezier[linecolor=black, linewidth=0.02](1.7404878,0.9363537)(1.7182246,0.4052874)(1.5090024,0.16808036)(1.3684949,-0.0024117369)(1.2279874,-0.17290384)(0.9832228,-0.53215486)(0.96024096,-0.9599426)
\psbezier[linecolor=black, linewidth=0.02](1.4108529,-0.96067125)(1.4170504,-0.52274495)(1.730664,-0.22030145)(1.807487,-0.11639207)(1.8843099,-0.0124826925)(2.190435,0.3177961)(2.1911886,0.943724)
\psbezier[linecolor=black, linewidth=0.02](2.7648828,0.9515636)(2.8015378,0.31432924)(3.083288,0.012312092)(3.1817718,-0.09465865)(3.2802556,-0.20162939)(3.5068116,-0.45356354)(3.531105,-0.95688087)
\psbezier[linecolor=black, linewidth=0.02](2.3364627,0.9308269)(2.3702757,0.24885371)(2.6224895,-0.0047052153)(2.7094824,-0.13102704)(2.7964756,-0.25734887)(3.087661,-0.5426759)(3.0871477,-0.9625065)
\rput[bl](2.164121,-0.6061507){$\scriptstyle \cdots$}
\rput[bl](2.7602112,-0.061990313){$\scriptstyle \cdots$}
\rput[bl](1.4633691,-0.076025404){$\scriptstyle \cdots$}
\rput{90.0}(-0.2655724,-1.6068038){\rput[bl](0.67061573,-0.9361881){$\overbrace{\qquad\qquad\quad\ }$}}
\psbezier[linecolor=black, linewidth=0.02, arrowsize=0.05291666666666667cm 2.0,arrowlength=1.4,arrowinset=0.0]{<-}(2.7617035,-0.64720565)(5.0897036,-0.47120565)(3.1377034,-1.4632057)(4.6177034,-1.2472056)
\rput[bl](4.756122,-1.4256243){$k=\frac{a+b-c}{2}$}
\psbezier[linecolor=black, linewidth=0.02, arrowsize=0.05291666666666667cm 2.0,arrowlength=1.4,arrowinset=0.0]{<-}(1.9654243,0.46274784)(4.443249,0.31391063)(1.4707633,1.3743757)(4.635192,1.1139107)
\psbezier[linecolor=black, linewidth=0.02, arrowsize=0.05291666666666667cm 2.0,arrowlength=1.4,arrowinset=0.0]{<-}(3.0356874,0.0126832295)(3.8285322,0.014276368)(4.0905166,0.007903818)(4.6687403,0.011090092)
\rput[bl](4.7947187,0.8796389){$\frac{a+c-b}{2}$}
\rput[bl](4.805245,-0.19755408){$\frac{b+c-a}{2}$}
\psline[linecolor=colour1, linewidth=0.02](2.3350368,0.9465721)(2.3350368,1.2665721)
\psline[linecolor=colour1, linewidth=0.02](2.18837,0.9465721)(2.18837,1.2665721)
\psline[linecolor=colour1, linewidth=0.02](3.0863469,-0.9702)(3.0863469,-1.2957859)
\psline[linecolor=colour1, linewidth=0.02](2.8996801,-0.9702)(2.8996801,-1.2957859)
\psline[linecolor=colour1, linewidth=0.02](1.5841247,-0.9702)(1.5841247,-1.2957859)
\psline[linecolor=colour1, linewidth=0.02](1.4107913,-0.9657556)(1.4107913,-1.2913415)
\rput[bl](2.5774543,-1.179484){\textcolor{colour1}{$\scriptstyle \cdots$}}
\rput[bl](3.1952322,-1.179484){\textcolor{colour1}{$\scriptstyle \cdots$}}
\rput[bl](1.6618987,-1.188373){\textcolor{colour1}{$\scriptstyle \cdots$}}
\rput[bl](1.0618988,-1.188373){\textcolor{colour1}{$\scriptstyle \cdots$}}
\rput[bl](1.8396766,1.0649604){\textcolor{colour1}{$\scriptstyle \cdots$}}
\rput[bl](2.4263432,1.060516){\textcolor{colour1}{$\scriptstyle \cdots$}}
\end{pspicture}
}
\end{center}
\caption{Nonzero morphism $g\colon p_a\otimes p_b \rightarrow p_c$.}\label{fig:nonzero-yielding_f}
\end{figure}

Now $f$ exists precisely when
\begin{align*}
& a \geq \frac{a+b-c}{2} \quad \text{and} \quad b \geq \frac{a+b-c}{2} \\
\iff \ & a+c \geq b \quad \text{and} \quad b+c \geq a,
\end{align*}
and in this case writing $p_a = \id_a + m_a, p_b = \id_b + m_b, p_c = \id_c+m_c$ as in property \eqref{eq:JWP1} of Theorem \ref{thm:JWP} and expanding, we have that
\begin{align}
g &= p_c \circ f \circ (p_a \otimes p_b) \notag \\
  &= f + f \circ (m_a \otimes \id_b) + f \circ (\id_a \otimes m_b) + f \circ (m_a\otimes m_b) + m_c \circ f \circ (p_a \otimes p_b).
\label{eq:g}
\end{align}

We claim that the coefficient of $f$ in the above expression is exactly $1$, so in particular $g$ is nonzero.
To see this, let us partition the points on the top and bottom edges of $g$ into three groups: the $a$ leftmost points on the bottom edge, the remaining rightmost $b$ points on the bottom edge, and the $c$ points on the top edge.
Observe that every arc in $f$ connects points from distinct groups $a, b, c$.
On the other hand each of the other terms in \eqref{eq:g} contains a $m_a$, $m_b$ or $m_c$ term, each of which is a sum of non-identity diagrams on $a$, $b$ or $c$ points which contain cups and caps.
Hence these terms all have arcs which connect two points in some group $a,b,c$, and so do not contain $f$ as a summand, thus proving our claim.
\end{proof}

\begin{corollary}
\label{cor:JWP-disjointsimpleobjs}
The Jones-Wenzl idempotents are simple, and form a collection of disjoint simple objects in $\Kar\TL$.
\end{corollary}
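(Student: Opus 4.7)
The plan is to obtain both assertions as immediate consequences of Theorem \ref{thm:JWP_hom_dim}, by exploiting the fact that the empty diagram $p_0$ acts as the tensor identity in $\Kar\TL$, so that $p_n \otimes p_0 = p_n$ and $\Hom(p_m, p_n) = \Hom(p_m \otimes p_0, p_n)$ for all $m, n$.

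First I would address simplicity. Applying Theorem \ref{thm:JWP_hom_dim} to the triple $(n, 0, n)$, which is manifestly admissible (sum is $2n$, and $n + 0 \geq n$, $0 + n \geq n$, $n + n \geq 0$), gives that $\Hom(p_n, p_n)$ is one-dimensional over $\mathbbm{F}$. Since the identity morphism on $p_n$ in $\Kar\TL$ is $p_n$ itself, which is a nonzero element of this hom-space, it must span, so $\Hom(p_n, p_n) = \spn_\mathbbm{F}\{\mathbbm{1}_{p_n}\}$ and $p_n$ is simple.

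Next I would treat disjointness. For $m \neq n$, apply Theorem \ref{thm:JWP_hom_dim} to the triple $(m, 0, n)$. Admissibility would require $m + 0 \geq n$ and $0 + n \geq m$, that is, $m = n$. Since this fails, the triple is inadmissible and $\Hom(p_m, p_n) = \Hom(p_m \otimes p_0, p_n) = \{0\}$, which is precisely the disjointness condition.

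There is no real obstacle here: the whole statement falls out of Theorem \ref{thm:JWP_hom_dim} once one observes that $p_0$ is a two-sided tensor identity and that $\mathbbm{1}_{p_n} = p_n$ in $\Kar\TL$. The only thing worth being careful about is ensuring that the element $p_n \in \Hom_{\Kar\TL}(p_n, p_n)$ really is nonzero, which follows from property \eqref{eq:JWP1} in Theorem \ref{thm:JWP}, so that the one-dimensional hom-space is actually spanned by the identity rather than being trivial.
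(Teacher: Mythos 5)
Your proof is correct and follows essentially the same route as the paper's: both reduce the claim to Theorem \ref{thm:JWP_hom_dim} by inserting the tensor identity $p_0$ (the paper writes $\Hom(p_a,p_a)=\Hom(p_0\otimes p_a,p_a)$ with the admissible triple $(0,a,a)$, while you use $\Hom(p_m\otimes p_0,p_n)$ and the triple $(m,0,n)$ — a cosmetic difference). Your explicit note that $\mathbbm{1}_{p_n}=p_n$ is nonzero by property \eqref{eq:JWP1} is a worthwhile bit of care that the paper leaves implicit when it identifies the spanning morphism as $g=p_a\circ\id_a\circ p_a=p_a$.
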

\begin{proof}
$(0,a,a)$ is admissible hence $\Hom(p_a, p_a) = \Hom(p_0 \otimes p_a , p_a)$ is one-dimensional, spanned by $g = p_a \circ \id_a \circ p_a = p_a = \mathbbm{1}_{p_a}$.
On the other hand if $b \neq c$ then $(0,b,c)$ is not admissible, hence $\Hom(p_b, p_c) = \Hom(p_0 \otimes p_b , p_c) = \{0\}$.
\end{proof}

Finally, let us extend our category one last time.

\begin{defn}
Let $\mathcal{C}$ be a $\mathrm{Ab}$-enriched category, i.e. one whose hom-sets are additive abelian groups, and where composition distributes over addition.
The \textbf{additive completion} or \emph{matrix category} $\Mat\mathcal{C}$ of $\mathcal{C}$ is the category formed by taking all formal finite direct sums $\bigoplus_i x_i$ of objects $x_i$ in $\mathcal{C}$.
A morphism $f\colon \bigoplus_{i=1}^n x_i \rightarrow \bigoplus_{j=1}^m y_j$ is a $m\times n$ matrix with columns indexed by $x_i$ and rows by $y_j$, where the $(j,i)$-th entry is a morphism $f_{j,i}\colon x_i \rightarrow y_j$.
Composition of morphisms is given by matrix multiplication, and the identity morphism on $\bigoplus_i x_i$ is the matrix direct sum
\begin{equation*}
\mathbbm{1}_{\bigoplus_i x_i} = \bigoplus_i \begin{bmatrix} \mathbbm{1}_{x_i} \end{bmatrix}.
\end{equation*}
\end{defn}

Recall, or refer to Chapter VIII of \cite{MacLane1998}, that an additive category is a $\mathrm{Ab}$-enriched category which further has a zero object and finite biproducts (equivalently, products or coproducts) for all objects.
The additive completion of a category $\mathcal{C}$ is then an additive category containing $\mathcal{C}$ as a subcategory, where the biproduct of objects in $\Mat\mathcal{C}$ is given by their direct sum.

If $\mathcal{C}$ is a $\mathbbm{F}$-linear category then so is $\Mat\mathcal{C}$: its hom-spaces $\Hom(\bigoplus_i x_i, \bigoplus_j y_j)$ are $\mathbbm{F}$-vector spaces isomorphic to the direct sum $\bigoplus_{i,j}\Hom(x_i,y_j)$.
If in addition $\mathcal{C}$ is linear monoidal, we can further lift this structure to $\Mat\mathcal{C}$ by defining $\otimes$ in $\Mat\mathcal{C}$ to be the tensor product in $\mathcal{C}$ on single (unary direct sums of) objects and extending bilinearly over $\oplus$, while we define $\otimes$ of morphisms to be the usual Kronecker product of matrices with composition of the entries in place of multiplication.

We are now ready to introduce the main ingredient in the construction of our skein modules.

\begin{defn}
Take the full subcategory of $\Mat(\Kar\TL)$ having as objects the closure of the set of Jones-Wenzl idempotents under the direct sum and tensor product.
This is a $\mathbbm{F}$-linear monoidal category which we call the \textbf{generic Temperley-Lieb-Jones category} $\TLJ$.
\end{defn}
\todo{For the reasons of 1. avoiding another possibly lengthy proof of semisimplicity of the whole $\Mat\Kar\TL$, and 2. the fact that we seem to only need direct sums of tensor products of $p_n$, I have reverted back to my original definition of $\TLJ$.}

Observe that $\Kar\TL$ embeds fully and faithfully into $\TLJ$ (with objects and morphisms corresponding to unary direct sums and $1\times 1$ matrices in the obvious way), and that the Jones-Wenzl idempotents still form a collection of disjoint simple objects in $\TLJ$.

We wish to show that $\TLJ$ is \emph{semisimple}; that is, every object in $\TLJ$ is isomorphic to a direct sum of simple objects, namely the Jones-Wenzl idempotents.
In order to do so we need to develop a bit more theory.

\chapter{Temperley-Lieb skein theory}
\section{Trivalent graphs and TL diagrams}
In the previous section we saw that $\Hom(p_a\otimes p_b,p_c)$ is a one-dimensional $\mathbbm{F}$-vector space precisely when $(a,b,c)$ is admissible, in which case $\Hom(p_a\otimes p_b,p_c)$ is spanned by the formal diagram
\begin{equation*}
g_{a,b,c} \ \ = \ \
\begin{gathered}
\psscalebox{0.8} 
{
\begin{pspicture}(0,-1.7630463)(2.8069377,1.7630463)
\definecolor{colour1}{rgb}{0.003921569,0.003921569,0.003921569}
\psline[linecolor=colour1, linewidth=0.02](1.9329329,1.7579412)(1.9329329,1.4444277)
\psline[linecolor=colour1, linewidth=0.02](0.91131127,1.7630464)(0.91131127,1.445054)
\psline[linecolor=colour1, linewidth=0.02](2.6975808,-1.4659572)(2.6975808,-1.7586019)
\psline[linecolor=colour1, linewidth=0.02](1.6585824,-1.4769118)(1.6585824,-1.7509023)
\psframe[linecolor=colour1, linewidth=0.02, dimen=outer](2.050091,1.4499193)(0.7720423,0.9328461)
\psframe[linecolor=colour1, linewidth=0.02, dimen=outer](2.8069377,-0.95926756)(1.528889,-1.4763408)
\rput[bl](1.2510667,1.0594878){\textcolor{colour1}{$p_c$}}
\rput[bl](2.0113723,-1.3439683){\textcolor{colour1}{$p_b$}}
\psline[linecolor=colour1, linewidth=0.02](1.168692,-1.4704015)(1.168692,-1.7630464)
\psline[linecolor=colour1, linewidth=0.02](0.12969357,-1.4813561)(0.12969357,-1.7553468)
\psframe[linecolor=colour1, linewidth=0.02, dimen=outer](1.2780489,-0.963712)(0.0,-1.4807851)
\rput[bl](0.48248345,-1.3484126){\textcolor{colour1}{$p_a$}}
\psarc[linecolor=black, linewidth=0.02, dimen=outer](1.4069856,-0.9803607){0.24242425}{0.0}{180.0}
\psarc[linecolor=black, linewidth=0.02, dimen=outer](1.4050103,-0.97344714){0.6597082}{0.0}{180.0}
\psbezier[linecolor=black, linewidth=0.02](0.902833,0.93218696)(0.8805697,0.40112072)(0.67134756,0.16391365)(0.53084004,-0.0065784496)(0.3903325,-0.17707056)(0.14556788,-0.5363216)(0.12258605,-0.9641093)
\psbezier[linecolor=black, linewidth=0.02](0.573198,-0.96483797)(0.5793955,-0.5269117)(0.8930092,-0.22446816)(0.96983206,-0.12055878)(1.0466549,-0.016649405)(1.35278,0.31362942)(1.3535337,0.93955725)
\psbezier[linecolor=black, linewidth=0.02](1.927228,0.9473969)(1.9638828,0.3101625)(2.2456331,0.008145379)(2.344117,-0.098825365)(2.4426007,-0.20579611)(2.6691568,-0.45773026)(2.6934502,-0.9610476)
\psbezier[linecolor=black, linewidth=0.02](1.4988079,0.9266602)(1.5326208,0.244687)(1.7848345,-0.008871928)(1.8718276,-0.13519377)(1.9588207,-0.2615156)(2.2500062,-0.54684263)(2.249493,-0.96667314)
\rput[bl](0.31904852,0.10296096){$i$}
\rput[bl](2.4523818,0.027960962){$j$}
\rput[bl](1.3440485,-0.25537238){$k$}
\rput[bl](1.2107152,1.5446277){$\cdots$}
\rput[bl](0.42738187,-1.6803724){$\cdots$}
\rput[bl](1.9940486,-1.672039){$\cdots$}
\rput[bl](0.6523819,-0.07203904){$\scriptstyle\cdots$}
\rput[bl](1.3023819,-0.5803724){$\scriptstyle\cdots$}
\rput[bl](1.9273819,-0.08037237){$\scriptstyle\cdots$}
\end{pspicture}
}
\end{gathered}
\ \ = \ \
\begin{gathered}
\psscalebox{0.8} 
{
\begin{pspicture}(0,-1.6027273)(3.5945454,1.6027273)
\definecolor{colour0}{rgb}{0.003921569,0.003921569,0.003921569}
\psline[linecolor=black, linewidth=0.02](1.7917753,0.5658509)(1.7917753,1.3465613)
\psframe[linecolor=colour0, linewidth=0.02, dimen=outer](2.2844281,0.57011575)(1.3098263,0.2823323)
\psline[linecolor=black, linewidth=0.02](0.98861605,-0.9593419)(0.25413743,-1.4093337)
\rput{120.0}(0.884416,-2.2798777){\psframe[linecolor=colour0, linewidth=0.02, dimen=outer](1.5876529,-0.7407383)(0.6130511,-1.0285217)}
\psline[linecolor=black, linewidth=0.02](2.6056383,-0.9495448)(3.3525999,-1.4000793)
\rput{-120.0}(4.4989376,0.8282032){\psframe[linecolor=colour0, linewidth=0.02, dimen=outer](2.9758513,-0.7407381)(2.0012496,-1.0285215)}
\psbezier[linecolor=black, linewidth=0.02](1.6057084,0.28550643)(1.6057084,-0.30868474)(1.5100133,-0.3971813)(1.1092901,-0.63738626)
\psbezier[linecolor=black, linewidth=0.02](1.9944695,0.28550643)(1.9944695,-0.3592542)(2.1439931,-0.4098237)(2.472945,-0.6247439)
\psbezier[linecolor=black, linewidth=0.02](1.3006803,-0.9913725)(1.7073843,-0.7574887)(1.8808317,-0.7574887)(2.2755737,-0.98505133)
\rput[bl](2.2231793,-0.34031078){$j$}
\rput[bl](1.7483072,-1.1739248){$k$}
\rput[bl](1.288032,-0.27906227){$i$}
\rput[bl](1.7363636,1.4427273){$c$}
\rput[bl](0.0,-1.5754546){$a$}
\rput[bl](3.4545455,-1.6027273){$b$}
\end{pspicture}
}
\end{gathered}
\end{equation*}
where $i = \frac{a+c-b}{2}$, $j=\frac{b+c-a}{2}$, $k=\frac{a+b-c}{2}$, and in the right hand simplified diagram:
\begin{itemize}
\item an edge marked by $n$ represents a collection of $n$ parallel strands, and
\item a rectangle
$\begin{gathered}
\psscalebox{1.0 1.0} 
{
\begin{pspicture}(0,-0.395102)(0.48,0.395102)
\definecolor{colour0}{rgb}{0.99607843,0.99607843,0.99607843}
\psline[linecolor=black, linewidth=0.02](0.24571429,0.39510202)(0.24571429,-0.39510205)
\psframe[linecolor=black, linewidth=0.02, fillstyle=solid,fillcolor=colour0, dimen=outer](0.48,0.04938774)(0.0,-0.09061226)
\rput[bl](0.31396824,-0.3320635){$\scriptstyle n$}
\end{pspicture}
}
\end{gathered}$
represents a Jones-Wenzl idempotent $p_n$ on $n$ strands.
\end{itemize}
We also introduce the use of edge-labelled uni-trivalent graphs to represent a certain class of TL diagrams built by connecting Jones-Wenzl idempotents.

\begin{defn}[Trivalent representations of TL diagrams]
A \emph{uni-trivalent} graph is one whose vertices have degree $1$ or $3$.
Let $\Gamma$ be a planar uni-trivalent graph with edges labelled by natural numbers, such that at every trivalent vertex $v$ the labels $(a,b,c)$ of the edges incident to $v$ form an admissible triple.
We implicitly assume that $\Gamma$ is drawn in the unit square $I\times I$, and require that its univalent vertices are either on the top or bottom edge.
Then $\Gamma$ represents a formal TL diagram in the following manner: an edge labelled by $n$ indicates the presence of a Jones-Wenzl idempotent $p_n$, and a trivalent vertex with incident edges $a,b,c$ represents the basis element $g_{a,b,c}$ of $\Hom(p_a\otimes p_b,p_c)$.
\end{defn}

Thus for instance
\begin{equation*}
g_{a,b,c} \ \ = \ \ 
\begin{gathered}
\psscalebox{1.0 1.0} 
{
\begin{pspicture}(0,-0.5875841)(1.4244444,0.5875841)
\psline[linecolor=black, linewidth=0.02](0.69777775,0.5875841)(0.69777775,-0.19146354)
\psline[linecolor=black, linewidth=0.02](0.69821805,-0.18495561)(1.3728931,-0.5744794)
\psline[linecolor=black, linewidth=0.02](0.701782,-0.18940005)(0.02710693,-0.5789239)
\rput[bl](0.82222223,0.39647296){$c$}
\rput[bl](1.2844445,-0.37241593){$b$}
\rput[bl](0.0,-0.37686038){$a$}
\end{pspicture}
}
\end{gathered}
\end{equation*}
is the representation of $g_{a,b,c}$ as a trivalent graph.
We will use uni-trivalent graphs and formal diagrams interchangeably, as required.

\section{Theta nets}
\begin{defn}
The TL diagram represented by the graph
\begin{equation*}
\theta(a,b,c) \ \ = \ \ 
\begin{gathered}
\psscalebox{1.0 1.0} 
{
\begin{pspicture}(0,-0.97727275)(1.46,0.97727275)
\psellipse[linecolor=black, linewidth=0.02, dimen=outer](0.73,-0.15727273)(0.73,0.82)
\psline[linecolor=black, linewidth=0.02](0.003902439,-0.18498892)(1.4514071,-0.18498892)
\rput[bl](0.65636367,0.8172727){$a$}
\rput[bl](0.6490909,-0.095454544){$b$}
\rput[bl](0.64363635,-0.87545455){$c$}
\end{pspicture}
}
\end{gathered}
\end{equation*}
is called a \textbf{theta net}.
\end{defn}

Expanding fully we see that the theta nets are given by
\begin{align*}
\theta(a,b,c)
\ \ &= \ \ 
\begin{gathered}
\psscalebox{1.0 1.0} 
{
\begin{pspicture}(0,-1.3447769)(2.7054546,1.3447769)
\rput{-60.0}(0.56472343,2.2468257){\psframe[linecolor=black, linewidth=0.02, dimen=outer](2.3133795,0.92801374)(2.14296,0.3406822)}
\rput{60.0}(0.51468676,-2.2757144){\psframe[linecolor=black, linewidth=0.02, dimen=outer](2.3133795,-0.3984596)(2.14296,-0.98579115)}
\psframe[linecolor=black, linewidth=0.02, dimen=outer](1.9109626,0.25207853)(1.7405431,-0.33525303)
\rput{-60.0}(0.8633737,0.111156434){\psframe[linecolor=black, linewidth=0.02, dimen=outer](0.61316085,-0.39845955)(0.44274142,-0.98579115)}
\rput{60.0}(0.813337,-0.14004502){\psframe[linecolor=black, linewidth=0.02, dimen=outer](0.6131608,0.9280138)(0.44274136,0.34068227)}
\psframe[linecolor=black, linewidth=0.02, dimen=outer](1.0155778,0.25207883)(0.8451584,-0.3352527)
\psline[linecolor=black, linewidth=0.02](1.0119902,-0.034495503)(1.7463896,-0.034495503)
\psbezier[linecolor=black, linewidth=0.02](0.5868573,0.5215616)(0.5592852,0.32676482)(0.6630008,0.094940886)(0.8486228,0.098334566)
\psbezier[linecolor=black, linewidth=0.02](1.8953633,0.09691807)(2.1213768,0.14192994)(2.1880813,0.2929162)(2.1790507,0.5258838)
\psbezier[linecolor=black, linewidth=0.02](0.85698265,-0.16903448)(0.73314244,-0.18121794)(0.5923353,-0.27259383)(0.611889,-0.55890495)
\psbezier[linecolor=black, linewidth=0.02](1.897931,-0.1507593)(2.120811,-0.18584405)(2.169669,-0.35178626)(2.169669,-0.58327186)
\psbezier[linecolor=black, linewidth=0.02](0.37724432,0.6411651)(0.16867128,0.30002844)(0.20126082,-0.38833663)(0.39679804,-0.68683124)
\psbezier[linecolor=black, linewidth=0.02](2.3912377,0.6419849)(2.580257,0.34958205)(2.5672612,-0.357878)(2.3717241,-0.68073946)
\psbezier[linecolor=black, linewidth=0.02](2.1957407,0.70208234)(1.8698452,1.1955122)(0.89215904,1.2016039)(0.540192,0.7142658)
\psbezier[linecolor=black, linewidth=0.02](0.57278156,-0.7477485)(0.9377844,-1.1558942)(1.8372557,-1.1498024)(2.1957407,-0.7538402)
\rput[bl](1.5643063,0.03700517){$\scriptstyle b$}
\rput[bl](0.0,-0.12841533){$\scriptstyle l$}
\rput[bl](2.6454546,-0.13750625){$\scriptstyle l$}
\rput[bl](0.40223655,0.09016543){$\scriptstyle m$}
\rput[bl](2.1623335,0.09195706){$\scriptstyle m$}
\rput[bl](0.43259212,-0.3716182){$\scriptstyle n$}
\rput[bl](2.1872327,-0.34033903){$\scriptstyle n$}
\psframe[linecolor=black, linewidth=0.02, fillstyle=solid, dimen=outer](1.4709626,1.344777)(1.3005432,0.7574454)
\psframe[linecolor=black, linewidth=0.02, fillstyle=solid, dimen=outer](1.484296,-0.7574453)(1.3138765,-1.3447769)
\psframe[linecolor=black, linewidth=0.02, fillstyle=solid, dimen=outer](1.4709626,0.25311026)(1.3005432,-0.33422127)
\rput[bl](1.1054827,0.03700517){$\scriptstyle b$}
\rput[bl](1.7165507,1.1256105){$\scriptstyle a$}
\rput[bl](1.7215507,-1.198032){$\scriptstyle c$}
\rput[bl](0.9459625,1.1256105){$\scriptstyle a$}
\rput[bl](0.9509625,-1.198032){$\scriptstyle c$}
\end{pspicture}
}
\end{gathered} \\
&= \ \ 
\begin{gathered}
\psscalebox{1.0 1.0} 
{
\begin{pspicture}(0,-1.0528004)(2.9010253,1.0528004)
\psframe[linecolor=black, linewidth=0.02, dimen=outer](0.6833824,-0.008439892)(0.0,-0.25818348)
\psframe[linecolor=black, linewidth=0.02, dimen=outer](2.9010255,-0.008439704)(2.217643,-0.25818336)
\psframe[linecolor=black, linewidth=0.02, dimen=outer](1.7922039,-0.00843977)(1.1088215,-0.2581834)
\rput[bl](0.23950326,-0.22118896){$\scriptstyle p_a$}
\rput[bl](1.3374624,-0.2252706){$\scriptstyle p_b$}
\rput[bl](2.4354217,-0.22118896){$\scriptstyle p_c$}
\psbezier[linecolor=black, linewidth=0.02](0.45824167,-0.2481834)(0.47203815,-0.5297881)(1.3537407,-0.53526753)(1.3510988,-0.2481834)
\psbezier[linecolor=black, linewidth=0.02](1.6010988,-0.25532627)(1.6696898,-0.54456306)(2.42468,-0.52098185)(2.4532514,-0.2481834)
\psbezier[linecolor=black, linewidth=0.02](0.18681309,-0.2481834)(0.32368952,-1.2034414)(2.6241908,-1.207805)(2.7225273,-0.2481834)
\psbezier[linecolor=black, linewidth=0.02](0.4455215,-0.013251896)(0.46202588,0.31551522)(1.3217387,0.31551522)(1.36059,-0.013251896)
\psbezier[linecolor=black, linewidth=0.02](1.5852475,-0.018731348)(1.6236037,0.32099468)(2.4126449,0.32099468)(2.451001,-0.013251896)
\psbezier[linecolor=black, linewidth=0.02](0.17702836,-0.013251896)(0.20442562,1.0662001)(2.6756585,1.0662001)(2.719494,-0.013251896)
\rput[bl](0.7826888,0.29110676){$\scriptstyle m$}
\rput[bl](1.9591311,0.28092653){$\scriptstyle n$}
\rput[bl](1.443529,0.8728004){$\scriptstyle l$}
\end{pspicture}
}
\end{gathered}
\end{align*}
by the idempotent property, where the $l,m,n$ are uniquely determined by the $a,b,c$ and vice versa:
given an admissible triple $(a,b,c)$ we have the by-now familiar formulae
\begin{equation}
m = \frac{a+b-c}{2}, \quad n = \frac{b+c-a}{2}, \quad l = \frac{a+c-b}{2},
\label{eq:theta_lblconv1}
\end{equation}
and given any triple $(l,m,n)$ we have
\begin{equation}
a = m+l, \quad b = m+n, \quad c=n+l.
\label{eq:theta_lblconv2}
\end{equation}
Hence we may equivalently describe a theta net $\theta(a,b,c)$ by specifying the labels $m,n,l$.

\begin{defn}
Define
\[ \Net(m,n,l) \coloneqq \theta(a,b,c) \]
where $m,n,l$ and $a,b,c$ are related by equations \eqref{eq:theta_lblconv1} and \eqref{eq:theta_lblconv2}.
\end{defn}

We will need to know the value of the theta nets $\theta(a,b,c)=\Net(m,n,l)$.
Kauffman and Lins devote an entire chapter to finding an explicit formula for this trace; we simply state their result here and refer the reader to Chapter 6 of \cite{KL1994} for a proof.

\begin{theorem}[Kauffman and Lins, 1994]
\label{thm:theta_net_trace}
The value of the theta net $\theta(a,b,c) = \Net(m,n,l)$ is given by
\[ \Net(m,n,l) = \frac{[m]![n]![l]![m+n+l+1]!}{[m+n]![n+l]![m+l]!} \]
where the quantum factorial $[n]!$ is the product of the quantum integers ranging from $[n]$ down to $[1]$.
In particular,
\[ \Net(m,n,0) = [m+n+1]. \]
\end{theorem}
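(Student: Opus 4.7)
The plan is to proceed by induction on one of the three parameters, say $l$, using the Wenzl recursion~\eqref{eq:JWPrelation} to peel strands off one of the cables in the theta net. Before starting, I would rewrite $\Net(m,n,l) = \theta(a,b,c)$ with $a=m+l$, $b=m+n$, $c=n+l$ as the closed diagram in which three cables of sizes $a,b,c$ join two trivalent vertices, each vertex being the standard basis morphism $g_{a,b,c}$.

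\textbf{Base case.} When $l=0$, one has $a=m$, $c=n$, and $b=m+n$, so at each trivalent vertex the $k=\frac{a+b-c}{2}=m$ caps absorb all of cable $a$ and none of cable $c$ into cable $b$. The absorption property (the remark following Theorem~\ref{thm:JWP}) then lets $p_{m+n}$ swallow the tensor product $p_m\otimes p_n$, collapsing the whole configuration to a single closed loop of $p_{m+n}$. By Proposition~\ref{prop:JWP_trace} this equals $[m+n+1]$, which agrees with the claimed formula since $[0]!=1$ and the remaining factorials cancel to give $[m+n+1]!/[m+n]! = [m+n+1]$.

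\textbf{Inductive step.} Assume the formula holds whenever the third parameter is $l-1$. To get $\Net(m,n,l)$, I would apply the Wenzl recursion to the outermost strand of the JWP on the edge labelled $c = n+l$, writing $p_c = (\id\otimes p_{c-1}) - \frac{[c-1]}{[c]}\,(\cdots U_{c-1}\cdots)$ and substituting into the theta. In the identity summand, one strand of the $c$-cable becomes a free through-string that, together with the remaining JWPs and absorption, reorganises into $\Net(m,n,l-1)$ multiplied by a tracial factor. In the $U_{c-1}$ summand, the cup-cap pair connects the $c$-cable back to itself, and after pushing the cap through the adjacent trivalent vertices using properties~\eqref{eq:JWP2} of the JWPs on the $a$- and $b$-cables, only one configuration survives: a theta net with $c$ decreased by $2$ and appropriately adjusted $n,l$, with coefficient $-[c-1]/[c]$. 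One then checks that the resulting two-term recursion is satisfied by the closed-form expression, using the quantum integer identity of Proposition~\ref{prop:quantumint_relation} to match coefficients.

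\textbf{Main obstacle.} The delicate part is the second summand above: when the Wenzl recursion produces a $U_{c-1}$ term inside the theta, one must carefully track which cups and caps get annihilated by the other JWPs (via $p_n\cup_i = \cap_i p_n = 0$) and which reorganise into a genuine theta net of smaller parameters. It is easy to produce an expression that is not a theta but some more complicated tetrahedral or network diagram, in which case the induction fails to close. Choosing the right edge to peel and the right strand within that edge — essentially exploiting the full symmetry $\Net(m,n,l)=\Net(n,l,m)=\cdots$ — is what makes the recursion tractable, and this is the combinatorial bookkeeping that fills Chapter~6 of \cite{KL1994}.
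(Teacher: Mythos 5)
The paper does not prove this theorem itself; it states the formula as a black box with a pointer to Chapter 6 of \cite{KL1994}, so there is no internal proof of the paper's to compare against. That makes it worth assessing your sketch on its own merits.

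Your base case is correct. With $l=0$ we have $a=m$, $c=n$, $b=m+n$, no arcs join $p_a$ to $p_c$, and absorption collapses the whole closed diagram to $\tr(p_{m+n})=[m+n+1]$ by Proposition \ref{prop:JWP_trace}; this matches $[m+n+1]!/[m+n]!$.

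The inductive step is where the proposal has a genuine gap rather than just compressed bookkeeping — you've honestly flagged it yourself. Peeling the outermost strand of $p_c$ with \eqref{eq:JWPrelation} is fine for the identity summand: if that strand belongs to the $l$-group (the arcs joining $p_a$ to $p_c$), the stray strand closes through $p_a$ and the one-strand trace relation $\tr_1(p_a)=\frac{[a+1]}{[a]}p_{a-1}$ converts that summand to $\frac{[a+1]}{[a]}\Net(m,n,l-1)$. The $U_{c-1}$ summand, however, does not "survive as a single theta of smaller parameters" by mere annihilation. The cup and cap sit \emph{inside} the $c$-cable, between the strand $c$ that bypasses $p_{c-1}$ and the strand $c-1$ that passes through it; they do not land directly on two strands of a single idempotent, so property \eqref{eq:JWP2} does not immediately kill or simplify the term. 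What you actually get is a closed diagram that is not a theta net on the nose — in general it has a bubble spanning two edges and requires either the bubble-collapsing Lemma \ref{lemma:roundabout_lemma} (which presupposes the theta formula and so would be circular here) or a separate careful simplification of a tetrahedron-like network. Stating that "only one configuration survives" is asserting the hard part of the proof rather than proving it. Until that summand is resolved into a $\Net$ with explicit parameters and coefficient, the two-term recursion you intend to verify against the closed form does not yet exist, and the induction does not close. This is precisely why Kauffman and Lins devote a chapter to the computation; the argument is not a direct two-term Wenzl peel.
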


From this theorem we observe the following fact.

\begin{corollary}
\label{cor:theta_trace_invariance}
The value of a theta net $\Net(m,n,l)$ is invariant under any permutation of its arguments $m,n,l$.
Hence by the relations in equation \eqref{eq:theta_lblconv1}, $\theta(a,b,c)$ is also invariant under permutations of $a,b,c$.
\end{corollary}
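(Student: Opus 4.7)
The plan is to read the invariance directly off the Kauffman--Lins formula and then transport it back along the change-of-variables in equations \eqref{eq:theta_lblconv1} and \eqref{eq:theta_lblconv2}.

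First I would inspect the explicit expression
\[
\Net(m,n,l) = \frac{[m]!\,[n]!\,[l]!\,[m+n+l+1]!}{[m+n]!\,[n+l]!\,[m+l]!}
\]
given by Theorem \ref{thm:theta_net_trace}. The numerator is the product of the three factorials $[m]!,[n]!,[l]!$ with the factorial of the fully symmetric expression $m+n+l+1$, and the denominator is the product of the factorials of the three two-element sums $m+n$, $n+l$, $m+l$. Every factor is therefore unchanged under any permutation of the triple $(m,n,l)$, which immediately gives the first claim.

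For the second claim, I would verify that transpositions of the arguments $(a,b,c)$ correspond, under the bijection \eqref{eq:theta_lblconv1}, to transpositions of $(m,n,l)$. A direct substitution shows for instance that swapping $a\leftrightarrow b$ fixes $m=\tfrac{a+b-c}{2}$ and swaps $n=\tfrac{b+c-a}{2}$ with $l=\tfrac{a+c-b}{2}$; swapping $a\leftrightarrow c$ fixes $l$ and swaps $m$ with $n$; and swapping $b\leftrightarrow c$ fixes $n$ and swaps $m$ with $l$. Since the transpositions generate $S_3$, any permutation of $(a,b,c)$ induces a permutation of $(m,n,l)$, and the invariance of $\Net(m,n,l)$ established above then yields the invariance of $\theta(a,b,c)=\Net(m,n,l)$.

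There is no real obstacle here: once one accepts the Kauffman--Lins formula as a black box, both statements are essentially by inspection. The only bookkeeping is the verification that the correspondence between admissible triples $(a,b,c)$ and triples $(m,n,l)$ is $S_3$-equivariant, which is done by checking the three transpositions above.
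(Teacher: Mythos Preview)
Your proof is correct and takes essentially the same approach as the paper: the corollary is stated there as an immediate observation from the Kauffman--Lins formula in Theorem~\ref{thm:theta_net_trace}, with no further argument given. You have simply spelled out the symmetry check on the formula and added the explicit verification that the change of variables \eqref{eq:theta_lblconv1} is $S_3$-equivariant, which the paper leaves implicit.
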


\section{Some diagrammatic identities}
In this section we prove the main results we need to show that $\TLJ$ is semisimple, beginning with a lemma due to Kauffman and Lins.

\begin{lemma}[cf. Lemma 7 of \cite{KL1994}]
\label{lemma:roundabout_lemma}
Let $(a,c,d)$ and $(b,c,d)$ be admissible triples.
If $a \neq b$ then
\begin{equation*}
\begin{gathered}
\psscalebox{1.0 1.0} 
{
\begin{pspicture}(0,-0.97975385)(1.1033772,0.97975385)
\psellipse[linecolor=black, linewidth=0.02, dimen=outer](0.5526531,-0.019766627)(0.32,0.4494703)
\psline[linecolor=black, linewidth=0.02](0.5508031,0.4216411)(0.5508031,0.9797538)
\psline[linecolor=black, linewidth=0.02](0.55251956,-0.979754)(0.55251956,-0.4624576)
\rput[bl](0.63673466,0.6766606){$\scriptstyle a$}
\rput[bl](0.64081633,-0.8457884){$\scriptstyle b$}
\rput[bl](0.0,-0.06619656){$\scriptstyle c$}
\rput[bl](0.9733772,-0.07241117){$\scriptstyle d$}
\end{pspicture}
}
\end{gathered}
\ \ = \ \ 0
\end{equation*}
while if $a=b$ then
\begin{equation*}
\begin{gathered}
\psscalebox{1.0 1.0} 
{
\begin{pspicture}(0,-0.97975385)(1.1033772,0.97975385)
\psellipse[linecolor=black, linewidth=0.02, dimen=outer](0.5526531,-0.019766627)(0.32,0.4494703)
\psline[linecolor=black, linewidth=0.02](0.5508031,0.4216411)(0.5508031,0.9797538)
\psline[linecolor=black, linewidth=0.02](0.55251956,-0.979754)(0.55251956,-0.4624576)
\rput[bl](0.63673466,0.6766606){$\scriptstyle a$}
\rput[bl](0.64081633,-0.8457884){$\scriptstyle b$}
\rput[bl](0.0,-0.06619656){$\scriptstyle c$}
\rput[bl](0.9733772,-0.07241117){$\scriptstyle d$}
\end{pspicture}
}
\end{gathered}
\ \ = \ \ 
\frac{\theta(a,c,d)}{[a+1]}
\ 
\begin{gathered}
\psscalebox{1.0 1.0} 
{
\begin{pspicture}(0,-0.68108106)(0.64,0.68108106)
\definecolor{colour0}{rgb}{0.99607843,0.99607843,0.99607843}
\psline[linecolor=black, linewidth=0.02](0.3226912,0.6810811)(0.3226912,-0.68108106)
\psframe[linecolor=black, linewidth=0.02, fillstyle=solid,fillcolor=colour0, dimen=outer](0.64,0.1291777)(0.0,-0.090822294)
\rput[bl](0.41787678,-0.45121837){$\scriptstyle a$}
\end{pspicture}
}
\end{gathered} \ .
\end{equation*}
\end{lemma}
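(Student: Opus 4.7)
\medskip
\noindent\textbf{Proof proposal.}
The strategy is to recognize the left-hand diagram as a morphism between Jones--Wenzl idempotents in $\Kar\TL$ and then exploit disjoint simplicity. Each trivalent vertex $g_{c,d,x}$ is by definition an element of $\Hom(p_c \otimes p_d, p_x)$, and hence absorbs $p_x$ on its outgoing $x$-edge (and by Remark \ref{rem:JWP_lat_refl_dual} and the analogous absorption for $p_c$, $p_d$ on its other edges). Applied at both vertices of the depicted diagram, this shows that the left-hand side lies in $\Hom_{\Kar\TL}(p_b, p_a)$. By Corollary \ref{cor:JWP-disjointsimpleobjs} the Jones--Wenzl idempotents form a collection of disjoint simple objects, so $\Hom(p_b, p_a) = \{0\}$ whenever $a \neq b$; this disposes of the first case immediately.

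When $a = b$, the same corollary tells us $\Hom(p_a, p_a)$ is one-dimensional, spanned by $\mathbbm{1}_{p_a} = p_a$, so the diagram equals $\lambda\, p_a$ for some $\lambda \in \mathbbm{F}$. To pin down $\lambda$, I would apply the trace $\tr$ (Definition \ref{defn:TL-trace}) to both sides, regarded as endomorphisms on $a$ points. Closing up the free $a$-strands at top and bottom welds the two trivalent vertices into a closed graph whose three edges carry the labels $a$, $c$, $d$ --- precisely the theta net $\theta(a,c,d)$. On the other side, Proposition \ref{prop:JWP_trace} gives $\tr(\lambda\, p_a) = \lambda\,[a+1]$, so $\lambda = \theta(a,c,d)/[a+1]$ as required.

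The only verification warranting care is confirming that the trace closure of the left-hand diagram really reproduces $\theta(a,c,d)$ in the precise graphical sense used earlier, rather than some rescaled variant: this amounts to matching the external cap/cup arcs introduced by Definition \ref{defn:TL-trace} against the graphical conventions for trivalent vertices and theta nets, and is routine once unpacked. Otherwise the argument is purely formal, relying only on the disjoint simplicity of the Jones--Wenzl idempotents and the known trace value $[a+1]$ of $p_a$.
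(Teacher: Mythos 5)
Your proof is correct and is essentially the same as the paper's, with the minor streamlining that you invoke the already-established disjoint simplicity of the Jones--Wenzl idempotents (Corollary \ref{cor:JWP-disjointsimpleobjs}) to dispose of the case $a\neq b$ and to reduce $\Hom(p_a,p_a)$ to $\spn\{p_a\}$, whereas the paper re-derives both of these facts directly from the diagrams by cup-counting and by expanding in the simple diagram basis; the trace computation for $a=b$ is identical in both.
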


\begin{proof}
If $a>b$ then
\begin{equation*}
\begin{gathered}
\psscalebox{1.0 1.0} 
{
\begin{pspicture}(0,-0.97975385)(1.1033772,0.97975385)
\psellipse[linecolor=black, linewidth=0.02, dimen=outer](0.5526531,-0.019766627)(0.32,0.4494703)
\psline[linecolor=black, linewidth=0.02](0.5508031,0.4216411)(0.5508031,0.9797538)
\psline[linecolor=black, linewidth=0.02](0.55251956,-0.979754)(0.55251956,-0.4624576)
\rput[bl](0.63673466,0.6766606){$\scriptstyle a$}
\rput[bl](0.64081633,-0.8457884){$\scriptstyle b$}
\rput[bl](0.0,-0.06619656){$\scriptstyle c$}
\rput[bl](0.9733772,-0.07241117){$\scriptstyle d$}
\end{pspicture}
}
\end{gathered}
\ \ = \ \ 
\begin{gathered}
\psscalebox{1.0 1.0} 
{
\begin{pspicture}(0,-1.4369283)(3.0876243,1.4369283)
\psframe[linecolor=black, linewidth=0.02, fillstyle=solid, dimen=outer](2.108718,0.08284173)(1.5687178,-0.09715827)
\psframe[linecolor=black, linewidth=0.02, fillstyle=solid, dimen=outer](0.890647,0.08284181)(0.350647,-0.097158186)
\psframe[linecolor=black, linewidth=0.02, fillstyle=solid, dimen=outer](1.4996821,-0.90219116)(0.9596821,-1.0821911)
\psbezier[linecolor=black, linewidth=0.02](1.128218,-0.80993986)(1.1192547,-0.6399987)(1.0400562,-0.61092955)(0.83306533,-0.5040128)(0.6260745,-0.39709604)(0.5181001,-0.33742884)(0.49492928,-0.0888106)
\psbezier[linecolor=black, linewidth=0.02](1.3397918,-0.80575305)(1.3539429,-0.6220738)(1.4691813,-0.606467)(1.6607364,-0.5057667)(1.8522915,-0.4050663)(1.9434257,-0.31995028)(1.9644328,-0.088440575)
\psbezier[linecolor=black, linewidth=0.02](1.1104332,0.78709847)(1.1029717,0.67203534)(0.90975744,0.6200584)(0.75697887,0.539589)(0.60420024,0.4591196)(0.49101967,0.3504878)(0.49340066,0.06971692)
\psbezier[linecolor=black, linewidth=0.02](1.3390481,0.79922175)(1.3515114,0.66274256)(1.4857805,0.64079094)(1.6895777,0.5390806)(1.8933748,0.43737024)(1.9634086,0.2890694)(1.9683832,0.06599119)
\psline[linecolor=black, linewidth=0.02](1.2295853,-1.073038)(1.2295853,-1.4193066)
\rput[bl](1.325878,-1.4369283){$\scriptstyle b$}
\psframe[linecolor=black, linewidth=0.02, linestyle=dotted, dotsep=0.10583334cm, dimen=outer](2.4643989,0.8046488)(0.0,-0.8151213)
\rput{-90.0}(1.804853,3.3697062){\rput[bl](2.5872796,0.7824266){$\overbrace{\qquad\qquad\ \,}$}}
\rput[bl](2.8976245,-0.1899872){$f$}
\psbezier[linecolor=black, linewidth=0.02](1.7282535,0.07807877)(1.6699412,0.3448717)(1.4065144,0.37952805)(1.2297028,0.37952805)(1.0528913,0.37952805)(0.7822786,0.33653137)(0.73115206,0.07807877)
\psbezier[linecolor=black, linewidth=0.02](0.7398844,-0.08424007)(0.79886144,-0.32124835)(1.0906807,-0.33389643)(1.238435,-0.33351544)(1.3861895,-0.33313444)(1.6534241,-0.30816713)(1.7195945,-0.08424007)
\psline[linecolor=black, linewidth=0.02](1.2295853,1.0428984)(1.2295853,1.4369283)
\rput[bl](1.3065232,1.259846){$\scriptstyle a$}
\psline[linecolor=black, linewidth=0.02, linestyle=dashed, dash=0.17638889cm 0.10583334cm](1.1066666,0.786871)(1.1066666,0.9113155)
\psline[linecolor=black, linewidth=0.02, linestyle=dashed, dash=0.17638889cm 0.10583334cm](1.1244444,-0.7997956)(1.1288887,-0.9153512)
\psline[linecolor=black, linewidth=0.02, linestyle=dashed, dash=0.17638889cm 0.10583334cm](1.3422221,-0.7997956)(1.3422221,-0.91090673)
\psframe[linecolor=black, linewidth=0.02, fillstyle=solid, dimen=outer](1.4996823,1.0549715)(0.95968235,0.8749714)
\psline[linecolor=black, linewidth=0.02, linestyle=dashed, dash=0.17638889cm 0.10583334cm](1.3377776,0.7913155)(1.3377776,0.8890933)
\end{pspicture}
}
\end{gathered}
\end{equation*}
and since $f$ is a diagram from $b<a$ to $a$ points it must have a cup on its top edge, so composition with $p_a$ gives $0$.
A similar argument applies to the case $a<b$.

On the other hand if $a=b$ then
\begin{equation*}
\begin{gathered}
\psscalebox{1.0 1.0} 
{
\begin{pspicture}(0,-0.97975385)(1.1033772,0.97975385)
\psellipse[linecolor=black, linewidth=0.02, dimen=outer](0.5526531,-0.019766627)(0.32,0.4494703)
\psline[linecolor=black, linewidth=0.02](0.5508031,0.4216411)(0.5508031,0.9797538)
\psline[linecolor=black, linewidth=0.02](0.55251956,-0.979754)(0.55251956,-0.4624576)
\rput[bl](0.63673466,0.6766606){$\scriptstyle a$}
\rput[bl](0.64081633,-0.8457884){$\scriptstyle a$}
\rput[bl](0.0,-0.06619656){$\scriptstyle c$}
\rput[bl](0.9733772,-0.07241117){$\scriptstyle d$}
\end{pspicture}
}
\end{gathered}
\ \ = \ \ 
\begin{gathered}
\psscalebox{1.0 1.0} 
{
\begin{pspicture}(0,-1.3528796)(1.4523079,1.3528796)
\psellipse[linecolor=black, linewidth=0.02, dimen=outer](0.7353847,0.19893283)(0.32,0.4494703)
\psline[linecolor=black, linewidth=0.02](0.73353475,0.64034057)(0.73353475,1.1984533)
\psline[linecolor=black, linewidth=0.02](0.7352512,-0.85252815)(0.7352512,-0.24375814)
\rput[bl](0.8194663,0.89536005){$\scriptstyle a$}
\rput[bl](0.18273161,0.1525029){$\scriptstyle c$}
\rput[bl](1.1561089,0.14628828){$\scriptstyle d$}
\psframe[linecolor=black, linewidth=0.02, dimen=outer](0.95244145,-0.8393876)(0.49609226,-1.0171654)
\psline[linecolor=black, linewidth=0.02](0.7278383,-1.0028796)(0.7278383,-1.3528796)
\rput[bl](0.81600153,-1.246145){$\scriptstyle a$}
\psframe[linecolor=black, linewidth=0.02, linestyle=dotted, dotsep=0.10583334cm, dimen=outer](1.4523077,1.3528796)(0.0,-0.6840434)
\rput[bl](0.8163894,-0.5107938){$\scriptstyle a$}
\end{pspicture}
}
\end{gathered}
\end{equation*}
by the idempotent property of $p_a$.
Writing the dotted portion as a linear combination of simple diagrams in $\TL_n$ and observing that every diagram that is not multiple of the identity $\id_a$ is a product of generators $U_i$ and hence contains caps, we have that
\begin{equation*}
\begin{gathered}
\psscalebox{1.0 1.0} 
{
\begin{pspicture}(0,-0.97975385)(1.1033772,0.97975385)
\psellipse[linecolor=black, linewidth=0.02, dimen=outer](0.5526531,-0.019766627)(0.32,0.4494703)
\psline[linecolor=black, linewidth=0.02](0.5508031,0.4216411)(0.5508031,0.9797538)
\psline[linecolor=black, linewidth=0.02](0.55251956,-0.979754)(0.55251956,-0.4624576)
\rput[bl](0.63673466,0.6766606){$\scriptstyle a$}
\rput[bl](0.64081633,-0.8457884){$\scriptstyle a$}
\rput[bl](0.0,-0.06619656){$\scriptstyle c$}
\rput[bl](0.9733772,-0.07241117){$\scriptstyle d$}
\end{pspicture}
}
\end{gathered}
\ \ = \ \ \lambda \ 
\begin{gathered}
\psscalebox{1.0 1.0} 
{
\begin{pspicture}(0,-0.68108106)(0.64,0.68108106)
\definecolor{colour0}{rgb}{0.99607843,0.99607843,0.99607843}
\psline[linecolor=black, linewidth=0.02](0.3226912,0.6810811)(0.3226912,-0.68108106)
\psframe[linecolor=black, linewidth=0.02, fillstyle=solid,fillcolor=colour0, dimen=outer](0.64,0.1291777)(0.0,-0.090822294)
\rput[bl](0.41787678,-0.45121837){$\scriptstyle a$}
\end{pspicture}
}
\end{gathered}
\end{equation*}
since only the identity terms survive the extra $p_a$.
Taking the trace we have that
\begin{equation*}
\begin{gathered}
\psscalebox{1.0 1.0} 
{
\begin{pspicture}(0,-0.7046765)(1.732531,0.7046765)
\psellipse[linecolor=black, linewidth=0.02, dimen=outer](0.5526531,-0.011323825)(0.32,0.4494703)
\rput[bl](1.602531,-0.058240876){$\scriptstyle a$}
\rput[bl](0.0,-0.057753757){$\scriptstyle c$}
\rput[bl](0.9733772,-0.063968375){$\scriptstyle d$}
\psbezier[linecolor=black, linewidth=0.02](0.5553197,0.42344353)(0.7137322,0.5871186)(0.8423447,0.6938192)(1.0406531,0.6581102)(1.2389615,0.6224012)(1.4987438,0.45361143)(1.4948475,-0.0032787002)(1.4909512,-0.4601688)(1.2506683,-0.64551365)(1.0459864,-0.6698898)(0.84130454,-0.694266)(0.6693735,-0.60831684)(0.5344502,-0.44542605)
\end{pspicture}
}
\end{gathered}
\ \ = \ \ 
\lambda \ 
\begin{gathered}
\psscalebox{1.0 1.0} 
{
\begin{pspicture}(0,-0.55151504)(1.2229252,0.55151504)
\definecolor{colour0}{rgb}{0.99607843,0.99607843,0.99607843}
\rput[bl](1.0929252,-0.051988628){$\scriptstyle a$}
\psellipse[linecolor=black, linewidth=0.02, dimen=outer](0.6551515,0.0)(0.36666667,0.55151516)
\psframe[linecolor=black, linewidth=0.02, fillstyle=solid,fillcolor=colour0, dimen=outer](0.64,0.12363632)(0.0,-0.09636368)
\end{pspicture}
}
\end{gathered}
\ \ = \ \ 
\lambda [a+1],
\end{equation*}
so that
\begin{equation*}
\lambda \ = \ 
\frac{
\begin{gathered}
\psscalebox{1.0 1.0} 
{
\begin{pspicture}(0,-0.48309985)(0.72,0.48309985)
\psellipse[linecolor=black, linewidth=0.02, dimen=outer](0.36,-0.08309984)(0.36,0.4)
\psline[linecolor=black, linewidth=0.02](0.016216217,-0.09663297)(0.7081081,-0.09663297)
\rput[bl](0.31617466,0.37309983){$\scriptstyle a$}
\rput[bl](0.3119981,-0.038932778){$\scriptstyle d$}
\rput[bl](0.30863115,-0.4172404){$\scriptstyle c$}
\end{pspicture}
}
\end{gathered}
}{[a+1]}
\end{equation*}
as required.
\end{proof}

We also have the following two ``triangle-shrinking'' lemmas.

\allowdisplaybreaks

\begin{lemma}
\label{lemma:shrink_triangle_k+1}
Let $(a,b-1,k)$, $(b-1,1,b)$ and $(k,1,k+1)$ be admissible triples.
Then
\begin{equation*}
\begin{gathered}
\psscalebox{1.0 1.0} 
{
\begin{pspicture}(0,-0.91888887)(1.9158479,0.91888887)
\rput[bl](0.7532164,0.447193){$\scriptstyle b-1$}
\rput[bl](0.49637428,-0.14134502){$\scriptstyle k$}
\rput[bl](1.3557894,-0.1485965){$\scriptstyle 1$}
\rput[bl](1.0872514,-0.91888887){$\scriptstyle k+1$}
\rput[bl](0.0,0.7524561){$\scriptstyle a$}
\rput[bl](1.805848,0.73888886){$\scriptstyle b$}
\psline[linecolor=black, linewidth=0.02](1.7453245,0.7198784)(1.4368912,0.40336388)
\psline[linecolor=black, linewidth=0.02](0.17507282,0.7069489)(0.49158728,0.3985156)
\psline[linecolor=black, linewidth=0.02](0.9793618,-0.38187093)(0.98063165,-0.85933125)
\pspolygon[linecolor=black, linewidth=0.02](0.4963635,0.3950733)(0.48902255,0.40238094)(1.4360014,0.40238094)(0.98086435,-0.37953943)
\end{pspicture}
}
\end{gathered}
\ \ = \ \ 
\begin{gathered}
\psscalebox{1.0 1.0} 
{
\begin{pspicture}(0,-0.70747477)(1.6522115,0.70747477)
\psline[linecolor=black, linewidth=0.02](0.80185485,-0.64820975)(0.80185485,0.10923705)
\psline[linecolor=black, linewidth=0.02](0.80217934,0.10710939)(0.14621116,0.4858328)
\psline[linecolor=black, linewidth=0.02](0.80578566,0.10285407)(1.4617538,0.4815775)
\rput[bl](1.5422115,0.52747476){$\scriptstyle b$}
\rput[bl](0.0,0.541042){$\scriptstyle a$}
\rput[bl](0.9100797,-0.70747477){$\scriptstyle k+1$}
\end{pspicture}
}
\end{gathered} \ .
\end{equation*}
\end{lemma}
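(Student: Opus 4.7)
The plan is to verify that both sides are equal morphisms in $\Hom(p_a \otimes p_b, p_{k+1})$. Admissibility of $(a, b, k+1)$ is straightforward from the admissibility of the three given triples: parity follows since $a + b - 1 + k$ is even, and each triangle inequality such as $a + b \geq k + 1$ is equivalent to a triangle inequality for $(a, b-1, k)$. Hence by Theorem~\ref{thm:JWP_hom_dim}, $\Hom(p_a \otimes p_b, p_{k+1})$ is one-dimensional and spanned by $g_{a,b,k+1}$, so it suffices to prove that the LHS equals $g_{a,b,k+1}$ with coefficient exactly $1$.

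The key simplification is that the two ``$1$-labelled'' vertices in the triangle are effectively trivial. Vertex $V_2$ has triple $(b, b-1, 1)$, whose canonical simple diagram from the proof of Theorem~\ref{thm:JWP_hom_dim} consists of $b-1$ through-strings from $b$ to $b-1$ and a single through-string from $b$ to $1$, with no caps between $b-1$ and $1$. By the absorption property of Jones-Wenzl idempotents (the remark following Theorem~\ref{thm:JWP}), this vertex collapses to $p_b \circ (p_{b-1} \otimes p_1) = p_b$. Similarly $V_3$, with triple $(k+1, k, 1)$, collapses to $p_{k+1} \circ (p_k \otimes p_1) = p_{k+1}$. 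After these collapses the LHS reduces to
\[
p_{k+1} \cdot (g_{a, b-1, k} \otimes \id_1) \cdot (p_a \otimes p_b).
\]

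I then expand $g_{a, b-1, k} = p_k \cdot f_1 \cdot (p_a \otimes p_{b-1})$, where $f_1$ is the basis simple diagram identified in the proof of Theorem~\ref{thm:JWP_hom_dim}, having $(a + b - k - 1)/2$ caps joining the rightmost strands of $p_a$ with the leftmost strands of $p_{b-1}$. Further absorption (using $p_{k+1} \circ (p_k \otimes \id_1) = p_{k+1}$ and $p_b = p_b \circ (p_{b-1} \otimes p_1)$) reduces the LHS to $p_{k+1} \cdot (f_1 \otimes \id_1) \cdot (p_a \otimes p_b)$. A direct strand count then shows that $f_1 \otimes \id_1$ is precisely the basis simple diagram $f_0$ for $g_{a, b, k+1}$: both have $(a + b - k - 1)/2$ caps between the $a$- and $b$-bundles, $(a + k - b + 1)/2$ left-side through-strings, and $(b + k - a + 1)/2$ right-side through-strings, where in $f_1 \otimes \id_1$ the extra right-side strand is contributed by $\id_1$. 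Hence LHS $= p_{k+1} \cdot f_0 \cdot (p_a \otimes p_b) = g_{a, b, k+1}$, which is the RHS.

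I do not expect any serious obstacle: the argument is a direct chase through the absorption and hom-dimension results already in hand, with the bulk of the work being the combinatorial check that the strand counts of $f_1 \otimes \id_1$ match those of $f_0$.
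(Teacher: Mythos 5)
Your proof is correct and takes essentially the same route as the paper: both expand each trivalent vertex into its Jones--Wenzl composite, use absorption to collapse the two $1$-labelled vertices and their adjacent idempotents, and then observe that the surviving diagram $p_{k+1}\circ(f_1\otimes\id_1)\circ(p_a\otimes p_b)$ is exactly $g_{a,b,k+1}$. The opening appeal to one-dimensionality of $\Hom(p_a\otimes p_b,p_{k+1})$ is harmless but unnecessary, since you go on to verify the equality of morphisms directly rather than merely comparing scalars.
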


\begin{proof}
Expanding, we have
\begin{align*}
\begin{gathered}
\psscalebox{1.0 1.0} 
{
\begin{pspicture}(0,-0.91888887)(1.9158479,0.91888887)
\rput[bl](0.7532164,0.447193){$\scriptstyle b-1$}
\rput[bl](0.49637428,-0.14134502){$\scriptstyle k$}
\rput[bl](1.3557894,-0.1485965){$\scriptstyle 1$}
\rput[bl](1.0872514,-0.91888887){$\scriptstyle k+1$}
\rput[bl](0.0,0.7524561){$\scriptstyle a$}
\rput[bl](1.805848,0.73888886){$\scriptstyle b$}
\psline[linecolor=black, linewidth=0.02](1.7453245,0.7198784)(1.4368912,0.40336388)
\psline[linecolor=black, linewidth=0.02](0.17507282,0.7069489)(0.49158728,0.3985156)
\psline[linecolor=black, linewidth=0.02](0.9793618,-0.38187093)(0.98063165,-0.85933125)
\pspolygon[linecolor=black, linewidth=0.02](0.4963635,0.3950733)(0.48902255,0.40238094)(1.4360014,0.40238094)(0.98086435,-0.37953943)
\end{pspicture}
}
\end{gathered}
\quad &= \quad
\begin{gathered}
\psscalebox{1.0 1.0} 
{
\begin{pspicture}(0,-2.06187)(4.2250166,2.06187)
\definecolor{colour0}{rgb}{1.0,0.19607843,0.19607843}
\definecolor{colour1}{rgb}{0.39215687,0.78431374,1.0}
\rput{80.0}(1.2902669,-0.4546695){\psframe[linecolor=black, linewidth=0.02, fillstyle=solid,fillcolor=colour0, dimen=outer](0.99333316,0.8369599)(0.83878773,0.24605079)}
\rput{-40.0}(-0.7058619,0.71552056){\psframe[linecolor=black, linewidth=0.02, dimen=outer](0.7072801,1.6228846)(0.5527346,1.0319756)}
\psframe[linecolor=black, linewidth=0.02, fillstyle=solid,fillcolor=colour1, dimen=outer](1.6048522,1.3486652)(1.4503068,0.7577562)
\rput{-90.0}(3.343482,0.8925301){\psframe[linecolor=black, linewidth=0.02, dimen=outer](2.1952786,-0.9300214)(2.0407333,-1.5209305)}
\rput{-40.0}(0.7972607,0.9027535){\psframe[linecolor=black, linewidth=0.02, fillstyle=solid,fillcolor=colour0, dimen=outer](1.7160505,-0.3483966)(1.5615051,-0.93930566)}
\rput{40.0}(1.6913136,-1.9919859){\psframe[linecolor=black, linewidth=0.02, dimen=outer](3.6593976,1.6228846)(3.5048523,1.0319755)}
\psframe[linecolor=black, linewidth=0.02, fillstyle=solid,fillcolor=colour1, dimen=outer](2.7618256,1.3486652)(2.60728,0.7577561)
\psline[linecolor=black, linewidth=0.02](1.5974526,1.0671782)(2.6253595,1.0671782)
\psline[linecolor=black, linewidth=0.02](0.0064277425,1.858619)(0.5870088,1.3714536)
\psline[linecolor=black, linewidth=0.02](3.6380067,1.3714536)(4.218588,1.858619)
\psline[linecolor=black, linewidth=0.02](2.1252897,-1.3039755)(2.1252897,-2.0618703)
\psbezier[linecolor=black, linewidth=0.02](3.460534,1.3584573)(3.2473397,1.1895089)(3.0357325,1.0710566)(2.7544825,1.0710566)
\psbezier[linecolor=black, linewidth=0.02](3.585534,1.2209574)(3.2099388,0.934251)(2.210534,-0.7040426)(2.2230341,-1.1540426)
\psbezier[linecolor=black, linewidth=0.02](0.7674785,1.3802828)(0.99604994,1.2279018)(1.243669,1.1644098)(1.459542,1.170759)
\psbezier[linecolor=black, linewidth=0.02](0.62779593,1.2152034)(0.8182722,0.986632)(0.8373198,0.8913939)(0.81192297,0.63742566)
\psbezier[linecolor=black, linewidth=0.02](1.0087483,0.58663195)(1.0468435,0.7644098)(1.2373198,0.96123517)(1.459542,0.95488596)
\psbezier[linecolor=black, linewidth=0.02](1.7008119,-0.6768601)(2.0055737,-0.8990823)(2.0119228,-0.9752728)(2.0182722,-1.1467013)
\psbezier[linecolor=black, linewidth=0.02](0.88811344,0.47869548)(0.7611293,-0.11178072)(1.3897008,-0.46098706)(1.5865262,-0.5943204)
\rput[bl](0.11985946,1.8882192){$\scriptstyle a$}
\rput[bl](4.0309706,1.88187){$\scriptstyle b$}
\rput[bl](2.2586184,-1.9538586){$\scriptstyle k+1$}
\rput[bl](0.8627166,-0.42289183){$\scriptstyle k$}
\rput[bl](1.0563674,1.3453622){$\scriptstyle m$}
\rput[bl](1.2182722,0.6564733){$\scriptstyle n$}
\rput[bl](0.6119229,0.786632){$\scriptstyle l$}
\rput[bl](1.8976372,1.1263145){$\scriptstyle b-1$}
\rput[bl](2.9357324,-0.14670135){$\scriptstyle 1$}
\end{pspicture}
}
\end{gathered} \\
&= \quad \quad
\begin{gathered}
\psscalebox{1.0 1.0} 
{
\begin{pspicture}(0,-1.6909611)(3.5050159,1.6909611)
\rput[bl](1.9168005,-1.5894141){$\scriptstyle k+1$}
\rput[bl](0.1198597,1.5318557){$\scriptstyle a$}
\psline[linecolor=black, linewidth=0.02](0.0064279926,1.5022554)(0.5870091,1.0150901)
\rput{-40.0}(-0.4767958,0.6321475){\psframe[linecolor=black, linewidth=0.02, dimen=outer](0.70728034,1.2665211)(0.5527349,0.67561203)}
\rput[bl](3.3109708,1.510961){$\scriptstyle b$}
\psline[linecolor=black, linewidth=0.02](2.918007,1.0005447)(3.498588,1.48771)
\rput{40.0}(1.28445,-1.6159552){\psframe[linecolor=black, linewidth=0.02, dimen=outer](2.939398,1.2519757)(2.7848525,0.6610666)}
\psline[linecolor=black, linewidth=0.02](1.7597915,-0.9330663)(1.7597915,-1.690961)
\rput{-90.0}(2.6070745,0.89794105){\psframe[linecolor=black, linewidth=0.02, dimen=outer](1.8297805,-0.5591122)(1.675235,-1.1500213)}
\psbezier[linecolor=black, linewidth=0.02](0.7544748,1.0238981)(1.3830463,0.5731045)(2.170348,0.5794537)(2.7100303,1.0238981)
\psbezier[linecolor=black, linewidth=0.02](1.6433637,-0.7792765)(1.560824,0.07786638)(1.0020939,0.50326324)(0.6211415,0.85881877)
\psbezier[linecolor=black, linewidth=0.02](1.7830462,-0.7856257)(1.8084431,-0.13800663)(2.2274907,0.4016759)(2.817967,0.9032632)
\psbezier[linecolor=black, linewidth=0.02](2.9005065,0.8016759)(2.4433637,0.33183464)(2.0370145,0.033421937)(1.9227288,-0.7856257)
\rput[bl](1.1263243,-0.06242996){$\scriptstyle l$}
\rput[bl](1.6501338,0.7915383){$\scriptstyle m$}
\rput[bl](1.8818799,0.17883988){$\scriptstyle n$}
\rput[bl](2.3548958,-0.12274742){$\scriptstyle 1$}
\end{pspicture}
}
\end{gathered}
\quad = \quad
\begin{gathered}
\psscalebox{1.0 1.0} 
{
\begin{pspicture}(0,-0.70747477)(1.6522115,0.70747477)
\psline[linecolor=black, linewidth=0.02](0.80185485,-0.64820975)(0.80185485,0.10923705)
\psline[linecolor=black, linewidth=0.02](0.80217934,0.10710939)(0.14621116,0.4858328)
\psline[linecolor=black, linewidth=0.02](0.80578566,0.10285407)(1.4617538,0.4815775)
\rput[bl](1.5422115,0.52747476){$\scriptstyle b$}
\rput[bl](0.0,0.541042){$\scriptstyle a$}
\rput[bl](0.9100797,-0.70747477){$\scriptstyle k+1$}
\end{pspicture}
}
\end{gathered}
\end{align*}
where we use the idempotent and absorption rules to absorb the blue and red idempotents into $p_b$ and $p_{k+1}$ respectively.

\end{proof}

\begin{lemma}
\label{lemma:shrink_triangle_k-1}
Let $(a,b-1,k)$, $(b-1,1,b)$ and $(k,1,k-1)$ be admissible triples.
If $a+k > b-1$ then $n = \frac{k+b-1-a}{2} < k$, and
\begin{equation*}
\begin{gathered}
\psscalebox{1.0 1.0} 
{
\begin{pspicture}(0,-0.91888887)(1.9158479,0.91888887)
\rput[bl](0.7532164,0.447193){$\scriptstyle b-1$}
\rput[bl](0.49637428,-0.14134502){$\scriptstyle k$}
\rput[bl](1.3557894,-0.1485965){$\scriptstyle 1$}
\rput[bl](1.0872514,-0.91888887){$\scriptstyle k-1$}
\rput[bl](0.0,0.7524561){$\scriptstyle a$}
\rput[bl](1.805848,0.73888886){$\scriptstyle b$}
\psline[linecolor=black, linewidth=0.02](1.7453245,0.7198784)(1.4368912,0.40336388)
\psline[linecolor=black, linewidth=0.02](0.17507282,0.7069489)(0.49158728,0.3985156)
\psline[linecolor=black, linewidth=0.02](0.9793618,-0.38187093)(0.98063165,-0.85933125)
\pspolygon[linecolor=black, linewidth=0.02](0.4963635,0.3950733)(0.48902255,0.40238094)(1.4360014,0.40238094)(0.98086435,-0.37953943)
\end{pspicture}
}
\end{gathered}
\ \ = \ \ 
(-1)^n\frac{[k-n]}{[k]} \ 
\begin{gathered}
\psscalebox{1.0 1.0} 
{
\begin{pspicture}(0,-0.70747477)(1.6522115,0.70747477)
\psline[linecolor=black, linewidth=0.02](0.80185485,-0.64820975)(0.80185485,0.10923705)
\psline[linecolor=black, linewidth=0.02](0.80217934,0.10710939)(0.14621116,0.4858328)
\psline[linecolor=black, linewidth=0.02](0.80578566,0.10285407)(1.4617538,0.4815775)
\rput[bl](1.5422115,0.52747476){$\scriptstyle b$}
\rput[bl](0.0,0.541042){$\scriptstyle a$}
\rput[bl](0.9100797,-0.70747477){$\scriptstyle k-1$}
\end{pspicture}
}
\end{gathered}
\end{equation*}
\end{lemma}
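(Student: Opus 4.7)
The plan is to mimic the proof of Lemma \ref{lemma:shrink_triangle_k+1}: expand each trivalent vertex into its Jones--Wenzl decomposition, then absorb projectors using the absorption generalization of Theorem \ref{thm:JWP}\eqref{eq:JWP2}. The new wrinkle in the $k-1$ case is that the simplification does not reduce directly to a single basis morphism, but leaves behind a nontrivial roundabout bubble, which must be evaluated via Lemma \ref{lemma:roundabout_lemma}.

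First I verify that $(a, b, k-1)$ is admissible: the parity condition and two of the triangle inequalities are immediate from admissibility of $(a, b-1, k)$, while the third, equivalent to $n \leq k-1$, follows from the hypothesis $a+k > b-1$ combined with parity (the integer $a+k-b$ is odd, so $a+k > b-1$ forces $a+k \geq b+1$). By Theorem \ref{thm:JWP_hom_dim}, $\Hom(p_a \otimes p_{k-1}, p_b)$ is one-dimensional, so the LHS is $\lambda \cdot g_{a,b,k-1}$ for a unique scalar $\lambda$, and the task reduces to computing $\lambda$.

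Expanding the three trivalent vertices, the key difference from the $k+1$ case is at $V_3 = (k, 1, k-1)$: the internal bundles now have sizes $m_3 = 1$, $n_3 = 0$, $l_3 = k-1$, so the single strand between $p_k$ and $p_1$ ``hooks back'' toward $p_k$ instead of passing through to $p_{k-1}$. Absorbing the internal $p_{b-1}, p_1$ at $V_2$ into $p_b$ and the internal $p_1, p_{k-1}$ at $V_3$ into their neighbors, and merging the two copies of $p_k$ on the $V_1V_3$ edge by idempotency, the LHS reduces to the basis morphism $g_{a,b,k-1}$ decorated by an internal $p_k$ whose $k$ strands are split so that $n+1$ of them ultimately route into $p_b$, while $l_1 = (a+k-b+1)/2$ route to $p_a$ and $k-1$ route to $p_{k-1}$. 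Grouping the $(n+1)$-bundle on one side and the other bundles on the opposite side of $p_k$ exhibits this configuration as a roundabout in the sense of Lemma \ref{lemma:roundabout_lemma}, and substituting the $\theta$-net evaluation from Theorem \ref{thm:theta_net_trace} yields $\lambda$.

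The main obstacle I anticipate is tracking the $(-1)^n$ sign. Since every isolated cap against $p_k$ vanishes by property \eqref{eq:JWP2}, this sign cannot arise from a single cup-cap pairing, but must come from iterating the recursion \eqref{eq:JWPrelation} defining $p_k$ so as to peel off the $(n+1)$-bundle one strand at a time --- each iteration contributing a factor of $-[k-i]/[k-i+1]$, which telescope to $(-1)^n [k-n]/[k]$. An equivalent and possibly cleaner route is to close both sides of the claimed identity with $g_{a,b,k-1}^{\ast}$, reducing to an equation between a tetrahedral closed network on the LHS and $\lambda \cdot \theta(a,b,k-1)$ on the RHS, and to compute the tetrahedron directly exploiting the degenerate simplification afforded by its label-$1$ edge (via repeated application of Lemma \ref{lemma:roundabout_lemma}), with the sign emerging naturally from the resulting ratio of quantum factorials.
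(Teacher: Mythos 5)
Your proposal converges on exactly the technique the paper uses. The core of your argument --- after expanding the three vertices and absorbing the internal Jones--Wenzl projectors, iterate the recursion \eqref{eq:JWPrelation} on the central $p_k$ to peel off the $n$ extra strands one at a time, each iteration contributing a factor of $-\frac{[k-i-1]}{[k-i]}$ (because the first term of the recursion dies under the absorption hypothesis while the second introduces exactly that coefficient), with the product telescoping to $(-1)^n\frac{[k-n]}{[k]}$ --- is precisely the paper's proof, including the separate base case $n=0$ where direct absorption of the central idempotent into $p_a$ already gives $g_{a,b,k-1}$.

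The one part of your proposal I would flag is the middle paragraph, where you assert that grouping the bundles around the central $p_k$ ``exhibits this configuration as a roundabout in the sense of Lemma \ref{lemma:roundabout_lemma}'' and that the $\theta$-net evaluation then yields $\lambda$. After absorption, the configuration is tetrahedral (central $p_k$ routed to three distinct projectors $p_a$, $p_b$, $p_{k-1}$), not a bubble, so the roundabout lemma does not apply directly --- and, as you yourself immediately observe, the roundabout formula $\theta(a,c,d)/[a+1]$ carries no sign, so it could not produce $(-1)^n$ anyway. You catch this in the following paragraph and correctly redirect to the recursion, so the gap is self-repaired; the roundabout detour is best deleted or demoted to a remark. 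Your alternative route (closing with $g_{a,b,k-1}^{\ast}$ and evaluating the resulting tetrahedral net with label-$1$ edge) is also viable and is a genuinely different framing, but note that the standard one-strand tetrahedral evaluation is itself proved by the same kind of iterated recursion on a projector, so it would not actually sidestep the telescoping computation.
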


\begin{proof}
Expanding as in the previous proof we have that
\begin{align}
\begin{gathered}
\psscalebox{1.0 1.0} 
{
\begin{pspicture}(0,-0.91888887)(1.9158479,0.91888887)
\rput[bl](0.7532164,0.447193){$\scriptstyle b-1$}
\rput[bl](0.49637428,-0.14134502){$\scriptstyle k$}
\rput[bl](1.3557894,-0.1485965){$\scriptstyle 1$}
\rput[bl](1.0872514,-0.91888887){$\scriptstyle k-1$}
\rput[bl](0.0,0.7524561){$\scriptstyle a$}
\rput[bl](1.805848,0.73888886){$\scriptstyle b$}
\psline[linecolor=black, linewidth=0.02](1.7453245,0.7198784)(1.4368912,0.40336388)
\psline[linecolor=black, linewidth=0.02](0.17507282,0.7069489)(0.49158728,0.3985156)
\psline[linecolor=black, linewidth=0.02](0.9793618,-0.38187093)(0.98063165,-0.85933125)
\pspolygon[linecolor=black, linewidth=0.02](0.4963635,0.3950733)(0.48902255,0.40238094)(1.4360014,0.40238094)(0.98086435,-0.37953943)
\end{pspicture}
}
\end{gathered}
\ \ &= \ \ 
\begin{gathered}
\psscalebox{1.0 1.0} 
{
\begin{pspicture}(0,-2.06187)(4.2250166,2.06187)
\definecolor{colour0}{rgb}{0.99607843,0.99607843,0.99607843}
\rput{80.0}(1.2902669,-0.4546695){\psframe[linecolor=black, linewidth=0.02, fillstyle=solid,fillcolor=colour0, dimen=outer](0.99333316,0.8369599)(0.83878773,0.24605079)}
\rput{-40.0}(-0.7058619,0.71552056){\psframe[linecolor=black, linewidth=0.02, dimen=outer](0.7072801,1.6228846)(0.5527346,1.0319756)}
\psframe[linecolor=black, linewidth=0.02, fillstyle=solid,fillcolor=colour0, dimen=outer](1.6048522,1.3486652)(1.4503068,0.7577562)
\rput{-90.0}(3.343482,0.8925301){\psframe[linecolor=black, linewidth=0.02, dimen=outer](2.1952786,-0.9300214)(2.0407333,-1.5209305)}
\rput{-40.0}(0.7972607,0.9027535){\psframe[linecolor=black, linewidth=0.02, fillstyle=solid,fillcolor=colour0, dimen=outer](1.7160505,-0.3483966)(1.5615051,-0.93930566)}
\rput{40.0}(1.6913136,-1.9919859){\psframe[linecolor=black, linewidth=0.02, dimen=outer](3.6593976,1.6228846)(3.5048523,1.0319755)}
\psframe[linecolor=black, linewidth=0.02, fillstyle=solid,fillcolor=colour0, dimen=outer](2.7618256,1.3486652)(2.60728,0.7577561)
\psline[linecolor=black, linewidth=0.02](1.5974526,1.0671782)(2.6253595,1.0671782)
\psline[linecolor=black, linewidth=0.02](0.0064277425,1.858619)(0.5870088,1.3714536)
\psline[linecolor=black, linewidth=0.02](3.6380067,1.3714536)(4.218588,1.858619)
\psline[linecolor=black, linewidth=0.02](2.1252897,-1.3039755)(2.1252897,-2.0618703)
\psbezier[linecolor=black, linewidth=0.02](3.460534,1.3584573)(3.2473397,1.1895089)(3.0357325,1.0710566)(2.7544825,1.0710566)
\psbezier[linecolor=black, linewidth=0.02](3.585534,1.2209574)(3.1590724,0.77574134)(2.4840305,-1.2040427)(1.7563674,-0.62626487)
\psbezier[linecolor=black, linewidth=0.02](0.7674785,1.3802828)(0.99604994,1.2279018)(1.243669,1.1644098)(1.459542,1.170759)
\psbezier[linecolor=black, linewidth=0.02](0.62779593,1.2152034)(0.8182722,0.986632)(0.8373198,0.8913939)(0.81192297,0.63742566)
\psbezier[linecolor=black, linewidth=0.02](1.0087483,0.58663195)(1.0468435,0.7644098)(1.2373198,0.96123517)(1.459542,0.95488596)
\psbezier[linecolor=black, linewidth=0.02](1.6194476,-0.76377994)(1.7935022,-0.8796855)(2.1191845,-1.0175724)(2.1293833,-1.163368)
\psbezier[linecolor=black, linewidth=0.02](0.88811344,0.47869548)(0.7611293,-0.11178072)(1.3897008,-0.46098706)(1.5865262,-0.5943204)
\rput[bl](0.11985946,1.8882192){$\scriptstyle a$}
\rput[bl](4.0309706,1.88187){$\scriptstyle b$}
\rput[bl](2.2586184,-1.9538586){$\scriptstyle k-1$}
\rput[bl](0.8627166,-0.42289183){$\scriptstyle k$}
\rput[bl](1.0563674,1.3453622){$\scriptstyle m$}
\rput[bl](1.2182722,0.6564733){$\scriptstyle n$}
\rput[bl](0.6119229,0.786632){$\scriptstyle l$}
\rput[bl](1.8976372,1.1263145){$\scriptstyle b-1$}
\rput[bl](2.9857326,-0.11336802){$\scriptstyle 1$}
\end{pspicture}
}
\end{gathered} \notag \\
&= \ \ 
\begin{gathered}
\psscalebox{1.0 1.0} 
{
\begin{pspicture}(0,-1.6726408)(3.7017455,1.6726408)
\definecolor{colour1}{rgb}{0.39215687,0.78431374,1.0}
\rput[bl](2.025125,-1.5710937){$\scriptstyle k-1$}
\rput{28.139988}(0.84926987,-1.2733543){\psframe[linecolor=black, linewidth=0.02, dimen=outer](3.0422556,1.3530749)(2.88771,0.7621658)}
\psline[linecolor=black, linewidth=0.02](3.0287194,1.0892192)(3.6970294,1.4466631)
\rput[bl](3.4428663,1.4926407){$\scriptstyle b$}
\rput{-26.48331}(-0.40324286,0.4407651){\psframe[linecolor=black, linewidth=0.02, dimen=outer](0.81219393,1.372652)(0.6576485,0.781743)}
\psline[linecolor=black, linewidth=0.02](0.004459554,1.4479251)(0.6828241,1.1099517)
\rput[bl](0.13067278,1.514772){$\scriptstyle a$}
\rput{-90.0}(2.6970787,1.0245858){\psframe[linecolor=black, linewidth=0.02, dimen=outer](1.938105,-0.5407918)(1.7835596,-1.1317009)}
\psline[linecolor=black, linewidth=0.02](1.8681159,-0.9147459)(1.8681159,-1.6726407)
\psbezier[linecolor=black, linewidth=0.02](1.7451308,0.045601282)(1.5910535,0.4338445)(1.158952,0.72742414)(0.7412115,0.93970305)
\psbezier[linecolor=black, linewidth=0.02](1.9707917,0.044561252)(2.0545208,0.35275638)(2.4466321,0.6798117)(2.9471257,0.9561966)
\rput[bl](1.1977117,0.3293289){$\scriptstyle l$}
\rput[bl](1.7210084,0.97066015){$\scriptstyle m$}
\rput[bl](2.0062861,0.4915549){$\scriptstyle n$}
\rput[bl](2.600318,0.11590136){$\scriptstyle 1$}
\rput{-90.0}(1.8905213,1.8377006){\psframe[linecolor=black, linewidth=0.02, fillstyle=solid,fillcolor=colour1, dimen=outer](1.9413837,0.26904425)(1.7868382,-0.32186484)}
\psbezier[linecolor=black, linewidth=0.02](1.7839482,-0.099443085)(1.85334,-0.39849052)(1.8645277,-0.6660767)(1.8673198,-0.7665695)
\psbezier[linecolor=black, linewidth=0.02](1.9624922,-0.10174194)(2.0394258,-0.37543893)(2.3027554,-0.25310382)(2.3992982,0.0147036705)(2.4958408,0.28251117)(2.699168,0.67192096)(2.9859521,0.8646716)
\psbezier[linecolor=black, linewidth=0.02](0.84705365,1.164223)(1.7113394,0.7285087)(2.2150989,0.8270049)(2.8365273,1.1484334)
\end{pspicture}
}
\end{gathered}
\label{eq:expanded_k-1triangle}
\end{align}
where $n = \frac{k+b-1-a}{2}$ by equation \eqref{eq:theta_lblconv1}.

Assume that $a+k > b-1$, then
\[ n = \frac{k+b-1-a}{2} < \frac{k+b-1-(b-1)+k}{2} = k. \]

If $n=0$ then $(-1)^0\frac{[k-0]}{[k]}=1$, and in \eqref{eq:expanded_k-1triangle} we can absorb the central blue idempotent into $p_a$, obtaining
\begin{equation*}
\begin{gathered}
\psscalebox{1.0 1.0} 
{
\begin{pspicture}(0,-1.6726407)(3.7017453,1.6726407)
\rput[bl](2.0251248,-1.5710938){$\scriptstyle k-1$}
\rput{28.139988}(0.8492698,-1.2733542){\psframe[linecolor=black, linewidth=0.02, dimen=outer](3.0422554,1.3530748)(2.8877099,0.7621657)}
\psline[linecolor=black, linewidth=0.02](3.0287192,1.0892191)(3.697029,1.4466631)
\rput[bl](3.442866,1.4926407){$\scriptstyle b$}
\rput{-26.48331}(-0.40324286,0.440765){\psframe[linecolor=black, linewidth=0.02, dimen=outer](0.81219375,1.3726519)(0.65764827,0.7817429)}
\psline[linecolor=black, linewidth=0.02](0.004459324,1.447925)(0.68282384,1.1099516)
\rput[bl](0.13067256,1.5147719){$\scriptstyle a$}
\rput{-90.0}(2.6970785,1.0245856){\psframe[linecolor=black, linewidth=0.02, dimen=outer](1.9381047,-0.5407919)(1.7835593,-1.131701)}
\psline[linecolor=black, linewidth=0.02](1.8681157,-0.914746)(1.8681157,-1.6726408)
\psbezier[linecolor=black, linewidth=0.02](0.7837706,1.0295037)(1.5379224,0.6278397)(2.152533,0.62642294)(2.900914,1.0323852)
\rput[bl](1.0944377,-0.15846881){$\scriptstyle k-1$}
\rput[bl](1.5829129,0.98970765){$\scriptstyle b-1$}
\rput[bl](1.7717464,0.48256794){$\scriptstyle 1$}
\psbezier[linecolor=black, linewidth=0.02](0.84705347,1.1642228)(1.7113391,0.7285086)(2.2150986,0.82700485)(2.836527,1.1484334)
\psbezier[linecolor=black, linewidth=0.02](0.7210259,0.9013283)(1.3556029,0.57825965)(1.8604662,0.088608734)(1.8754563,-0.7644945)
\end{pspicture}
}
\end{gathered}
\ \ = \ \ 
\begin{gathered}
\psscalebox{1.0 1.0} 
{
\begin{pspicture}(0,-0.70747477)(1.6522115,0.70747477)
\psline[linecolor=black, linewidth=0.02](0.80185485,-0.64820975)(0.80185485,0.10923705)
\psline[linecolor=black, linewidth=0.02](0.80217934,0.10710939)(0.14621116,0.4858328)
\psline[linecolor=black, linewidth=0.02](0.80578566,0.10285407)(1.4617538,0.4815775)
\rput[bl](1.5422115,0.52747476){$\scriptstyle b$}
\rput[bl](0.0,0.541042){$\scriptstyle a$}
\rput[bl](0.9100797,-0.70747477){$\scriptstyle k-1$}
\end{pspicture}
}
\end{gathered}
\end{equation*}
as required.

In the case that $n>0$, we have the following identity for $0\leq j <n$,
\begin{equation}
\begin{gathered}
\psscalebox{1.0 1.0} 
{
\begin{pspicture}(0,-1.9574962)(4.1221237,1.9574962)
\rput[bl](2.2114208,-1.9461484){$\scriptstyle k-1$}
\psline[linecolor=black, linewidth=0.02](3.372877,1.3144104)(4.117202,1.7352834)
\rput[bl](3.8341298,1.7774962){$\scriptstyle b$}
\psline[linecolor=black, linewidth=0.02](0.0046319785,1.7248456)(0.7432374,1.3388219)
\rput[bl](0.14520095,1.8035548){$\scriptstyle a$}
\psline[linecolor=black, linewidth=0.02](2.0835373,-1.2873342)(2.0835373,-1.9574963)
\psbezier[linecolor=black, linewidth=0.02](1.8492413,0.07891031)(1.6920731,0.53377557)(1.2513038,0.87773305)(0.8251835,1.1264387)
\psbezier[linecolor=black, linewidth=0.02](2.1006393,0.06839888)(2.193492,0.4719026)(2.6579587,0.9198521)(3.212988,1.2817082)
\rput[bl](1.2621783,0.40775272){$\scriptstyle l$}
\rput[bl](1.9251201,1.293324){$\scriptstyle m$}
\rput[bl](1.9306372,0.72892666){$\scriptstyle n-j$}
\rput[bl](2.5509207,0.3863372){$\scriptstyle 1$}
\psbezier[linecolor=black, linewidth=0.02](1.8695295,-0.07565938)(1.9140967,-0.53022987)(1.9212822,-0.9567305)(1.9230754,-1.1094857)
\psbezier[linecolor=black, linewidth=0.02](0.9430644,1.3908001)(1.4013078,1.1473329)(2.6754093,1.1415683)(3.1588247,1.3722088)
\psbezier[linecolor=black, linewidth=0.02](3.2696881,1.1496068)(3.0983627,0.9569654)(2.907973,0.731985)(2.7574677,0.4777493)(2.6069624,0.22351359)(2.4899094,-0.0042754)(2.4072669,-0.1701818)(2.324624,-0.3360882)(2.1165056,-0.28604463)(2.0865088,-0.07168935)
\psbezier[linecolor=black, linewidth=0.02](3.3457344,1.003912)(3.008873,0.6900055)(2.304885,-0.6624101)(2.2525442,-1.1229863)
\rput{28.139988}(0.9781438,-1.4202011){\psframe[linecolor=black, linewidth=0.02, fillstyle=solid, dimen=outer](3.414081,1.660726)(3.2306788,0.8218763)}
\rput{-90.0}(1.977531,1.9657713){\psframe[linecolor=black, linewidth=0.02, fillstyle=solid, dimen=outer](2.057713,0.3420036)(1.8855892,-0.35376328)}
\rput{-28.1}(-0.5085771,0.5281356){\psframe[linecolor=black, linewidth=0.02, fillstyle=solid, dimen=outer](0.8926069,1.69961)(0.70920473,0.86076033)}
\psframe[linecolor=black, linewidth=0.02, fillstyle=solid, dimen=outer](2.5038838,-1.1095093)(1.6673398,-1.2979016)
\rput[bl](2.8163521,-0.23583712){$\scriptstyle j$}
\rput[bl](0.9283375,-0.7071668){$\scriptstyle k-j-1$}
\end{pspicture}
}
\end{gathered}
\ \ = \ \ -\frac{[k-j-1]}{[k-j]} \ 
\begin{gathered}
\psscalebox{1.0 1.0} 
{
\begin{pspicture}(0,-1.9574962)(4.1221237,1.9574962)
\rput[bl](2.2114208,-1.9461484){$\scriptstyle k-1$}
\psline[linecolor=black, linewidth=0.02](3.372877,1.3144104)(4.117202,1.7352834)
\rput[bl](3.8341298,1.7774962){$\scriptstyle b$}
\psline[linecolor=black, linewidth=0.02](0.0046319785,1.7248456)(0.7432374,1.3388219)
\rput[bl](0.14520095,1.8035548){$\scriptstyle a$}
\psline[linecolor=black, linewidth=0.02](2.0835373,-1.2873342)(2.0835373,-1.9574963)
\psbezier[linecolor=black, linewidth=0.02](1.8492413,0.07891031)(1.6920731,0.53377557)(1.2513038,0.87773305)(0.8251835,1.1264387)
\psbezier[linecolor=black, linewidth=0.02](2.1006393,0.06839888)(2.193492,0.4719026)(2.6579587,0.9198521)(3.212988,1.2817082)
\rput[bl](1.2621783,0.40775272){$\scriptstyle l$}
\rput[bl](1.9251201,1.293324){$\scriptstyle m$}
\rput[bl](1.6572195,0.7593064){$\scriptstyle n-j-1$}
\rput[bl](2.5509207,0.3863372){$\scriptstyle 1$}
\psbezier[linecolor=black, linewidth=0.02](1.8695295,-0.07565938)(1.9140967,-0.53022987)(1.9212822,-0.9567305)(1.9230754,-1.1094857)
\psbezier[linecolor=black, linewidth=0.02](0.9430644,1.3908001)(1.4013078,1.1473329)(2.6754093,1.1415683)(3.1588247,1.3722088)
\psbezier[linecolor=black, linewidth=0.02](3.2696881,1.1496068)(3.0983627,0.9569654)(2.907973,0.731985)(2.7574677,0.4777493)(2.6069624,0.22351359)(2.4899094,-0.0042754)(2.4072669,-0.1701818)(2.324624,-0.3360882)(2.1165056,-0.28604463)(2.0865088,-0.07168935)
\psbezier[linecolor=black, linewidth=0.02](3.3457344,1.003912)(3.008873,0.6900055)(2.304885,-0.6624101)(2.2525442,-1.1229863)
\rput{28.139988}(0.9781438,-1.4202011){\psframe[linecolor=black, linewidth=0.02, fillstyle=solid, dimen=outer](3.414081,1.660726)(3.2306788,0.8218763)}
\rput{-28.1}(-0.5085771,0.5281356){\psframe[linecolor=black, linewidth=0.02, fillstyle=solid, dimen=outer](0.8926069,1.69961)(0.70920473,0.86076033)}
\psframe[linecolor=black, linewidth=0.02, fillstyle=solid, dimen=outer](2.5038838,-1.1095093)(1.6673398,-1.2979016)
\rput[bl](2.8163521,-0.23583712){$\scriptstyle j+1$}
\rput[bl](0.9283375,-0.7071668){$\scriptstyle k-j-2$}
\psframe[linecolor=black, linewidth=0.02, dimen=outer](2.2399642,0.08231499)(1.7184453,-0.08983691)
\end{pspicture}
}
\end{gathered}
\label{eq:triangle_recursive_relation}
\end{equation}
by expanding the central idempotent according to the inductive relation \eqref{eq:JWPrelation} and applying the absorption rule.

Applying \eqref{eq:triangle_recursive_relation} recursively to \eqref{eq:expanded_k-1triangle} we get that
\begin{align*}
\begin{gathered}
\psscalebox{1.0 1.0} 
{
\begin{pspicture}(0,-1.6726408)(3.7017455,1.6726408)
\rput[bl](2.025125,-1.5710937){$\scriptstyle k-1$}
\rput{28.139988}(0.84926987,-1.2733543){\psframe[linecolor=black, linewidth=0.02, dimen=outer](3.0422556,1.3530749)(2.88771,0.7621658)}
\psline[linecolor=black, linewidth=0.02](3.0287194,1.0892192)(3.6970294,1.4466631)
\rput[bl](3.4428663,1.4926407){$\scriptstyle b$}
\rput{-26.48331}(-0.40324286,0.4407651){\psframe[linecolor=black, linewidth=0.02, dimen=outer](0.81219393,1.372652)(0.6576485,0.781743)}
\psline[linecolor=black, linewidth=0.02](0.004459554,1.4479251)(0.6828241,1.1099517)
\rput[bl](0.13067278,1.514772){$\scriptstyle a$}
\rput{-90.0}(2.6970787,1.0245858){\psframe[linecolor=black, linewidth=0.02, dimen=outer](1.938105,-0.5407918)(1.7835596,-1.1317009)}
\psline[linecolor=black, linewidth=0.02](1.8681159,-0.9147459)(1.8681159,-1.6726407)
\psbezier[linecolor=black, linewidth=0.02](1.7451308,0.045601282)(1.5910535,0.4338445)(1.158952,0.72742414)(0.7412115,0.93970305)
\psbezier[linecolor=black, linewidth=0.02](1.9707917,0.044561252)(2.0545208,0.35275638)(2.4466321,0.6798117)(2.9471257,0.9561966)
\rput[bl](1.1977117,0.3293289){$\scriptstyle l$}
\rput[bl](1.7210084,0.97066015){$\scriptstyle m$}
\rput[bl](2.0062861,0.4915549){$\scriptstyle n$}
\rput[bl](2.600318,0.11590136){$\scriptstyle 1$}
\rput{-90.0}(1.8905213,1.8377006){\psframe[linecolor=black, linewidth=0.02, fillstyle=solid, dimen=outer](1.9413837,0.26904425)(1.7868382,-0.32186484)}
\psbezier[linecolor=black, linewidth=0.02](1.7839482,-0.099443085)(1.85334,-0.39849052)(1.8645277,-0.6660767)(1.8673198,-0.7665695)
\psbezier[linecolor=black, linewidth=0.02](1.9624922,-0.10174194)(2.0394258,-0.37543893)(2.3027554,-0.25310382)(2.3992982,0.0147036705)(2.4958408,0.28251117)(2.699168,0.67192096)(2.9859521,0.8646716)
\psbezier[linecolor=black, linewidth=0.02](0.84705365,1.164223)(1.7113394,0.7285087)(2.2150989,0.8270049)(2.8365273,1.1484334)
\end{pspicture}
}
\end{gathered}
\ \ &= \ \ 
\left[\prod_{j=0}^{n-1} \left( -\frac{[k-j-1]}{[k-j]} \right)\right]
\begin{gathered}
\psscalebox{1.0 1.0} 
{
\begin{pspicture}(0,-1.7688127)(3.961637,1.7688127)
\rput[bl](2.1253252,-1.7586113){$\scriptstyle k-1$}
\psline[linecolor=black, linewidth=0.02](3.2415948,1.172515)(3.9569616,1.5508648)
\rput[bl](3.6849027,1.5888126){$\scriptstyle b$}
\psline[linecolor=black, linewidth=0.02](0.004391805,1.5414816)(0.7142617,1.19446)
\rput[bl](0.13949192,1.6122383){$\scriptstyle a$}
\psline[linecolor=black, linewidth=0.02](2.002417,-1.1663609)(2.002417,-1.7688128)
\psbezier[linecolor=black, linewidth=0.02](1.8255233,0.2960094)(1.8162656,0.5004071)(1.2368451,0.81937075)(0.7759335,0.9995398)
\rput[bl](1.226128,0.42303577){$\scriptstyle l$}
\rput[bl](1.8501631,1.1535591){$\scriptstyle m$}
\rput[bl](2.554134,0.55795705){$\scriptstyle 1$}
\psbezier[linecolor=black, linewidth=0.02](1.7454768,0.17894456)(1.818211,-0.3445286)(1.8464745,-0.8574244)(1.8481979,-1.0064814)
\psbezier[linecolor=black, linewidth=0.02](0.9063144,1.2411866)(1.3467298,1.0223182)(2.5712621,1.0171361)(3.0358703,1.2244737)
\psbezier[linecolor=black, linewidth=0.02](3.1424205,1.0243623)(2.9777606,0.8511846)(2.8631225,0.7008754)(2.6501281,0.4203864)(2.4371338,0.13989739)(2.4444234,0.13228737)(2.3436382,0.051065173)(2.2428532,-0.03015702)(1.9956591,-0.054481093)(1.9411998,0.16218907)
\psbezier[linecolor=black, linewidth=0.02](3.2155082,0.8933879)(2.8917525,0.6111972)(2.2151532,-0.6045763)(2.1648488,-1.018618)
\rput{28.139988}(0.8994202,-1.3751106){\psframe[linecolor=black, linewidth=0.02, fillstyle=solid, dimen=outer](3.2811959,1.4838402)(3.104929,0.7297443)}
\rput{-28.1}(-0.44705138,0.49711287){\psframe[linecolor=black, linewidth=0.02, fillstyle=solid, dimen=outer](0.85782,1.5187956)(0.68155307,0.7646997)}
\psframe[linecolor=black, linewidth=0.02, fillstyle=solid, dimen=outer](2.40641,-1.0065026)(1.6024119,-1.1758605)
\rput[bl](2.7323508,-0.15717602){$\scriptstyle n$}
\rput[bl](0.7940999,-0.52351564){$\scriptstyle k-n-1$}
\psframe[linecolor=black, linewidth=0.02, fillstyle=solid, dimen=outer](2.0801423,0.30428144)(1.578913,0.14952302)
\end{pspicture}
}
\end{gathered} \\
&= \ \ 
(-1)^n\frac{[k-n]}{[k]}\ 
\begin{gathered}
\psscalebox{1.0 1.0} 
{
\begin{pspicture}(0,-1.4955456)(3.1620328,1.4955456)
\rput[bl](1.7297653,-1.4053617){$\scriptstyle k-1$}
\rput{28.139988}(0.73754656,-1.0844111){\psframe[linecolor=black, linewidth=0.02, dimen=outer](2.5981512,1.191597)(2.4662063,0.6668097)}
\psline[linecolor=black, linewidth=0.02](2.5865943,0.95726633)(3.157171,1.2747128)
\rput[bl](2.9401767,1.3155457){$\scriptstyle b$}
\rput{-26.48331}(-0.35619467,0.3794868){\psframe[linecolor=black, linewidth=0.02, dimen=outer](0.69421285,1.2089834)(0.5622681,0.68419623)}
\psline[linecolor=black, linewidth=0.02](0.0046012304,1.2758336)(0.583762,0.97567886)
\rput[bl](0.11235709,1.3352004){$\scriptstyle a$}
\rput{-90.0}(2.3422413,0.8367561){\psframe[linecolor=black, linewidth=0.02, dimen=outer](1.6554711,-0.49034894)(1.5235263,-1.0151362)}
\psline[linecolor=black, linewidth=0.02](1.5957172,-0.82245815)(1.5957172,-1.4955456)
\psbezier[linecolor=black, linewidth=0.02](0.6699463,0.904233)(1.3723478,0.54751456)(1.8385415,0.54625636)(2.4774795,0.9067921)
\rput[bl](0.29615715,-0.13617307){$\scriptstyle k-n-1$}
\rput[bl](1.4692956,0.864012){$\scriptstyle m$}
\rput[bl](1.5280751,0.41362044){$\scriptstyle 1$}
\psbezier[linecolor=black, linewidth=0.02](0.7239747,1.0238773)(1.4618676,0.6369188)(1.8919574,0.7243934)(2.4225085,1.0098547)
\psbezier[linecolor=black, linewidth=0.02](0.6163774,0.79040027)(1.158154,0.50348246)(1.4769913,0.058866635)(1.4897891,-0.6987756)
\psbezier[linecolor=black, linewidth=0.02](2.542409,0.7826398)(1.8692383,0.37776175)(1.7131406,-0.07589677)(1.6838723,-0.68565285)
\rput[bl](2.0061238,-0.118086874){$\scriptstyle n$}
\end{pspicture}
}
\end{gathered}
\end{align*}
where the product telescopes to give the required coefficient.
\end{proof}

\begin{remark}
It should be noted that by Remark \ref{rem:JWP_lat_refl_dual} all our diagrammatic identities hold under planar Euclidean transformations (specifically, reflections and rotations); this follows simply by applying the transformation to all diagrams in the proofs.
\end{remark}

Let us use the notation
\begin{equation*}
\sumtwo{k=r}{s}f(k) = f(r) + f(r+2) + \dotsb + f(s-2) + f(s)
\end{equation*}
to indicate that a summation is to be taken in steps of two over the range of the indexing variable in the case that $s-r$ is even.
Now we come to the main theorem of this chapter.

\begin{theorem}[Tensor product identity]
\label{thm:JWP-tensorprod-identity}
For $a,b \geq 0$,
\begin{equation}
\label{eq:JWP-tensorprod-identity}
p_a \otimes p_b = 
\sumtwo{k=\abs{a-b}}{a+b} \left( \frac{[k+1]}{\theta(a,b,k)}
\begin{gathered}
\psscalebox{1.0 1.0} 
{
\begin{pspicture}(0,-0.792027)(0.94241494,0.792027)
\psline[linecolor=black, linewidth=0.02](0.45697075,0.20267501)(0.45697075,-0.21881929)
\psline[linecolor=black, linewidth=0.02](0.4562587,0.20204714)(0.8494084,0.5487268)
\psline[linecolor=black, linewidth=0.02](0.067692176,0.5459411)(0.46084186,0.19926146)
\psline[linecolor=black, linewidth=0.02](0.06372307,-0.56065685)(0.45687276,-0.21397717)
\psline[linecolor=black, linewidth=0.02](0.45066956,-0.20976295)(0.84381926,-0.5564426)
\rput[bl](0.010811806,0.6225269){$\scriptstyle a$}
\rput[bl](0.52679443,-0.08153543){$\scriptstyle k$}
\rput[bl](0.832415,0.61202705){$\scriptstyle b$}
\rput[bl](0.0,-0.78152716){$\scriptstyle a$}
\rput[bl](0.8216032,-0.792027){$\scriptstyle b$}
\end{pspicture}
}
\end{gathered}
\ \right)
\end{equation}
\end{theorem}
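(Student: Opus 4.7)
The plan is to denote by $\pi_k$ the diagram appearing in the summand of the right-hand side (without its scalar coefficient), and set $e_k := \frac{[k+1]}{\theta(a,b,k)}\pi_k$, so the claimed identity reads $p_a\otimes p_b = \sum_k e_k$. The first step is to apply Lemma \ref{lemma:roundabout_lemma} to the composition $\pi_k\circ\pi_j$: stacking the two diagrams produces two closed "roundabout" bubbles formed by the $p_a$ and $p_b$ strands, with two trivalent vertices joined along both a $j$-edge and a $k$-edge. By Lemma \ref{lemma:roundabout_lemma} these bubbles vanish when $k\neq j$ and contribute a factor of $\theta(a,b,k)/[k+1]$ times a single $p_k$ when $k=j$. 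Hence $\{e_k\}$ is a system of mutually orthogonal idempotents in $\Hom_{\Kar\TL}(p_a\otimes p_b,\, p_a\otimes p_b)$, and by Jones-Wenzl absorption $E := \sum_k e_k$ satisfies $E = (p_a\otimes p_b)\,E\,(p_a\otimes p_b)$ and is therefore an idempotent in this endomorphism space.

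It remains to prove $E = p_a\otimes p_b$, which I would establish by induction on $b$. The base case $b=0$ is immediate: the only admissible $k$ is $a$, and since $\theta(a,0,a)=[a+1]$, the sole term collapses to $\pi_a = p_a$. The case $b=1$ is a direct rearrangement of Wenzl's recursion \eqref{eq:JWPrelation}. Using Theorem \ref{thm:theta_net_trace} one finds $\theta(a,1,a+1)=[a+2]$ and $\theta(a,1,a-1)=[a+1]$, so the identity reads $p_a\otimes\id_1 = \pi_{a+1} + \frac{[a]}{[a+1]}\pi_{a-1}$. The term $\pi_{a+1}$ reduces to $p_{a+1}$ by the absorption property, and the remaining summand is precisely the Wenzl correction, so this matches \eqref{eq:JWPrelation} rearranged.

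For $b\geq 2$, apply Wenzl's recursion to rewrite the right-most $p_b$ as $(p_{b-1}\otimes\id_1)$ minus a $\frac{[b-1]}{[b]}$-correction, then tensor on the left with $p_a$. The first term becomes $(p_a\otimes p_{b-1})\otimes\id_1$, into which I substitute the inductive hypothesis to obtain a combination of diagrams $\pi_j^{(a,b-1)}\otimes\id_1$. The additional free strand on the right can be fused with the middle $p_j$ via the already-established $b=1$ case, producing contributions to $\pi_{j+1}^{(a,b)}$ and $\pi_{j-1}^{(a,b)}$; the triangle-shrinking lemmas \ref{lemma:shrink_triangle_k+1} and \ref{lemma:shrink_triangle_k-1} then collapse the "little triangles" that arise from the reconnections, recasting each term as a scalar multiple of some $\pi_k^{(a,b)}$. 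The $\frac{[b-1]}{[b]}$-correction is treated analogously.

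The main obstacle is the coefficient bookkeeping in this inductive step: one must verify that summing all contributions to a given $\pi_k^{(a,b)}$ (coming from fusions of $\pi_{k-1}^{(a,b-1)}$ and $\pi_{k+1}^{(a,b-1)}$, together with the Wenzl correction) yields exactly $[k+1]/\theta(a,b,k)$. This hinges on the quantum-integer recursion $[n+1]=[2][n]-[n-1]$ of Proposition \ref{prop:quantumint_relation}, the explicit theta formula of Theorem \ref{thm:theta_net_trace}, and a telescoping argument to absorb the $(-1)^n[k-n]/[k]$ factors contributed by Lemma \ref{lemma:shrink_triangle_k-1}. Once the bookkeeping closes, the orthogonal idempotent structure of the $e_k$ pins down the decomposition and the identity follows.
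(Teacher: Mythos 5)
Your argument is essentially the paper's proof: the same induction on $b$, the same base cases $b=0,1$ using Wenzl's recursion and the theta values $\theta(a,1,a\pm 1)$, the same use of the triangle-shrinking Lemmas \ref{lemma:shrink_triangle_k+1} and \ref{lemma:shrink_triangle_k-1} in the inductive step, and the same coefficient bookkeeping closed by Proposition \ref{prop:quantumint_relation} and a telescoping identity. The one genuine addition --- your opening observation, via Lemma \ref{lemma:roundabout_lemma}, that the $e_k$ form a family of mutually orthogonal idempotents --- is correct (it is precisely the computation that appears later in the proof of Lemma \ref{lemma:JWP-tensor-isomorphism}), but it does no work here: you never actually use orthogonality to conclude $E=p_a\otimes p_b$, and the final sentence claiming that the orthogonal idempotent structure ``pins down the decomposition'' is a non sequitur, since knowing $E$ is an idempotent dominated by $p_a\otimes p_b$ does not by itself force equality (that would require an independent input such as a trace count together with a nondegeneracy argument, or the semisimplicity one is trying to establish). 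The identity is established entirely by the induction you then carry out, which is the paper's route.
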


\begin{proof}
First observe that $a+b-\abs{a-b} = 2\min(a,b)$ is even, and that for every $k$ in the range of the two-step sum, $k = \abs{a-b}+2i$ for some $0 \leq i \leq \min(a,b)$.
Then for all such $k$, the sum $a+b+k$ is even, and
\begin{align*}
a+b &= \abs{a-b}+2\min(a,b) \geq k, \\
b+k &\geq b+\abs{a-b} \geq \max(a,b) \geq a, \shortintertext{and}
a+k &\geq a+\abs{a-b} \geq \max(a,b) \geq b,
\end{align*}
so that $(a,b,k)$ is admissible over the range of the sum.

If $a=0$ then
\begin{align*}
\sumtwo{k=\abs{a-b}}{a+b} \left( \frac{[k+1]}{\theta(a,b,k)}
\begin{gathered}
\psscalebox{1.0 1.0} 
{

}
\end{gathered} \ .
\end{multlined}
\end{align*}

It is now a tedious algebraic calculation to verify that the coefficients in this expression agree with those in \eqref{eq:JWP-tensorprod-identity}.
The coefficients for the first and last terms in the above expansion are straightforward to compute using Theorem \ref{thm:theta_net_trace}, these we leave to the reader.
We show here that for $k^\prime \in \{a-b+2, a-b+4, \dotsc, a+b-2\}$,
\begin{equation}
\lambda(a,b-1,k^\prime-1) + \lambda(a,b-1,k^\prime+1)\frac{[k^\prime+1-n_{k^\prime+1}]^2}{[k^\prime+1][k^\prime+2]} = \lambda(a,b,k^\prime).
\label{eq:JWP_tensorprod_coeffs}
\end{equation}

Write $k^\prime = a-b+2+2i$ for some $0 \leq i \leq b-2$.
Then one calculates that
\begin{align*}
\lambda(a,b,k^\prime) &= \frac{[a-b+3+2i]}{\theta(a,b,a-b+2+2i)} \\
                     &= \frac{[a]![b]![a-b+3+2i]!}{[b-1-i]![i+1]![a-b+1+i]![a+2+i]!} \ , \\
\intertext{and similarly}
\lambda(a,b-1,k^\prime-1) &= \frac{[a]![b-1]![a-b+2+2i]!}{[b-1-i]![i]![a-b+1+i]![a+1+i]!} \ , \\
\lambda(a,b-1,k^\prime+1) &= \frac{[a]![b-1]![a-b+4+2i]!}{[b-2-i]![i+1]![a-b+2+i]![a+2+i]!} \ ,
\end{align*}
so that the left-hand side of \eqref{eq:JWP_tensorprod_coeffs} is equal to
\begin{equation}
\lambda(a,b,k^\prime) \left( \frac{[i+1][a+2+i]+[b-1-i][a-b+2+i]}{[b][a-b+3+2i]} \right).
\label{eq:JWP_tensorprod_coeffs_LHS}
\end{equation}
But since
\begin{align*}
  & [i+1][a+2+i]+[b-1-i][a-b+2+i] \\
= \ & \frac{(q^{i+1}-q^{-i-1})(q^{a+2+i}-q^{-a-2-i})+(q^{b-1-i}-q^{-b+1+i})(q^{a-b+2+i}-q^{-a+b-2-i})}{(q-q^{-1})^2} \\
= \ & \frac{q^{a+2i+3}-q^{a-2b+2i+3}-q^{-a+2b-2i-3}+q^{-a-2i-3}}{(q-q^{-1})^2} \\
= \ & \frac{(q^b-q^{-b})(q^{a-b+2i+3}-q^{-a+b-2i-3})}{(q-q^{-1})^2} \\
= \ & [b][a-b+2i+3] \ ,
\end{align*}
we have that \eqref{eq:JWP_tensorprod_coeffs_LHS} is equal to $\lambda(a,b,k^\prime)$, as required.
Hence by induction the identity holds for all $b \leq a$.

Finally, if $b > a$ observe that
\begin{equation*}
p_b \otimes p_a = 
\sumtwo{k=b-a}{a+b} \left( \frac{[k+1]}{\theta(b,a,k)}
\begin{gathered}
\psscalebox{1.0 1.0} 
{
\begin{pspicture}(0,-0.792027)(0.94241494,0.792027)
\psline[linecolor=black, linewidth=0.02](0.45697075,0.20267501)(0.45697075,-0.21881929)
\psline[linecolor=black, linewidth=0.02](0.4562587,0.20204714)(0.8494084,0.5487268)
\psline[linecolor=black, linewidth=0.02](0.067692176,0.5459411)(0.46084186,0.19926146)
\psline[linecolor=black, linewidth=0.02](0.06372307,-0.56065685)(0.45687276,-0.21397717)
\psline[linecolor=black, linewidth=0.02](0.45066956,-0.20976295)(0.84381926,-0.5564426)
\rput[bl](0.010811806,0.6225269){$\scriptstyle b$}
\rput[bl](0.52679443,-0.08153543){$\scriptstyle k$}
\rput[bl](0.832415,0.61202705){$\scriptstyle a$}
\rput[bl](0.0,-0.78152716){$\scriptstyle b$}
\rput[bl](0.8216032,-0.792027){$\scriptstyle a$}
\end{pspicture}
}
\end{gathered}
\ \right) ,
\end{equation*}
then reflecting all diagrams about the vertical axis gives
\begin{equation*}
p_a \otimes p_b = 
\sumtwo{k=b-a}{a+b} \left( \frac{[k+1]}{\theta(b,a,k)}
\begin{gathered}
\psscalebox{1.0 1.0} 
{
\begin{pspicture}(0,-0.792027)(0.94241494,0.792027)
\psline[linecolor=black, linewidth=0.02](0.45697075,0.20267501)(0.45697075,-0.21881929)
\psline[linecolor=black, linewidth=0.02](0.4562587,0.20204714)(0.8494084,0.5487268)
\psline[linecolor=black, linewidth=0.02](0.067692176,0.5459411)(0.46084186,0.19926146)
\psline[linecolor=black, linewidth=0.02](0.06372307,-0.56065685)(0.45687276,-0.21397717)
\psline[linecolor=black, linewidth=0.02](0.45066956,-0.20976295)(0.84381926,-0.5564426)
\rput[bl](0.010811806,0.6225269){$\scriptstyle a$}
\rput[bl](0.52679443,-0.08153543){$\scriptstyle k$}
\rput[bl](0.832415,0.61202705){$\scriptstyle b$}
\rput[bl](0.0,-0.78152716){$\scriptstyle a$}
\rput[bl](0.8216032,-0.792027){$\scriptstyle b$}
\end{pspicture}
}
\end{gathered}
\ \right)
\ \ = \ \ 
\sumtwo{k=b-a}{a+b} \left( \frac{[k+1]}{\theta(a,b,k)}
\begin{gathered}
\psscalebox{1.0 1.0} 
{
\begin{pspicture}(0,-0.792027)(0.94241494,0.792027)
\psline[linecolor=black, linewidth=0.02](0.45697075,0.20267501)(0.45697075,-0.21881929)
\psline[linecolor=black, linewidth=0.02](0.4562587,0.20204714)(0.8494084,0.5487268)
\psline[linecolor=black, linewidth=0.02](0.067692176,0.5459411)(0.46084186,0.19926146)
\psline[linecolor=black, linewidth=0.02](0.06372307,-0.56065685)(0.45687276,-0.21397717)
\psline[linecolor=black, linewidth=0.02](0.45066956,-0.20976295)(0.84381926,-0.5564426)
\rput[bl](0.010811806,0.6225269){$\scriptstyle a$}
\rput[bl](0.52679443,-0.08153543){$\scriptstyle k$}
\rput[bl](0.832415,0.61202705){$\scriptstyle b$}
\rput[bl](0.0,-0.78152716){$\scriptstyle a$}
\rput[bl](0.8216032,-0.792027){$\scriptstyle b$}
\end{pspicture}
}
\end{gathered}
\ \right)
\end{equation*}
by Corollary \ref{cor:theta_trace_invariance}.
\end{proof}

\chapter{Semisimplicity of generic Temperley-Lieb-Jones}
\begin{defn}
A $\mathbbm{F}$-linear category $\mathcal{C}$ with biproduct $\oplus$ is \textbf{semisimple} if there is a collection of disjoint simple objects $S \subset \Obj(\mathcal{C})$ such that every object $x$ of $\mathcal{C}$ is isomorphic to a finite biproduct $\bigoplus_{i=1}^n x_i$ of objects $x_i \in S$.
\end{defn}

We remark that if $\mathcal{C}$ is also abelian, this definition is equivalent to the usual notion of semisimplicity in abelian categories.

Recall (cf. Corollary \ref{cor:JWP-disjointsimpleobjs}) that in $\TLJ$ the Jones-Wenzl idempotents form a collection of disjoint simple objects and the direct sum $\oplus$ is a biproduct.

We claim that generic Temperley-Lieb-Jones is semisimple.
In fact most of the work to show this has already been done in the previous chapter; all we still need is the following important lemma, from which semisimplicity will follow as a consequence.

\begin{lemma}
\label{lemma:JWP-tensor-isomorphism}
Let $p_a, p_b \in \Obj(\TLJ)$ be Jones-Wenzl idempotents.
Then
\begin{equation*}
p_a \otimes p_b \cong p_{\abs{a-b}} \oplus p_{\abs{a-b}+2} \oplus \dotsb \oplus p_{a+b},
\end{equation*}
where the direct sum runs from $p_{\abs{a-b}}$ to $p_{a+b}$ in steps of two.
\end{lemma}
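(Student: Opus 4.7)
My plan is to exhibit explicit mutually inverse morphisms in $\TLJ$ realizing the claimed decomposition, using the two main results of the previous chapter. For each $k \in \{\abs{a-b}, \abs{a-b}+2, \dotsc, a+b\}$, let $\mu_k \colon p_a \otimes p_b \to p_k$ denote the downward (merging) trivalent vertex morphism $g_{a,b,k}$, and let $\nu_k \colon p_k \to p_a \otimes p_b$ denote its upward (splitting) counterpart; by Theorem~\ref{thm:JWP_hom_dim} these span the respective one-dimensional hom-spaces. Set $\lambda_k = [k+1]/\theta(a,b,k)$. The proposed isomorphism $\Phi \colon p_a \otimes p_b \to \bigoplus_k p_k$ and its candidate inverse $\Psi$ are, viewed as morphisms in $\Mat(\Kar\TL)$, the matrices whose $k$-th entries are $\lambda_k\,\mu_k$ and $\nu_k$ respectively.

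First I would verify that $\Psi \circ \Phi = \mathbbm{1}_{p_a \otimes p_b}$. By the definition of matrix composition in $\Mat(\Kar\TL)$, this reduces to the single equation
\[ \sum_k \lambda_k\, (\nu_k \circ \mu_k) \;=\; \mathbbm{1}_{p_a \otimes p_b}, \]
which is exactly the tensor product identity of Theorem~\ref{thm:JWP-tensorprod-identity}: each composite $\nu_k \circ \mu_k$ is precisely the hourglass-shaped diagram appearing as the $k$-th summand on the right-hand side of \eqref{eq:JWP-tensorprod-identity}, and $\mathbbm{1}_{p_a \otimes p_b} = p_a \otimes p_b$ as morphisms in $\Kar\TL$.

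Next I would check $\Phi \circ \Psi = \mathbbm{1}_{\bigoplus_k p_k}$. In matrix terms this consists of the system of equalities $\lambda_k\, (\mu_k \circ \nu_{k'}) = \delta_{k,k'}\,\mathbbm{1}_{p_k}$, one for each ordered pair $(k, k')$. Each composite $\mu_k \circ \nu_{k'}$ is a ``roundabout'' diagram with outer edges labelled $k$ (top) and $k'$ (bottom) and with interior loop edges labelled $a$ and $b$. Lemma~\ref{lemma:roundabout_lemma} therefore kills it off the diagonal, and on the diagonal evaluates it as $(\theta(k,a,b)/[k+1])\,\mathbbm{1}_{p_k} = (1/\lambda_k)\,\mathbbm{1}_{p_k}$ after reordering the arguments of $\theta$ using Corollary~\ref{cor:theta_trace_invariance}. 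Multiplying through by $\lambda_k$ gives exactly the desired identity.

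Taken together these two computations exhibit $\Phi$ and $\Psi$ as mutually inverse, establishing the isomorphism. The substantive content resides entirely in Theorem~\ref{thm:JWP-tensorprod-identity} and Lemma~\ref{lemma:roundabout_lemma}; what remains is careful bookkeeping of the scalar factors and unpacking the matrix-category conventions, so I anticipate no real obstacle here.
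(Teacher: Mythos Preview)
Your proposal is correct and follows essentially the same approach as the paper: both construct explicit inverse morphisms from the trivalent vertex diagrams, verify one composite via Theorem~\ref{thm:JWP-tensorprod-identity} and the other via Lemma~\ref{lemma:roundabout_lemma} (together with Corollary~\ref{cor:theta_trace_invariance}). The only cosmetic difference is that the paper places the scalar factors $\lambda_k = [k+1]/\theta(a,b,k)$ on the map $\psi \colon \bigoplus_k p_k \to p_a \otimes p_b$ rather than on $\Phi$, which of course makes no difference.
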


\begin{proof}
Write $P = p_{\abs{a-b}} \oplus p_{\abs{a-b}+2} \oplus \dotsb \oplus p_{a+b}$.
Note that we showed in the proof of Theorem \ref{thm:JWP-tensorprod-identity} that $(a,b,k)$ is admissible for $k \in \{\abs{a-b}, \abs{a-b}+2, \dotsc, a+b\}$; hence let $\varphi \colon p_a \otimes p_b \rightarrow P$ be the morphism with single column indexed by $p_a \otimes p_b$ and rows indexed by $p_{\abs{a-b}}, p_{\abs{a-b}+2}, \dotsc, p_{a+b}$, whose entry in the row indexed by $p_k$ is given by
\begin{equation*}
g_{a,b,k} \ \ = \ \ 
\begin{gathered}
\psscalebox{1.0 1.0} 
{
\begin{pspicture}(0,-0.5435213)(1.1242857,0.5435213)
\psline[linecolor=black, linewidth=0.02](0.5709957,0.29500684)(0.5709957,-0.15044771)
\psline[linecolor=black, linewidth=0.02](0.56473863,-0.14758837)(0.9296337,-0.4030906)
\psline[linecolor=black, linewidth=0.02](0.5772527,-0.14282647)(0.2123577,-0.3983287)
\rput[bl](0.47593985,0.3535213){$\scriptstyle k$}
\rput[bl](0.0,-0.5339975){$\scriptstyle a$}
\rput[bl](1.0142857,-0.5435213){$\scriptstyle b$}
\end{pspicture}
}
\end{gathered}\ .
\end{equation*}
That is,
\begin{equation*}
\varphi = \begin{bmatrix}
\begin{gathered}
\psscalebox{1.0 1.0} 
{
\begin{pspicture}(0,-0.5435213)(1.1242857,0.5435213)
\psline[linecolor=black, linewidth=0.02](0.5709957,0.29500684)(0.5709957,-0.15044771)
\psline[linecolor=black, linewidth=0.02](0.56473863,-0.14758837)(0.9296337,-0.4030906)
\psline[linecolor=black, linewidth=0.02](0.5772527,-0.14282647)(0.2123577,-0.3983287)
\rput[bl](0.27593985,0.2935213){$\scriptstyle \abs{a-b}$}
\rput[bl](0.0,-0.5339975){$\scriptstyle a$}
\rput[bl](1.0142857,-0.5435213){$\scriptstyle b$}
\end{pspicture}
}
\end{gathered} \rule{0pt}{5ex} \ \\
\begin{gathered}
\psscalebox{1.0 1.0} 
{
\begin{pspicture}(0,-0.5575564)(1.1242857,0.5575564)
\psline[linecolor=black, linewidth=0.02](0.5709957,0.28097174)(0.5709957,-0.1644828)
\psline[linecolor=black, linewidth=0.02](0.56473863,-0.16162346)(0.9296337,-0.4171257)
\psline[linecolor=black, linewidth=0.02](0.5772527,-0.15686156)(0.2123577,-0.4123638)
\rput[bl](0.09348371,0.3075564){$\scriptstyle|a-b|+2$}
\rput[bl](0.0,-0.5480326){$\scriptstyle a$}
\rput[bl](1.0142857,-0.5575564){$\scriptstyle b$}
\end{pspicture}
}
\end{gathered} \rule[-5ex]{0pt}{5ex} \rule{0pt}{6.5ex} \ \\
\vdots \\
\begin{gathered}
\psscalebox{1.0 1.0} 
{
\begin{pspicture}(0,-0.5676441)(1.1242857,0.5676441)
\psline[linecolor=black, linewidth=0.02](0.5709957,0.27088404)(0.5709957,-0.17457052)
\psline[linecolor=black, linewidth=0.02](0.56473863,-0.17171118)(0.9296337,-0.4272134)
\psline[linecolor=black, linewidth=0.02](0.5772527,-0.16694927)(0.2123577,-0.4224515)
\rput[bl](0.33208022,0.3676441){$\scriptstyle a+b$}
\rput[bl](0.0,-0.5581203){$\scriptstyle a$}
\rput[bl](1.0142857,-0.5676441){$\scriptstyle b$}
\end{pspicture}
}
\end{gathered} \ \rule{0pt}{6.5ex}
\end{bmatrix}
\end{equation*}

Also let $\psi \colon P \rightarrow p_a \otimes p_b$ be the following morphism row-indexed by $p_a \otimes p_b$ and column-indexed by $p_{\abs{a-b}}, p_{\abs{a-b}+2}, \dotsb, p_{a+b}$:
\begin{equation*}
\psi = \begin{bmatrix}
\displaystyle \frac{\left[\abs{a-b}+1\right]}{\theta(a,b,\abs{a-b})}
\begin{gathered}
\psscalebox{1.0 1.0} 
{
\begin{pspicture}(0,-0.4926521)(1.2255411,0.4926521)
\psline[linecolor=black, linewidth=0.02](0.5276145,-0.43496695)(0.5276145,0.010487583)
\psline[linecolor=black, linewidth=0.02](0.5213575,0.007628242)(0.88625246,0.2631305)
\psline[linecolor=black, linewidth=0.02](0.53387153,0.0028663373)(0.16897652,0.25836858)
\rput[bl](0.62554115,-0.4926521){$\scriptstyle \abs{a-b}$}
\rput[bl](0.0,0.32832763){$\scriptstyle a$}
\rput[bl](0.9618136,0.31265208){$\scriptstyle b$}
\end{pspicture}
}
\end{gathered} \quad ,
& \cdots \ \ ,
& \displaystyle \frac{\left[a+b+1\right]}{\theta(a,b,a+b)}
\begin{gathered}
\psscalebox{1.0 1.0} 
{
\begin{pspicture}(0,-0.4926521)(1.2255411,0.4926521)
\psline[linecolor=black, linewidth=0.02](0.5276145,-0.43496695)(0.5276145,0.010487583)
\psline[linecolor=black, linewidth=0.02](0.5213575,0.007628242)(0.88625246,0.2631305)
\psline[linecolor=black, linewidth=0.02](0.53387153,0.0028663373)(0.16897652,0.25836858)
\rput[bl](0.62554115,-0.4926521){$\scriptstyle a+b$}
\rput[bl](0.0,0.32832763){$\scriptstyle a$}
\rput[bl](0.9618136,0.31265208){$\scriptstyle b$}
\end{pspicture}
}
\end{gathered}
\end{bmatrix},
\end{equation*}
where in similar fashion to $\varphi$ the entry in the column indexed by $p_k$ is given by
\begin{equation*}
\frac{\left[k+1\right]}{\theta(a,b,k)}
\begin{gathered}
\psscalebox{1.0 1.0} 
{
\begin{pspicture}(0,-0.4926521)(1.2255411,0.4926521)
\psline[linecolor=black, linewidth=0.02](0.5276145,-0.43496695)(0.5276145,0.010487583)
\psline[linecolor=black, linewidth=0.02](0.5213575,0.007628242)(0.88625246,0.2631305)
\psline[linecolor=black, linewidth=0.02](0.53387153,0.0028663373)(0.16897652,0.25836858)
\rput[bl](0.62554115,-0.4926521){$\scriptstyle k$}
\rput[bl](0.0,0.32832763){$\scriptstyle a$}
\rput[bl](0.9618136,0.31265208){$\scriptstyle b$}
\end{pspicture}
}
\end{gathered} \ .
\end{equation*}
We claim that $\varphi$ and $\psi$ are inverse isomorphisms.

To see this first consider the composition $\varphi\psi \colon P \rightarrow P$.
Let us write $(\varphi\psi)_{p_i,p_j}$ to mean the entry of $\varphi\psi$ row-indexed by $p_i$ and column-indexed by $p_j$.
Then by Lemma \ref{lemma:roundabout_lemma}, observe that $\varphi\psi$ has off-diagonal entries
\begin{equation*}
(\varphi\psi)_{p_i,p_j} \ \ = \ \ \frac{[j+1]}{\theta(a,b,j)}
\begin{gathered}
\psscalebox{1.0 1.0} 
{
\begin{pspicture}(0,-0.97975385)(1.1033772,0.97975385)
\psellipse[linecolor=black, linewidth=0.02, dimen=outer](0.5526531,-0.019766627)(0.32,0.4494703)
\psline[linecolor=black, linewidth=0.02](0.5508031,0.4216411)(0.5508031,0.9797538)
\psline[linecolor=black, linewidth=0.02](0.55251956,-0.979754)(0.55251956,-0.4624576)
\rput[bl](0.63673466,0.6766606){$\scriptstyle i$}
\rput[bl](0.64081633,-0.8457884){$\scriptstyle j$}
\rput[bl](0.0,-0.06619656){$\scriptstyle a$}
\rput[bl](0.9733772,-0.07241117){$\scriptstyle b$}
\end{pspicture}
}
\end{gathered} \ \ = \ \ 0 \quad \text{ for } p_i \neq p_j
\end{equation*}
and diagonal entries
\begin{align*}
(\varphi\psi)_{p_i,p_i} \ \ &= \ \ 
\frac{[i+1]}{\theta(a,b,i)}
\begin{gathered}
\psscalebox{1.0 1.0} 
{
\begin{pspicture}(0,-0.97975385)(1.1033772,0.97975385)
\psellipse[linecolor=black, linewidth=0.02, dimen=outer](0.5526531,-0.019766627)(0.32,0.4494703)
\psline[linecolor=black, linewidth=0.02](0.5508031,0.4216411)(0.5508031,0.9797538)
\psline[linecolor=black, linewidth=0.02](0.55251956,-0.979754)(0.55251956,-0.4624576)
\rput[bl](0.63673466,0.6766606){$\scriptstyle i$}
\rput[bl](0.64081633,-0.8457884){$\scriptstyle i$}
\rput[bl](0.0,-0.06619656){$\scriptstyle a$}
\rput[bl](0.9733772,-0.07241117){$\scriptstyle b$}
\end{pspicture}
}
\end{gathered} \\
&= \ \ 
\frac{[i+1]}{\theta(a,b,i)} \cdot \frac{\theta(a,b,i)}{[i+1]} p_i \\
&= \ \ p_i,
\end{align*}
thus
$\varphi\psi = [p_{\abs{a-b}}] \oplus [p_{\abs{a-b}+2}] \oplus \dotsb \oplus [p_{a+b}]$
is the identity morphism on $P$.

Consider now $\psi\varphi \colon p_a \otimes p_b \rightarrow p_a \otimes p_b$, multiplying out the matrices we have that
\begin{align*}
\psi\varphi &= \sumtwo{k=\abs{a-b}}{a+b}
\frac{[k+1]}{\theta(a,b,k)}
\begin{gathered}
\psscalebox{1.0 1.0} 
{
\begin{pspicture}(0,-0.792027)(0.94241494,0.792027)
\psline[linecolor=black, linewidth=0.02](0.45697075,0.20267501)(0.45697075,-0.21881929)
\psline[linecolor=black, linewidth=0.02](0.4562587,0.20204714)(0.8494084,0.5487268)
\psline[linecolor=black, linewidth=0.02](0.067692176,0.5459411)(0.46084186,0.19926146)
\psline[linecolor=black, linewidth=0.02](0.06372307,-0.56065685)(0.45687276,-0.21397717)
\psline[linecolor=black, linewidth=0.02](0.45066956,-0.20976295)(0.84381926,-0.5564426)
\rput[bl](0.010811806,0.6225269){$\scriptstyle a$}
\rput[bl](0.52679443,-0.08153543){$\scriptstyle k$}
\rput[bl](0.832415,0.61202705){$\scriptstyle b$}
\rput[bl](0.0,-0.78152716){$\scriptstyle a$}
\rput[bl](0.8216032,-0.792027){$\scriptstyle b$}
\end{pspicture}
}
\end{gathered} \\
&= p_a \otimes p_b
\end{align*}
by Theorem \ref{thm:JWP-tensorprod-identity}, hence $\psi\varphi$ is the identity on $p_a \otimes p_b$.
\end{proof}
\bigskip

\begin{theorem}[Semisimplicity of $\TLJ$]
Generic Temperley-Lieb-Jones is semisimple: every object $P \in \TLJ$ is isomorphic to a direct sum of Jones-Wenzl idempotents.
\end{theorem}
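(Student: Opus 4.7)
The plan is to proceed by structural induction on how objects of $\TLJ$ are built. By definition, $\TLJ$ is the full subcategory of $\Mat(\Kar\TL)$ whose objects are the closure of $\{p_n : n\geq 0\}$ under $\oplus$ and $\otimes$, so every $P\in\Obj(\TLJ)$ is constructed in finitely many steps, each of which is either a direct sum or a tensor product of previously built objects. I would induct on the number of such construction steps, with induction hypothesis that the resulting object is isomorphic to a finite direct sum $\bigoplus_i p_{n_i}$ of Jones-Wenzl idempotents. The base case---a single $p_n$---is trivially a one-term such sum.

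For the inductive step, suppose $P_1 \cong \bigoplus_i p_{a_i}$ and $P_2 \cong \bigoplus_j p_{b_j}$ are two objects for which the claim is already known. If $P = P_1 \oplus P_2$, concatenating the two decompositions gives the required presentation of $P$ as a direct sum of Jones-Wenzl idempotents immediately. The substantive case is $P = P_1 \otimes P_2$. Here I would first invoke the bilinearity of the tensor product over the direct sum in $\Mat(\Kar\TL)$---which is built into the matrix category by the Kronecker-product definition of $\otimes$ on morphisms---to obtain
\[ P_1 \otimes P_2 \;\cong\; \bigoplus_{i,j} \bigl( p_{a_i}\otimes p_{b_j} \bigr). \]
Then I would apply Lemma \ref{lemma:JWP-tensor-isomorphism} to each binary factor $p_{a_i}\otimes p_{b_j}$, rewriting it as $p_{\abs{a_i-b_j}} \oplus p_{\abs{a_i-b_j}+2}\oplus \dotsb \oplus p_{a_i+b_j}$. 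Assembling the resulting iterated direct sum produces the required decomposition of $P$.

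The real engine of the proof is Lemma \ref{lemma:JWP-tensor-isomorphism}, which in turn rests on the tensor product identity (Theorem \ref{thm:JWP-tensorprod-identity}) and the roundabout lemma (Lemma \ref{lemma:roundabout_lemma}). All of this heavy machinery has already been established, so the theorem itself is essentially bookkeeping once those results are in hand. The main---and rather mild---point to verify is simply that every intermediate object arising in the induction actually lies in $\TLJ$ (rather than merely in the ambient $\Mat(\Kar\TL)$), but this is automatic: $\TLJ$ is by construction closed under both $\oplus$ and $\otimes$, and any direct sum of Jones-Wenzl idempotents is already one of its objects. I expect no genuine obstacle; the work has all been done in the preceding chapters.
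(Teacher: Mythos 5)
Your proof is correct and takes essentially the same approach as the paper: both reduce to binary tensor products of Jones-Wenzl idempotents via distributivity of $\otimes$ over $\oplus$ and then invoke Lemma \ref{lemma:JWP-tensor-isomorphism}. The only difference is organizational — the paper first normalizes any object to a direct sum of iterated tensor products $\bigotimes p_{a_i}$ and then inducts on the number of tensor factors, while you perform a structural induction on the build-up of the object directly; these are equivalent formulations of the same argument.
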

\begin{proof}
By the distributive property of $\otimes$ over $\oplus$ every object $P \in \TLJ$ can be written as a direct sum of tensor products $\bigotimes_{i=1}^n p_{a_i}$, so it suffices to show that every non-unary tensor product is isomorphic to a direct sum of Jones-Wenzl idempotents.

This is an easy induction on the number of tensor factors; the base case $n=2$ is Lemma \ref{lemma:JWP-tensor-isomorphism}.
Assume that for some $n \geq 2$ all tensor products with $n$ factors are isomorphic to a direct sum.
Then for a product with $n+1$ factors, using the isomorphism on the first $n$ factors and distributing the last factor over the resulting direct sum we obtain a direct sum of two-factor tensors.
Applying Lemma \ref{lemma:JWP-tensor-isomorphism} to each tensor product once more gives the required direct sum of Jones-Wenzl idempotents.
\end{proof}

\chapter{Specializing at roots of unity}
We pause for a moment to provide some motivation for this chapter and give an indication of where we are headed.

In order to construct a skein module associated to a surface $\Sigma$ we take a suitable diagrammatic category $\mathcal{C}$ and ``draw diagrams'' labelled with simple objects from $\mathcal{C}$ onto $\Sigma$.
However we would like our modules to be finite-dimensional, and in this case generic Temperley-Lieb-Jones does not suffice; if we apply our construction to $\TLJ$ we obtain an infinite-dimensional module, for the reason that there are infinitely many simple objects $p_n, n\in\mathbbm{N}$ in the category.
In this chapter we show that $\TLJ$, at a root of unity $q$ and modulo negligible elements, is a semisimple category with \emph{finitely} many simple objects (and is moreover a \emph{spherical fusion category}) which we can use to construct finite skein modules.

\section{Strict pivotal and spherical categories}
In order to develop the results we will need, we must first formalize a few properties of $\TL$.

\begin{defn}
\label{defn:strictpivotal}
Let $(\mathcal{C}, \otimes, e)$ be a strict monoidal category.
$\mathcal{C}$ is further called a \textbf{strict pivotal category} if it satisfies the following axioms.
\begin{enumerate}
\item \label{defn:strictpivotal-1} Firstly, $\mathcal{C}$ is equipped with a contravariant functor $^\ast \colon \mathcal{C}^\mathrm{op} \rightarrow \mathcal{C}$ known as the \emph{dual}, such that
\begin{enumerate}[label=\roman{*}., ref=\roman{*}]
\item $e^\ast = e$,
\end{enumerate}
and for all $a,b\in\Obj(\mathcal{C})$,
\begin{enumerate}[label=\roman{*}., ref=\roman{*}, resume]
\item $(a \otimes b)^\ast = b^\ast \otimes a^\ast$, and
\item $(a^\ast)^\ast = a$.
\end{enumerate}

\item \label{defn:strictpivotal-2} Secondly, for all objects $a \in \Obj(\mathcal{C})$ there is a \emph{unit} or \emph{co-evaluation} morphism $\eta_a \colon e \rightarrow a \otimes a^\ast$ satisfying the following three properties.
\begin{enumerate}[label=\roman{*}., ref=\roman{*}]
\item For all $a,b\in\Obj(\mathcal{C})$ and $f \colon a\rightarrow b$, the following diagram commutes:
\begin{equation*}
\begin{tikzcd}[row sep=3.5em, column sep=large]
e \arrow{r}{\eta_a} \arrow{d}[swap]{\eta_b} & a \otimes a^\ast \arrow{d}{f \otimes \mathbbm{1}} \\
b \otimes b^\ast \arrow{r}{\mathbbm{1}\otimes f^\ast} & b \otimes a^\ast
\end{tikzcd}
\end{equation*}
\item The following composition is the identity on $a^\ast$:
\begin{equation}
a^\ast = e \otimes a^\ast \xrightarrow{\eta_{a^\ast}\otimes\mathbbm{1}} a^\ast\otimes a^{\ast\ast} \otimes a^\ast = a^\ast\otimes(a\otimes a^\ast)^\ast \xrightarrow{\mathbbm{1}\otimes\eta^\ast_a} a^\ast \otimes e^\ast = a^\ast \otimes e = a^\ast
\label{eq:pivotal-id-comp}
\end{equation}
\item The following composition is $\eta_{a\otimes b}$:
\begin{equation}
e \xrightarrow{\eta_a} a \otimes a^\ast = a \otimes e \otimes a^\ast \xrightarrow{\mathbbm{1}\otimes\eta_b\otimes\mathbbm{1}} a \otimes b \otimes b^\ast \otimes a^\ast = a\otimes b\otimes(a\otimes b)^\ast
\label{eq:pivotal-eta_ab-comp}
\end{equation}
\end{enumerate}
\end{enumerate}
\end{defn}

Generic Temperley-Lieb is a strict pivotal category: the dual $a^\ast$ of an object $a \in \mathbbm{N}$ is $a$ itself, that is $a^\ast=a$, and the dual of a morphism $f$ is exactly the dual $f^\ast$ of formal diagrams as given in Definition \ref{defn:TL-dual}.
The unit morphism $\eta_a \colon 0 \rightarrow a \otimes a^\ast = 2a$ is the $a$-fold nested cup from $0$ to $2a$ points, hence $\eta_a^\ast \colon 2a \rightarrow 0$ is the $a$-fold nested cap.
One checks that the axioms for a strict pivotal category are satisfied; note especially that the compositions \eqref{eq:pivotal-id-comp} and \eqref{eq:pivotal-eta_ab-comp} are given by the diagrams in Figure \ref{fig:pivotal-composition}.

\begin{figure}[htb]
\begin{align*}
\begin{gathered}
\psscalebox{1.0 1.0} 
{
\begin{pspicture}(0,-1.2666935)(2.5208032,1.2666935)
\psarc[linecolor=black, linewidth=0.02, dimen=outer](0.8333334,0.0){0.5688889}{180.0}{360.0}
\psarc[linecolor=black, linewidth=0.02, dimen=outer](0.83195204,0.0){0.24864864}{180.0}{360.0}
\psline[linecolor=black, linewidth=0.02](1.9155556,-1.2533065)(1.9155556,0.008915677)
\psline[linecolor=black, linewidth=0.02](2.2355556,-1.2533065)(2.2355556,0.008915677)
\psarc[linecolor=black, linewidth=0.02, dimen=outer](1.6577778,0.0){0.25777778}{0.0}{180.0}
\psarc[linecolor=black, linewidth=0.02, dimen=outer](1.6577778,0.004471232){0.5777778}{0.0}{180.0}
\psline[linecolor=black, linewidth=0.02](0.2622223,0.0)(0.2622223,1.262249)
\psline[linecolor=black, linewidth=0.02](0.5822223,0.0)(0.5822223,1.262249)
\psframe[linecolor=black, linewidth=0.02, linestyle=dotted, dotsep=0.10583334cm, dimen=outer](2.5208032,1.2666935)(0.0,-1.2666935)
\psline[linecolor=black, linewidth=0.02, linestyle=dotted, dotsep=0.10583334cm](0.042705383,-0.004417657)(2.4428985,-0.004417657)
\rput[bl](1.9536508,-0.66219544){$\scriptstyle\cdots$}
\rput[bl](1.5479366,0.34351885){$\scriptstyle\cdots$}
\rput[bl](0.7079366,-0.44505256){$\scriptstyle\cdots$}
\rput[bl](0.30222228,0.57209027){$\scriptstyle\cdots$}
\rput[bl](0.7,-0.89076686){$\scriptstyle\eta_{a}$}
\rput[bl](1.5536509,0.6920903){$\scriptstyle\eta_a^\ast$}
\rput[bl](0.68965083,0.694376){$\scriptstyle\mathbbm{1}_a$}
\rput[bl](1.5227938,-0.8919097){$\scriptstyle\mathbbm{1}_a$}
\end{pspicture}
}
\end{gathered}
\ &= \ 
\begin{gathered}
\psscalebox{1.0 1.0} 
{
\begin{pspicture}(0,-0.56170213)(1.2,0.56170213)
\psline[linecolor=black, linewidth=0.02](0.38127682,-0.5590544)(0.38127682,0.558487)
\psline[linecolor=black, linewidth=0.02](0.7523407,-0.5590544)(0.7523407,0.558487)
\rput[bl](0.44680876,-0.05256306){$\scriptstyle\cdots$}
\psframe[linecolor=black, linewidth=0.02, dimen=outer](1.1234045,0.56170213)(0.0,-0.56170213)
\end{pspicture}
}
\end{gathered}
\ = \ \mathbbm{1}_a \\\\
\begin{gathered}
\psscalebox{1.0 1.0} 
{
\begin{pspicture}(0,-1.1740487)(2.6696632,1.1740487)
\psarc[linecolor=black, linewidth=0.02, dimen=outer](1.3310114,0.8469023){0.5}{180.0}{360.0}
\psarc[linecolor=black, linewidth=0.02, dimen=outer](1.3310114,0.8469023){0.24}{180.0}{360.0}
\psarc[linecolor=black, linewidth=0.02, dimen=outer](1.3310113,0.08311196){1.06}{180.0}{360.0}
\psarc[linecolor=black, linewidth=0.02, dimen=outer](1.3310113,0.083111875){0.7}{180.0}{360.0}
\psline[linecolor=black, linewidth=0.02](2.0305,0.074282445)(2.0247028,0.84790236)
\psline[linecolor=black, linewidth=0.02](2.3910112,0.08273939)(2.3910112,0.85000974)
\psline[linecolor=black, linewidth=0.02](0.27285278,0.0719295)(0.2670557,0.8545382)
\psline[linecolor=black, linewidth=0.02](0.6286584,0.08038645)(0.6286584,0.85215116)
\psframe[linecolor=black, linewidth=0.02, linestyle=dotted, dotsep=0.10583334cm, dimen=outer](2.669663,0.85291785)(0.0,-1.1740484)
\psline[linecolor=black, linewidth=0.02, linestyle=dotted, dotsep=0.10583334cm](0.008988881,0.070895374)(2.633708,0.07538975)
\rput[bl](0.33939844,0.9740487){$\scriptstyle \mathbbm{1}_a$}
\rput[bl](2.0813339,0.9740487){$\scriptstyle \mathbbm{1}_a$}
\rput[bl](1.2103662,0.9940487){$\scriptstyle \eta_b$}
\rput[bl](1.2297211,-0.47756422){$\scriptstyle \eta_a$}
\rput[bl](0.32004362,0.40630674){$\scriptstyle \cdots$}
\rput[bl](2.0877855,0.39985514){$\scriptstyle \cdots$}
\rput[bl](1.2160804,0.4551547){$\scriptstyle \cdots$}
\rput[bl](1.2297211,-0.8222647){$\scriptstyle \cdots$}
\end{pspicture}
}
\end{gathered}
\ &= \ 
\begin{gathered}
\psscalebox{1.0 1.0} 
{
\begin{pspicture}(0,-0.9151724)(1.2275866,0.50)
\rput[bl](0.5,-0.3805994){$\scriptstyle \cdots$}
\psframe[linecolor=black, linewidth=0.02, dimen=outer](1.2275865,0.3124138)(0.0,-0.9151724)
\psbezier[linecolor=black, linewidth=0.02](0.43540254,0.30306515)(0.44536424,0.051417634)(0.5195954,-0.19320084)(0.62559414,-0.19103447)(0.73159283,-0.1888681)(0.7873442,0.0348194)(0.80000025,0.3010728)
\psbezier[linecolor=black, linewidth=0.02](0.1931037,0.29172415)(0.19881018,-0.25324485)(0.4157857,-0.56789273)(0.62023014,-0.57233715)(0.8246746,-0.5767816)(1.0354079,-0.26176843)(1.0576248,0.30061305)
\rput[bl](0.18699533,0.39517242){$\displaystyle\overbrace{\qquad\ }^{2(a+b)}$}
\end{pspicture}
}
\end{gathered}
\ = \ \eta_{a\otimes b}
\end{align*}
\caption{$\TL$ is pivotal.}\label{fig:pivotal-composition}
\end{figure}
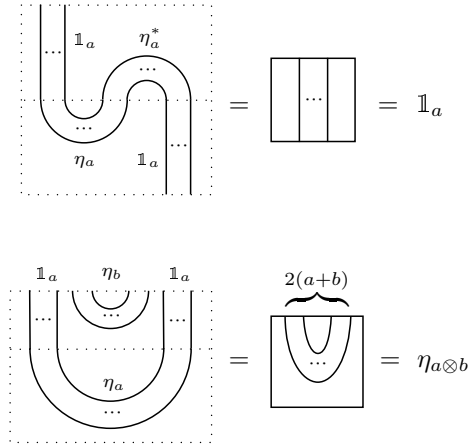

\begin{remark}
We only define strict pivotal categories since it will turn out that $\TL$ and the rest of our categories are all strictly pivotal.
The non-strict definition is more involved: we replace the equalities in \eqref{defn:strictpivotal-1} in Definition \ref{defn:strictpivotal} with natural isomorphisms between the appropriate functors.
As a result the compositions in \eqref{defn:strictpivotal-2} have to include the identity and associative isomorphisms $\lambda, \rho, \alpha$, and we have to impose coherence conditions in the form of six different diagrams which are required to be commutative.
Fortunately for our purposes we may ignore these  details and simply refer the interested reader to Sections 1 and 2 of \cite{Barrett1993} for more information.
\end{remark}

The dual $\eta^\ast$ of the unit morphism is called the \emph{evaluation} or \emph{co-unit} morphism.
Any strict pivotal category has \emph{trace maps} on endomorphisms $f$, obtained by tensoring $f$ on the left or right with the identity morphism and then composing with the appropriate unit and evaluation morphisms.

\begin{defn}
Let $\mathcal{C}$ be a strict pivotal category.
The \textbf{right trace} of a morphism $f \colon a \rightarrow a$ in $\mathcal{C}$ is given by the following composition,
\begin{equation*}
e \xrightarrow{\eta_a} a\otimes a^\ast \xrightarrow{f\otimes\mathbbm{1}} a\otimes a^\ast \xrightarrow{\eta^\ast_a} e.
\end{equation*}
Similarly we define the \textbf{left trace} of $f$ to be
\begin{equation*}
e \xrightarrow{\eta_{a^\ast}} a^\ast\otimes a \xrightarrow{\mathbbm{1}\otimes f} a^\ast\otimes a \xrightarrow{\eta^\ast_{a^\ast}} e.
\end{equation*}
\end{defn}

It is clear that in $\TL$ the categorical trace is exactly the trace $\tr$ obtained by tracing off an endomorphism $f$ on the right or the left as in Definition \ref{defn:TL-trace}.
Recall from Remark \ref{rem:trace-map} that $\tr$ is a map from the endomorphism spaces $\Hom(a,a)$ to $\mathbbm{F}$, given by $d^m$ where $m$ is the number of loops in $\tr(f)$.
Since this is the same whichever side we trace off, the left trace and the right trace are the same for all $f$, hence $\TL$ is an example of a \emph{spherical category}.

\begin{defn}[Barrett and Westbury, 1993]
A pivotal category $\mathcal{C}$ is \textbf{spherical} if for every endomorphism $f$ in $\mathcal{C}$, the left and right trace of $f$ are equal.
\end{defn}

This justifies us simply speaking of ``the'' trace in a spherical category.

Let $f \colon a \rightarrow b, g \colon b\rightarrow a$ be TL diagrams, and consider $\tr(gf)$.
Observe that the trace of a TL diagram $D$ is invariant under isotopies of $D$, thus we may pull the diagram $g$ around the closed loop without changing the trace (Fig. \ref{fig:trace-isotopy-inv}).
This proves the following statement:

\begin{lemma}
\label{lemma:trace-cyclic-inv}
For all formal diagrams $f \colon a\rightarrow b, g \colon b \rightarrow a$ in $\TL$,
\[ \tr(gf)=\tr(fg). \]
\end{lemma}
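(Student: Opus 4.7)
The plan is to reduce to simple TL diagrams and then exploit the planar isotopy invariance of the trace construction. Concretely, since both sides of the equation are bilinear in $f$ and $g$ (composition in $\TL$ is bilinear, and the trace is linear by Definition \ref{defn:TL-trace}), it suffices to prove the identity when $f$ and $g$ are simple TL diagrams. I would therefore fix simple diagrams $f\colon a\to b$ and $g\colon b\to a$.

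Next I would interpret $\tr(gf)$ geometrically: stack $g$ on top of $f$ inside the unit square, then connect the $a$ points on the top edge to the $a$ points on the bottom edge by $a$ disjoint nested arcs drawn around the outside of the square. The resulting picture is a collection of disjoint smooth arcs and loops drawn in an annular neighbourhood of $I\times I$, and the $d$-equivalence class of this picture — hence its value in $\Hom(0,0)=\mathbbm{F}$ — depends only on its planar isotopy class (this is built into the definition of simple TL diagrams, which are identified up to planar isotopy).

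The key move is then an isotopy that ``slides'' the subdiagram $g$ once around the closed loop formed by the $a$ external arcs. Before the isotopy, $g$ sits on top of $f$ and the external arcs wrap around both; after sliding $g$ through the nested arcs, $g$ ends up beneath $f$, with the $b$ endpoints on the common interface of $g$ and $f$ now reconnected by $b$ nested arcs around the outside. This is precisely the picture defining $\tr(fg)$. Since the isotopy preserves the number of closed loops and the connectivity of arcs, the two simple diagrams (with loops) obtained are equal, and hence so are their $d$-equivalence classes.

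The only subtle point, which I would verify carefully, is that such an isotopy genuinely exists in the plane and does not introduce or remove any loops. This is clear from the annular picture: the external arcs provide disjoint tracks along which $g$ can be continuously transported, and the $a$ strands on the outside connecting the top and bottom of the square remain disjoint from $g$ throughout the motion. I do not expect any real obstacle here; the lemma is essentially a topological restatement of cyclicity, and the main content of the write-up will be a clear figure illustrating the isotopy.
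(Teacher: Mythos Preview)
Your proposal is correct and follows essentially the same approach as the paper: reduce to simple diagrams by bilinearity, then use planar isotopy invariance to slide $g$ once around the closed trace loop so that $\tr(gf)$ becomes $\tr(fg)$. The paper's proof is even terser---it just says to ``pull the diagram $g$ around the closed loop'' and relies on a figure (with an intermediate stage showing $g^\ast$ on the opposite side) to justify the isotopy---so your explicit mention of the bilinearity reduction and the annular picture only makes the same argument more complete.
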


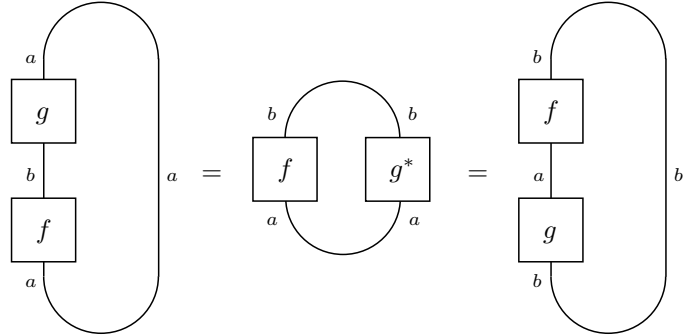
\begin{figure}
\begin{equation*}
\begin{gathered}
\psscalebox{1.0 1.0} 
{
\begin{pspicture}(0,-2.2042856)(2.1785386,2.2042856)
\psline[linecolor=black, linewidth=0.02](0.43363753,1.4411204)(0.43363753,-1.437311)
\psframe[linecolor=black, linewidth=0.02, fillstyle=solid, dimen=outer](0.8630307,1.1820116)(0.0,0.3189813)
\rput[bl](0.19793253,-0.16928965){$\scriptstyle b$}
\psarc[linecolor=black, linewidth=0.02, dimen=outer](1.1870056,1.4399999){0.7542857}{0.0}{180.0}
\psarc[linecolor=black, linewidth=0.02, dimen=outer](1.1870055,-1.44){0.7542857}{180.0}{360.0}
\psline[linecolor=black, linewidth=0.02](1.9448462,-1.4451542)(1.9448462,1.4489635)
\rput[bl](0.18793254,1.3875731){$\scriptstyle a$}
\rput[bl](0.18793254,-1.5810543){$\scriptstyle a$}
\rput[bl](0.33090946,0.5789813){$g$}
\psframe[linecolor=black, linewidth=0.02, fillstyle=solid, dimen=outer](0.8630305,-0.3931399)(0.0,-1.2561703)
\rput[bl](0.33090925,-0.9961702){$f$}
\rput[bl](2.0485387,-0.17000265){$\scriptstyle a$}
\end{pspicture}
}
\end{gathered}
\ \ = \ \ 
\begin{gathered}
\psscalebox{1.0 1.0} 
{
\begin{pspicture}(0,-1.159109)(2.5,1.159109)
\rput[bl](0.19793177,0.63588685){$\scriptstyle b$}
\psarc[linecolor=black, linewidth=0.02, dimen=outer](1.1950048,-0.39482358){0.7542857}{180.0}{360.0}
\rput[bl](0.18793176,-0.7638778){$\scriptstyle a$}
\psframe[linecolor=black, linewidth=0.02, fillstyle=solid, dimen=outer](0.8630297,0.4120366)(0.0,-0.4509937)
\rput[bl](0.3309085,-0.1909937){$f$}
\rput[bl](2.0485377,0.63517386){$\scriptstyle b$}
\psframe[linecolor=black, linewidth=0.02, fillstyle=solid, dimen=outer](2.345383,0.41071755)(1.4823525,-0.45231277)
\rput[bl](1.8054185,-0.19231276){$g^\ast$}
\psarc[linecolor=black, linewidth=0.02, dimen=outer](1.1870049,0.39482346){0.7542857}{0.0}{180.0}
\rput[bl](2.0599318,-0.7638778){$\scriptstyle a$}
\end{pspicture}
}
\end{gathered}
\ \ = \ \ 
\begin{gathered}
\psscalebox{1.0 1.0} 
{
\begin{pspicture}(0,-2.2042856)(2.158538,2.2042856)
\psline[linecolor=black, linewidth=0.02](0.4336371,1.4411204)(0.4336371,-1.4373109)
\psframe[linecolor=black, linewidth=0.02, fillstyle=solid, dimen=outer](0.8630301,-0.39313984)(0.0,-1.2561702)
\psframe[linecolor=black, linewidth=0.02, fillstyle=solid, dimen=outer](0.86303025,1.1820117)(0.0,0.31898138)
\rput[bl](0.19793212,-0.16928956){$\scriptstyle a$}
\psarc[linecolor=black, linewidth=0.02, dimen=outer](1.1870053,1.4399999){0.7542857}{0.0}{180.0}
\psarc[linecolor=black, linewidth=0.02, dimen=outer](1.1870052,-1.44){0.7542857}{180.0}{360.0}
\psline[linecolor=black, linewidth=0.02](1.9448458,-1.4451541)(1.9448458,1.4489635)
\rput[bl](0.18793212,1.3875732){$\scriptstyle b$}
\rput[bl](0.18793212,-1.5810542){$\scriptstyle b$}
\rput[bl](0.33090904,0.5789814){$f$}
\rput[bl](0.33090886,-0.9961701){$g$}
\rput[bl](2.0485382,-0.17000256){$\scriptstyle b$}
\end{pspicture}
}
\end{gathered}
\end{equation*}
\caption{Invariance of the trace.} \label{fig:trace-isotopy-inv}
\end{figure}

In fact Lemma \ref{lemma:trace-cyclic-inv} is true in general for all spherical categories, see Lemma 1.5.1 of \cite{Turaev1994}.
(This justifies the name ``trace''; furthermore this construction is indeed the usual linear trace in the category $\mathbf{Vect}$ of vector spaces.)

\section{Negligible morphisms}
\begin{defn}
Let $\mathcal{C}$ be a spherical category with trace $\tr$.
A morphism $f \colon a \rightarrow b$ in $\mathcal{C}$ is \textbf{negligible} if for all $g \colon b \rightarrow a$, the trace $\tr(gf)$ is zero.
\end{defn}

We state and prove the following lemma in the setting of $\TL$, though the reader familiar with the diagrammatic calculus for pivotal categories will recognize it as a result for such categories in general.

\begin{lemma}
The set $\mathcal{N}$ of all negligible morphisms in $\TL$ forms a tensor ideal, called the \textbf{\upshape{negligible ideal}}.
\end{lemma}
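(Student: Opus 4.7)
The plan is to verify the three defining properties of a tensor ideal in turn: (1) linearity within each hom-space, (2) closure under composition by arbitrary morphisms, (3) closure under tensor product with arbitrary morphisms.

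For (1), I would note that bilinearity of composition and linearity of $\tr$ immediately yield $\tr(g(cf_1 + f_2)) = c\,\tr(gf_1) + \tr(gf_2) = 0$ whenever $f_1, f_2 \in \mathcal{N}$ and $c \in \mathbbm{F}$, so each $\mathcal{N} \cap \Hom(x,y)$ is a linear subspace. For (2), given $f\colon a \to b$ negligible and arbitrary $h\colon c \to a$ and $k\colon b \to d$, I must show $fh$ and $kf$ are negligible. For $fh$ and any test morphism $g\colon b \to c$, cyclic invariance of the trace (Lemma~\ref{lemma:trace-cyclic-inv}) gives $\tr(g(fh)) = \tr((hg)f) = 0$ since $hg\colon b\to a$ and $f$ is negligible. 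An entirely analogous argument handles $kf$.

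The main obstacle, as expected, is (3). Given a negligible $f\colon a \to b$ and an arbitrary $h\colon c \to d$, I want to show that $f \otimes h$ is negligible, i.e.\ $\tr(g(f \otimes h)) = 0$ for every $g\colon b \otimes d \to a \otimes c$. The idea is to use the strict pivotal structure of $\TL$ (the cups and caps) to ``bend the $c$- and $d$-strands around'' so that $h$ and $g$ combine into a single morphism $\tilde g \colon b \to a$, isolating $f$ inside a single endomorphism trace. Concretely, set
\[
\tilde g \;=\; (\id_a \otimes \eta_c^\ast) \circ (g \otimes \id_c) \circ (\id_b \otimes h \otimes \id_c) \circ (\id_b \otimes \eta_c),
\]
which is a morphism $b \to a$, and verify diagrammatically (by planar isotopy of the closed trace-picture, which is legal in the spherical category $\TL$) that
\[
\tr\bigl(g \circ (f \otimes h)\bigr) \;=\; \tr(\tilde g \circ f).
\]
The right-hand side vanishes by negligibility of $f$. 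The analogous argument with the cups and caps on the left shows $h \otimes f$ is negligible, so $\mathcal{N}$ is closed under tensor product on either side, completing the proof.

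The only subtle step is the diagrammatic identity $\tr(g(f \otimes h)) = \tr(\tilde g f)$. I would argue it purely pictorially: the trace of $g(f \otimes h)$ is a closed diagram in which $h$ sits to the right of $f$, and one may isotope $h$ up along the closing arc of the trace, over the top of $f$, so that it ends up stacked with $g$ rather than alongside $f$. Since all such isotopies preserve the value of a trace in a spherical category (cf.\ the proof of Lemma~\ref{lemma:trace-cyclic-inv}), this rearrangement is legitimate and exhibits the closed diagram as $\tr(\tilde g f)$.
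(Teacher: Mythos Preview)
Your proof is correct and follows essentially the same strategy as the paper: linearity of the trace for the subspace property, cyclic invariance (Lemma~\ref{lemma:trace-cyclic-inv}) for closure under composition, and an isotopy argument in the spherical structure for closure under tensor product. The only difference is cosmetic: you write down the auxiliary morphism $\tilde g$ explicitly in terms of units and counits, whereas the paper simply draws the two isotopic closed diagrams and reads off the same conclusion.
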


\begin{proof}
Let $f \colon a \rightarrow b$ be negligible.
For all $g \colon b \rightarrow c$ and all $h \colon c \rightarrow a$,
\[ \tr(h(gf)) = \tr((hg)f) = 0 \]
and
\[ \tr(g(fh)) = \tr((gf)h) = \tr(h(gf)) = \tr((hg)f) = 0 \]
by Lemma \ref{lemma:trace-cyclic-inv}, hence $\mathcal{N}$ is closed under arbitrary composition.

Now suppose $g \colon c \rightarrow d$, we need to show that for all $h \colon b \otimes d \rightarrow a \otimes c$ the composite $h(f\otimes g)$ has zero trace.
By isotoping, we have that
\begin{align*}
\tr(h(f\otimes g))
\ \ = \ \ 
\begin{gathered}
\psscalebox{1.0 1.0} 
{
\begin{pspicture}(0,-2.2322364)(3.3245127,2.2322364)
\psline[linecolor=black, linewidth=0.02](3.3106408,1.3626614)(3.3106408,-1.5157701)
\rput[bl](0.40278366,-0.08394113){$\scriptstyle b$}
\psline[linecolor=black, linewidth=0.02](2.997227,-1.5363345)(2.997227,1.3577831)
\rput[bl](1.918048,-0.08394113){$\scriptstyle d$}
\rput[bl](0.1835844,-1.863995){$\scriptstyle a$}
\rput[bl](1.8458356,-1.863995){$\scriptstyle c$}
\psframe[linecolor=black, linewidth=0.02, fillstyle=solid, dimen=outer](1.6692802,1.3848789)(0.8062499,0.52184856)
\rput[bl](1.162159,0.7943486){$h$}
\psframe[linecolor=black, linewidth=0.02, fillstyle=solid, dimen=outer](0.8630302,-0.65894663)(0.0,-1.521977)
\rput[bl](0.33090895,-1.261977){$f$}
\psframe[linecolor=black, linewidth=0.02, fillstyle=solid, dimen=outer](2.4755304,-0.6589468)(1.6125001,-1.5219771)
\rput[bl](1.9434092,-1.2619771){$g$}
\psbezier[linecolor=black, linewidth=0.02](1.0927196,0.53929573)(1.0884637,0.16926613)(0.92541516,0.08305868)(0.75938624,-0.07022807)(0.5933573,-0.22351481)(0.42772293,-0.3302087)(0.43557674,-0.66070426)
\psbezier[linecolor=black, linewidth=0.02](2.026053,-0.67022806)(2.0130897,-0.28607363)(1.8326584,-0.20381409)(1.6641482,-0.04165664)(1.4956379,0.12050081)(1.3915994,0.2161523)(1.3828944,0.52856636)
\psbezier[linecolor=black, linewidth=0.02](2.9946907,1.345076)(2.9903378,1.6357408)(2.8914366,1.8913376)(2.5763755,1.9114252)(2.2613144,1.9315127)(2.0460274,1.9326644)(1.7746662,1.9131346)(1.503305,1.8936048)(1.3714857,1.6451814)(1.3677797,1.3787267)
\psbezier[linecolor=black, linewidth=0.02](3.3094444,1.3574142)(3.2875898,1.8323585)(3.0213103,2.1904504)(2.6637337,2.2029274)(2.306157,2.2154043)(2.0704403,2.222227)(1.7164658,2.2058194)(1.3624913,2.1894119)(1.0807092,1.8141103)(1.080176,1.3769264)
\psbezier[linecolor=black, linewidth=0.02](2.9970052,-1.5285614)(2.9863214,-1.7581348)(2.8468769,-1.8186903)(2.539783,-1.8230058)(2.2326891,-1.8273213)(2.0505106,-1.7704839)(2.0470054,-1.5135614)
\psbezier[linecolor=black, linewidth=0.02](3.3114498,-1.5180058)(3.3149335,-2.2577653)(2.7708926,-2.2230783)(1.7753386,-2.2213392)(0.7797846,-2.2196)(0.44709948,-2.0966394)(0.43200532,-1.5085614)
\end{pspicture}
}
\end{gathered}
\ \ = \ \ 
\begin{gathered}
\psscalebox{1.0 1.0} 
{
\begin{pspicture}(0,-2.8715785)(3.3804357,2.8715785)
\psframe[linecolor=black, linewidth=0.02, fillstyle=solid, dimen=outer](1.1395129,1.1686046)(0.27648255,0.3055743)
\psline[linecolor=black, linewidth=0.02](3.3688185,2.328446)(3.3688185,-2.2629266)
\rput[bl](1.3235022,2.4362683){$\scriptstyle a$}
\rput[bl](1.3549408,-2.5389578){$\scriptstyle a$}
\rput[bl](0.61739165,0.58557427){$h$}
\rput[bl](1.4779621,1.5211536){$\scriptstyle c$}
\psframe[linecolor=black, linewidth=0.02, fillstyle=solid, dimen=outer](2.621866,1.1672856)(1.7588357,0.30425522)
\rput[bl](2.0819016,0.56425524){$g^\ast$}
\rput[bl](1.4658266,-0.19554514){$\scriptstyle d$}
\psarc[linecolor=black, linewidth=0.02, dimen=outer](1.5286841,1.1670805){0.645}{0.0}{180.0}
\psarc[linecolor=black, linewidth=0.02, dimen=outer](1.5386841,0.30708054){0.655}{180.0}{360.0}
\psframe[linecolor=black, linewidth=0.02, fillstyle=solid, dimen=outer](1.9489248,-1.4128451)(1.0858945,-2.2758756)
\rput[bl](1.4168036,-2.0158756){$f$}
\psbezier[linecolor=black, linewidth=0.02](0.61486053,0.31266877)(0.6241922,-0.11970544)(0.80676067,-0.35887372)(1.0706121,-0.4977826)(1.3344635,-0.63669145)(1.5521861,-0.6872584)(1.552977,-0.8843447)
\psframe[linecolor=black, linewidth=0.02, linestyle=dotted, dotsep=0.10583334cm, dimen=outer](2.9263155,2.3088474)(0.0,-0.89476156)
\psbezier[linecolor=black, linewidth=0.02](0.59732044,1.1548078)(0.5926308,1.5859135)(0.7860757,1.8283223)(1.0658919,1.963639)(1.345708,2.0989556)(1.5379009,2.0621264)(1.5355023,2.3111715)
\psline[linecolor=black, linewidth=0.02](1.5512992,-0.8803043)(1.5512992,-1.4240441)
\psbezier[linecolor=black, linewidth=0.02](1.5513312,-2.2732136)(1.5513312,-3.0732136)(3.3678017,-3.0496843)(3.3678017,-2.249684)
\psbezier[linecolor=black, linewidth=0.02](1.5352225,2.2984908)(1.5352225,3.0496018)(3.3704362,3.0420806)(3.3704362,2.3190036)
\rput[bl](0.83562905,-0.7598164){$\scriptstyle b$}
\end{pspicture}
}
\end{gathered}
\ \ = \ \ 0.
\end{align*}
A similar argument shows that $\mathcal{N}$ is closed under left tensor multiplication.

Finally since the trace is $\mathbbm{F}$-linear, if $f,g\in\Hom(a,b)$ are negligible then so are $f+g$ and $cf$ for all $c\in\mathbbm{F}$, hence the intersection of $\mathcal{N}$ with $\Hom(a,b)$ is a linear subspace of $\Hom(a,b)$.
\end{proof}

\section{Evaluating the quantum parameter}
Recall from Definition \ref{defn:quantumint} the formula for the quantum integers $[n]$.
By fixing the value of the parameter $q$ at a root of unity, we find that for certain $n$ the quantum integers $[n]$ become zero.
More specifically, $[n]=0$ precisely when $q \neq \pm 1$ is a $2n$-th root of unity, since
\[ [n] = \frac{q^n-q^{-n}}{q-q^{-1}} = 0 \iff q^n-q^{-n} = 0 \iff q^{2n}-1=0. \]

Recall that generic Temperley-Lieb $\TL$ is a $\mathbbm{F}=\mathbbm{C}(q)$-linear category; if we fix a value for $q$ we obtain a $\mathbbm{C}$-linear category.

\begin{defn}
Let $n>1$ and let $q = e^{\pi i/n}$ be a $2n$-th root of unity.
We obtain a $\mathbbm{C}$-linear category $\TL(q=e^{\pi i/n})$ with the same objects $x\in\mathbbm{N}$ as $\TL$, and whose morphisms are now $\mathbbm{C}$-linear combinations of simple TL diagrams from $x$ to $y$ points.
We say that $\TL(q)$ is the \textbf{Temperley-Lieb category evaluated at $q = e^{\pi i/n}$}.
\end{defn}

Evaluating $\TL$ at a root of unity has two effects --- the first is that all but finitely many of the Jones-Wenzl idempotents cease to exist in $\TL(q)$, and the second is that some nonzero morphisms in the category become negligible.

\begin{lemma}
Let $q = e^{\pi i/n}$ for some $n>1$, then in $\TL(q)$ only the finitely many Jones-Wenzl idempotents $p_i$ for $0 \leq i \leq n-1$ are defined.
Furthermore $p_{n-1}$ is negligible, and is the only Jones-Wenzl idempotent to be so.
\end{lemma}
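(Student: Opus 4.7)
The plan has two parts matching the two assertions of the lemma. The common input is a careful accounting of which quantum integers vanish at $q = e^{\pi i/n}$: from Definition \ref{defn:quantumint} we have $[k] = (q^k - q^{-k})/(q - q^{-1})$, which vanishes iff $q^{2k} = 1$. At $q = e^{\pi i/n}$ with $n > 1$ this happens exactly when $n \mid k$, so $[1], [2], \ldots, [n-1]$ are all nonzero while $[n] = 0$, and the next zero sits at $[2n]$.

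For the existence/non-existence claim I would run the induction of Theorem \ref{thm:JWP} unchanged: at each stage $k \leq n-1$ the only quantum integer that needs inverting is $[k] \neq 0$, so the Wenzl recursion \eqref{eq:JWPrelation} and the verification of properties (i)--(iii) carry over verbatim, producing $p_0, p_1, \ldots, p_{n-1}$ in $\TL(q)$. At the following stage one would need $[n]^{-1}$, which does not exist. To rule out that some non-recursive element of $\TL_n(q)$ nonetheless satisfies the three Jones-Wenzl properties, I would combine the uniqueness clause of Theorem \ref{thm:JWP} with the absorption-driven identity established in that proof: any such $p$ would force $\tr_1(p) \cdot [n] = [n+1] \cdot p_{n-1}$, which at the root of unity reads $0 = [n+1]\,p_{n-1}$, contradicting $[n+1] \neq 0$ and $p_{n-1} \neq 0$. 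The same argument rules out $p_{n+1}, p_{n+2}, \ldots$ in turn, since each would require an existing $p_{n-1+j}$ as an ingredient.

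For the negligibility claim I would compute $\tr(g\,p_{n-1})$ for arbitrary $g \in \TL_{n-1}(q)$. By Remark \ref{rem:loopless-diagrams} write $g = c\,\id_{n-1} + \sum_j c_j g_j$ with each $g_j$ a loopless non-identity simple diagram; each such $g_j$ has strictly fewer than $n-1$ through-strings and hence at least one cap on its bottom edge, allowing a factorization $g_j = h_j \circ \cap_{i,n-1}$ for some $h_j$ and $i \in \{1, \ldots, n-2\}$. Property (ii) of Theorem \ref{thm:JWP} applied to $p_{n-1}$ then yields $g_j p_{n-1} = h_j \circ (\cap_i p_{n-1}) = 0$, so $\tr(g_j p_{n-1}) = 0$. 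The identity contribution is $c \cdot \tr(p_{n-1}) = c \cdot [n] = 0$ by Proposition \ref{prop:JWP_trace}. Hence $\tr(g p_{n-1}) = 0$ for every $g$, proving $p_{n-1}$ is negligible. Conversely, for $0 \leq i \leq n-2$, the test $g = p_i$ gives $\tr(p_i p_i) = \tr(p_i) = [i+1]$, which is nonzero since $1 \leq i+1 \leq n-1$, so none of these $p_i$ is negligible. The main obstacle is the non-existence step for $p_n$: extracting the identity $\tr_1(p_n)\,[n] = [n+1]\,p_{n-1}$ from properties (i)--(iii) alone, rather than from the now-unavailable Wenzl formula, is the point where the argument most needs care.
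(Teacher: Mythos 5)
Your negligibility argument is essentially the paper's: both write $g = c\,\id + \sum c_j m_j$, use property \eqref{eq:JWP2} to kill the non-identity terms against $p_{n-1}$, reduce to $\tr(g\,p_{n-1}) = c\,[n]=0$, and then verify non-negligibility of $p_i$ for $i \le n-2$ via $\tr(p_i)=[i+1]\neq 0$. The only cosmetic difference is that the paper states and proves the more uniform claim ``$p_a$ is negligible iff $[a+1]=0$'' rather than splitting into the $p_{n-1}$ and $p_i$ cases.

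On the non-existence part you have taken a genuinely different --- and more ambitious --- route than the paper, and you have correctly flagged where it breaks. The paper's argument is just: the Wenzl recursion \eqref{eq:JWPrelation} expresses $p_n$ using the scalar $[n-1]/[n]$, so at $[n]=0$ the formula cannot be evaluated, hence $p_n$ (and therefore all $p_k$ for $k\ge n$) is undefined. Here ``the Jones-Wenzl idempotent $p_k$'' is, as a matter of convention, the output of the recursion; the paper is not claiming that no idempotent of $\TL_n(q)$ satisfies properties (i)--(iii). You, by contrast, want to rule out such ``non-recursive'' idempotents. The identity you reach for, $\tr_1(p)\cdot[n]=[n+1]\,p_{n-1}$, will not do the job: the derivation of $\tr_1(p_n)=\tfrac{[n+1]}{[n]}p_{n-1}$ in the proof of Theorem \ref{thm:JWP} uses the Wenzl formula itself, so it is not available as a consequence of (i)--(iii) alone. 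Indeed, from (i)--(iii) one can only deduce (via the absorption and cap-killing properties of $\tr_1(p_n)$) that $\tr_1(p_n)=c\,p_{n-1}$ for \emph{some} scalar $c$; the value $c=[n+1]/[n]$ is extra information supplied by the explicit recursion. Also note that the uniqueness clause of Theorem \ref{thm:JWP} was proved over $\mathbbm{C}(q)$, so it does not automatically transfer to the specialization. Establishing that no element of $\TL_n(q)$ satisfies (i)--(iii) at $q=e^{\pi i/n}$ is true but is a separate theorem (typically proved via Gram matrix or Markov-trace arguments), and it is not needed for this lemma: the paper's convention-based reading is what the lemma is asserting, and your attempted strengthening leaves a genuine gap that the paper does not incur.
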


\begin{proof}
Observe from the inductive definition \eqref{eq:JWPrelation} in Theorem \ref{thm:JWP} that since $[n]=0$, $p_n$ is no longer defined, hence the $p_i$ are undefined for $i \geq n$.

We claim that among the remaining Jones-Wenzl idempotents, $p_a$ is negligible if and only if $[a+1]=0$.
Let $g \in \Hom(a,a)$ and write $g = c\cdot\id_a + \sum c_j m_j$, where as usual each $m_j$ is a product of non-identity generators $U_i$ of $\TL_a$.
Then by property \eqref{eq:JWP2} of Theorem \ref{thm:JWP} we have that $g p_a = c p_a$, so that $\tr(g p_a) = c\,\tr(p_a) = c [a+1]$, and this trace is zero for all $g$ if and only if $[a+1]=0$.
Suppose $[m]=0$, then $1=q^{2m}=e^{2m\pi i/n}$ implies that $m$ is a multiple of $n$, thus $n$ is the smallest integer for which $[n]=0$ and $p_{n-1}$ is negligible.
\end{proof}

\begin{lemma}
\label{lemma:negligible_JWP_hom}
Let $q=e^{\pi i/n}$ and let $(a,b,c)$ be an admissible triple.
The morphism
\begin{equation*}
g_{a,b,c} \ \ = \ \ 
\begin{gathered}
\psscalebox{1.0 1.0} 
{
\begin{pspicture}(0,-0.5875841)(1.4244444,0.5875841)
\psline[linecolor=black, linewidth=0.02](0.69777775,0.5875841)(0.69777775,-0.19146354)
\psline[linecolor=black, linewidth=0.02](0.69821805,-0.18495561)(1.3728931,-0.5744794)
\psline[linecolor=black, linewidth=0.02](0.701782,-0.18940005)(0.02710693,-0.5789239)
\rput[bl](0.82222223,0.39647296){$c$}
\rput[bl](1.2844445,-0.37241593){$b$}
\rput[bl](0.0,-0.37686038){$a$}
\end{pspicture}
}
\end{gathered}
\end{equation*}
is negligible in $\TL(q)$ if and only if $a+b+c \geq 2n-2$. 
\end{lemma}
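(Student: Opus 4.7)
The plan is to reduce the negligibility condition on $g_{a,b,c}$ to the vanishing of the theta-net value $\theta(a,b,c)$, and then to evaluate that quantity explicitly via the Kauffman-Lins formula (Theorem \ref{thm:theta_net_trace}).

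First I would show that $g_{a,b,c}$ is negligible in $\TL(q)$ if and only if $\theta(a,b,c) = 0$. Since $g_{a,b,c} = p_c \circ g_{a,b,c} \circ (p_a \otimes p_b)$, cyclicity of the trace (Lemma \ref{lemma:trace-cyclic-inv}) gives, for every formal diagram $h \colon c \to a+b$,
\[
\tr(h\, g_{a,b,c}) \ = \ \tr\bigl(g_{a,b,c} \cdot (p_a \otimes p_b)\, h\, p_c \bigr).
\]
The sandwiched morphism $(p_a \otimes p_b)\, h\, p_c$ lies in $\Hom_{\Kar\TL}(p_c,\, p_a \otimes p_b)$, which by the obvious top-bottom symmetric version of Theorem \ref{thm:JWP_hom_dim} is one-dimensional, spanned by the vertically reflected trivalent vertex $\overline{g_{a,b,c}}$. (The proof of Theorem \ref{thm:JWP_hom_dim} uses only the existence of $p_a, p_b, p_c$ together with property \eqref{eq:JWP1} of Theorem \ref{thm:JWP}, so it carries over verbatim to $\TL(q)$ whenever these idempotents are defined.) Hence every trace $\tr(h\, g_{a,b,c})$ is a scalar multiple of $\tr(\overline{g_{a,b,c}}\, g_{a,b,c})$. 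Stacking the two vertices and then closing off the $a$- and $b$-strands around the outside produces the closed theta graph, so this trace equals exactly $\theta(a,b,c)$. Thus $g_{a,b,c}$ is negligible if and only if $\theta(a,b,c) = 0$.

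Next I would invoke Theorem \ref{thm:theta_net_trace}. Setting $\mu = (a+b-c)/2$, $\nu = (b+c-a)/2$, $\lambda = (a+c-b)/2$ and using the identities $\mu + \nu = b$, $\nu + \lambda = c$, $\mu + \lambda = a$, $\mu + \nu + \lambda = (a+b+c)/2$, the formula rewrites as
\[
\theta(a,b,c) \ = \ \frac{[\mu]!\,[\nu]!\,[\lambda]!\,\bigl[\tfrac{a+b+c}{2}+1\bigr]!}{[a]!\,[b]!\,[c]!}\,.
\]

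Finally I would analyze which factors vanish at $q = e^{\pi i/n}$. Two facts are key: (i) the existence of $p_a, p_b, p_c$ in $\TL(q)$ forces $a, b, c \leq n-1$, since the inductive construction of $p_k$ breaks at the first $k$ with $[k]=0$; and (ii) $[j] = 0$ if and only if $n \mid j$, so $[k]! = 0$ if and only if $k \geq n$. Under (i), each of $a, b, c, \mu, \nu, \lambda$ is at most $\max(a,b,c) \leq n-1 < n$, so all factorials except possibly $[\tfrac{a+b+c}{2}+1]!$ are nonzero. That remaining factor vanishes precisely when $\tfrac{a+b+c}{2}+1 \geq n$, i.e.\ when $a+b+c \geq 2n-2$, giving the result.

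I expect the main obstacle to be the first step: making precise that ``$\tr(h g_{a,b,c}) = 0$ for all $h$'' reduces to a single trace computation, which in turn rests on confirming that Theorem \ref{thm:JWP_hom_dim}'s one-dimensionality result continues to hold at a root of unity for the relevant hom-space. Once this reduction is secured, the remaining analysis is purely combinatorial factorial arithmetic.
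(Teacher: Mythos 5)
Your proof is correct and takes essentially the same route as the paper: reduce the negligibility condition to the vanishing of $\theta(a,b,c)$ via one-dimensionality of the relevant hom-space, then analyze the Kauffman--Lins formula for $\theta(a,b,c)$. The only real difference is one of care: you explicitly verify that Theorem \ref{thm:JWP_hom_dim} persists in $\TL(q)$ whenever $p_a,p_b,p_c$ exist, and that every factorial in the numerator other than $[\tfrac{a+b+c}{2}+1]!$ and every factorial in the denominator remains nonzero (by bounding $a,b,c,\mu,\nu,\lambda \le n-1$) --- a bookkeeping step the paper needs for the ``only if'' direction but passes over without comment.
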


\begin{proof}
By isotopy we have that
\begin{equation*}
\tr(h\circ g_{a,b,c})
\ \ = \ \ 
\begin{gathered}
\psscalebox{1.0 1.0} 
{
\begin{pspicture}(0,-2.090995)(2.6783168,2.090995)
\psframe[linecolor=black, linewidth=0.02, linestyle=dashed, dash=0.17638889cm 0.10583334cm, dimen=outer](1.7333739,1.2964113)(0.023784796,0.16764419)
\psbezier[linecolor=black, linewidth=0.02](0.9798082,-0.40629798)(0.9987938,-0.64278907)(1.1741021,-0.6717027)(1.3265833,-0.78234136)(1.4790645,-0.89298)(1.5725181,-0.9928408)(1.5931416,-1.1751869)
\psbezier[linecolor=black, linewidth=0.02](0.77536374,-0.41074243)(0.76489466,-0.62446195)(0.54568267,-0.6997438)(0.42544505,-0.7935067)(0.30520743,-0.8872695)(0.20159031,-0.9622936)(0.17091931,-1.1707424)
\psbezier[linecolor=black, linewidth=0.02](0.40203044,-1.1796314)(0.44190526,-0.99360824)(0.61580753,-0.8811547)(0.8869085,-0.88109475)(1.1580094,-0.88103473)(1.300902,-0.9901955)(1.357586,-1.1796314)
\psframe[linecolor=black, linewidth=0.02, dimen=outer](1.7571586,-1.1721679)(1.1944444,-1.3174092)
\psframe[linecolor=black, linewidth=0.02, dimen=outer](0.56271416,-1.1721679)(0.0,-1.317409)
\psframe[linecolor=black, linewidth=0.02, dimen=outer](1.1599364,-0.2721679)(0.5972222,-0.4174091)
\psline[linecolor=black, linewidth=0.02](0.8785794,-0.28263786)(0.8785794,0.1832445)
\rput[bl](0.96502984,-0.1085202){$\scriptstyle c$}
\psbezier[linecolor=black, linewidth=0.02](1.4775223,-1.3046492)(1.514305,-1.8136556)(2.0910385,-1.8757918)(2.1733487,-1.1759825)(2.2556589,-0.4761732)(2.2350821,0.53304386)(2.1744835,1.2224195)(2.1138847,1.9117951)(1.2688313,1.9148926)(1.1889974,1.2792685)
\psbezier[linecolor=black, linewidth=0.02](0.27289167,-1.3059806)(0.20978361,-2.0058644)(2.4154932,-2.4811916)(2.5418854,-1.4204025)(2.6682773,-0.35961327)(2.6456294,0.31505919)(2.538701,1.3477246)(2.4317725,2.38039)(0.53121966,2.1593423)(0.46812978,1.2916385)
\rput[bl](1.4274703,1.4039203){$\scriptstyle b$}
\rput[bl](0.24002986,1.4160632){$\scriptstyle a$}
\rput[bl](0.06871323,-1.5794463){$\scriptstyle a$}
\rput[bl](1.7152218,-1.5687321){$\scriptstyle b$}
\rput[bl](0.7853395,0.5588482){$h$}
\rput[bl](0.284137,-0.7463178){$\scriptstyle i$}
\rput[bl](1.4327084,-0.7406035){$\scriptstyle j$}
\rput[bl](0.82747036,-1.1617941){$\scriptstyle k$}
\end{pspicture}
}
\end{gathered}
\ \ = \ \ 
\begin{gathered}
\psscalebox{1.0 1.0} 
{
\begin{pspicture}(0,-2.050021)(2.7453618,2.050021)
\psframe[linecolor=black, linewidth=0.02, linestyle=dashed, dash=0.17638889cm 0.10583334cm, dimen=outer](2.7453616,0.4875312)(1.0357726,-0.64123595)
\psframe[linecolor=black, linewidth=0.02, dimen=outer](2.1719239,1.0767542)(1.6092097,0.931513)
\psline[linecolor=black, linewidth=0.02](1.8905673,0.4790315)(1.8905673,0.94491386)
\rput[bl](2.005589,0.65974253){$\scriptstyle c$}
\rput[bl](1.7973272,-0.2500319){$h^\ast$}
\psframe[linecolor=black, linewidth=0.02, dimen=outer](2.5929768,-1.0845568)(2.0302625,-1.229798)
\psline[linecolor=black, linewidth=0.02](2.3116198,-1.0950267)(2.3116198,-0.6291444)
\rput[bl](2.4050877,-0.93854064){$\scriptstyle a$}
\psframe[linecolor=black, linewidth=0.02, dimen=outer](1.7192926,-1.081048)(1.1565783,-1.2262892)
\psline[linecolor=black, linewidth=0.02](1.4379355,-1.0915179)(1.4379355,-0.6256356)
\rput[bl](1.2085966,-0.97354066){$\scriptstyle b$}
\psbezier[linecolor=black, linewidth=0.02](1.5227778,-1.2164823)(1.5182087,-1.4340025)(1.7251318,-1.4551191)(1.8761928,-1.4584529)(2.0272539,-1.4617867)(2.2199512,-1.423246)(2.2197833,-1.2149441)
\psbezier[linecolor=black, linewidth=0.02](1.3122514,-1.2171552)(1.1973983,-1.8051026)(0.8488454,-1.5477121)(0.7459071,-1.2564245)(0.64296883,-0.9651371)(0.54818755,-0.6680119)(0.63380396,0.6731645)(0.71942043,2.0143409)(1.7589253,1.7657866)(1.7813181,1.0631251)
\psbezier[linecolor=black, linewidth=0.02](2.4406762,-1.2257168)(2.467865,-2.2752328)(0.5626456,-2.2431269)(0.3404438,-1.4295363)(0.118241966,-0.61594576)(0.19946928,0.38059953)(0.300302,1.2467321)(0.4011347,2.1128647)(1.982653,2.426484)(2.0007536,1.0733142)
\rput[bl](1.7885088,-1.7156574){$\scriptstyle k$}
\rput[bl](0.4270802,-0.14446682){$\scriptstyle j$}
\rput[bl](0.0,-0.108514436){$\scriptstyle i$}
\end{pspicture}
}
\end{gathered}\ ,
\end{equation*}
so for all $h \colon c \rightarrow a \otimes b$ the diagram $h^\ast$ is a morphism from $b \otimes a$ to $c$.
We proved (cf. Theorem \ref{thm:JWP_hom_dim}) that the only such nonzero diagram is $g_{b,a,c}$, so the trace is certainly zero unless $h^\ast = g_{b,a,c}$, in which case we get that
\begin{equation*}
\tr(h\circ g_{a,b,c})
\ \ = \ \ 
\begin{gathered}
\psscalebox{1.0 1.0} 
{
\begin{pspicture}(0,-1.3079076)(2.0261943,1.3079076)
\psline[linecolor=black, linewidth=0.02](0.53444445,0.43877974)(0.53444445,-0.5012202)
\psbezier[linecolor=black, linewidth=0.02](0.9748543,0.7444804)(1.1075333,0.8376038)(1.2694311,0.86202896)(1.3978032,0.7398827)(1.5261751,0.61773646)(1.6008801,0.36902323)(1.6017241,-0.06374898)(1.6025681,-0.4965212)(1.5377197,-0.6960537)(1.4086999,-0.826914)(1.27968,-0.9577743)(1.0999609,-0.91099745)(0.97025657,-0.8224196)
\psbezier[linecolor=black, linewidth=0.02](0.08908046,0.75387555)(0.079440236,0.9826691)(0.2184073,1.0818934)(0.37582377,1.1561744)(0.5332402,1.2304554)(0.8900458,1.2977089)(1.3201579,1.1857044)(1.75027,1.0736998)(2.0161824,0.8176167)(2.0129306,-0.035755336)(2.0096788,-0.8891274)(1.6572316,-1.1078804)(1.2946014,-1.2028313)(0.9319714,-1.2977822)(0.55230695,-1.263929)(0.375977,-1.2007605)(0.19964707,-1.137592)(0.09133517,-1.0501508)(0.08463602,-0.8271206)
\psline[linecolor=black, linewidth=0.02](0.5403175,0.4200496)(0.081614666,0.7565849)
\psline[linecolor=black, linewidth=0.02](0.52989054,0.41775075)(0.98859334,0.75428605)
\psline[linecolor=black, linewidth=0.02](0.5380186,-0.49398983)(0.07931582,-0.8305251)
\psline[linecolor=black, linewidth=0.02](0.5275917,-0.49169096)(0.98629445,-0.82822627)
\rput[bl](0.6311111,-0.13622025){$\scriptstyle c$}
\rput[bl](0.0,-0.71844244){$\scriptstyle a$}
\rput[bl](0.96666664,-0.73622024){$\scriptstyle b$}
\rput[bl](0.97555554,0.49489087){$\scriptstyle b$}
\rput[bl](0.008888889,0.51266867){$\scriptstyle a$}
\end{pspicture}
}
\end{gathered}
\ \ = \ \ 
\theta(a,b,c),
\end{equation*}
which by Theorem \ref{thm:theta_net_trace} is equal to
\[ \frac{[\frac{a+b-c}{2}]![\frac{b+c-a}{2}]![\frac{a+c-b}{2}]![\frac{a+b+c+2}{2}]!}{[a]![b]![c]!}. \]
But this is zero if and only if the numerator contains a factor of $[n]$, which occurs precisely when
\[ \frac{a+b+c+2}{2} \geq n \iff a+b+c \geq 2n-2. \qedhere \]
\end{proof}

The preceding two lemmas motivate the following definition, which we will shortly use.

\begin{defn}
At a root of unity $q=e^{\pi i/n}$, a triple $(a,b,c)$ of natural numbers is called \textbf{$q$-admissible} if $(a,b,c)$ is admissible, $0 \leq a,b,c \leq n-2$, and $a+b+c < 2n-2$.
\end{defn}

\section{Temperley-Lieb-Jones at roots of unity}
Throughout this section we fix the value of $q = e^{\pi i/n}$ at a $2n$-th root of unity for some $n>1$.

Now let us take the quotient of $\TL(q)$ by its negligible ideal $\mathcal{N}$.
This is the category $\TL(q)/\mathcal{N}$ having the same objects as $\TL(q)$, with each $\Hom_{\TL(q)/\mathcal{N}}(a,b)$ being the hom-space $\Hom_{\TL(q)}(a,b)$ quotiented out by its subspace $\mathcal{N}\cap\Hom_{\TL(q)}(a,b)$.
Thus $\TL(q)/\mathcal{N}$ is the category $\TL(q)$ where all negligible morphisms are identified with the zero morphism $0$.
In particular $p_{n-1}$ and $g_{a,b,c}$, where $(a,b,c)$ is not a $q$-admissible triple, are zero in $\TL(q)/\mathcal{N}$.

Note that the quotient category inherits the spherical $\mathbbm{C}$-linear monoidal structure of $\TL(q)$.
As in the unevaluated case we can take the Karoubi envelope and then the additive completion to obtain $\Mat\Kar(\TL(q)/\mathcal{N})$.

\begin{remark*}
Our quotient category $\TL/\mathcal{N}$ is a specific instance of Theorem 2.9 in \cite{Barrett1993}.
\end{remark*}

\begin{defn}
The \textbf{Temperley-Lieb-Jones category} $\TLJ(q)$ \textbf{evaluated at $q=e^{\pi i/n}$} is the full subcategory of $\Mat\Kar(\TL(q)/\mathcal{N})$ having objects $\Obj(\TLJ(q))$ being the closure of the nonzero Jones-Wenzl idempotents $p_i,\ 0\leq i\leq n-2$ under direct sum and tensor product.
\end{defn}

By Theorem \ref{thm:JWP_hom_dim} and Lemma \ref{lemma:negligible_JWP_hom}, if $p_a,p_b,p_c$ are Jones-Wenzl idempotents in $\TLJ(q)$ at a root of unity then $\Hom(p_a \otimes p_b, p_c)$ is $1$-dimensional if and only if $(a,b,c)$ is a $q$-admissible triple.
This is because for $(a,b,c)$ not $q$-admissible, $g_{a,b,c}$ is negligible in $\TL(q)$ and hence zero after taking the quotient.
From this it then follows in exactly the same manner as for the unevaluated case that

\begin{theorem}
The Jones-Wenzl idempotents $p_i,\ 0 \leq i \leq n-2$ are simple and form a collection of disjoint simple objects in $\TLJ(q)$.
\end{theorem}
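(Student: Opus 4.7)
The plan is to follow the template of Corollary \ref{cor:JWP-disjointsimpleobjs} verbatim, with the only new ingredient being the verification of $q$-admissibility in place of plain admissibility. The key input, noted just before the theorem, is that in $\TLJ(q)$ the space $\Hom(p_a \otimes p_b, p_c)$ is one-dimensional iff $(a,b,c)$ is $q$-admissible, and zero otherwise; this is the analog of Theorem \ref{thm:JWP_hom_dim} at a root of unity.

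First, for simplicity, I would fix $0 \le a \le n-2$ and check that $(0,a,a)$ is $q$-admissible: admissibility is automatic as in the generic case, the bounds $0, a \le n-2$ hold trivially, and the remaining condition $0 + a + a < 2n-2$ reduces to $a < n-1$, which is exactly $a \le n-2$. Hence $\Hom(p_a, p_a) = \Hom(p_0 \otimes p_a, p_a)$ is one-dimensional. The spanning morphism is $g_{0,a,a} = p_a \circ \id_a \circ p_a = p_a$, which is precisely $\mathbbm{1}_{p_a}$ in $\Kar$, so $p_a$ is simple.

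Next, for disjointness, take $b \neq c$ with $0 \le b,c \le n-2$. The triple $(0,b,c)$ already fails ordinary admissibility, since the triangle inequalities $0+b \ge c$ and $0+c \ge b$ would force $b=c$. A fortiori $(0,b,c)$ is not $q$-admissible, so $\Hom(p_b, p_c) = \Hom(p_0 \otimes p_b, p_c) = \{0\}$.

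There is essentially no obstacle here; the only thing to watch is the bookkeeping for $q$-admissibility bounds, and in particular to confirm that the restriction $0 \le i \le n-2$ in the statement is exactly what makes $(0,i,i)$ pass the new condition $a+b+c < 2n-2$ (the borderline case $i = n-1$ would give $0 + (n-1) + (n-1) = 2n-2$, at which point $g_{0,n-1,n-1} = p_{n-1}$ is negligible, correctly excluded from the list of simple objects). Once these admissibility checks are in place, the argument is identical to that of Corollary \ref{cor:JWP-disjointsimpleobjs}.
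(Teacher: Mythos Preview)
Your proof is correct and follows exactly the same approach as the paper, which simply remarks that the result ``follows in exactly the same manner as for the unevaluated case'' (i.e.\ Corollary~\ref{cor:JWP-disjointsimpleobjs}) once one knows $\Hom(p_a\otimes p_b,p_c)$ is one-dimensional iff $(a,b,c)$ is $q$-admissible. Your explicit verification of $q$-admissibility for $(0,a,a)$ and the remark on the borderline case $i=n-1$ are helpful elaborations of what the paper leaves implicit.
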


As before we wish to show that $\TLJ(q)$ is semisimple.
However now the isomorphism between a tensor product and a direct sum of simple objects is more complicated.
The following notation will simplify the presentation slightly: at a $2n$-th root of unity we write
\[ a\nplus b = 
\begin{cases}
a+b & \text{if $a+b < n-1$} \\
2n - (a+b) -4 & \text{if $a+b \geq n-1$}
\end{cases}
\]
\begin{remark}
\label{rem:tensorprod-q-adm}
It is not hard to verify that for all $0 \leq a,b \leq n-2$ we have
\[ a \nplus b \leq n-2, \]
and $(a \nplus b) - \abs{a-b} \geq 0$ is even.
Now let $k \in \{\abs{a-b}, \abs{a-b}+2, \dotsc, a \nplus b \}$, then the triple $(a,b,k)$ is $q$-admissible, which we see as follows.

In the case that $a+b < n-1$ we have $a \nplus b = a+b$, and we showed in the proof of Theorem \ref{thm:JWP-tensorprod-identity} that $(a,b,k)$ is admissible in this case.
Furthermore since $k \leq a \nplus b \leq n-2$ and $a+b+k \leq a+b+n-2 \leq 2(n-2) = 2n-4$, it is also $q$-admissible.

If $a+b \geq n-1$ then by the proof mentioned earlier we have that $a+b+k$ is even, $b+k \geq a$, and $a+k \geq b$.
Furthermore
\[ a+b \geq n-1 > a \nplus b \geq k, \]
and
\[ a+b+k \leq a+b+(a \nplus b) = 2n-4, \]
so $(a,b,k)$ is also $q$-admissible in this case.
\end{remark}

Observe that at a root of unity, Lemmas \ref{lemma:roundabout_lemma}, \ref{lemma:shrink_triangle_k+1} and \ref{lemma:shrink_triangle_k-1} and their proofs still hold under condition of $q$-admissibility.
We then have the following theorem.

\begin{theorem}[Truncated tensor product identity]
\label{thm:truncated-tensorprod-identity}
At a root of unity $q=e^{\pi i/n}$,
\begin{equation}
\label{eq:truncated-tensorprod-identity}
p_a \otimes p_b = 
\sumtwo{k=\abs{a-b}}{a \nplus b} \left( \frac{[k+1]}{\theta(a,b,k)}
\begin{gathered}
\psscalebox{1.0 1.0} 
{
\begin{pspicture}(0,-0.792027)(0.94241494,0.792027)
\psline[linecolor=black, linewidth=0.02](0.45697075,0.20267501)(0.45697075,-0.21881929)
\psline[linecolor=black, linewidth=0.02](0.4562587,0.20204714)(0.8494084,0.5487268)
\psline[linecolor=black, linewidth=0.02](0.067692176,0.5459411)(0.46084186,0.19926146)
\psline[linecolor=black, linewidth=0.02](0.06372307,-0.56065685)(0.45687276,-0.21397717)
\psline[linecolor=black, linewidth=0.02](0.45066956,-0.20976295)(0.84381926,-0.5564426)
\rput[bl](0.010811806,0.6225269){$\scriptstyle a$}
\rput[bl](0.52679443,-0.08153543){$\scriptstyle k$}
\rput[bl](0.832415,0.61202705){$\scriptstyle b$}
\rput[bl](0.0,-0.78152716){$\scriptstyle a$}
\rput[bl](0.8216032,-0.792027){$\scriptstyle b$}
\end{pspicture}
}
\end{gathered}
\ \right)
\end{equation}
for $0 \leq a,b \leq n-2$.
\end{theorem}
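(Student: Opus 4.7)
The plan is to replicate the induction on $b$ from the proof of Theorem \ref{thm:JWP-tensorprod-identity}, but carried out in the quotient category $\TL(q)/\mathcal{N}$ (equivalently, in $\TLJ(q)$); the algebraic skeleton transfers unchanged, and the only new ingredient is bookkeeping that shows exactly the non-$q$-admissible tail of the generic sum drops out. For the base cases $a=0$, $b=0$, and $b=1$ with $a\leq n-2$, the explicit calculations from Theorem \ref{thm:JWP-tensorprod-identity} go through verbatim: every quantum integer appearing in a denominator has index in $\{1,\dots,n-1\}$ and is therefore nonzero at $q=e^{\pi i/n}$. When the generic calculation produces a contribution at an index $k$ with $(a,b,k)$ not $q$-admissible — for instance $k=n-1$ when $a=n-2$ and $b=1$ — the corresponding $g_{a,b,k}$ is negligible by Lemma \ref{lemma:negligible_JWP_hom} (since $a+b+k\geq 2n-2$) and hence zero in the quotient, so that term drops out and leaves exactly the truncated identity.

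For the inductive step ($2\leq b\leq a$), I would invoke the extended absorption of $p_{b-1}\otimes p_1$ into $p_b$ to write $p_a\otimes p_b$ as $p_a\otimes p_b$ sandwiching $p_a\otimes p_{b-1}\otimes p_1$, substitute the inductive identity (summed over $k'\in\{\abs{a-b+1},\dots,a\nplus(b-1)\}$), and apply the triangle-shrinking Lemmas \ref{lemma:shrink_triangle_k+1} and \ref{lemma:shrink_triangle_k-1}. Every intermediate coefficient is a product of ratios $[j]/[j']$ and $[j-m]/[j]$ with $1\leq j,j'\leq n-1$, hence well-defined at $q=e^{\pi i/n}$. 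Reindexing and collecting by final index $k$, for each $q$-admissible $(a,b,k)$ the accumulated coefficient collapses to $[k+1]/\theta(a,b,k)=\lambda(a,b,k)$ by the same polynomial identity $[i+1][a+2+i]+[b-1-i][a-b+2+i]=[b][a-b+2i+3]$ verified at the end of Theorem \ref{thm:JWP-tensorprod-identity} — this is a polynomial relation in $q$, so it remains valid at any root of unity. The case $b>a$ is handled by the vertical reflection argument at the end of that proof.

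The main obstacle is the boundary bookkeeping: I must confirm that every final index $k$ arising in the induction but lying outside $\{\abs{a-b},\dots,a\nplus b\}$ corresponds to a negligible morphism in the quotient. I would settle this by splitting into the three cases $a+b<n-1$ (where $a\nplus b=a+b$ matches the generic range and no truncation is needed), $a+b=n-1$ (where the extra index $k=n-1$ has $a+b+k=2n-2$), and $a+b\geq n$ (where $a\nplus(b-1)+1=a\nplus b+2$, and $a+b+(a\nplus b+2)=2n-2$), checking in each case that the overshoot indices satisfy $a+b+k\geq 2n-2$ and hence yield a $g_{a,b,k}$ that is zero in $\TL(q)/\mathcal{N}$ by Lemma \ref{lemma:negligible_JWP_hom}. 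The subtle point is that for such overshoot indices the induction produces a finite nonzero coefficient (a product of nonzero quantum-integer ratios), but this coefficient multiplies the zero morphism and drops out; we never have to interpret the closed form $[k+1]/\theta(a,b,k)$, whose denominator would vanish for these non-$q$-admissible triples, because that closed form only arises as a rewriting of the coefficient at $q$-admissible $k$.
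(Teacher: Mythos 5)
Your proposal is correct and follows essentially the same route as the paper's proof: mirror the generic induction via the triangle-shrinking lemmas, split on whether $a+b$ is below, equal to, or above $n-1$, and verify that the overshoot indices $k$ satisfy $a+b+k\geq 2n-2$ so the corresponding $g_{a,b,k}$ vanish in the quotient by Lemma \ref{lemma:negligible_JWP_hom}. Your treatment of the degenerate coefficient is in fact slightly cleaner than the paper's, which adopts an ad-hoc convention that $[n]/\theta(n-2,1,n-1)=1$; you correctly note that the coefficient arising in the induction is a well-defined product of nonzero quantum-integer ratios multiplying a zero morphism, so the ill-defined closed form $[k+1]/\theta(a,b,k)$ never actually needs to be evaluated at the non-$q$-admissible indices.
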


\begin{proof}
The structure of this proof is essentially the same as that for the non-truncated tensor product identity (Theorem \ref{thm:JWP-tensorprod-identity}), we give a sketch by following along the lines of the proof of the non-truncated case, indicating what changes need to be made as we go.
Note that we have already showed in Remark \ref{rem:tensorprod-q-adm} that $(a,b,k)$ is $q$-admissible over the range of the sum.

Firstly, the case where $a=0$ or $b=0$ is the same as for the non-truncated version.

Next we prove \eqref{eq:truncated-tensorprod-identity} for $a \geq 1$ and $b=1$.
If $a <n-2$, we have that $a+b<n-1$ and $a\nplus b=a+b$, and the truncated identity is the same as the non-truncated identity, which we have already proved.
(One checks that the proof in the non-truncated case does not involve negligible morphisms and division by zero quantum integers, and so is still valid when considered at a root of unity.)
If $a=n-2$, we have $a+b=n-1$, and hence $a\nplus b=n-3=a-1$. Then
\begin{align*}
\sumtwo{k=a-1}{a-1} \left( \frac{[k+1]}{\theta(a,b,k)}
\begin{gathered}
\psscalebox{1.0 1.0} 
{
\begin{pspicture}(0,-0.792027)(0.94241494,0.792027)
\psline[linecolor=black, linewidth=0.02](0.45697075,0.20267501)(0.45697075,-0.21881929)
\psline[linecolor=black, linewidth=0.02](0.4562587,0.20204714)(0.8494084,0.5487268)
\psline[linecolor=black, linewidth=0.02](0.067692176,0.5459411)(0.46084186,0.19926146)
\psline[linecolor=black, linewidth=0.02](0.06372307,-0.56065685)(0.45687276,-0.21397717)
\psline[linecolor=black, linewidth=0.02](0.45066956,-0.20976295)(0.84381926,-0.5564426)
\rput[bl](0.010811806,0.6225269){$\scriptstyle a$}
\rput[bl](0.52679443,-0.08153543){$\scriptstyle k$}
\rput[bl](0.832415,0.61202705){$\scriptstyle b$}
\rput[bl](0.0,-0.78152716){$\scriptstyle a$}
\rput[bl](0.8216032,-0.792027){$\scriptstyle b$}
\end{pspicture}
}
\end{gathered}
\ \right)
\ \ &= \ \ 
\frac{[a]}{\theta(a,1,a-1)}
\begin{gathered}
\psscalebox{1.0 1.0} 
{
\begin{pspicture}(0,-0.792027)(0.94241494,0.792027)
\psline[linecolor=black, linewidth=0.02](0.45697075,0.20267501)(0.45697075,-0.21881929)
\psline[linecolor=black, linewidth=0.02](0.4562587,0.20204714)(0.8494084,0.5487268)
\psline[linecolor=black, linewidth=0.02](0.067692176,0.5459411)(0.46084186,0.19926146)
\psline[linecolor=black, linewidth=0.02](0.06372307,-0.56065685)(0.45687276,-0.21397717)
\psline[linecolor=black, linewidth=0.02](0.45066956,-0.20976295)(0.84381926,-0.5564426)
\rput[bl](0.010811806,0.6225269){$\scriptstyle a$}
\rput[bl](0.52679443,-0.08153543){$\scriptstyle a-1$}
\rput[bl](0.832415,0.61202705){$\scriptstyle 1$}
\rput[bl](0.0,-0.78152716){$\scriptstyle a$}
\rput[bl](0.8216032,-0.792027){$\scriptstyle 1$}
\end{pspicture}
}
\end{gathered} \\
&= \ \ 
\frac{[n-2]}{[n-1]} \ 
\begin{gathered}
\psscalebox{1.0 1.0} 
{
\begin{pspicture}(-0.31,-1.1710682)(1.415972,1.1710682)
\psframe[linecolor=black, linewidth=0.02, dimen=outer](0.8157322,0.62710935)(0.0,0.40234742)
\psframe[linecolor=black, linewidth=0.02, dimen=outer](0.81650305,-0.40278077)(0.0,-0.6275427)
\psline[linecolor=black, linewidth=0.02](0.27610415,-0.40790898)(0.27610415,0.41504583)
\psline[linecolor=black, linewidth=0.02](0.3925316,0.6144799)(0.3925316,1.1605116)
\psline[linecolor=black, linewidth=0.02](0.38320315,-1.1710682)(0.38320315,-0.6250364)
\psarc[linecolor=black, linewidth=0.02, dimen=outer](0.8560519,-0.41484427){0.31111112}{0.0}{180.0}
\psarc[linecolor=black, linewidth=0.02, dimen=outer](0.8653804,0.41822043){0.31111112}{180.0}{360.0}
\psline[linecolor=black, linewidth=0.02](1.1791992,0.40281427)(1.1791992,1.1710682)
\psline[linecolor=black, linewidth=0.02](1.163615,-0.41465753)(1.163615,-1.155834)
\rput[bl](-0.31,0.92){$\scriptstyle n-2$}
\rput[bl](-0.31,-1.15){$\scriptstyle n-2$}
\rput[bl](1.315972,0.9419145){$\scriptstyle 1$}
\rput[bl](1.267394,-1.116547){$\scriptstyle 1$}
\end{pspicture}
}
\end{gathered} \\
&= \ \ 
p_{n-1} \ \ + \ \ 
\frac{[n-2]}{[n-1]} \ 
\begin{gathered}
\psscalebox{1.0 1.0} 
{
\begin{pspicture}(-0.31,-1.1710682)(1.415972,1.1710682)
\psframe[linecolor=black, linewidth=0.02, dimen=outer](0.8157322,0.62710935)(0.0,0.40234742)
\psframe[linecolor=black, linewidth=0.02, dimen=outer](0.81650305,-0.40278077)(0.0,-0.6275427)
\psline[linecolor=black, linewidth=0.02](0.27610415,-0.40790898)(0.27610415,0.41504583)
\psline[linecolor=black, linewidth=0.02](0.3925316,0.6144799)(0.3925316,1.1605116)
\psline[linecolor=black, linewidth=0.02](0.38320315,-1.1710682)(0.38320315,-0.6250364)
\psarc[linecolor=black, linewidth=0.02, dimen=outer](0.8560519,-0.41484427){0.31111112}{0.0}{180.0}
\psarc[linecolor=black, linewidth=0.02, dimen=outer](0.8653804,0.41822043){0.31111112}{180.0}{360.0}
\psline[linecolor=black, linewidth=0.02](1.1791992,0.40281427)(1.1791992,1.1710682)
\psline[linecolor=black, linewidth=0.02](1.163615,-0.41465753)(1.163615,-1.155834)
\rput[bl](-0.31,0.92){$\scriptstyle n-2$}
\rput[bl](-0.31,-1.15){$\scriptstyle n-2$}
\rput[bl](1.315972,0.9419145){$\scriptstyle 1$}
\rput[bl](1.267394,-1.116547){$\scriptstyle 1$}
\end{pspicture}
}
\end{gathered} \quad \text{since $p_{n-1}=0$ in $\TLJ(q)$} \\
&= \ \ 
\begin{gathered}
\psscalebox{1.0 1.0} 
{
\begin{pspicture}(-0.3,-0.92082924)(1.1251371,0.92082924)
\psline[linecolor=black, linewidth=0.02](0.34142664,0.9207958)(0.34758046,-0.9192042)
\psframe[linecolor=black, linewidth=0.02, fillstyle=solid, dimen=outer](0.67419314,0.11688164)(0.0,-0.107880265)
\psline[linecolor=black, linewidth=0.02](0.90139127,0.9192043)(0.9075451,-0.9207957)
\rput[bl](-0.3,-0.605){$\scriptstyle n-2$}
\rput[bl](1.0251371,-0.605){$\scriptstyle 1$}
\end{pspicture}
}
\end{gathered} \\
&= \ \ p_a \otimes p_1.
\end{align*}

We have proved the truncated identity on $p_a \otimes p_1$ in the ``correct'' way; let us now introduce a convention that will greatly simplify the rest of this proof.
We have observed that the truncated identity for $p_a \otimes p_1$ coincides with the non-truncated identity when $a < n-2$.
Let us extend this by convention to include the case $a=n-2$, that is to say we will write
\begin{equation*}
p_{n-2} \otimes p_1 = \sumtwo{k=n-3}{n-1}\frac{[k+1]}{\theta(a,1,k)}
\begin{gathered}
\psscalebox{1.0 1.0} 
{
\begin{pspicture}(-0.15,-0.792027)(0.94241494,0.792027)
\psline[linecolor=black, linewidth=0.02](0.45697075,0.20267501)(0.45697075,-0.21881929)
\psline[linecolor=black, linewidth=0.02](0.4562587,0.20204714)(0.8494084,0.5487268)
\psline[linecolor=black, linewidth=0.02](0.067692176,0.5459411)(0.46084186,0.19926146)
\psline[linecolor=black, linewidth=0.02](0.06372307,-0.56065685)(0.45687276,-0.21397717)
\psline[linecolor=black, linewidth=0.02](0.45066956,-0.20976295)(0.84381926,-0.5564426)
\rput[bl](-0.15,0.59){$\scriptstyle n-2$}
\rput[bl](0.52679443,-0.08153543){$\scriptstyle k$}
\rput[bl](0.832415,0.61202705){$\scriptstyle 1$}
\rput[bl](-0.15,-0.8){$\scriptstyle n-2$}
\rput[bl](0.8216032,-0.792027){$\scriptstyle 1$}
\end{pspicture}
}
\end{gathered}
\ \ = \ \ 
\frac{[n-2]}{\theta(n-2,1,n-3)}
\begin{gathered}
\psscalebox{1.0 1.0} 
{
\begin{pspicture}(-0.15,-0.792027)(0.94241494,0.792027)
\psline[linecolor=black, linewidth=0.02](0.45697075,0.20267501)(0.45697075,-0.21881929)
\psline[linecolor=black, linewidth=0.02](0.4562587,0.20204714)(0.8494084,0.5487268)
\psline[linecolor=black, linewidth=0.02](0.067692176,0.5459411)(0.46084186,0.19926146)
\psline[linecolor=black, linewidth=0.02](0.06372307,-0.56065685)(0.45687276,-0.21397717)
\psline[linecolor=black, linewidth=0.02](0.45066956,-0.20976295)(0.84381926,-0.5564426)
\rput[bl](-0.15,0.59){$\scriptstyle n-2$}
\rput[bl](0.52679443,-0.08153543){$\scriptstyle n-3$}
\rput[bl](0.832415,0.61202705){$\scriptstyle 1$}
\rput[bl](-0.15,-0.8){$\scriptstyle n-2$}
\rput[bl](0.8216032,-0.792027){$\scriptstyle 1$}
\end{pspicture}
}
\end{gathered}
\quad + \ \ 
\frac{[n]}{\theta(n-2,1,n-1)}
\begin{gathered}
\psscalebox{1.0 1.0} 
{
\begin{pspicture}(-0.15,-0.792027)(0.94241494,0.792027)
\psline[linecolor=black, linewidth=0.02](0.45697075,0.20267501)(0.45697075,-0.21881929)
\psline[linecolor=black, linewidth=0.02](0.4562587,0.20204714)(0.8494084,0.5487268)
\psline[linecolor=black, linewidth=0.02](0.067692176,0.5459411)(0.46084186,0.19926146)
\psline[linecolor=black, linewidth=0.02](0.06372307,-0.56065685)(0.45687276,-0.21397717)
\psline[linecolor=black, linewidth=0.02](0.45066956,-0.20976295)(0.84381926,-0.5564426)
\rput[bl](-0.15,0.59){$\scriptstyle n-2$}
\rput[bl](0.52679443,-0.08153543){$\scriptstyle n-1$}
\rput[bl](0.832415,0.61202705){$\scriptstyle 1$}
\rput[bl](-0.15,-0.8){$\scriptstyle n-2$}
\rput[bl](0.8216032,-0.792027){$\scriptstyle 1$}
\end{pspicture}
}
\end{gathered}
\end{equation*}
and consider the second term to be zero since $p_{n-1}=0$.
(Of course, the coefficient $\frac{[n]}{\theta(n-2,1,n-1)}=\frac{[n]}{[n]}$ is, properly speaking, not well-defined, but we will agree that it is $1$ since the numerator and denominator are equal.)
We use this convention to avoid having to break into cases depending on whether $a<n-2$ or $a=n-2$.

Next let $2 \leq b \leq a$, and assume for induction that \eqref{eq:truncated-tensorprod-identity} holds for $p_a \otimes p_c$ for all $1 \leq c \leq b-1$.
Then we have the following cases for $b$.

\noindent Case 1: $a+b<n-1$. Then $a\nplus b = a+b$, so the upper limit of summation is the same as in the non-truncated case, and one checks that the proof for the unevaluated case also holds here, with no modifications necessary.

\noindent Case 2. $a+b = n-1$, so $a \nplus b = n-3 = a+b-2$.
Then using the convention described earlier and following the calculation of the non-truncated proof we get that
\begin{multline*}
p_a \otimes p_b
\ \ = \ \ 
\lambda(a,b,a-b)
\begin{gathered}
\psscalebox{1.0 1.0} 
{
\begin{pspicture}(0,-0.792027)(0.94241494,0.792027)
\psline[linecolor=black, linewidth=0.02](0.45697075,0.20267501)(0.45697075,-0.21881929)
\psline[linecolor=black, linewidth=0.02](0.4562587,0.20204714)(0.8494084,0.5487268)
\psline[linecolor=black, linewidth=0.02](0.067692176,0.5459411)(0.46084186,0.19926146)
\psline[linecolor=black, linewidth=0.02](0.06372307,-0.56065685)(0.45687276,-0.21397717)
\psline[linecolor=black, linewidth=0.02](0.45066956,-0.20976295)(0.84381926,-0.5564426)
\rput[bl](0.010811806,0.6225269){$\scriptstyle a$}
\rput[bl](0.832415,0.61202705){$\scriptstyle b$}
\rput[bl](0.52679443,-0.08153543){$\scriptstyle a-b$}
\rput[bl](0.0,-0.78152716){$\scriptstyle a$}
\rput[bl](0.8216032,-0.792027){$\scriptstyle b$}
\end{pspicture}
}
\end{gathered} \\
+
\sumtwo{k^\prime = a-b+2}{a+b-2} \lambda(a,b,k^\prime)
\begin{gathered}
\psscalebox{1.0 1.0} 
{
\begin{pspicture}(0,-0.792027)(0.94241494,0.792027)
\psline[linecolor=black, linewidth=0.02](0.45697075,0.20267501)(0.45697075,-0.21881929)
\psline[linecolor=black, linewidth=0.02](0.4562587,0.20204714)(0.8494084,0.5487268)
\psline[linecolor=black, linewidth=0.02](0.067692176,0.5459411)(0.46084186,0.19926146)
\psline[linecolor=black, linewidth=0.02](0.06372307,-0.56065685)(0.45687276,-0.21397717)
\psline[linecolor=black, linewidth=0.02](0.45066956,-0.20976295)(0.84381926,-0.5564426)
\rput[bl](0.010811806,0.6225269){$\scriptstyle a$}
\rput[bl](0.832415,0.61202705){$\scriptstyle b$}
\rput[bl](0.52679443,-0.08153543){$\scriptstyle k^\prime$}
\rput[bl](0.0,-0.78152716){$\scriptstyle a$}
\rput[bl](0.8216032,-0.792027){$\scriptstyle b$}
\end{pspicture}
}
\end{gathered}
\ \ + \ \ 
\lambda(a,b-1,a+b-1)
\begin{gathered}
\psscalebox{1.0 1.0} 
{
\begin{pspicture}(0,-0.792027)(0.94241494,0.792027)
\psline[linecolor=black, linewidth=0.02](0.45697075,0.20267501)(0.45697075,-0.21881929)
\psline[linecolor=black, linewidth=0.02](0.4562587,0.20204714)(0.8494084,0.5487268)
\psline[linecolor=black, linewidth=0.02](0.067692176,0.5459411)(0.46084186,0.19926146)
\psline[linecolor=black, linewidth=0.02](0.06372307,-0.56065685)(0.45687276,-0.21397717)
\psline[linecolor=black, linewidth=0.02](0.45066956,-0.20976295)(0.84381926,-0.5564426)
\rput[bl](0.010811806,0.6225269){$\scriptstyle a$}
\rput[bl](0.832415,0.61202705){$\scriptstyle b$}
\rput[bl](0.52679443,-0.08153543){$\scriptstyle a+b$}
\rput[bl](0.0,-0.78152716){$\scriptstyle a$}
\rput[bl](0.8216032,-0.792027){$\scriptstyle b$}
\end{pspicture}
}
\end{gathered} \ ,
\end{multline*}
where as before we write $\lambda(i,j,k) = \frac{[k+1]}{\theta(i,j,k)}$.
The last term in the above expansion comes from our convention of writing $p_{n-2} \otimes p_1$ and is negligible (it involves $p_{a+b}=p_{n-1}$) and hence zero.
Hence we have that
\begin{align*}
p_a \otimes p_b
\ \ &= \ \ 
\sumtwo{k = a-b}{a\nplus b} \lambda(a,b,k)
\begin{gathered}
\psscalebox{1.0 1.0} 
{
\begin{pspicture}(0,-0.792027)(0.94241494,0.792027)
\psline[linecolor=black, linewidth=0.02](0.45697075,0.20267501)(0.45697075,-0.21881929)
\psline[linecolor=black, linewidth=0.02](0.4562587,0.20204714)(0.8494084,0.5487268)
\psline[linecolor=black, linewidth=0.02](0.067692176,0.5459411)(0.46084186,0.19926146)
\psline[linecolor=black, linewidth=0.02](0.06372307,-0.56065685)(0.45687276,-0.21397717)
\psline[linecolor=black, linewidth=0.02](0.45066956,-0.20976295)(0.84381926,-0.5564426)
\rput[bl](0.010811806,0.6225269){$\scriptstyle a$}
\rput[bl](0.832415,0.61202705){$\scriptstyle b$}
\rput[bl](0.52679443,-0.08153543){$\scriptstyle k$}
\rput[bl](0.0,-0.78152716){$\scriptstyle a$}
\rput[bl](0.8216032,-0.792027){$\scriptstyle b$}
\end{pspicture}
}
\end{gathered} \ .
\end{align*}

\begin{remark*}
In the calculation above we had to use the triangle-shrinking Lemma \ref{lemma:shrink_triangle_k+1} on a diagram involving the negligible Jones-Wenzl idempotent $p_{n-1}$, however this is not a problem since the lemma simply rewrites one negligible morphism as another.
(That is, zero morphisms stay zero and nonzero morphisms stay nonzero under the lemma.)
\end{remark*}

\noindent Case 3. $a+b \geq n$, hence $a \nplus b = 2n-4-(a+b)$.
The calculation is the same as for the unevaluated case, except we have to change the upper limit of summation to $a \nplus (b-1) = 2n-4-(a+b-1)$, obtaining
\begin{align*}
p_a \otimes p_b
\ \ &= \ \ 
\sumtwo{k=a-b+1}{2n-4-(a+b-1)} \lambda(a,b-1,k)
\left( \frac{[k-n_k]^2}{[k][k+1]}
\begin{gathered}
\psscalebox{1.0 1.0} 
{
\begin{pspicture}(0,-0.792027)(0.94241494,0.792027)
\psline[linecolor=black, linewidth=0.02](0.45697075,0.20267501)(0.45697075,-0.21881929)
\psline[linecolor=black, linewidth=0.02](0.4562587,0.20204714)(0.8494084,0.5487268)
\psline[linecolor=black, linewidth=0.02](0.067692176,0.5459411)(0.46084186,0.19926146)
\psline[linecolor=black, linewidth=0.02](0.06372307,-0.56065685)(0.45687276,-0.21397717)
\psline[linecolor=black, linewidth=0.02](0.45066956,-0.20976295)(0.84381926,-0.5564426)
\rput[bl](0.010811806,0.6225269){$\scriptstyle a$}
\rput[bl](0.832415,0.61202705){$\scriptstyle b$}
\rput[bl](0.52679443,-0.08153543){$\scriptstyle k-1$}
\rput[bl](0.0,-0.78152716){$\scriptstyle a$}
\rput[bl](0.8216032,-0.792027){$\scriptstyle b$}
\end{pspicture}
}
\end{gathered}
\quad + \ \ 
\begin{gathered}
\psscalebox{1.0 1.0} 
{
\begin{pspicture}(0,-0.792027)(0.94241494,0.792027)
\psline[linecolor=black, linewidth=0.02](0.45697075,0.20267501)(0.45697075,-0.21881929)
\psline[linecolor=black, linewidth=0.02](0.4562587,0.20204714)(0.8494084,0.5487268)
\psline[linecolor=black, linewidth=0.02](0.067692176,0.5459411)(0.46084186,0.19926146)
\psline[linecolor=black, linewidth=0.02](0.06372307,-0.56065685)(0.45687276,-0.21397717)
\psline[linecolor=black, linewidth=0.02](0.45066956,-0.20976295)(0.84381926,-0.5564426)
\rput[bl](0.010811806,0.6225269){$\scriptstyle a$}
\rput[bl](0.832415,0.61202705){$\scriptstyle b$}
\rput[bl](0.52679443,-0.08153543){$\scriptstyle k+1$}
\rput[bl](0.0,-0.78152716){$\scriptstyle a$}
\rput[bl](0.8216032,-0.792027){$\scriptstyle b$}
\end{pspicture}
}
\end{gathered} \ 
\right)
\end{align*}
(compare equation \eqref{eq:JWP-tensorprod-identity_sumtwo_k} in Theorem \ref{thm:JWP-tensorprod-identity}).
Reindexing and simplifying as in the non-truncated proof, we get that
\begin{multline*}
p_a \otimes p_b
\ \ = \ \ 
\lambda(a,b-1,a-b+1) \frac{[a-b+1-n_{a-b+1}]^2}{[a-b+1][a-b+2]}
\begin{gathered}
\psscalebox{1.0 1.0} 
{
\begin{pspicture}(0,-0.792027)(0.94241494,0.792027)
\psline[linecolor=black, linewidth=0.02](0.45697075,0.20267501)(0.45697075,-0.21881929)
\psline[linecolor=black, linewidth=0.02](0.4562587,0.20204714)(0.8494084,0.5487268)
\psline[linecolor=black, linewidth=0.02](0.067692176,0.5459411)(0.46084186,0.19926146)
\psline[linecolor=black, linewidth=0.02](0.06372307,-0.56065685)(0.45687276,-0.21397717)
\psline[linecolor=black, linewidth=0.02](0.45066956,-0.20976295)(0.84381926,-0.5564426)
\rput[bl](0.010811806,0.6225269){$\scriptstyle a$}
\rput[bl](0.832415,0.61202705){$\scriptstyle b$}
\rput[bl](0.52679443,-0.08153543){$\scriptstyle a-b$}
\rput[bl](0.0,-0.78152716){$\scriptstyle a$}
\rput[bl](0.8216032,-0.792027){$\scriptstyle b$}
\end{pspicture}
}
\end{gathered} \\
+ \ \ 
\sumtwo{k^\prime = a-b+2}{2n-4-(a+b)} \left[ \left( \lambda(a,b-1,k^\prime-1) + \lambda(a,b-1,k^\prime+1)\frac{[k^\prime+1-n_{k^\prime+1}]^2}{[k^\prime+1][k^\prime+2]} \right)
\begin{gathered}
\psscalebox{1.0 1.0} 
{
\begin{pspicture}(0,-0.792027)(0.94241494,0.792027)
\psline[linecolor=black, linewidth=0.02](0.45697075,0.20267501)(0.45697075,-0.21881929)
\psline[linecolor=black, linewidth=0.02](0.4562587,0.20204714)(0.8494084,0.5487268)
\psline[linecolor=black, linewidth=0.02](0.067692176,0.5459411)(0.46084186,0.19926146)
\psline[linecolor=black, linewidth=0.02](0.06372307,-0.56065685)(0.45687276,-0.21397717)
\psline[linecolor=black, linewidth=0.02](0.45066956,-0.20976295)(0.84381926,-0.5564426)
\rput[bl](0.010811806,0.6225269){$\scriptstyle a$}
\rput[bl](0.832415,0.61202705){$\scriptstyle b$}
\rput[bl](0.52679443,-0.08153543){$\scriptstyle k^\prime$}
\rput[bl](0.0,-0.78152716){$\scriptstyle a$}
\rput[bl](0.8216032,-0.792027){$\scriptstyle b$}
\end{pspicture}
}
\end{gathered} \ \right] \\
+ \ \ \lambda(a,b-1,2n-4-(a+b-1))
\begin{gathered}
\psscalebox{1.0 1.0} 
{
\begin{pspicture}(0,-0.792027)(2.1,0.792027)
\psline[linecolor=black, linewidth=0.02](0.45697075,0.20267501)(0.45697075,-0.21881929)
\psline[linecolor=black, linewidth=0.02](0.4562587,0.20204714)(0.8494084,0.5487268)
\psline[linecolor=black, linewidth=0.02](0.067692176,0.5459411)(0.46084186,0.19926146)
\psline[linecolor=black, linewidth=0.02](0.06372307,-0.56065685)(0.45687276,-0.21397717)
\psline[linecolor=black, linewidth=0.02](0.45066956,-0.20976295)(0.84381926,-0.5564426)
\rput[bl](0.010811806,0.6225269){$\scriptstyle a$}
\rput[bl](0.832415,0.61202705){$\scriptstyle b$}
\rput[bl](0.52679443,-0.15){$\scriptstyle 2n-2-(a+b)$}
\rput[bl](0.0,-0.78152716){$\scriptstyle a$}
\rput[bl](0.8216032,-0.792027){$\scriptstyle b$}
\end{pspicture}
}
\end{gathered} \ .
\end{multline*}
Now $k^\prime \leq 2n-4-(a+b) \leq 2(a+b)-4-(a+b) = a+b-4$, and by the non-truncated proof we know that for all $k^\prime \in \{a-b, a-b+2, \dotsc, a+b-4 \}$, the coefficients of the terms indexed by $k^\prime$ in the above expansion are the coefficients $\lambda(a,b,k^\prime) = \frac{[k^\prime+1]}{\theta(a,b,k^\prime)}$ in the tensor product identity.
Furthermore the last term again arises due to us writing $p_{n-2} \otimes p_1$ as a sum involving a negligible morphism, it is itself negligible since $a+b+(2n-2-(a+b))=2n-2$, and hence is equal to zero.
Thus
\begin{align*}
p_a \otimes p_b
\ \ &= \ \ 
\sumtwo{k=a-b}{2n-4-(a+b)} \frac{[k+1]}{\theta(a,b,k)}
\begin{gathered}
\psscalebox{1.0 1.0} 
{
\begin{pspicture}(0,-0.792027)(0.94241494,0.792027)
\psline[linecolor=black, linewidth=0.02](0.45697075,0.20267501)(0.45697075,-0.21881929)
\psline[linecolor=black, linewidth=0.02](0.4562587,0.20204714)(0.8494084,0.5487268)
\psline[linecolor=black, linewidth=0.02](0.067692176,0.5459411)(0.46084186,0.19926146)
\psline[linecolor=black, linewidth=0.02](0.06372307,-0.56065685)(0.45687276,-0.21397717)
\psline[linecolor=black, linewidth=0.02](0.45066956,-0.20976295)(0.84381926,-0.5564426)
\rput[bl](0.010811806,0.6225269){$\scriptstyle a$}
\rput[bl](0.832415,0.61202705){$\scriptstyle b$}
\rput[bl](0.52679443,-0.08153543){$\scriptstyle k$}
\rput[bl](0.0,-0.78152716){$\scriptstyle a$}
\rput[bl](0.8216032,-0.792027){$\scriptstyle b$}
\end{pspicture}
}
\end{gathered} \\
&= \ \ 
\sumtwo{k=a-b}{a \nplus b} \frac{[k+1]}{\theta(a,b,k)}
\begin{gathered}
\psscalebox{1.0 1.0} 
{
\begin{pspicture}(0,-0.792027)(0.94241494,0.792027)
\psline[linecolor=black, linewidth=0.02](0.45697075,0.20267501)(0.45697075,-0.21881929)
\psline[linecolor=black, linewidth=0.02](0.4562587,0.20204714)(0.8494084,0.5487268)
\psline[linecolor=black, linewidth=0.02](0.067692176,0.5459411)(0.46084186,0.19926146)
\psline[linecolor=black, linewidth=0.02](0.06372307,-0.56065685)(0.45687276,-0.21397717)
\psline[linecolor=black, linewidth=0.02](0.45066956,-0.20976295)(0.84381926,-0.5564426)
\rput[bl](0.010811806,0.6225269){$\scriptstyle a$}
\rput[bl](0.832415,0.61202705){$\scriptstyle b$}
\rput[bl](0.52679443,-0.08153543){$\scriptstyle k$}
\rput[bl](0.0,-0.78152716){$\scriptstyle a$}
\rput[bl](0.8216032,-0.792027){$\scriptstyle b$}
\end{pspicture}
}
\end{gathered} \ .
\end{align*}

Hence by induction the truncated identity holds for all $b \leq a$.
Finally, reflecting all diagrams about the vertical axis gives the result for all $a,b \leq n-2$.
\end{proof}

As in generic $\TLJ$, we have maps
\begin{gather*}
\varphi \colon p_a \otimes p_b \longrightarrow p_{\abs{a-b}} \oplus p_{\abs{a-b}+2} \oplus \dotsb \oplus p_{a\nplus b}, \\
\psi \colon p_{\abs{a-b}} \oplus p_{\abs{a-b}+2} \oplus \dotsb \oplus p_{a\nplus b} \longrightarrow p_a \otimes p_b,
\end{gather*}
given by
\begin{gather*}
\varphi = \begin{bmatrix}
\begin{gathered}
\psscalebox{1.0 1.0} 
{
\begin{pspicture}(0,-0.5435213)(1.1242857,0.5435213)
\psline[linecolor=black, linewidth=0.02](0.5709957,0.29500684)(0.5709957,-0.15044771)
\psline[linecolor=black, linewidth=0.02](0.56473863,-0.14758837)(0.9296337,-0.4030906)
\psline[linecolor=black, linewidth=0.02](0.5772527,-0.14282647)(0.2123577,-0.3983287)
\rput[bl](0.27593985,0.2935213){$\scriptstyle \abs{a-b}$}
\rput[bl](0.0,-0.5339975){$\scriptstyle a$}
\rput[bl](1.0142857,-0.5435213){$\scriptstyle b$}
\end{pspicture}
}
\end{gathered} \rule{0pt}{5ex} \ \\
\begin{gathered}
\psscalebox{1.0 1.0} 
{
\begin{pspicture}(0,-0.5575564)(1.1242857,0.5575564)
\psline[linecolor=black, linewidth=0.02](0.5709957,0.28097174)(0.5709957,-0.1644828)
\psline[linecolor=black, linewidth=0.02](0.56473863,-0.16162346)(0.9296337,-0.4171257)
\psline[linecolor=black, linewidth=0.02](0.5772527,-0.15686156)(0.2123577,-0.4123638)
\rput[bl](0.09348371,0.3075564){$\scriptstyle\abs{a-b}+2$}
\rput[bl](0.0,-0.5480326){$\scriptstyle a$}
\rput[bl](1.0142857,-0.5575564){$\scriptstyle b$}
\end{pspicture}
}
\end{gathered} \rule[-5ex]{0pt}{5ex} \rule{0pt}{6.5ex} \ \\
\vdots \\
\begin{gathered}
\psscalebox{1.0 1.0} 
{
\begin{pspicture}(0,-0.5676441)(1.1242857,0.5676441)
\psline[linecolor=black, linewidth=0.02](0.5709957,0.27088404)(0.5709957,-0.17457052)
\psline[linecolor=black, linewidth=0.02](0.56473863,-0.17171118)(0.9296337,-0.4272134)
\psline[linecolor=black, linewidth=0.02](0.5772527,-0.16694927)(0.2123577,-0.4224515)
\rput[bl](0.33208022,0.3676441){$\scriptstyle a\nplus b$}
\rput[bl](0.0,-0.5581203){$\scriptstyle a$}
\rput[bl](1.0142857,-0.5676441){$\scriptstyle b$}
\end{pspicture}
}
\end{gathered} \ \rule{0pt}{6.5ex}
\end{bmatrix} \\
\intertext{and}
\psi = \begin{bmatrix}
\displaystyle \frac{\left[\abs{a-b}+1\right]}{\theta(a,b,\abs{a-b})}
\begin{gathered}
\psscalebox{1.0 1.0} 
{
\begin{pspicture}(0,-0.4926521)(1.2255411,0.4926521)
\psline[linecolor=black, linewidth=0.02](0.5276145,-0.43496695)(0.5276145,0.010487583)
\psline[linecolor=black, linewidth=0.02](0.5213575,0.007628242)(0.88625246,0.2631305)
\psline[linecolor=black, linewidth=0.02](0.53387153,0.0028663373)(0.16897652,0.25836858)
\rput[bl](0.62554115,-0.4926521){$\scriptstyle \abs{a-b}$}
\rput[bl](0.0,0.32832763){$\scriptstyle a$}
\rput[bl](0.9618136,0.31265208){$\scriptstyle b$}
\end{pspicture}
}
\end{gathered} \quad ,
& \cdots \ \ ,
& \displaystyle \frac{\left[(a\nplus b)+1\right]}{\theta(a,b,a\nplus b)}
\begin{gathered}
\psscalebox{1.0 1.0} 
{
\begin{pspicture}(0,-0.4926521)(1.4255411,0.4926521)
\psline[linecolor=black, linewidth=0.02](0.5276145,-0.43496695)(0.5276145,0.010487583)
\psline[linecolor=black, linewidth=0.02](0.5213575,0.007628242)(0.88625246,0.2631305)
\psline[linecolor=black, linewidth=0.02](0.53387153,0.0028663373)(0.16897652,0.25836858)
\rput[bl](0.62554115,-0.4926521){$\scriptstyle a\nplus b$}
\rput[bl](0.0,0.32832763){$\scriptstyle a$}
\rput[bl](0.9618136,0.31265208){$\scriptstyle b$}
\end{pspicture}
}
\end{gathered}
\end{bmatrix}
\end{gather*}
for all objects $p_a \otimes p_b$ in $\TLJ(q)$, and the proof that these give isomorphisms
\[ p_a \otimes p_b \cong p_{\abs{a-b}} \oplus p_{\abs{a-b}+2} \oplus \dotsb \oplus p_{a\nplus b} \]
is exactly the same as for Lemma \ref{lemma:JWP-tensor-isomorphism}.
Accordingly, the proof of the following theorem is the same as for the generic case.

\begin{theorem}[Semisimplicity of $\TLJ(q)$]
Temperley-Lieb-Jones at a root of unity $q=e^{\pi i/n}$ is semisimple, with simple objects the Jones-Wenzl idempotents $p_i$ for $0 \leq i \leq n-2$.
\end{theorem}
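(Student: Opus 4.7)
The plan is to reduce the theorem to the binary tensor product isomorphism $p_a\otimes p_b\cong p_{\abs{a-b}}\oplus p_{\abs{a-b}+2}\oplus\dotsb\oplus p_{a\nplus b}$ that is noted to hold in $\TLJ(q)$ via the same $\varphi,\psi$ construction as in Lemma \ref{lemma:JWP-tensor-isomorphism}, now using Theorem \ref{thm:truncated-tensorprod-identity} in place of Theorem \ref{thm:JWP-tensorprod-identity}.

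First I would observe that by definition of $\TLJ(q)$ every object is built from the $p_i$ with $0\leq i\leq n-2$ using finitely many applications of $\oplus$ and $\otimes$. Since $\otimes$ is bilinear over $\oplus$ in $\Mat\Kar(\TL(q)/\mathcal{N})$, repeatedly distributing lets us rewrite any such object as a finite direct sum of pure tensor products $\bigotimes_{i=1}^k p_{a_i}$ with each $a_i\in\{0,1,\dotsc,n-2\}$. Thus it suffices to show that every such tensor product is isomorphic to a direct sum of Jones-Wenzl idempotents drawn from $\{p_0,\dotsc,p_{n-2}\}$.

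I would then proceed by induction on $k$. The case $k=1$ is trivial, and the case $k=2$ is exactly the binary isomorphism mentioned above: the matrices $\varphi$ and $\psi$ defined as in Lemma \ref{lemma:JWP-tensor-isomorphism} (but with the upper summation limit $a+b$ replaced by $a\nplus b$) compose to the identity on each side, where the verification of $\varphi\psi=\id$ uses Lemma \ref{lemma:roundabout_lemma} together with $q$-admissibility of $(a,b,k)$ for $k$ in the truncated range, and $\psi\varphi=\id$ is Theorem \ref{thm:truncated-tensorprod-identity}. For the inductive step, given $\bigotimes_{i=1}^{k+1}p_{a_i}$, apply the induction hypothesis to the first $k$ factors to obtain an isomorphism $\bigotimes_{i=1}^{k}p_{a_i}\cong\bigoplus_j p_{c_j}$; tensoring on the right with $p_{a_{k+1}}$ and distributing gives $\bigoplus_j(p_{c_j}\otimes p_{a_{k+1}})$, and the $k=2$ case now expresses each summand as a direct sum of Jones-Wenzl idempotents.

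The step to be slightly careful about is that each Jones-Wenzl idempotent that appears along the way must actually lie in $\{p_0,\dotsc,p_{n-2}\}$, i.e.\ nothing spills over to the undefined or negligible indices. This is guaranteed by Remark \ref{rem:tensorprod-q-adm}, which ensures $a\nplus b\leq n-2$ and $(a,b,k)$ is $q$-admissible throughout the truncated range, so the construction closes up inside the subcategory $\TLJ(q)$. Combined with Corollary \ref{cor:JWP-disjointsimpleobjs} (whose proof carries over verbatim at roots of unity using the $q$-admissible version of Theorem \ref{thm:JWP_hom_dim}), the $p_i$ for $0\leq i\leq n-2$ form the required collection of disjoint simple objects, completing the proof of semisimplicity.
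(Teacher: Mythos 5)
Your proof is correct and follows essentially the same route the paper takes: distribute $\otimes$ over $\oplus$ to reduce to pure tensor products, induct on the number of factors with the base case being the binary isomorphism $p_a\otimes p_b\cong p_{\abs{a-b}}\oplus\dotsb\oplus p_{a\nplus b}$ furnished by the truncated tensor product identity and the $\varphi,\psi$ matrices, and invoke $q$-admissibility throughout to stay inside $\{p_0,\dotsc,p_{n-2}\}$. The paper itself simply remarks that the proof is ``the same as for the generic case,'' and you have filled in precisely the details it leaves implicit, including the observation that Corollary \ref{cor:JWP-disjointsimpleobjs} carries over at roots of unity.
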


\begin{remark}
$\TLJ(q)$ at a root of unity $q$ is in fact equivalent as a braided spherical tensor category to the category $\mathcal{R}ep\,U_q(\mathfrak{sl}_2(\mathbbm{C}))$ of representations of the quantum algebra $U_q(\mathfrak{sl}_2(\mathbbm{C}))$.
However a problem arises when we consider $\TLJ(q)$ and $\mathcal{R}ep\,U_q(\mathfrak{sl}_2(\mathbbm{C}))$ as \emph{pivotal} categories: for reasons we do not go into here, their standard pivotal structures do not match, hence in order to make them equivalent as pivotal categories we either change the pivotal structure or else negate the parameter $q$ in one of the categories.
That is to say, $\TLJ(q)$ and $\mathcal{R}ep\,U_{-q}(\mathfrak{sl}_2(\mathbbm{C}))$ are equivalent categories.
(See \cite{ST2008} and \cite{Tingley2010} for details.)
\end{remark}

\begin{defn}
A \textbf{spherical fusion category} is a category that is
\begin{enumerate}
\item spherical,
\item linear with finite-dimensional hom-spaces,
\item \emph{rigid} monoidal, having tensor product with duals and unit and evaluation morphisms satisfying the \emph{zigzag identities}
\begin{equation*}
\begin{gathered}
\psscalebox{1.0 1.0} 
{
\begin{pspicture}(0,-0.89029413)(1.36,0.89029413)
\psframe[linecolor=black, linewidth=0.02, dimen=outer](1.36,0.7043372)(0.0,-0.6556628)
\psline[linecolor=black, linewidth=0.02](0.3323407,0.6950146)(0.3323407,-0.6383761)
\psbezier[linecolor=black, linewidth=0.02](0.6300458,-0.64468706)(0.65626836,0.21215205)(1.0268565,0.20896283)(1.0637808,-0.6543256)
\rput[bl](1.0051699,-0.89029413){$\scriptstyle a$}
\rput[bl](0.23164041,-0.89029413){$\scriptstyle a$}
\rput[bl](0.23164041,0.7802941){$\scriptstyle a$}
\rput[bl](0.5698757,-0.89029413){$\scriptstyle a^\ast$}
\end{pspicture}
}
\end{gathered}
\,\circ\,
\begin{gathered}
\psscalebox{1.0 1.0} 
{
\begin{pspicture}(0,-0.8979412)(1.36,0.8979412)
\psframe[linecolor=black, linewidth=0.02, dimen=outer](1.3600001,0.64374894)(0.0,-0.716251)
\psline[linecolor=black, linewidth=0.02](1.0276594,0.63442636)(1.0276594,-0.69896436)
\psbezier[linecolor=black, linewidth=0.02](0.28164056,0.6355468)(0.28164056,-0.1644532)(0.7169347,-0.1762179)(0.7169347,0.6237821)
\rput[bl](0.98605233,0.7079412){$\scriptstyle a$}
\rput[bl](0.21399349,0.7079412){$\scriptstyle a$}
\rput[bl](0.62869936,0.7079412){$\scriptstyle a^\ast$}
\rput[bl](0.98605233,-0.8979412){$\scriptstyle a$}
\end{pspicture}
}
\end{gathered}
\ = \ 
\begin{gathered}
\psscalebox{1.0 1.0} 
{
\begin{pspicture}(0,-0.68)(1.36,0.68)
\psframe[linecolor=black, linewidth=0.02, dimen=outer](1.36,0.68)(0.0,-0.68)
\psbezier[linecolor=black, linewidth=0.02](0.29234043,0.65644825)(0.32135603,-0.53872484)(0.5778809,-0.25374556)(0.67531914,-0.032913435)(0.77275735,0.18791868)(1.0266957,0.4569061)(1.0582979,-0.67972195)
\rput[bl](0.82,-0.54760563){$\scriptstyle a$}
\end{pspicture}
}
\end{gathered}
\ = \ 
\begin{gathered}
\psscalebox{1.0 1.0} 
{
\begin{pspicture}(0,-0.68)(1.36,0.68)
\psframe[linecolor=black, linewidth=0.02, dimen=outer](1.3599999,0.67999995)(0.0,-0.68000007)
\psline[linecolor=black, linewidth=0.02](0.6795743,0.6699999)(0.6795743,-0.68000007)
\rput[bl](0.4407011,-0.54760563){$\scriptstyle a$}
\end{pspicture}
}
\end{gathered}
\ \ = \ \ 
\begin{gathered}
\psscalebox{1.0 1.0} 
{
\begin{pspicture}(0,-0.68)(1.3600003,0.68)
\psframe[linecolor=black, linewidth=0.02, dimen=outer](1.3600001,0.68000007)(0.0,-0.67999995)
\psbezier[linecolor=black, linewidth=0.02](0.27106392,-0.679595)(0.31853592,0.49174958)(0.591165,0.3453503)(0.71361715,-0.058318403)(0.8360693,-0.46198708)(1.0949676,-0.3277481)(1.1136171,0.66508585)
\rput[bl](0.36748585,-0.54760563){$\scriptstyle a$}
\end{pspicture}
}
\end{gathered}
\ = \ 
\begin{gathered}
\psscalebox{1.0 1.0} 
{
\begin{pspicture}(0,-0.87978303)(1.36,0.87978303)
\psbezier[linecolor=black, linewidth=0.02](0.31953484,-0.64073336)(0.3457574,0.21610577)(0.7163456,0.21291654)(0.7532698,-0.6503719)
\psframe[linecolor=black, linewidth=0.02, dimen=outer](1.3600001,0.70559084)(0.0,-0.65440917)
\psline[linecolor=black, linewidth=0.02](1.0276594,0.6962682)(1.0276594,-0.6371225)
\rput[bl](0.261872,-0.87978303){$\scriptstyle a$}
\rput[bl](0.63313526,-0.87978303){$\scriptstyle a^\ast$}
\rput[bl](0.98605233,0.769783){$\scriptstyle a$}
\rput[bl](0.9794949,-0.87978303){$\scriptstyle a$}
\end{pspicture}
}
\end{gathered}
\,\circ\,
\begin{gathered}
\psscalebox{1.0 1.0} 
{
\begin{pspicture}(0,-0.9296191)(1.36,0.9296191)
\psframe[linecolor=black, linewidth=0.02, dimen=outer](1.36,0.66501224)(0.0,-0.6949878)
\psline[linecolor=black, linewidth=0.02](0.3323407,0.6556896)(0.3323407,-0.67770106)
\psbezier[linecolor=black, linewidth=0.02](0.6380531,0.65410995)(0.6380531,-0.14589003)(1.0733472,-0.15765473)(1.0733472,0.64234525)
\rput[bl](0.23164041,-0.9296191){$\scriptstyle a$}
\rput[bl](0.23164041,0.7396191){$\scriptstyle a$}
\rput[bl](1.022865,0.7396191){$\scriptstyle a$}
\rput[bl](0.6178989,0.7396191){$\scriptstyle a^\ast$}
\end{pspicture}
}
\end{gathered} \ ,
\end{equation*}
and
\item semisimple with finitely many simple objects, and the tensor identity is simple.
\end{enumerate}
\end{defn}

We have now shown that Temperley-Lieb-Jones at a root of unity is an example of such a category, and are finally ready to turn our attention to the construction of skein modules.

\chapter{Turaev-Viro skein modules for $n$-holed disks}
At the end of the last chapter we saw that $\TLJ(q)$ is a spherical fusion category.
One of the reasons these categories are interesting is that they allow us to construct $(2+1)$-dimensional topological quantum field theories (TQFTs).

One of the first TQFTs to be discovered was the Turaev-Viro TQFT \cite{TV1992}, which takes as input a spherical fusion category in order to construct free modules for $2$-surfaces and linear maps for $3$-cobordisms.
In this final chapter we deal only with the $2$-dimensional aspect of the theory and present an alternative construction of the Turaev-Viro skein modules for $n$-holed disks.

In Turaev and Viro's original construction, the skein module for a given surface $\Sigma$ is defined abstractly as a finite quotient of an infinite-dimensional vector space formed by considering all possible ``labellings'' over all triangulations of $\Sigma$.
In order to do concrete calculations one then has to invoke the (folkloric) spine lemma \cite{Mat2013}, which allows one to pass from triangulations to a spine for $\Sigma$, that is, a graph embedded in $\Sigma$ onto which $\Sigma$ deformation retracts, and this then gives us an explicit basis for the skein module.
Our approach in this chapter will be to short-circuit all this, and instead \emph{define} modules via spines, proving all the results we need about existence and uniqueness directly.
However our construction is much weaker than the original, being valid for $n$-holed disks (or more generally, for compact surfaces with trivial $n$th homology for $n\geq 2$); in contrast the original Turaev-Viro construction works for any compact $2$-manifold.

For the rest of this chapter we work at a root of unity $q=e^{\pi i/n}$.

\begin{defn}
Let $\Sigma$ be a $n$-holed disk, i.e. $\Sigma = \overline{D^2} \setminus \coprod_{i=1}^n D^2_i$ is the compact smooth $2$-manifold with boundary, formed by taking the closed $2$-disk and removing $n$ disjoint copies $D_1^2, \dotsc, D_n^2$ of the open disk from its interior.
Let $M$ be a finite collection of distinguished points marked on the boundary of $\Sigma$.
Then we call $(\Sigma, M)$ a \textbf{$n$-holed disk with marked boundary}.
\end{defn}

\begin{defn}
Let $(\Sigma, M)$ be a $n$-holed disk with marked boundary.
A \textbf{spine} for $(\Sigma,M)$ is a planar graph $s$ embedded in $\Sigma$ such that
\begin{enumerate}
\item the set of vertices of degree $1$ in $s$ is precisely the set $M$ of marked points on the boundary of $\Sigma$, and
\item $\Sigma$ deformation retracts onto $s$.
\end{enumerate}
A \textbf{trivalent spine} is a spine that is further uni-trivalent, i.e. one whose vertices are all of degree $3$, except for those coinciding with the marked boundary points.
\end{defn}

Note that the marked boundary points are part of the data of spines for $n$-holed disks, so for example while the two surfaces in Figure \ref{fig:different_spines} are the same, the set of marked boundary points is different and hence a spine for one is not a spine for the other.

\begin{figure}[ht]
\begin{equation*}
\psscalebox{1.0 1.0} 
{
\begin{pspicture}(0,-0.83111113)(3.38,0.83111113)
\definecolor{colour0}{rgb}{0.2,0.2,1.0}
\psellipse[linecolor=colour0, linewidth=0.02, dimen=outer](1.6899999,-0.021111004)(1.35,0.5539394)
\psellipse[linecolor=black, linewidth=0.02, dimen=outer](1.69,-0.02111111)(1.69,0.81)
\pscircle[linecolor=black, linewidth=0.02, dimen=outer](0.8627273,-0.02111111){0.22}
\pscircle[linecolor=black, linewidth=0.02, dimen=outer](1.69,-0.02111111){0.22}
\pscircle[linecolor=black, linewidth=0.02, dimen=outer](2.5172727,-0.02111111){0.22}
\pscircle[linecolor=colour0, linewidth=0.02, fillstyle=solid,fillcolor=colour0, dimen=outer](0.8666667,0.18222222){0.04888889}
\pscircle[linecolor=colour0, linewidth=0.02, fillstyle=solid,fillcolor=colour0, dimen=outer](1.6933334,0.18666667){0.04888889}
\pscircle[linecolor=colour0, linewidth=0.02, fillstyle=solid,fillcolor=colour0, dimen=outer](2.5244443,0.17777778){0.04888889}
\pscircle[linecolor=colour0, linewidth=0.02, fillstyle=solid,fillcolor=colour0, dimen=outer](0.74666667,0.64){0.04888889}
\pscircle[linecolor=colour0, linewidth=0.02, fillstyle=solid,fillcolor=colour0, dimen=outer](1.5822222,0.7822222){0.04888889}
\pscircle[linecolor=colour0, linewidth=0.02, fillstyle=solid,fillcolor=colour0, dimen=outer](2.431111,0.6888889){0.04888889}
\psline[linecolor=colour0, linewidth=0.02](1.6888889,0.18666667)(1.6933334,0.51555556)
\psline[linecolor=colour0, linewidth=0.02](0.74222225,0.63555557)(0.74222225,0.36444443)
\psline[linecolor=colour0, linewidth=0.02](0.8666667,0.18222222)(0.8666667,0.40444446)
\psline[linecolor=colour0, linewidth=0.02](2.5244443,0.18666667)(2.5244443,0.4)
\psline[linecolor=colour0, linewidth=0.02](2.4266667,0.69777775)(2.4266667,0.43555555)
\psline[linecolor=colour0, linewidth=0.02](1.2755556,0.4977778)(1.2755556,-0.5288889)
\psline[linecolor=colour0, linewidth=0.02](2.1288888,0.4888889)(2.1288888,-0.5377778)
\psline[linecolor=colour0, linewidth=0.02](1.5777777,0.7866667)(1.5777777,0.52)
\end{pspicture}
}
\qquad
\psscalebox{1.0 1.0} 
{
\begin{pspicture}(0,-0.8099998)(3.3800004,0.8099998)
\definecolor{colour0}{rgb}{0.2,0.2,1.0}
\psellipse[linecolor=black, linewidth=0.02, dimen=outer](1.69,0.0)(1.69,0.81)
\pscircle[linecolor=black, linewidth=0.02, dimen=outer](0.76284236,-0.002413793){0.22}
\pscircle[linecolor=colour0, linewidth=0.02, fillstyle=solid,fillcolor=colour0, dimen=outer](0.76678175,0.19632185){0.04888889}
\pscircle[linecolor=colour0, linewidth=0.02, dimen=outer](0.7635634,0.0){0.40229884}
\psline[linecolor=colour0, linewidth=0.02](0.7600001,0.38632172)(0.7645978,0.22540219)
\pscircle[linecolor=black, linewidth=0.02, dimen=outer](1.6892792,-0.002413793){0.22}
\pscircle[linecolor=colour0, linewidth=0.02, fillstyle=solid,fillcolor=colour0, dimen=outer](1.6932186,0.19632185){0.04888889}
\pscircle[linecolor=colour0, linewidth=0.02, dimen=outer](1.6900002,0.0){0.40229884}
\psline[linecolor=colour0, linewidth=0.02](1.6887357,0.21160908)(1.6887357,0.39551714)
\pscircle[linecolor=black, linewidth=0.02, dimen=outer](2.615716,-0.002413793){0.22}
\pscircle[linecolor=colour0, linewidth=0.02, fillstyle=solid,fillcolor=colour0, dimen=outer](2.6196554,0.19632185){0.04888889}
\pscircle[linecolor=colour0, linewidth=0.02, dimen=outer](2.616437,0.0){0.40229884}
\psline[linecolor=colour0, linewidth=0.02](2.6220691,0.20701139)(2.6266668,0.39551714)
\psline[linecolor=colour0, linewidth=0.02](1.1600001,0.0047125295)(1.2887357,0.0047125295)
\psline[linecolor=colour0, linewidth=0.02](2.0887358,0.0047125295)(2.2174714,0.0047125295)
\end{pspicture}
}
\end{equation*}
\caption{Trivalent spines for different $n$-holed disks. \label{fig:different_spines}}
\end{figure}
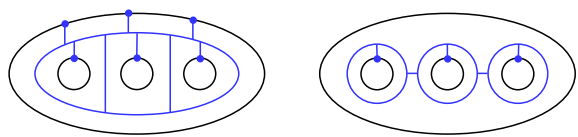

\begin{defn}
Let $s$ be a trivalent spine for a $n$-holed disk $(\Sigma,M)$.
A \textbf{coloring} of $s$ is a labelling of the edges of $s$ by simple objects $p_i$ in $\TLJ(q)$, such that at every trivalent vertex the labels of the edges adjacent to it form a $q$-admissible triple.
\end{defn}

Hence we see that a coloring of $s$ determines a particular configuration of TL diagrams drawn on the surface of $\Sigma$.
Such diagrams are in general called \emph{skein diagrams}.

\begin{defn}
The \textbf{skein module associated to the trivalent spine $s$} for the surface $(\Sigma,M)$ is the free $\mathbbm{C}$-module $C(s)$ having as a basis all colorings of $s$.
\end{defn}

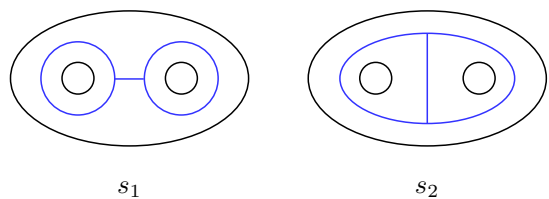
\begin{figure}
\begin{equation*}
\begin{gathered}
\psscalebox{1.0 1.0} 
{
\begin{pspicture}(0,-0.9052379)(3.1514285,0.9052379)
\definecolor{colour0}{rgb}{0.2,0.2,1.0}
\psellipse[linecolor=black, linewidth=0.02, dimen=outer](1.5757143,0.0)(1.5757142,0.9052381)
\pscircle[linecolor=colour0, linewidth=0.02, dimen=outer](2.2557142,0.0){0.4952381}
\pscircle[linecolor=black, linewidth=0.02, dimen=outer](2.2576532,0.002461867){0.22}
\pscircle[linecolor=colour0, linewidth=0.02, dimen=outer](0.8957143,0.0){0.4952381}
\pscircle[linecolor=black, linewidth=0.02, dimen=outer](0.8976532,0.002461867){0.22}
\psline[linecolor=colour0, linewidth=0.02](1.3790946,-0.0072004185)(1.762768,-0.0072004185)
\end{pspicture}
} \\
s_1
\end{gathered}
\qquad
\begin{gathered}
\psscalebox{1.0 1.0} 
{
\begin{pspicture}(0,-0.90523803)(3.151429,0.90523803)
\definecolor{colour0}{rgb}{0.2,0.2,1.0}
\pscircle[linecolor=black, linewidth=0.02, dimen=outer](2.2576535,0.0024617517){0.22}
\pscircle[linecolor=black, linewidth=0.02, dimen=outer](0.89765346,0.0024617517){0.22}
\psellipse[linecolor=black, linewidth=0.02, dimen=outer](1.5757146,0.0)(1.5757142,0.9052381)
\psellipse[linecolor=colour0, linewidth=0.02, dimen=outer](1.5757143,0.0)(1.1591836,0.60816324)
\psline[linecolor=colour0, linewidth=0.02](1.5757145,0.5911539)(1.5757145,-0.5925195)
\end{pspicture}
} \\
s_2
\end{gathered}
\end{equation*}
\caption{Trivalent spines for the doubly-holed disk. \label{fig:skein-module-eg-spines}}
\end{figure}

For example, let $(\overline{D^2}\setminus (D_1^2 \sqcup D_2^2),\emptyset)$ be the doubly-holed disk with empty boundary marking, and let $s_1$ and $s_2$ be the trivalent spines given in Figure \ref{fig:skein-module-eg-spines}.
Then at $q=e^{\pi i/3}$, the simple objects in $\TLJ(q)$ are $p_0$ and $p_1$, the only $q$-admissible triples are $(0,0,0)$ and $(0,1,1)$, and the skein modules associated to $s_1$ and $s_2$ are given by
\begin{equation*}
C(s_1) = \spn
\left\{
\begin{aligned}
& \begin{gathered}
\psscalebox{1.0 1.0} 
{
\begin{pspicture}(0,-0.77607137)(2.6192858,0.77607137)
\definecolor{colour0}{rgb}{0.8,0.8,0.8}
\definecolor{colour1}{rgb}{0.2,0.2,1.0}
\psellipse[linecolor=colour0, linewidth=0.02, dimen=outer](1.3096428,0.0)(1.3096429,0.7760714)
\pscircle[linecolor=colour0, linewidth=0.02, dimen=outer](0.7482087,0.0){0.17416666}
\pscircle[linecolor=colour0, linewidth=0.02, dimen=outer](1.871077,0.0){0.17416666}
\pscircle[linecolor=colour0, linewidth=0.02, dimen=outer](0.7482087,0.0){0.17416666}
\pscircle[linecolor=colour0, linewidth=0.02, dimen=outer](1.871077,0.0){0.17416666}
\pscircle[linecolor=colour1, linewidth=0.02, dimen=outer](0.75115573,0.0031571442){0.37575758}
\pscircle[linecolor=colour1, linewidth=0.02, dimen=outer](1.868489,0.0031571442){0.37575758}
\psline[linecolor=colour1, linewidth=0.02](1.1150345,-0.0035903307)(1.5017011,-0.0035903307)
\rput[bl](1.250792,0.058434334){$\scriptstyle 0$}
\rput[bl](1.8176761,0.43892843){$\scriptstyle 0$}
\rput[bl](0.6843427,0.43892843){$\scriptstyle 0$}
\end{pspicture}
}
\end{gathered} \ ,
\quad
\begin{gathered}
\psscalebox{1.0 1.0} 
{
\begin{pspicture}(0,-0.7760715)(2.6192856,0.7760715)
\definecolor{colour0}{rgb}{0.8,0.8,0.8}
\definecolor{colour1}{rgb}{0.2,0.2,1.0}
\psellipse[linecolor=colour0, linewidth=0.02, dimen=outer](1.3096429,0.0)(1.3096429,0.7760714)
\pscircle[linecolor=colour0, linewidth=0.02, dimen=outer](0.7482088,0.0){0.17416666}
\pscircle[linecolor=colour0, linewidth=0.02, dimen=outer](1.8710771,0.0){0.17416666}
\pscircle[linecolor=colour0, linewidth=0.02, dimen=outer](0.7482088,0.0){0.17416666}
\pscircle[linecolor=colour0, linewidth=0.02, dimen=outer](1.8710771,0.0){0.17416666}
\pscircle[linecolor=colour1, linewidth=0.02, dimen=outer](0.75115585,0.003157353){0.37575758}
\pscircle[linecolor=colour1, linewidth=0.02, dimen=outer](1.8684891,0.003157353){0.37575758}
\psline[linecolor=colour1, linewidth=0.02](1.1150346,-0.0035901219)(1.5017012,-0.0035901219)
\rput[bl](1.2507921,0.058434542){$\scriptstyle 0$}
\rput[bl](1.8287872,0.4167064){$\scriptstyle 1$}
\rput[bl](0.6954539,0.4167064){$\scriptstyle 1$}
\end{pspicture}
}
\end{gathered} \ , \\
& \begin{gathered}
\psscalebox{1.0 1.0} 
{
\begin{pspicture}(0,-0.7760715)(2.6192858,0.7760715)
\definecolor{colour0}{rgb}{0.8,0.8,0.8}
\definecolor{colour1}{rgb}{0.2,0.2,1.0}
\psellipse[linecolor=colour0, linewidth=0.02, dimen=outer](1.3096428,0.0)(1.3096429,0.7760714)
\pscircle[linecolor=colour0, linewidth=0.02, dimen=outer](0.7482087,0.0){0.17416666}
\pscircle[linecolor=colour0, linewidth=0.02, dimen=outer](1.871077,0.0){0.17416666}
\pscircle[linecolor=colour0, linewidth=0.02, dimen=outer](0.7482087,0.0){0.17416666}
\pscircle[linecolor=colour0, linewidth=0.02, dimen=outer](1.871077,0.0){0.17416666}
\pscircle[linecolor=colour1, linewidth=0.02, dimen=outer](0.75115573,0.003157353){0.37575758}
\pscircle[linecolor=colour1, linewidth=0.02, dimen=outer](1.868489,0.003157353){0.37575758}
\psline[linecolor=colour1, linewidth=0.02](1.1150345,-0.0035901219)(1.5017012,-0.0035901219)
\rput[bl](1.2507921,0.058434542){$\scriptstyle 0$}
\rput[bl](1.8398982,0.4217064){$\scriptstyle 1$}
\rput[bl](0.7065649,0.4167064){$\scriptstyle 0$}
\end{pspicture}
}
\end{gathered} \ ,
\quad
\begin{gathered}
\psscalebox{1.0 1.0} 
{
\begin{pspicture}(0,-0.7760715)(2.6192856,0.7760715)
\definecolor{colour0}{rgb}{0.8,0.8,0.8}
\definecolor{colour1}{rgb}{0.2,0.2,1.0}
\psellipse[linecolor=colour0, linewidth=0.02, dimen=outer](1.3096429,0.0)(1.3096429,0.7760714)
\pscircle[linecolor=colour0, linewidth=0.02, dimen=outer](0.7482089,0.0){0.17416666}
\pscircle[linecolor=colour0, linewidth=0.02, dimen=outer](1.8710771,0.0){0.17416666}
\pscircle[linecolor=colour0, linewidth=0.02, dimen=outer](0.7482089,0.0){0.17416666}
\pscircle[linecolor=colour0, linewidth=0.02, dimen=outer](1.8710771,0.0){0.17416666}
\pscircle[linecolor=colour1, linewidth=0.02, dimen=outer](0.7511559,0.0031570138){0.37575758}
\pscircle[linecolor=colour1, linewidth=0.02, dimen=outer](1.8684893,0.0031570138){0.37575758}
\psline[linecolor=colour1, linewidth=0.02](1.1150347,-0.0035904609)(1.5017014,-0.0035904609)
\rput[bl](1.2507923,0.058434203){$\scriptstyle 0$}
\rput[bl](1.8287873,0.41920608){$\scriptstyle 0$}
\rput[bl](0.695454,0.42420608){$\scriptstyle 1$}
\end{pspicture}
}
\end{gathered}
\end{aligned}
\right\}
\end{equation*}
and
\begin{equation*}
C(s_2) = \spn
\left\{
\begin{aligned}
& \begin{gathered}
\psscalebox{1.0 1.0} 
{
\begin{pspicture}(0,-0.77607137)(2.6192858,0.77607137)
\definecolor{colour0}{rgb}{0.8,0.8,0.8}
\definecolor{colour1}{rgb}{0.2,0.2,1.0}
\psellipse[linecolor=colour0, linewidth=0.02, dimen=outer](1.3096429,0.0)(1.3096429,0.7760714)
\psline[linecolor=colour1, linewidth=0.02](1.311358,0.5323801)(1.311358,-0.53360957)
\psellipse[linecolor=colour1, linewidth=0.02, dimen=outer](1.309643,0.0)(1.0103464,0.54769814)
\pscircle[linecolor=colour0, linewidth=0.02, dimen=outer](0.7482088,0.0){0.17416666}
\pscircle[linecolor=colour0, linewidth=0.02, dimen=outer](1.8710771,0.0){0.17416666}
\rput[bl](2.3647316,-0.085){$\scriptstyle 0$}
\rput[bl](0.10654979,-0.085){$\scriptstyle 0$}
\rput[bl](1.099277,-0.085){$\scriptstyle 0$}
\end{pspicture}
}
\end{gathered} \ ,
\quad
\begin{gathered}
\psscalebox{1.0 1.0} 
{
\begin{pspicture}(0,-0.7760713)(2.6192858,0.7760713)
\definecolor{colour0}{rgb}{0.8,0.8,0.8}
\definecolor{colour1}{rgb}{0.2,0.2,1.0}
\rput[bl](2.3792098,-0.081273645){$\scriptstyle 1$}
\rput[bl](0.14202273,-0.081273645){$\scriptstyle 1$}
\rput[bl](1.0992769,-0.08627365){$\scriptstyle 0$}
\pscircle[linecolor=colour0, linewidth=0.02, dimen=outer](0.74820864,0.0){0.17416666}
\pscircle[linecolor=colour0, linewidth=0.02, dimen=outer](1.871077,0.0){0.17416666}
\psline[linecolor=colour1, linewidth=0.02](1.3113579,0.53238004)(1.3113579,-0.5336096)
\psellipse[linecolor=colour1, linewidth=0.02, dimen=outer](1.3096429,0.0)(1.0103464,0.54769814)
\psellipse[linecolor=colour0, linewidth=0.02, dimen=outer](1.3096428,0.0)(1.3096429,0.7760714)
\end{pspicture}
}
\end{gathered} \ , \\
& \begin{gathered}
\psscalebox{1.0 1.0} 
{
\begin{pspicture}(0,-0.7760715)(2.6192858,0.7760715)
\definecolor{colour0}{rgb}{0.8,0.8,0.8}
\definecolor{colour1}{rgb}{0.2,0.2,1.0}
\psellipse[linecolor=colour0, linewidth=0.02, dimen=outer](1.3096429,0.0)(1.3096429,0.7760714)
\psline[linecolor=colour1, linewidth=0.02](1.311358,0.53238016)(1.311358,-0.5336095)
\psellipse[linecolor=colour1, linewidth=0.02, dimen=outer](1.309643,0.0)(1.0103464,0.54769814)
\pscircle[linecolor=colour0, linewidth=0.02, dimen=outer](0.7482088,0.0){0.17416666}
\pscircle[linecolor=colour0, linewidth=0.02, dimen=outer](1.8710771,0.0){0.17416666}
\rput[bl](1.128368,-0.07900079){$\scriptstyle 1$}
\rput[bl](0.11293198,-0.084000796){$\scriptstyle 0$}
\rput[bl](2.379826,-0.07900079){$\scriptstyle 1$}
\end{pspicture}
}
\end{gathered} \ ,
\quad
\begin{gathered}
\psscalebox{1.0 1.0} 
{
\begin{pspicture}(0,-0.7760715)(2.6192858,0.7760715)
\definecolor{colour0}{rgb}{0.8,0.8,0.8}
\definecolor{colour1}{rgb}{0.2,0.2,1.0}
\rput[bl](2.3720045,-0.08636352){$\scriptstyle 0$}
\rput[bl](0.11891365,-0.08136352){$\scriptstyle 1$}
\rput[bl](1.1283681,-0.08136352){$\scriptstyle 1$}
\pscircle[linecolor=colour0, linewidth=0.02, dimen=outer](0.74820906,0.0){0.17416666}
\pscircle[linecolor=colour0, linewidth=0.02, dimen=outer](1.8710773,0.0){0.17416666}
\psline[linecolor=colour1, linewidth=0.02](1.3113582,0.5323802)(1.3113582,-0.53360945)
\psellipse[linecolor=colour1, linewidth=0.02, dimen=outer](1.3096433,0.0)(1.0103464,0.54769814)
\psellipse[linecolor=colour0, linewidth=0.02, dimen=outer](1.3096431,0.0)(1.3096429,0.7760714)
\end{pspicture}
}
\end{gathered}
\end{aligned}
\right\}
\end{equation*}
where for simplicity we write $i$ instead of $p_i$ to indicate the values of the edge labels.

Observe that $C(s_1)$ and $C(s_2)$ are both $4$-dimensional and are thus isomorphic (although not naturally).
This is true in general: given any two trivalent spines $s_1$ and $s_2$ for a $n$-holed disk $(\Sigma,M)$ the skein modules $C(s_1), C(s_2)$ associated to the spines are always isomorphic, as we will soon show.
W will rely on the following two results, for proofs of which see Theorem 2 and Proposition 9 of \cite{KL1994}.

\begin{theorem}[Recoupling theorem, Kauffman]
\label{thm:recoupling}
Let $(a,b,j)$ and $(c,d,j)$ be $q$-admissible triples.
Then there exist unique $r_i \in \mathbbm{R}$ such that
\begin{equation*}
\begin{gathered}
\psscalebox{1.0 1.0} 
{
\begin{pspicture}(0,-0.6988889)(2.058889,0.6988889)
\psline[linecolor=black, linewidth=0.02](0.5274074,-0.020864246)(1.5274074,-0.020864246)
\psline[linecolor=black, linewidth=0.02](0.5422222,-0.029012367)(0.17185186,0.48950616)
\psline[linecolor=black, linewidth=0.02](0.53481483,-0.021604959)(0.16444445,-0.54012346)
\psline[linecolor=black, linewidth=0.02](1.5150617,-0.029012324)(1.8854321,0.48950619)
\psline[linecolor=black, linewidth=0.02](1.5224692,-0.021604918)(1.8928396,-0.54012346)
\rput[bl](0.0,-0.6988889){$\scriptstyle a$}
\rput[bl](0.013333334,0.5188889){$\scriptstyle b$}
\rput[bl](1.9377778,0.5188889){$\scriptstyle c$}
\rput[bl](1.9288889,-0.6988889){$\scriptstyle d$}
\rput[bl](0.9866667,0.03888889){$\scriptstyle j$}
\end{pspicture}
}
\end{gathered}
\ \ = \ \ 
\sum_i r_i
\begin{gathered}
\psscalebox{1.0 1.0} 
{
\begin{pspicture}(0,-0.8551537)(1.3858334,0.8551537)
\psline[linecolor=black, linewidth=0.02](0.68475306,0.37688375)(0.68475306,-0.44488034)
\psline[linecolor=black, linewidth=0.02](0.6766049,0.36470947)(1.1951234,0.66906655)
\psline[linecolor=black, linewidth=0.02](0.68401235,0.3707966)(0.1654938,0.6751537)
\psline[linecolor=black, linewidth=0.02](0.676605,-0.4347351)(1.1951234,-0.7390922)
\psline[linecolor=black, linewidth=0.02](0.68401235,-0.44082224)(0.16549385,-0.7451793)
\rput[bl](0.0,-0.8551537){$\scriptstyle a$}
\rput[bl](0.01,0.6751537){$\scriptstyle b$}
\rput[bl](1.2608334,0.688487){$\scriptstyle c$}
\rput[bl](1.2558334,-0.8551537){$\scriptstyle d$}
\rput[bl](0.7809338,-0.113877065){$\scriptstyle i$}
\end{pspicture}
}
\end{gathered}
\end{equation*}
where the sum on the right hand side runs over all $0 \leq i \leq n-2$ for which $(a,d,i)$ and $(b,c,i)$ are $q$-admissible.
\end{theorem}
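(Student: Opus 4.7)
The plan is to view both the left-hand and right-hand diagrams as morphisms $p_a \otimes p_d \to p_b \otimes p_c$ in $\TLJ(q)$ (reading diagrams from bottom to top) and to show that the family of right-hand diagrams, indexed by admissible $i$, forms a basis for this hom-space; existence and uniqueness of the $r_i$ will then be immediate.

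The first step is to combine the truncated tensor product identity (Theorem \ref{thm:truncated-tensorprod-identity}) with semisimplicity of $\TLJ(q)$ to obtain
\[
\Hom(p_a \otimes p_d,\, p_b \otimes p_c) \;\cong\; \bigoplus_{i} \Hom(p_i,\, p_i),
\]
the sum being over $0 \le i \le n-2$ for which both $(a,d,i)$ and $(b,c,i)$ are $q$-admissible. Since the Jones-Wenzl idempotents form a collection of disjoint simple objects (Corollary \ref{cor:JWP-disjointsimpleobjs}), each summand $\Hom(p_i, p_i)$ is one-dimensional, so the total hom-space has dimension equal to the number of such $i$. I would then exhibit a nonzero element landing in the $i$-th summand: by Lemma \ref{lemma:JWP-tensor-isomorphism} in its root-of-unity form, the morphisms $g_{a,d,i} \colon p_a \otimes p_d \to p_i$ and $g_{b,c,i}^{\ast} \colon p_i \to p_b \otimes p_c$ are (up to normalization) the components of the decomposition isomorphisms $p_a \otimes p_d \cong \bigoplus_i p_i$ and $p_b \otimes p_c \cong \bigoplus_i p_i$; so the composite $g_{b,c,i}^{\ast} \circ g_{a,d,i}$ is, up to a universal scalar, precisely the H-shaped right-hand diagram $R_i$ with middle edge $i$. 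Linear independence of $\{R_i\}$ --- the one step that is not purely formal --- is a direct application of Lemma \ref{lemma:roundabout_lemma}: composing $R_i$ with a dual of $R_{i'}$ produces an inner bubble labelled $(i, i', a, d)$ (or $(i, i', b, c)$) that vanishes whenever $i \ne i'$. Hence $\{R_i\}$ is a basis for the hom-space, and expanding the left-hand I-diagram in this basis yields unique coefficients $r_i$.

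Reality of the $r_i$ is then immediate: at $q = e^{\pi i/n}$ one has $[k] = \sin(k\pi/n)/\sin(\pi/n) \in \mathbbm{R}$, so every coefficient arising from the basis expansion --- theta nets, normalizations $[k+1]/\theta(a,b,k)$, and the like --- is real. The main obstacle throughout is really just the linear-independence step via Lemma \ref{lemma:roundabout_lemma}; after that the argument is formal dimension counting. If one further wanted closed-form expressions for the $r_i$ (the quantum $6j$-symbols of Kauffman and Lins), one would pair both sides of the identity with the dual of $R_i$ and take the trace, producing a tetrahedral spin network divided by $\theta(b,c,i)$ --- but this computation is not required for bare existence and uniqueness.
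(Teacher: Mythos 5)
The paper does not supply its own proof of the Recoupling Theorem; it states the result and defers to Theorem 2 of Kauffman and Lins \cite{KL1994}. Your proposal therefore fills a genuine gap, and it does so correctly using precisely the machinery the paper has already built.

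Your plan — read both diagrams as morphisms $p_a \otimes p_d \to p_b \otimes p_c$, compute $\dim\Hom(p_a\otimes p_d,\,p_b\otimes p_c) = \#\{\,i : (a,d,i)\text{ and }(b,c,i)\text{ both }q\text{-admissible}\,\}$ via the root-of-unity form of Lemma \ref{lemma:JWP-tensor-isomorphism} and disjoint-simplicity of the $p_i$, then show the $H$-diagrams $R_i$ are linearly independent by closing them against one another and killing off-diagonal terms with Lemma \ref{lemma:roundabout_lemma} — is sound and is in fact essentially the argument Kauffman and Lins give. The one place I would tighten the write-up is the step ``$g_{b,c,i}^{\ast}\circ g_{a,d,i}$ is, up to a universal scalar, precisely $R_i$'': you should confirm the scalar is nonzero, which follows because $(a,d,i)$ and $(b,c,i)$ being $q$-admissible forces $\theta(a,d,i)\neq 0$ and $\theta(b,c,i)\neq 0$ (Lemma \ref{lemma:negligible_JWP_hom}) and $[i+1]\neq 0$ since $i\leq n-2$. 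Similarly, the bubble produced when you pair $R_i$ against $R_{i'}^{\ast}$ gives $\theta(i,b,c)/[i+1]$ on the diagonal, which again is nonzero for the same reason — that is the nondegeneracy needed to go from ``pairwise orthogonal'' to ``linearly independent.''

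The reality argument is correct but stated a little loosely. The cleanest justification is that every morphism in sight is a $\mathbbm{Q}(\,[1],[2],\dots)$-linear combination of simple loop-free TL diagrams, and $[k] = \sin(k\pi/n)/\sin(\pi/n)\in\mathbbm{R}$ at $q = e^{\pi i/n}$; expanding the left-hand I-diagram and each $R_i$ in the simple-diagram basis gives a linear system over $\mathbbm{R}$ with a unique solution, so the $r_i$ are real. (Your alternative route — pair with the dual of $R_i$ and take traces to land on a ratio of a tetrahedron net over a theta net — works equally well and has the advantage of producing the closed form.)
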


The coefficients $r_i$ are called the \emph{$q\mhyphen 6j$ symbols}, and we write
\[ r_i = \qj{a,b,i}{c,d,j} \]
to emphasize the dependence of their values on the values of the edge labels.
They satisfy the following relation.

\begin{lemma}[Orthogonality identity]
\label{lemma:orthogonality_id}
Let $(a,b,j)$, $(c,d,j)$, $(a,b,k)$ and $(c,d,k)$ be $q$-admissible.
Then
\begin{equation*}
\sum_{i=0}^{n-2}\qj{a,b,i}{c,d,j}\qj{d,a,k}{b,c,i} = \delta_{jk}
\end{equation*}
where $\delta_{jk}$ is the Kronecker delta, and the sum is taken over all $i$ for which $(a,d,i)$ and $(b,c,i)$ are $q$-admissible.
\end{lemma}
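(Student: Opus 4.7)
The plan is to apply the Recoupling Theorem (Theorem~\ref{thm:recoupling}) twice and then invoke linear independence. Let $T_m$ denote the horizontal (``H-shaped'') diagram on the left-hand side of Theorem~\ref{thm:recoupling}, with external labels $a,b,c,d$ at positions SW, NW, NE, SE respectively and horizontal internal edge labelled $m$; let $V_m$ denote the corresponding vertical (``I-shaped'') diagram with the same corners but vertical internal edge $m$. The first application of Theorem~\ref{thm:recoupling} gives
\[
T_j \;=\; \sum_i \qj{a,b,i}{c,d,j}\, V_i,
\]
summed over those intermediate $i$ for which $(a,d,i)$ and $(b,c,i)$ are $q$-admissible.

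For the second application, I would rotate each $V_i$ by $90^\circ$ clockwise. By invariance of the diagrammatic identities under planar Euclidean transformations (cf.\ Remark~\ref{rem:JWP_lat_refl_dual}), the Recoupling Theorem applies verbatim in the rotated frame. Tracking corner labels under this rotation, the new external labels at (SW, NW, NE, SE) become $(d,a,b,c)$, and the formerly vertical internal edge $i$ is now horizontal. Applying the theorem in the rotated frame and then rotating the resulting I-shapes back to the original frame gives
\[
V_i \;=\; \sum_k \qj{d,a,k}{b,c,i}\, T_k.
\]
Substituting this into the expansion of $T_j$ and interchanging sums,
\[
T_j \;=\; \sum_k \left(\sum_i \qj{a,b,i}{c,d,j}\,\qj{d,a,k}{b,c,i}\right) T_k.
\]

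To conclude, I would verify linear independence of the family $\{T_k\}$ within $\Hom(p_a \otimes p_d,\, p_b \otimes p_c)$. By semisimplicity of $\TLJ(q)$ together with the disjointness of the simple Jones-Wenzl idempotents (the root-of-unity analogue of Corollary~\ref{cor:JWP-disjointsimpleobjs}), this hom-space decomposes as $\bigoplus_k \Hom(p_a \otimes p_d,\, p_k) \otimes \Hom(p_k,\, p_b \otimes p_c)$, and each $T_k$ factors through $p_k$ and spans the one-dimensional $k$-th summand whenever $(a,d,k)$ and $(b,c,k)$ are both $q$-admissible. Comparing coefficients of $T_k$ on both sides of the identity above (the left side being $T_j = \sum_k \delta_{jk}\, T_k$) then yields the orthogonality identity. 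The main technical obstacle is the bookkeeping in the rotation step: one has to track the cyclic permutation of the four corner labels under the $90^\circ$ rotation carefully enough to confirm the precise form $\qj{d,a,k}{b,c,i}$, rather than some other tetrahedrally related $6j$-symbol that would arise from rotating the other way or rotating by a different angle.
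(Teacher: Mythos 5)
Your overall strategy --- two applications of the Recoupling Theorem, the second via a $90^\circ$ rotation, and then a coefficient comparison against a basis --- is sound, and your tracking of the corner labels under the rotation is correct: the rotated frame does have corners $(d,a,b,c)$ at (SW, NW, NE, SE), producing the coefficient $\qj{d,a,k}{b,c,i}$ as claimed. However, the final linear-independence step contains a genuine error. You assert that ``each $T_k$ factors through $p_k$ and spans the one-dimensional $k$-th summand whenever $(a,d,k)$ and $(b,c,k)$ are both $q$-admissible,'' but this describes the I-shapes $V_k$, not the H-shapes $T_k$: in the Schur decomposition $\Hom(p_a\otimes p_d,p_b\otimes p_c)\cong\bigoplus_k\Hom(p_a\otimes p_d,p_k)\otimes\Hom(p_k,p_b\otimes p_c)$ it is $V_k$ that sits in and spans the $k$-th summand, and the admissibility condition you quote is precisely the one governing $V_k$ rather than $T_k$ (whose defining condition is that $(a,b,k)$ and $(c,d,k)$ be $q$-admissible). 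So linear independence of $\{T_k\}$ has not been established, and the coefficient comparison does not go through.

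To close the gap, substitute the expansions in the other direction: insert $T_k=\sum_l\qj{a,b,l}{c,d,k}V_l$ into your relation $V_i=\sum_k\qj{d,a,k}{b,c,i}T_k$ and compare coefficients against the $\{V_l\}$, which you do know form a basis. This yields $\sum_k\qj{a,b,l}{c,d,k}\qj{d,a,k}{b,c,i}=\delta_{li}$, i.e.\ $AB=I$ for the two $6j$-transition matrices $A_{lk}=\qj{a,b,l}{c,d,k}$ and $B_{ki}=\qj{d,a,k}{b,c,i}$, whereas the orthogonality identity asserts $BA=I$. To bridge these one must show the matrices are square, which follows because the $90^\circ$ rotation is a linear isomorphism $\Hom(p_a\otimes p_d,p_b\otimes p_c)\cong\Hom(p_d\otimes p_c,p_a\otimes p_b)$ (a general feature of the spherical pivotal structure), and the dimensions of these two spaces count exactly the index sets of the $V_i$ and $T_k$ respectively. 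Alternatively, that same rotation isomorphism carries each $T_k$ to an I-shape in the rotated hom-space, where linear independence is immediate by the Schur decomposition there; pulling back gives linear independence of $\{T_k\}$ in the original space, after which your original coefficient comparison becomes legitimate.
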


\begin{remark}
An explicit formula for the $q\mhyphen 6j$ symbols is given in Proposition 11 of \cite{KL1994}, though we will not not need it here.
We remark however that our tensor product identities Theorem \ref{thm:truncated-tensorprod-identity} and Theorem \ref{thm:JWP-tensorprod-identity} (in the case of generic $q$) are special cases of the recoupling theorem with $j=0$, $a=b$ and $c=d$, which gives us that
\[ \qj{a,a,i}{c,c,0} = \frac{[i+1]}{\theta(a,c,i)}. \]
\end{remark}

One last thing we will need is to define the HI move on trivalent graphs.

\begin{defn}
Suppose a general (unlabelled) trivalent graph contains a subgraph
\begin{equation*}
\psscalebox{1.0 1.0} 
{
\begin{pspicture}(0,-0.80035293)(1.7121252,0.80035293)
\definecolor{colour0}{rgb}{0.2,0.2,1.0}
\psline[linecolor=colour0, linewidth=0.02](0.71605885,0.0062687234)(0.28494775,0.45071316)
\psline[linecolor=colour0, linewidth=0.02](0.99828106,0.0062686554)(1.4160589,0.44626865)
\psline[linecolor=colour0, linewidth=0.02](0.71605885,-0.0026202332)(0.28494775,-0.44706467)
\psline[linecolor=colour0, linewidth=0.02](0.99828106,-0.0026201655)(1.4293922,-0.45150906)
\psline[linecolor=black, linewidth=0.02](1.7049477,0.73737985)(1.4160589,0.43960205)
\psline[linecolor=black, linewidth=0.02](1.7049477,-0.72928685)(1.4116144,-0.4270646)
\psline[linecolor=black, linewidth=0.02](0.007169967,-0.73373127)(0.31828108,-0.41373128)
\psline[linecolor=black, linewidth=0.02](0.011614411,0.73293537)(0.30939218,0.42626873)
\rput{-80.12028}(0.7100964,0.8444583){\pscircle[linecolor=white, linewidth=0.18, dimen=outer](0.85717,0.0){0.80035293}}
\rput{-90.0}(0.85272557,0.8684376){\pscircle[linecolor=black, linewidth=0.02, linestyle=dashed, dash=0.17638889cm 0.10583334cm, dimen=outer](0.8605816,0.007856025){0.72}}
\psline[linecolor=colour0, linewidth=0.02](0.7131964,0.0)(1.0011437,0.0)
\end{pspicture}
} \ ,
\end{equation*}
then we can replace this locally by
\begin{equation*}
\psscalebox{1.0 1.0} 
{
\begin{pspicture}(0,-0.85606265)(1.6007059,0.85606265)
\definecolor{colour0}{rgb}{0.2,0.2,1.0}
\psline[linecolor=colour0, linewidth=0.02](0.7940842,-0.1400038)(0.34963974,-0.5711149)
\psline[linecolor=colour0, linewidth=0.02](0.79408425,0.14221843)(0.35408425,0.5599962)
\psline[linecolor=colour0, linewidth=0.02](0.80297315,-0.1400038)(1.2474176,-0.5711149)
\psline[linecolor=colour0, linewidth=0.02](0.8029731,0.14221843)(1.2518619,0.5733295)
\psline[linecolor=black, linewidth=0.02](0.06297309,0.8488851)(0.36075085,0.5599962)
\psline[linecolor=black, linewidth=0.02](1.5296397,0.8488851)(1.2274176,0.55555177)
\psline[linecolor=black, linewidth=0.02](1.5340842,-0.8488927)(1.2140841,-0.5377816)
\psline[linecolor=black, linewidth=0.02](0.06741753,-0.84444827)(0.3740842,-0.54667044)
\rput{9.87972}(0.0120591335,-0.13730845){\pscircle[linecolor=white, linewidth=0.18, dimen=outer](0.80035293,0.0011073303){0.80035293}}
\pscircle[linecolor=black, linewidth=0.02, linestyle=dashed, dash=0.17638889cm 0.10583334cm, dimen=outer](0.7924969,0.004518911){0.72}
\psline[linecolor=colour0, linewidth=0.02](0.80035305,-0.14286627)(0.80035305,0.145081)
\end{pspicture}
} \ .
\end{equation*}
One may think of as ``contracting'' the central edge down to a point and then extending again perpendicularly, keeping the edges outside of the circular region constant throughout.
We call performing such a local substitution a \textbf{HI move}.
Observe that HI moves are invertible; we simply apply the HI move (rotated $90^\circ$) to the same local region of the graph.
\end{defn}

Let $s$ be a trivalent spine for $(\Sigma,M)$, and assume $s^\prime$ is another spine for $(\Sigma,M)$ obtained by applying a single HI move to $s$.
Let $\gamma$ be the subgraph of $s$ to which we apply the HI move, and $\gamma^\prime$ the subgraph of $s^\prime$ which we hence obtain.
We define a linear map $\varphi \colon C(s) \rightarrow C(s^\prime)$ in the following manner:

Let $c$ be a coloring of $s$ for which the labelling of the subgraph $\gamma$ is given by
\begin{equation*}
\psscalebox{1.0 1.0} 
{
\begin{pspicture}(0,-0.6988889)(2.058889,0.6988889)
\psline[linecolor=black, linewidth=0.02](0.5274074,-0.020864246)(1.5274074,-0.020864246)
\psline[linecolor=black, linewidth=0.02](0.5422222,-0.029012367)(0.17185186,0.48950616)
\psline[linecolor=black, linewidth=0.02](0.53481483,-0.021604959)(0.16444445,-0.54012346)
\psline[linecolor=black, linewidth=0.02](1.5150617,-0.029012324)(1.8854321,0.48950619)
\psline[linecolor=black, linewidth=0.02](1.5224692,-0.021604918)(1.8928396,-0.54012346)
\rput[bl](0.0,-0.6988889){$\scriptstyle a$}
\rput[bl](0.013333334,0.5188889){$\scriptstyle b$}
\rput[bl](1.9377778,0.5188889){$\scriptstyle c$}
\rput[bl](1.9288889,-0.6988889){$\scriptstyle d$}
\rput[bl](0.9866667,0.03888889){$\scriptstyle j$}
\end{pspicture}
} \ .
\end{equation*}
The image $\varphi(c)$ is defined to be the linear combination of colorings of $s^\prime$ whose labels agree with those of $s$ on the region $s^\prime\setminus\gamma^\prime = s\setminus\gamma$, and whose labels on $\gamma^\prime$ are given by the linear combination
\begin{equation*}
\sum_i \qj{a,b,i}{c,d,j}
\begin{gathered}
\psscalebox{1.0 1.0} 
{
\begin{pspicture}(0,-0.8551537)(1.3858334,0.8551537)
\psline[linecolor=black, linewidth=0.02](0.68475306,0.37688375)(0.68475306,-0.44488034)
\psline[linecolor=black, linewidth=0.02](0.6766049,0.36470947)(1.1951234,0.66906655)
\psline[linecolor=black, linewidth=0.02](0.68401235,0.3707966)(0.1654938,0.6751537)
\psline[linecolor=black, linewidth=0.02](0.676605,-0.4347351)(1.1951234,-0.7390922)
\psline[linecolor=black, linewidth=0.02](0.68401235,-0.44082224)(0.16549385,-0.7451793)
\rput[bl](0.0,-0.8551537){$\scriptstyle a$}
\rput[bl](0.01,0.6751537){$\scriptstyle b$}
\rput[bl](1.2608334,0.688487){$\scriptstyle c$}
\rput[bl](1.2558334,-0.8551537){$\scriptstyle d$}
\rput[bl](0.7809338,-0.113877065){$\scriptstyle i$}
\end{pspicture}
}
\end{gathered}
\end{equation*}
as per the recoupling theorem.

Define $\psi \colon C(s^\prime) \rightarrow C(s)$ in a similar manner:
the image of the basis element $c^\prime \in C(s^\prime)$ is the linear combination of colorings of $s$ whose labels agree with those of $c^\prime$ on $s\setminus\gamma$, and whose labels on $\gamma$ are given by the linear combination 
\begin{equation*}
\sum_k \qj{d,a,k}{b,c,i}
\begin{gathered}
\psscalebox{1.0 1.0} 
{
\begin{pspicture}(0,-0.6988889)(2.058889,0.6988889)
\psline[linecolor=black, linewidth=0.02](0.5274074,-0.020864246)(1.5274074,-0.020864246)
\psline[linecolor=black, linewidth=0.02](0.5422222,-0.029012367)(0.17185186,0.48950616)
\psline[linecolor=black, linewidth=0.02](0.53481483,-0.021604959)(0.16444445,-0.54012346)
\psline[linecolor=black, linewidth=0.02](1.5150617,-0.029012324)(1.8854321,0.48950619)
\psline[linecolor=black, linewidth=0.02](1.5224692,-0.021604918)(1.8928396,-0.54012346)
\rput[bl](0.0,-0.6988889){$\scriptstyle a$}
\rput[bl](0.013333334,0.5188889){$\scriptstyle b$}
\rput[bl](1.9377778,0.5188889){$\scriptstyle c$}
\rput[bl](1.9288889,-0.6988889){$\scriptstyle d$}
\rput[bl](0.9866667,0.03888889){$\scriptstyle k$}
\end{pspicture}
}\end{gathered}
\end{equation*}
we get by applying the recoupling theorem to
\begin{equation*}
\begin{gathered}
\psscalebox{1.0 1.0} 
{
\begin{pspicture}(0,-0.8551537)(1.3858334,0.8551537)
\psline[linecolor=black, linewidth=0.02](0.68475306,0.37688375)(0.68475306,-0.44488034)
\psline[linecolor=black, linewidth=0.02](0.6766049,0.36470947)(1.1951234,0.66906655)
\psline[linecolor=black, linewidth=0.02](0.68401235,0.3707966)(0.1654938,0.6751537)
\psline[linecolor=black, linewidth=0.02](0.676605,-0.4347351)(1.1951234,-0.7390922)
\psline[linecolor=black, linewidth=0.02](0.68401235,-0.44082224)(0.16549385,-0.7451793)
\rput[bl](0.0,-0.8551537){$\scriptstyle a$}
\rput[bl](0.01,0.6751537){$\scriptstyle b$}
\rput[bl](1.2608334,0.688487){$\scriptstyle c$}
\rput[bl](1.2558334,-0.8551537){$\scriptstyle d$}
\rput[bl](0.7809338,-0.113877065){$\scriptstyle i$}
\end{pspicture}
}
\end{gathered} \ .
\end{equation*}

\begin{lemma}
\label{lemma:single-HI-iso}
The linear maps $\varphi$ and $\psi$ are inverse isomorphisms, and
\[ C(s) \cong C(s^\prime). \]
\end{lemma}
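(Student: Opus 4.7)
The plan is to prove the lemma by directly verifying, using the orthogonality identity, that $\psi \circ \varphi$ and $\varphi \circ \psi$ are the identity maps on the respective skein modules.

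First I would check well-definedness of $\varphi$ and $\psi$. The recoupling theorem (Theorem \ref{thm:recoupling}) sums only over $i$ such that $(a,d,i)$ and $(b,c,i)$ are $q$-admissible, so every coloring appearing in the expansion is genuinely a coloring of the new spine: the trivalent vertices of $\gamma'$ are labelled admissibly, and all edges outside $\gamma$ (respectively $\gamma'$) carry their original labels from $c$, which were already $q$-admissible. Hence $\varphi \colon C(s) \to C(s')$ and $\psi \colon C(s') \to C(s)$ are genuine $\mathbbm{C}$-linear maps.

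Next, compute $\psi \circ \varphi$ on a basis element $c \in C(s)$ whose subgraph $\gamma$ carries middle label $j$. By definition,
\[
\varphi(c) \;=\; \sum_i \qj{a,b,i}{c,d,j}\, c'_i,
\]
where $c'_i \in C(s')$ is the coloring that agrees with $c$ outside of $\gamma'$ and carries label $i$ on the central edge of $\gamma'$. Applying $\psi$ term-by-term and swapping the order of summation,
\[
\psi(\varphi(c)) \;=\; \sum_k \left(\sum_i \qj{a,b,i}{c,d,j}\,\qj{d,a,k}{b,c,i}\right) c_k,
\]
where $c_k$ is the coloring of $s$ obtained from $c$ by replacing the middle label of $\gamma$ by $k$. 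By the orthogonality identity (Lemma \ref{lemma:orthogonality_id}), the inner sum collapses to $\delta_{jk}$, giving $\psi(\varphi(c)) = c_j = c$.

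For $\varphi \circ \psi$ the analogous calculation yields the coefficient $\sum_k \qj{a,b,i'}{c,d,k}\,\qj{d,a,k}{b,c,i}$, which is not literally the expression appearing in the orthogonality identity: the summation index $k$ now sits in the bottom-right slot of the first $6j$-symbol and the top-right slot of the second, rather than the reverse. The cleanest way to deal with this is a symmetry argument: the reverse HI move (from $s'$ back to $s$) is itself a single HI move, and the construction of the associated maps simply exchanges the roles of $\varphi$ and $\psi$. Re-running the $\psi \circ \varphi = \mathrm{id}$ computation for this reverse move therefore gives $\varphi \circ \psi = \mathrm{id}_{C(s')}$ for free. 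Alternatively, one can invoke the tetrahedral symmetry of the $q$-$6j$ symbols to rewrite the inner sum directly in the form of Lemma \ref{lemma:orthogonality_id} and conclude $\delta_{ii'}$.

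The main obstacle is bookkeeping rather than genuine mathematics: one must match the label conventions of the recoupling theorem carefully to the labels of $\gamma$ and $\gamma'$, and justify that $\psi$'s coefficient $\qj{d,a,k}{b,c,i}$ really is the correct coefficient for expanding a ``vertical H'' back in terms of ``horizontal H''s (this needs a $90^\circ$ rotation of the subdiagram and an appeal to tetrahedral symmetry of $q$-$6j$ symbols). Once this matching is made, both inversions are a direct application of the orthogonality identity, and the isomorphism $C(s) \cong C(s')$ follows.
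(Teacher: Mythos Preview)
Your proposal is correct and follows essentially the same approach as the paper: compute $\psi\varphi$ on a basis coloring, swap the order of summation, and collapse the inner sum via the orthogonality identity (Lemma~\ref{lemma:orthogonality_id}). The paper handles the other composite with the single phrase ``the same argument also gives that $\varphi\psi(c')=c'$''; you are more careful here, correctly flagging that the indices do not literally match Lemma~\ref{lemma:orthogonality_id} and resolving this via the observation that the reverse HI move is itself an HI move, which is precisely what the paper's ``same argument'' tacitly invokes.
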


\begin{proof}
We abuse notation and represent colorings of the spines $s,s^\prime$ by the subgraphs $\gamma,\gamma^\prime$ on which we apply the HI move, for instance writing
\begin{equation*}
c = 
\begin{gathered}
\psscalebox{1.0 1.0} 
{
\begin{pspicture}(0,-0.6988889)(2.058889,0.6988889)
\psline[linecolor=black, linewidth=0.02](0.5274074,-0.020864246)(1.5274074,-0.020864246)
\psline[linecolor=black, linewidth=0.02](0.5422222,-0.029012367)(0.17185186,0.48950616)
\psline[linecolor=black, linewidth=0.02](0.53481483,-0.021604959)(0.16444445,-0.54012346)
\psline[linecolor=black, linewidth=0.02](1.5150617,-0.029012324)(1.8854321,0.48950619)
\psline[linecolor=black, linewidth=0.02](1.5224692,-0.021604918)(1.8928396,-0.54012346)
\rput[bl](0.0,-0.6988889){$\scriptstyle a$}
\rput[bl](0.013333334,0.5188889){$\scriptstyle b$}
\rput[bl](1.9377778,0.5188889){$\scriptstyle c$}
\rput[bl](1.9288889,-0.6988889){$\scriptstyle d$}
\rput[bl](0.9866667,0.03888889){$\scriptstyle j$}
\end{pspicture}
}\end{gathered}
\end{equation*}
to mean the \emph{coloring} $c$ of $s$, and noting as we do so that the labellings of the edges of $c$ are constant outside of this local region.

Then
\begin{align*}
\psi\varphi(c) &= \psi\left(\sum_i \qj{a,b,i}{c,d,j}
\begin{gathered}
\psscalebox{1.0 1.0} 
{
\begin{pspicture}(0,-0.8551537)(1.3858334,0.8551537)
\psline[linecolor=black, linewidth=0.02](0.68475306,0.37688375)(0.68475306,-0.44488034)
\psline[linecolor=black, linewidth=0.02](0.6766049,0.36470947)(1.1951234,0.66906655)
\psline[linecolor=black, linewidth=0.02](0.68401235,0.3707966)(0.1654938,0.6751537)
\psline[linecolor=black, linewidth=0.02](0.676605,-0.4347351)(1.1951234,-0.7390922)
\psline[linecolor=black, linewidth=0.02](0.68401235,-0.44082224)(0.16549385,-0.7451793)
\rput[bl](0.0,-0.8551537){$\scriptstyle a$}
\rput[bl](0.01,0.6751537){$\scriptstyle b$}
\rput[bl](1.2608334,0.688487){$\scriptstyle c$}
\rput[bl](1.2558334,-0.8551537){$\scriptstyle d$}
\rput[bl](0.7809338,-0.113877065){$\scriptstyle i$}
\end{pspicture}
}
\end{gathered} \ 
\right) \\
&= \sum_i \qj{a,b,i}{c,d,j} \left( \sum_k \qj{d,a,k}{b,c,i}
\begin{gathered}
\psscalebox{1.0 1.0} 
{
\begin{pspicture}(0,-0.6988889)(2.058889,0.6988889)
\psline[linecolor=black, linewidth=0.02](0.5274074,-0.020864246)(1.5274074,-0.020864246)
\psline[linecolor=black, linewidth=0.02](0.5422222,-0.029012367)(0.17185186,0.48950616)
\psline[linecolor=black, linewidth=0.02](0.53481483,-0.021604959)(0.16444445,-0.54012346)
\psline[linecolor=black, linewidth=0.02](1.5150617,-0.029012324)(1.8854321,0.48950619)
\psline[linecolor=black, linewidth=0.02](1.5224692,-0.021604918)(1.8928396,-0.54012346)
\rput[bl](0.0,-0.6988889){$\scriptstyle a$}
\rput[bl](0.013333334,0.5188889){$\scriptstyle b$}
\rput[bl](1.9377778,0.5188889){$\scriptstyle c$}
\rput[bl](1.9288889,-0.6988889){$\scriptstyle d$}
\rput[bl](0.9866667,0.03888889){$\scriptstyle k$}
\end{pspicture}
}\end{gathered} \ 
\right) \\
&= \sum_k \left( \sum_i \qj{a,b,i}{c,d,j} \qj{d,a,k}{b,c,i}
\begin{gathered}
\psscalebox{1.0 1.0} 
{
\begin{pspicture}(0,-0.6988889)(2.058889,0.6988889)
\psline[linecolor=black, linewidth=0.02](0.5274074,-0.020864246)(1.5274074,-0.020864246)
\psline[linecolor=black, linewidth=0.02](0.5422222,-0.029012367)(0.17185186,0.48950616)
\psline[linecolor=black, linewidth=0.02](0.53481483,-0.021604959)(0.16444445,-0.54012346)
\psline[linecolor=black, linewidth=0.02](1.5150617,-0.029012324)(1.8854321,0.48950619)
\psline[linecolor=black, linewidth=0.02](1.5224692,-0.021604918)(1.8928396,-0.54012346)
\rput[bl](0.0,-0.6988889){$\scriptstyle a$}
\rput[bl](0.013333334,0.5188889){$\scriptstyle b$}
\rput[bl](1.9377778,0.5188889){$\scriptstyle c$}
\rput[bl](1.9288889,-0.6988889){$\scriptstyle d$}
\rput[bl](0.9866667,0.03888889){$\scriptstyle k$}
\end{pspicture}
}\end{gathered} \ 
\right),
\end{align*}
but by the orthogonality identity (Lemma \ref{lemma:orthogonality_id}) all the coefficients are zero except those for which $k=j$, in which case
\begin{align*}
\psi\varphi(c) &= \sum_{i=0}^{n-2} \qj{a,b,i}{c,d,j} \qj{d,a,j}{b,c,i}
\begin{gathered}
\psscalebox{1.0 1.0} 
{
\begin{pspicture}(0,-0.6988889)(2.058889,0.6988889)
\psline[linecolor=black, linewidth=0.02](0.5274074,-0.020864246)(1.5274074,-0.020864246)
\psline[linecolor=black, linewidth=0.02](0.5422222,-0.029012367)(0.17185186,0.48950616)
\psline[linecolor=black, linewidth=0.02](0.53481483,-0.021604959)(0.16444445,-0.54012346)
\psline[linecolor=black, linewidth=0.02](1.5150617,-0.029012324)(1.8854321,0.48950619)
\psline[linecolor=black, linewidth=0.02](1.5224692,-0.021604918)(1.8928396,-0.54012346)
\rput[bl](0.0,-0.6988889){$\scriptstyle a$}
\rput[bl](0.013333334,0.5188889){$\scriptstyle b$}
\rput[bl](1.9377778,0.5188889){$\scriptstyle c$}
\rput[bl](1.9288889,-0.6988889){$\scriptstyle d$}
\rput[bl](0.9866667,0.03888889){$\scriptstyle j$}
\end{pspicture}
}\end{gathered} \\
&=
\begin{gathered}
\psscalebox{1.0 1.0} 
{
\begin{pspicture}(0,-0.6988889)(2.058889,0.6988889)
\psline[linecolor=black, linewidth=0.02](0.5274074,-0.020864246)(1.5274074,-0.020864246)
\psline[linecolor=black, linewidth=0.02](0.5422222,-0.029012367)(0.17185186,0.48950616)
\psline[linecolor=black, linewidth=0.02](0.53481483,-0.021604959)(0.16444445,-0.54012346)
\psline[linecolor=black, linewidth=0.02](1.5150617,-0.029012324)(1.8854321,0.48950619)
\psline[linecolor=black, linewidth=0.02](1.5224692,-0.021604918)(1.8928396,-0.54012346)
\rput[bl](0.0,-0.6988889){$\scriptstyle a$}
\rput[bl](0.013333334,0.5188889){$\scriptstyle b$}
\rput[bl](1.9377778,0.5188889){$\scriptstyle c$}
\rput[bl](1.9288889,-0.6988889){$\scriptstyle d$}
\rput[bl](0.9866667,0.03888889){$\scriptstyle j$}
\end{pspicture}
}\end{gathered} \ .
\end{align*}

Hence $\psi\varphi(c)=c$ for all basis elements $c$ of $C(s)$, and the same argument also gives that $\varphi\psi(c^\prime)=c^\prime$ for all basis elements $c^\prime$ of $C(s^\prime)$.
Thus $\psi\varphi$ and $\varphi\psi$ are the identities on the bases for $C(s),C(s^\prime)$, and so $\varphi,\psi$ are isomorphisms.
\end{proof}

Now we are ready to prove that different trivalent spines for the same surface yield isomorphic skein modules.

\begin{theorem}
\label{thm:skein-module-spine-iso}
\todo{I feel like it should have some kind of name.
``Well-definedness of skein modules for $n$-holed disks''?}
Let $(\Sigma,M)$ be a $n$-holed disk with marked boundary, and let $s, s^\prime$ be trivalent spines for $(\Sigma,M)$.
Then $C(s)\cong C(s^\prime)$.
\end{theorem}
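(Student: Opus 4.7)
The plan is to leverage Lemma \ref{lemma:single-HI-iso}, which handles the case of two spines related by a single HI move, by showing that \emph{any} two trivalent spines for $(\Sigma,M)$ can be connected by a finite sequence of HI moves. Given such a sequence $s = s_0, s_1, \dotsc, s_N = s^\prime$ where consecutive spines differ by a single HI, iterating Lemma \ref{lemma:single-HI-iso} produces a chain of isomorphisms $C(s) \cong C(s_1) \cong \dotsb \cong C(s^\prime)$, and composing these gives the desired isomorphism.

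To carry this out, I would first fix a ``standard'' trivalent spine $s_0$ for $(\Sigma,M)$---for example, a caterpillar-like graph consisting of a small loop around each of the $n$ holes, connected in sequence by a horizontal backbone with additional edges branching out to the marked points in $M$ in a fixed cyclic order around $\partial\Sigma$. A dimension count using the Euler characteristic $\chi(\Sigma) = 1-n$ together with the trivalence condition shows that the number of trivalent vertices in any such spine is $\abs{M} - 2 + 2n$, so all trivalent spines have the same number of vertices and edges, which is reassuring. I would then show that any trivalent spine $s$ can be transformed into $s_0$ by a finite sequence of HI moves, presumably by induction on a suitable complexity invariant (such as the number of edges of $s$ not lying in a prescribed embedded copy of $s_0$, or the number of ``crossings'' in a homotopy between $s$ and $s_0$).

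The hard part will be proving this combinatorial spine lemma---namely that any two trivalent spines of an $n$-holed disk with matching marked boundary data are connected by a finite sequence of HI moves. There are (at least) two natural approaches. One is purely combinatorial: argue by induction on $n$, reducing an arbitrary spine to the standard form by identifying a ``leaf'' subgraph around some hole that can be isolated via HI moves and then recursing on the lower-genus surface obtained by filling it in. The other is Morse-theoretic: a trivalent spine arises as the dual $1$-skeleton of a handle decomposition of $\Sigma$ rel $M$, HI moves correspond to handle slides, and uniqueness of handle decompositions of surfaces up to isotopy and slides then delivers the result. Either way, this topological input is where essentially all the work lives; once it is in hand, the algebraic content of Theorem \ref{thm:skein-module-spine-iso} is an immediate consequence of Lemma \ref{lemma:single-HI-iso}. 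Note that because the isomorphism depends on the sequence of HI moves chosen, the resulting $C(s) \cong C(s^\prime)$ is \emph{not} asserted to be canonical---which is why this theorem can be proved without having to verify any cocycle/pentagon-type coherence condition on the $q\mhyphen 6j$ symbols themselves.
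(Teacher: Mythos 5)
Your plan matches the paper's approach at the top level: iterate Lemma \ref{lemma:single-HI-iso} along a finite sequence of HI moves connecting $s$ to $s^\prime$, and prove such a sequence exists via handle decompositions and Cerf theory. However, there is a real gap hiding in your sentence ``HI moves correspond to handle slides, and uniqueness of handle decompositions\ldots delivers the result.'' Handle moves (creations, cancellations, slides) translate to \emph{spine moves}, but these are not yet HI moves: a generic sequence of spine moves will pass through intermediate spines having vertices of arbitrarily high degree, whereas an HI move only makes sense between trivalent spines (passing through a single degree-$4$ vertex). Bridging this is where the paper does its actual work: it introduces a complexity function $\Phi(W)=(d,T)$ on sequences of spine moves, recording the maximum vertex degree $d$ encountered and the longest run $T$ during which that degree is attained, and then proves by a detailed case analysis (relegated to an appendix) that any sequence with $d>4$ can be rewritten to one of strictly smaller complexity. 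Iterating this reduces to $d=4$, and a further (omitted in the paper) reduction to $T=1$ shows the sequence is literally a sequence of HI moves. So ``uniqueness of handle decompositions'' is the starting point, not the finish line.

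Your closing remark about non-canonicity is correct and matches the paper: Theorem \ref{thm:skein-module-spine-iso} only asserts existence of an isomorphism, so the pentagon/Biedenharn--Elliott coherence of the $q\mhyphen 6j$ symbols is not needed here; it only enters later, when the paper defines the surface-level module $C(\Sigma)$ as a ``flat section'' over all spines and needs the isomorphisms $\widetilde{Z}$ to be independent of the chosen sequence of HI moves.
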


The fundamental idea is to show that any two trivalent spines $s, s^\prime$ for $(\Sigma,M)$ are related via a sequence of HI moves
\[ s = s_1 \xrightarrow{\ z_1\ } s_2 \xrightarrow{\ z_2\ } s_3 \xrightarrow{\ z_3\ } \dotsc \xrightarrow{z_{k-2}} s_{k-1} \xrightarrow{\ z_{k-1}\ } s_k = s^\prime \]
where the $s_i$ are intermediate trivalent spines for $(\Sigma,M)$ and each $z_i$ is a single HI move applied to a local region of $s_i$.
Then by Lemma \ref{lemma:single-HI-iso} this yields a sequence of isomorphisms
\[ C(s) \stackrel{\widetilde{z_1}}{\cong} C(s_2) \stackrel{\widetilde{z_2}}{\cong} \dotsb \stackrel{\widetilde{z_{k-2}}}{\cong} C(s_{k-1}) \stackrel{\widetilde{z_{k-1}}}{\cong} C(s^\prime). \]
Our proof will use some basic concepts from Morse theory; the reader unfamiliar with some of the terms involved may consult Chapter 3 of \cite{Matsumoto2002}, Chapter 4 of \cite{Gompf1999} or any other introductory text.

\begin{proof}[Proof of Theorem \ref{thm:skein-module-spine-iso}]
Observe that any spine $s$ (not necessarily trivalent) for $(\Sigma,M)$ defines a handle decomposition $S$ for $\Sigma$ in terms of $2$-dimensional $0$ and $1$-handles in the following way:
for every vertex $v$ of $s$ we ``enlarge'' $v$ to a $0$-handle $H_v^0 = D^0\times D^2$ while keeping the edges adjacent to $v$ attached to the boundary of $H_v^0$, and for every edge $e$ connecting vertices $v, w$ we attach a $1$-handle $H_e^1 = D^1\times D^1$ to the corresponding $0$-handles $H_v^0,H_w^0$ in the obvious way, without twisting and by taking $e$ to be the core of $H_e^1$.
Conversely, suppose we have a handle decomposition for $\Sigma$ in terms of $0$ and $1$-handles, such that every marked boundary point on $\Sigma$ has an associated $0$-handle with exactly one $1$-handle attached.
Then it is clear that we can deformation retract each $1$-handle to its core and then each $0$-handle to a single point, and thus obtain a spine for $(\Sigma,M)$.
Therefore we see that giving a spine $s$ for $(\Sigma,M)$ is equivalent 
\todo{I'm not very happy about the word ``equivalent''... but I don't know how else to say this.}
to specifying a handle decomposition $S$ of $\Sigma$ ``with respect to the marked boundary $M$''.
It will be advantageous to first consider the case $M=\emptyset$, i.e. where $\Sigma$ has no marked boundary points.

Let $s,s^\prime$ be trivalent spines for $(\Sigma,\emptyset)$, and let $S,S^\prime$ be the associated handle decompositions for $\Sigma$.
By a fundamental result of Cerf theory, $S$ and $S^\prime$ are related (via homotopies of their corresponding Morse functions) by a finite sequence of handle pair creations, cancellations, and handle slides (cf. Theorem 4.2.12 of \cite{Gompf1999}); that is to say that $S^\prime$ can be obtained from $S$ by applying a sequence of such \emph{handle moves}, which are in fact diffeomorphisms.
Again, by considering homotopies of Morse functions it is not hard to show that since $S,S^\prime$ only involve handles of index $0$ or $1$, we may restrict ourselves to using only $(0,1)$-handle moves.

Observe that every $(0,1)$-handle move on handle decompositions has an interpretation as a ``spine move'', i.e. a transformation of the associated spine, as shown in Figure \ref{fig:handle-moves}.
The handle move theorem then asserts the existence of a finite sequence of spine moves that takes $s$ to $s^\prime$.
Our proof then proceeds in two steps:
\begin{enumerate}
\item First we show that given some sequence of spine moves taking $s$ to $s^\prime$, we can rewrite it into one that passes through spines $s_i$ whose largest vertex degree never exceeds $4$,
\item then we show that we can rearrange the moves in a sequence passing through such $4$-valent spines into one that reads as a sequence of HI moves that passes through trivalent spines.
\end{enumerate}

\begin{sidewaysfigure}
\begingroup
\addtolength{\tabcolsep}{0.7cm}
\begin{tabular}{c|c}
$
\begin{gathered}
\psscalebox{1.0 1.0} 
{
\begin{pspicture}(0,-0.7830115)(1.7131,0.7830115)
\rput{5.904688}(0.0020926145,-0.08569061){\pscircle[linecolor=black, linewidth=0.02, dimen=outer](0.83180386,-0.022557722){0.36}}
\psbezier[linecolor=black, linewidth=0.02](0.64389503,0.27094656)(0.5224001,0.4018205)(0.47242284,0.4408185)(0.3660655,0.47840557)(0.2597082,0.51599264)(0.14945288,0.62318385)(0.107658476,0.78044295)
\psbezier[linecolor=black, linewidth=0.02](0.5400387,0.1635715)(0.4159967,0.32754168)(0.44860277,0.29068318)(0.31642264,0.3461138)(0.18424253,0.40154442)(0.058418732,0.5007801)(0.009501023,0.6497742)
\psbezier[linecolor=black, linewidth=0.02](0.5240142,-0.18811677)(0.4301928,-0.21851663)(0.27067184,-0.2807878)(0.23258346,-0.36356476)(0.19449507,-0.44634172)(0.29771188,-0.5015704)(0.015506186,-0.65171236)
\psbezier[linecolor=black, linewidth=0.02](0.6181795,-0.29706076)(0.34968305,-0.3850644)(0.35788035,-0.44280902)(0.3574318,-0.51543236)(0.35698324,-0.5880556)(0.23235133,-0.7076937)(0.093384214,-0.7739859)
\psbezier[linecolor=black, linewidth=0.02](1.1326588,-0.19549845)(1.2313857,-0.38046387)(1.2544206,-0.46247524)(1.4160249,-0.44775626)(1.5776293,-0.43303728)(1.6751446,-0.5407062)(1.7037829,-0.6141624)
\psbezier[linecolor=black, linewidth=0.02](1.0357862,-0.3130456)(1.0927285,-0.47623575)(1.1624985,-0.58233464)(1.2863052,-0.5947142)(1.4101119,-0.6070939)(1.539323,-0.5596656)(1.6121051,-0.7264929)
\rput{-90.0}(0.23769212,0.43160906){\rput[bl](0.33465058,0.09695847){$\scriptstyle \dots$}}
\rput{-90.0}(1.1728272,1.3667442){\rput[bl](1.2697858,0.09695847){$\scriptstyle \dots$}}
\psbezier[linecolor=black, linewidth=0.02](0.9909002,0.2865897)(1.0786161,0.38032737)(1.1913203,0.46654552)(1.3104743,0.5040191)(1.4296283,0.5414927)(1.5227652,0.5712527)(1.5486631,0.74682117)
\psbezier[linecolor=black, linewidth=0.02](1.0963905,0.20840776)(1.19631,0.29585934)(1.2645248,0.35904545)(1.42153,0.4090026)(1.5785353,0.45895976)(1.701317,0.5638022)(1.7029862,0.71731234)
\end{pspicture}
}
\end{gathered}
\quad \genfrac{}{}{0pt}{}{\xrightarrow{\qquad}}{\xleftarrow{\qquad}} \quad
\begin{gathered}
\psscalebox{1.0 1.0} 
{
\begin{pspicture}(0,-0.8063061)(3.1449444,0.8063061)
\rput{5.904688}(0.0073156552,-0.23252036){\pscircle[linecolor=black, linewidth=0.02, dimen=outer](2.2579076,-0.045335997){0.36}}
\psbezier[linecolor=black, linewidth=0.02](2.5587626,-0.21827672)(2.6574895,-0.40324214)(2.6805243,-0.4852535)(2.8421288,-0.47053453)(3.0037332,-0.45581555)(3.1012483,-0.5634845)(3.1298866,-0.63694066)
\psbezier[linecolor=black, linewidth=0.02](2.46189,-0.33582386)(2.5188322,-0.49901402)(2.5886023,-0.6051129)(2.712409,-0.6174925)(2.8362157,-0.62987214)(2.9654267,-0.5824439)(3.038209,-0.74927115)
\rput{-90.0}(2.6217093,2.7700696){\rput[bl](2.6958895,0.07418019){$\scriptstyle \dots$}}
\psbezier[linecolor=black, linewidth=0.02](2.417004,0.26381144)(2.50472,0.3575491)(2.617424,0.44376725)(2.736578,0.48124084)(2.855732,0.5187144)(2.948869,0.54847443)(2.974767,0.7240429)
\psbezier[linecolor=black, linewidth=0.02](2.5224943,0.18562949)(2.6224139,0.27308106)(2.6906285,0.33626717)(2.8476338,0.38622433)(3.0046391,0.4361815)(3.1274207,0.5410239)(3.12909,0.69453406)
\rput{-90.0}(0.29968774,0.47251165){\rput[bl](0.3860997,0.08641195){$\scriptstyle \dots$}}
\psbezier[linecolor=black, linewidth=0.02](0.66962856,-0.3076073)(0.40113214,-0.39561093)(0.40932947,-0.45335552)(0.4088809,-0.52597886)(0.40843233,-0.5986022)(0.28380045,-0.7182402)(0.14483331,-0.7845325)
\psbezier[linecolor=black, linewidth=0.02](0.57546324,-0.1986633)(0.4816419,-0.22906315)(0.32212096,-0.2913343)(0.28403255,-0.37411126)(0.24594416,-0.45688826)(0.34916097,-0.5121169)(0.06695528,-0.66225886)
\psbezier[linecolor=black, linewidth=0.02](0.5914878,0.15302499)(0.4674458,0.31699517)(0.50005186,0.28013664)(0.36787173,0.3355673)(0.23569162,0.39099792)(0.109867826,0.4902336)(0.060950123,0.6392277)
\psbezier[linecolor=black, linewidth=0.02](0.69534415,0.26040006)(0.5738492,0.391274)(0.5238719,0.43027195)(0.41751462,0.46785903)(0.31115732,0.50544614)(0.20090197,0.61263734)(0.15910757,0.7698964)
\rput{5.904688}(0.0012806212,-0.09103934){\pscircle[linecolor=black, linewidth=0.02, dimen=outer](0.883253,-0.033104237){0.36}}
\psline[linecolor=black, linewidth=0.02](1.2257832,0.053254407)(1.9184662,0.053254407)
\psline[linecolor=black, linewidth=0.02](1.9038321,-0.117477305)(1.2209052,-0.117477305)
\end{pspicture}
}
\end{gathered}
$
&
$
\begin{gathered}
\psscalebox{1.0 1.0} 
{
\begin{pspicture}(0,-0.7060292)(1.4067255,0.7060292)
\definecolor{colour0}{rgb}{1.0,0.0,0.0}
\pscircle[linecolor=black, linewidth=0.02, fillstyle=solid, fillcolor=black, dimen=outer](0.70989335,-0.0100997295){0.09795918}
\psline[linecolor=black, linewidth=0.02](0.0070710713,0.6989584)(0.7063211,0.0)
\psline[linecolor=black, linewidth=0.02](0.0097377375,-0.6962916)(0.7089878,0.002958397)
\psline[linecolor=black, linewidth=0.02](1.3996544,0.69629174)(0.7004044,-0.002958309)
\psline[linecolor=black, linewidth=0.02](1.3969878,-0.69895834)(0.69773775,0.0)
\rput{-90.0}(0.25476697,0.44648156){\rput[bl](0.35062426,0.0958573){$\scriptstyle \dots$}}
\rput{-90.0}(0.9095615,1.101276){\rput[bl](1.0054188,0.0958573){$\scriptstyle \dots$}}
\end{pspicture}
}
\end{gathered}
\quad \genfrac{}{}{0pt}{}{\xrightarrow{\qquad}}{\xleftarrow{\qquad}} \quad
\begin{gathered}
\psscalebox{1.0 1.0} 
{
\begin{pspicture}(0,-0.70469606)(2.3667254,0.70469606)
\definecolor{colour0}{rgb}{1.0,0.0,0.0}
\psline[linecolor=colour0, linewidth=0.02](0.68291825,0.0)(1.660696,0.003777771)
\pscircle[linecolor=black, linewidth=0.02, fillstyle=solid,fillcolor=black, dimen=outer](0.6738933,0.0){0.09795918}
\pscircle[linecolor=black, linewidth=0.02, fillstyle=solid,fillcolor=black, dimen=outer](1.6671137,0.0){0.09795918}
\psline[linecolor=black, linewidth=0.02](2.3596544,0.69762504)(1.6604043,-0.001625006)
\psline[linecolor=black, linewidth=0.02](2.3569877,-0.697625)(1.6577376,0.0016250331)
\psline[linecolor=black, linewidth=0.02](0.0070709847,0.697625)(0.706321,-0.0016250467)
\psline[linecolor=black, linewidth=0.02](0.009737651,-0.69762504)(0.7089877,0.0016249925)
\rput{-90.0}(0.30744272,0.5093855){\rput[bl](0.40841413,0.1009714){$\scriptstyle \dots$}}
\rput{-90.0}(1.8389496,2.0408924){\rput[bl](1.939921,0.1009714){$\scriptstyle \dots$}}
\end{pspicture}
}
\end{gathered}
$
\\
Handle pair creation/cancellation & Edge-vertex creation/cancellation
\\\\\\
$
\begin{gathered}
\psscalebox{1.0 1.0} 
{
\begin{pspicture}(0,-0.88987)(3.1449444,0.88987)
\psframe[linecolor=black, linewidth=0.02, dimen=outer](2.0441551,-0.052925844)(1.0831162,-0.19318558)
\rput{5.904688}(-0.0073159193,-0.09148269){\pscircle[linecolor=black, linewidth=0.02, fillstyle=solid, dimen=outer](0.883253,-0.11666808){0.36}}
\psbezier[linecolor=black, linewidth=0.02](0.69534415,0.17683621)(0.5738492,0.30771017)(0.5238719,0.34670812)(0.41751462,0.3842952)(0.31115732,0.42188227)(0.20090197,0.5290735)(0.15910757,0.6863326)
\psbezier[linecolor=black, linewidth=0.02](0.5914878,0.06946115)(0.4674458,0.23343134)(0.50005186,0.19657281)(0.36787173,0.25200343)(0.23569162,0.30743408)(0.109867826,0.40666977)(0.060950123,0.5556638)
\psbezier[linecolor=black, linewidth=0.02](0.57546324,-0.28222713)(0.4816419,-0.312627)(0.32212096,-0.37489814)(0.28403255,-0.45767513)(0.24594416,-0.54045206)(0.34916097,-0.5956807)(0.06695528,-0.74582267)
\psbezier[linecolor=black, linewidth=0.02](0.66962856,-0.39117113)(0.40113214,-0.47917476)(0.40932947,-0.53691936)(0.4088809,-0.60954267)(0.40843233,-0.682166)(0.28380045,-0.80180407)(0.14483331,-0.8680963)
\rput{-90.0}(0.38325158,0.38894778){\rput[bl](0.3860997,0.0028481106){$\scriptstyle \dots$}}
\psbezier[linecolor=black, linewidth=0.02](2.417004,0.1802476)(2.50472,0.27398527)(2.617424,0.36020342)(2.736578,0.397677)(2.855732,0.43515056)(2.948869,0.46491057)(2.974767,0.640479)
\psbezier[linecolor=black, linewidth=0.02](2.5224943,0.10206564)(2.6224139,0.18951723)(2.6906285,0.25270334)(2.8476338,0.3026605)(3.0046391,0.35261762)(3.1274207,0.45746005)(3.12909,0.6109702)
\rput{-90.0}(2.7052732,2.6865058){\rput[bl](2.6958895,-0.009383649){$\scriptstyle \dots$}}
\psbezier[linecolor=black, linewidth=0.02](2.46189,-0.4193877)(2.5188322,-0.5825779)(2.5886023,-0.6886768)(2.712409,-0.70105636)(2.8362157,-0.71343595)(2.9654267,-0.6660077)(3.038209,-0.832835)
\psbezier[linecolor=black, linewidth=0.02](2.5587626,-0.30184057)(2.6574895,-0.48680598)(2.6805243,-0.5688174)(2.8421288,-0.55409837)(3.0037332,-0.5393794)(3.1012483,-0.64704835)(3.1298866,-0.7205045)
\rput{5.904688}(-0.0012808851,-0.23296371){\pscircle[linecolor=black, linewidth=0.02, fillstyle=solid, dimen=outer](2.2579076,-0.12889984){0.36}}
\psbezier[linecolor=black, linewidth=0.02](0.8023159,0.22074248)(0.76304805,0.39015183)(0.75130284,0.34404728)(0.74826187,0.5274021)(0.74522084,0.7107569)(0.7581918,0.74706084)(0.710424,0.8866319)
\psbezier[linecolor=black, linewidth=0.02](0.9644781,0.21636163)(0.9180242,0.36659372)(0.9019366,0.45788458)(0.8942078,0.57559144)(0.88647896,0.6932983)(0.91600513,0.72330517)(0.8725862,0.8866319)
\psline[linecolor=black, linewidth=0.02](0.738412,0.7867156)(0.86730087,0.86671567)
\psline[linecolor=black, linewidth=0.02](0.74730086,0.6824277)(0.898412,0.775761)
\psline[linecolor=black, linewidth=0.02](0.74730086,0.5668721)(0.8895231,0.64687216)
\psline[linecolor=black, linewidth=0.02](0.75174534,0.45576102)(0.89396757,0.535761)
\psline[linecolor=black, linewidth=0.02](0.76507866,0.3535388)(0.9073009,0.42464992)
\psline[linecolor=black, linewidth=0.02](0.7961898,0.24687214)(0.9295231,0.3135388)
\rput[bl](1.0363629,0.46135986){$\rightsquigarrow$}
\end{pspicture}
}
\end{gathered}
\ \xrightarrow{\qquad} \ 
\begin{gathered}
\psscalebox{1.0 1.0} 
{
\begin{pspicture}(0,-0.88987)(3.1449444,0.88987)
\psframe[linecolor=black, linewidth=0.02, dimen=outer](2.0441551,-0.052925844)(1.0831162,-0.19318558)
\rput{5.904688}(-0.0012808851,-0.23296371){\pscircle[linecolor=black, linewidth=0.02, fillstyle=solid, dimen=outer](2.2579076,-0.12889984){0.36}}
\psbezier[linecolor=black, linewidth=0.02](2.5587626,-0.30184057)(2.6574895,-0.48680598)(2.6805243,-0.5688174)(2.8421288,-0.55409837)(3.0037332,-0.5393794)(3.1012483,-0.64704835)(3.1298866,-0.7205045)
\psbezier[linecolor=black, linewidth=0.02](2.46189,-0.4193877)(2.5188322,-0.5825779)(2.5886023,-0.6886768)(2.712409,-0.70105636)(2.8362157,-0.71343595)(2.9654267,-0.6660077)(3.038209,-0.832835)
\rput{-90.0}(2.7052732,2.6865058){\rput[bl](2.6958895,-0.009383649){$\scriptstyle \dots$}}
\psbezier[linecolor=black, linewidth=0.02](2.417004,0.1802476)(2.50472,0.27398527)(2.617424,0.36020342)(2.736578,0.397677)(2.855732,0.43515056)(2.948869,0.46491057)(2.974767,0.640479)
\psbezier[linecolor=black, linewidth=0.02](2.5224943,0.10206564)(2.6224139,0.18951723)(2.6906285,0.25270334)(2.8476338,0.3026605)(3.0046391,0.35261762)(3.1274207,0.45746005)(3.12909,0.6109702)
\rput{-90.0}(0.38325158,0.38894778){\rput[bl](0.3860997,0.0028481106){$\scriptstyle \dots$}}
\psbezier[linecolor=black, linewidth=0.02](0.66962856,-0.39117113)(0.40113214,-0.47917476)(0.40932947,-0.53691936)(0.4088809,-0.60954267)(0.40843233,-0.682166)(0.28380045,-0.80180407)(0.14483331,-0.8680963)
\psbezier[linecolor=black, linewidth=0.02](0.57546324,-0.28222713)(0.4816419,-0.312627)(0.32212096,-0.37489814)(0.28403255,-0.45767513)(0.24594416,-0.54045206)(0.34916097,-0.5956807)(0.06695528,-0.74582267)
\psbezier[linecolor=black, linewidth=0.02](0.5914878,0.06946115)(0.4674458,0.23343134)(0.50005186,0.19657281)(0.36787173,0.25200343)(0.23569162,0.30743408)(0.109867826,0.40666977)(0.060950123,0.5556638)
\psbezier[linecolor=black, linewidth=0.02](0.69534415,0.17683621)(0.5738492,0.30771017)(0.5238719,0.34670812)(0.41751462,0.3842952)(0.31115732,0.42188227)(0.20090197,0.5290735)(0.15910757,0.6863326)
\rput{5.904688}(-0.0073159193,-0.09148269){\pscircle[linecolor=black, linewidth=0.02, fillstyle=solid, dimen=outer](0.883253,-0.11666808){0.36}}
\psbezier[linecolor=black, linewidth=0.02](2.1997185,0.2181451)(2.1604507,0.38755444)(2.1487055,0.3414499)(2.1456645,0.5248047)(2.1426234,0.7081595)(2.1555943,0.74446344)(2.1078267,0.8840345)
\psbezier[linecolor=black, linewidth=0.02](2.3618808,0.21376425)(2.3154268,0.36399633)(2.2993393,0.4552872)(2.2916105,0.57299405)(2.2838817,0.69070095)(2.3134077,0.72070783)(2.2699888,0.8840345)
\psline[linecolor=black, linewidth=0.02](2.1358147,0.78411824)(2.2647035,0.8641183)
\psline[linecolor=black, linewidth=0.02](2.1447034,0.6798303)(2.2958145,0.7731636)
\psline[linecolor=black, linewidth=0.02](2.1447034,0.5642747)(2.2869258,0.6442748)
\psline[linecolor=black, linewidth=0.02](2.149148,0.45316365)(2.2913702,0.53316367)
\psline[linecolor=black, linewidth=0.02](2.1624813,0.35094142)(2.3047035,0.42205253)
\psline[linecolor=black, linewidth=0.02](2.1935923,0.24427475)(2.3269258,0.31094143)
\end{pspicture}
}
\end{gathered}
$
&
$
\begin{gathered}
\psscalebox{1.0 1.0} 
{
\begin{pspicture}(0,-0.7780391)(2.3667254,0.7780391)
\definecolor{colour0}{rgb}{1.0,0.0,0.0}
\psline[linecolor=colour0, linewidth=0.02](0.6792468,-0.06840599)(0.6734497,0.7779708)
\psline[linecolor=black, linewidth=0.02](0.6829183,-0.07400986)(1.660696,-0.069565415)
\pscircle[linecolor=black, linewidth=0.02, fillstyle=solid,fillcolor=black, dimen=outer](0.6738934,-0.07334319){0.09795918}
\pscircle[linecolor=black, linewidth=0.02, fillstyle=solid,fillcolor=black, dimen=outer](1.6671137,-0.07334319){0.09795918}
\psline[linecolor=black, linewidth=0.02](2.3596544,0.6242818)(1.6604044,-0.07496819)
\psline[linecolor=black, linewidth=0.02](2.3569877,-0.7709682)(1.6577377,-0.07171815)
\psline[linecolor=black, linewidth=0.02](0.0070710643,0.6242818)(0.7063211,-0.07496823)
\psline[linecolor=black, linewidth=0.02](0.009737731,-0.77096826)(0.7089878,-0.07171819)
\rput{-90.0}(0.380786,0.43604243){\rput[bl](0.4084142,0.027628217){$\scriptstyle \dots$}}
\rput{-90.0}(1.9122928,1.9675493){\rput[bl](1.939921,0.027628217){$\scriptstyle \dots$}}
\end{pspicture}
}
\end{gathered}
\ \xrightarrow{\qquad} \ 
\begin{gathered}
\psscalebox{1.0 1.0} 
{
\begin{pspicture}(0,-0.7816756)(2.3667254,0.7816756)
\definecolor{colour0}{rgb}{1.0,0.0,0.4}
\psline[linecolor=colour0, linewidth=0.02](1.6756104,-0.064769745)(1.6698134,0.7816071)
\psline[linecolor=black, linewidth=0.02](0.6829183,-0.07764633)(1.660696,-0.07320189)
\pscircle[linecolor=black, linewidth=0.02, fillstyle=solid,fillcolor=black, dimen=outer](0.6738934,-0.07697966){0.09795918}
\pscircle[linecolor=black, linewidth=0.02, fillstyle=solid,fillcolor=black, dimen=outer](1.6671137,-0.07697966){0.09795918}
\psline[linecolor=black, linewidth=0.02](2.3596544,0.62064534)(1.6604044,-0.07860467)
\psline[linecolor=black, linewidth=0.02](2.3569877,-0.7746047)(1.6577377,-0.07535463)
\psline[linecolor=black, linewidth=0.02](0.0070710643,0.62064534)(0.7063211,-0.078604706)
\psline[linecolor=black, linewidth=0.02](0.009737731,-0.7746047)(0.7089878,-0.075354666)
\rput{-90.0}(0.38442248,0.43240595){\rput[bl](0.4084142,0.023991743){$\scriptstyle \dots$}}
\rput{-90.0}(1.9159293,1.9639128){\rput[bl](1.939921,0.023991743){$\scriptstyle \dots$}}
\end{pspicture}
}
\end{gathered}
$ \\
Handle slide & Edge slide
\end{tabular}
\endgroup
\caption{Handle moves for $(0,1)$-handle decompositions and their associated spine moves. \label{fig:handle-moves}}
\end{sidewaysfigure}
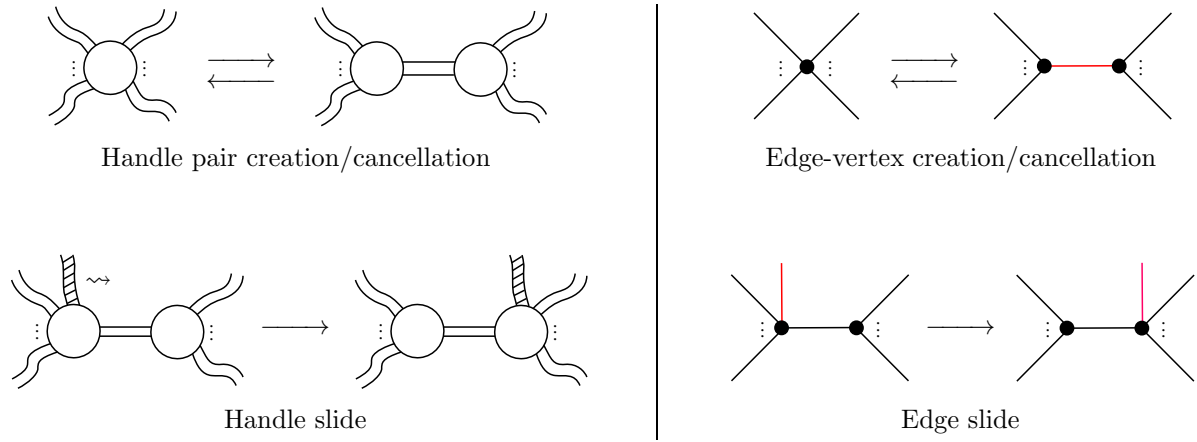

Now we begin the actual work of the proof.
Let
\[ s = s_1 \xrightarrow{\ w_1\ } s_2 \xrightarrow{\ w_2\ } \cdots \xrightarrow{w_{k-2}} s_{k-1} \xrightarrow{w_{k-1}} s_k = s^\prime \]
be a sequence of spine moves from $s$ to $s^\prime$.
Without loss of generality we may assume that each move $w_i$ is one of the following:
\begin{align*}
\text{Creation/cancellation (Type I):} & \qquad
\begin{gathered}
\psscalebox{1.0 1.0} 
{
\begin{pspicture}(0,-0.185)(0.75574267,0.185)
\pscircle[linecolor=black, linewidth=0.02, fillstyle=solid,fillcolor=black, dimen=outer](0.6477786,-0.0018133641){0.068584576}
\rput[bl](0.5457427,0.155){$\scriptstyle \dots$}
\psline[linecolor=black, linewidth=0.02](0.65,0.002327803)(0.0,0.002327803)
\rput[bl](0.5457427,-0.185){$\scriptstyle \dots$}
\end{pspicture}
}
\end{gathered}
\qquad \rightleftarrows \qquad
\begin{gathered}
\psscalebox{1.0 1.0} 
{
\begin{pspicture}(0,-0.185)(1.281236,0.185)
\definecolor{colour0}{rgb}{1.0,0.0,0.4}
\psline[linecolor=colour0, linewidth=0.02](1.1970313,0.0010212128)(0.66359574,0.0010212128)
\pscircle[linecolor=black, linewidth=0.02, fillstyle=solid,fillcolor=black, dimen=outer](0.6477786,-0.0018133641){0.068584576}
\pscircle[linecolor=colour0, linewidth=0.02, fillstyle=solid,fillcolor=colour0, dimen=outer](1.2126515,-0.002131798){0.068584576}
\rput[bl](0.5457427,0.155){$\scriptstyle \dots$}
\psline[linecolor=black, linewidth=0.02](0.65,0.002327803)(0.0,0.002327803)
\rput[bl](0.5457427,-0.185){$\scriptstyle \dots$}
\end{pspicture}
}
\end{gathered} \\
\text{Creation/cancellation (Type II):} & \qquad
\begin{gathered}
\psscalebox{1.0 1.0} 
{
\begin{pspicture}(0,-0.185)(1.2323016,0.185)
\definecolor{colour1}{rgb}{0.2,0.2,1.0}
\psline[linecolor=colour1, linewidth=0.02](1.2323016,0.0)(0.7043613,0.0)
\pscircle[linecolor=black, linewidth=0.02, fillstyle=solid,fillcolor=black, dimen=outer](0.6451661,0.0015139843){0.06787803}
\psline[linecolor=black, linewidth=0.02](0.65000015,0.002327803)(0.0,0.002327803)
\rput[bl](0.5457428,0.155){$\scriptstyle \dots$}
\rput[bl](0.5457428,-0.185){$\scriptstyle \dots$}
\end{pspicture}
}
\end{gathered}
\qquad \rightleftarrows \qquad
\begin{gathered}
\psscalebox{1.0 1.0} 
{
\begin{pspicture}(0,-0.185)(1.7814093,0.185)
\definecolor{colour1}{rgb}{0.2,0.2,1.0}
\definecolor{colour0}{rgb}{1.0,0.0,0.4}
\psline[linecolor=colour1, linewidth=0.02](1.7814091,-0.0017598893)(1.2666885,-0.0017598893)
\psline[linecolor=colour0, linewidth=0.02](1.2323738,-0.0017598893)(0.71765316,-0.0017598893)
\pscircle[linecolor=black, linewidth=0.02, fillstyle=solid,fillcolor=black, dimen=outer](0.65955615,0.004143681){0.06617837}
\pscircle[linecolor=colour0, linewidth=0.02, fillstyle=solid,fillcolor=colour0, dimen=outer](1.2334944,0.0){0.06617837}
\rput[bl](0.54574263,-0.185){$\scriptstyle \dots$}
\psline[linecolor=black, linewidth=0.02](0.6499999,0.002327803)(0.0,0.002327803)
\rput[bl](0.54574263,0.155){$\scriptstyle \dots$}
\end{pspicture}
}
\end{gathered} \\
\text{Slide:} & \qquad
\begin{gathered}
\psscalebox{1.0 1.0} 
{
\begin{pspicture}(0,-0.38049808)(1.1869475,0.38049808)
\definecolor{colour0}{rgb}{1.0,0.0,0.4}
\psline[linecolor=colour0, linewidth=0.02](0.21446268,-0.2701044)(0.21446268,0.380498)
\psline[linecolor=black, linewidth=0.02](0.97322446,-0.26231462)(0.2309,-0.26231462)
\pscircle[linecolor=black, linewidth=0.02, fillstyle=solid,fillcolor=black, dimen=outer](0.21508288,-0.2651492){0.068584576}
\pscircle[linecolor=black, linewidth=0.02, fillstyle=solid,fillcolor=black, dimen=outer](0.98396206,-0.26216474){0.068584576}
\rput{-90.0}(1.3274455,0.98644936){\rput[bl](1.1569475,-0.17049807){$\scriptstyle \dots$}}
\pscircle[linecolor=black, linewidth=0.02, fillstyle=solid,fillcolor=black, dimen=outer](0.21508288,-0.27549806){0.068584576}
\psline[linecolor=black, linewidth=0.02](0.97322446,-0.26231462)(0.2309,-0.26231462)
\rput{-90.0}(0.17049807,-0.17049807){\rput[bl](0.0,-0.17049807){$\scriptstyle \dots$}}
\end{pspicture}
}
\end{gathered}
\qquad \rightarrow \qquad
\begin{gathered}
\psscalebox{1.0 1.0} 
{
\begin{pspicture}(0,-0.38049808)(1.1869475,0.38049808)
\definecolor{colour0}{rgb}{1.0,0.0,0.4}
\psline[linecolor=colour0, linewidth=0.02](0.981586,-0.2701044)(0.981586,0.380498)
\psline[linecolor=black, linewidth=0.02](0.97322446,-0.26231462)(0.2309,-0.26231462)
\pscircle[linecolor=black, linewidth=0.02, fillstyle=solid,fillcolor=black, dimen=outer](0.21508288,-0.2651492){0.068584576}
\pscircle[linecolor=black, linewidth=0.02, fillstyle=solid,fillcolor=black, dimen=outer](0.98396206,-0.26216474){0.068584576}
\rput{-90.0}(1.3274455,0.98644936){\rput[bl](1.1569475,-0.17049807){$\scriptstyle \dots$}}
\pscircle[linecolor=black, linewidth=0.02, fillstyle=solid,fillcolor=black, dimen=outer](0.21508288,-0.27549806){0.068584576}
\psline[linecolor=black, linewidth=0.02](0.97322446,-0.26231462)(0.2309,-0.26231462)
\rput{-90.0}(0.17049807,-0.17049807){\rput[bl](0.0,-0.17049807){$\scriptstyle \dots$}}
\end{pspicture}
}
\end{gathered}
\end{align*}
(where the ellipses indicate zero or more edges attached to a vertex) as we can always write a general spine move as a composition consisting only of these moves.

For a sequence of spine moves $W$ taking $s$ to $s^\prime$, define a complexity function $\Phi$ on $W$ by the ordered pair
\begin{equation*}
\Phi(W) = (d,T)
\end{equation*}
where $d\geq 3$ is the maximum degree over all vertices over all spines in the sequence, and  $T\geq 1$ is the maximum contiguous length of time for which some vertex in the sequence achieves degree $d$.
That is, $T$ is the length of the longest subsequence
\[ s_{i_1} \xrightarrow{w_{i_1}} s_{i_2} \xrightarrow{w_{i_2}} \cdots \xrightarrow{w_{i_T-1}} s_{i_T} \]
in which the same vertex $v$ appears in every $s_{i_j}$ having degree $d$ throughout.

We claim that given a sequence $W$ from $s$ to $s^\prime$ having complexity function $\Phi(W)=(d,T)$ where $d>4$ and $T\geq 1$, we can always rewrite $W$ into a sequence $W^\prime$ for which
\[ \Phi(W^\prime) = (d^\prime,T^\prime), \]
where either $d^\prime < d$, or else $d^\prime=d$ and $T^\prime < T$.
That is, if the maximum degree $d$ over all vertices in a sequence is greater than $4$, we can always reduce the length of time for which vertices of degree $d$ appear in $W$, hence we can always find a sequence of maximum vertex degree at most $4$.
To prove this claim, let
\[ s_{i_1} \xrightarrow{w_{i_1}} \cdots \xrightarrow{w_{i_T-1}} s_{i_{T}} \]
be a subsequence of $W$ of length $T$ containing a vertex $v$ of degree $d$ throughout.
Then $v$ must have degree $d-1$ in the spine $s_{i_1-1}$ immediately preceding $s_{i_1}$, and also in the spine $s_{i_T+1}$ immediately following $s_{i_T}$.
\begin{equation}
\begin{array}{cccccccccc}
 & s_{i_1-1} & \xrightarrow{w_{i_1-1}} & s_{i_1} & \xrightarrow{w_{i_1}} & \cdots & \xrightarrow{w_{i_T-1}} & s_{i_T} & \xrightarrow{w_{i_T}} & s_{i_T+1} \\
\scriptstyle \deg v & \scriptstyle d-1 & & \scriptstyle d & & \scriptstyle \cdots & & \scriptstyle d & & \scriptstyle d-1
\end{array}
\label{eq:d-subseq}
\end{equation}
Thus $w_{i_1-1}$ and $w_{i_T}$ are moves that respectively increase and decrease the degree of $v$ by $1$.
Since there are a finite number of possibilities for such moves, we can go through each and show that we can always rewrite the subsequence \eqref{eq:d-subseq} in order to reduce the length of time for which $v$ has degree $d$.
The crucial fact that allows us to do this is that none of the moves $w_{i_1}, \dotsc, w_{i_T-1}$ change the degree of $v$, which means that each is either some move that does not involve $v$ or any of the edges adjacent to it, or else is a Type II creation/cancellation.
Even so the proof that we can always accomplish such a rewriting is a lengthy case-bash, which we relegate to Appendix A.
Hence by rewriting every subsequence of length $T$ in $W$ which contains a vertex of degree $d>4$ throughout, we reduce the total complexity of $W$.
Iterating this process we eventually obtain a sequence $Y$ taking $s$ to $s^\prime$, for which
\[ \Phi(Y) = (4,T). \]

It finally remains to show that we can rewrite $Y$ as a sequence $Z$ with complexity
\[ \Phi(Z) = (4,1), \]
the proof of which we omit.
Then such a sequence $Z$ is in fact a sequence of HI moves, since every subsequence
\[ s_{i_1} \rightarrow \cdots \rightarrow s_{i_T} \]
either does not contain a vertex of degree $4$, or else is of the form
\[ s_{i_1} \xrightarrow{w_{i_1}} s_{i_2} \xrightarrow{w_{i_2}} s_{i_3} \]
where $s_{i_1}, s_{i_3}$ contain no vertices of degree $4$ while $s_{i_2}$ contains exactly one vertex of degree $4$.
In the former case the spines $s_{i_j}$ are all isotopic, and in the latter case $w_{i_1}$ and $w_{i_2}$ are necessarily slides, and either $s_{i_1},s_{i_2},s_{i_3}$ are all isotopic, or else the subsequence is of the form
\begin{equation*}
\begin{gathered}
s_{i_1} \\
\begin{gathered}
\psscalebox{1.0 1.0} 
{
\begin{pspicture}(0,-0.80035293)(1.7121252,0.80035293)
\definecolor{colour0}{rgb}{0.2,0.2,1.0}
\definecolor{colour1}{rgb}{1.0,0.0,0.4}
\psline[linecolor=black, linewidth=0.02](0.7131964,0.0)(1.0011437,0.0)
\psline[linecolor=black, linewidth=0.02](0.011614411,0.73293537)(0.30939218,0.42626873)
\psline[linecolor=colour0, linewidth=0.02](0.007169967,-0.73373127)(0.31828108,-0.41373128)
\psline[linecolor=black, linewidth=0.02](1.7049477,-0.72928685)(1.4116144,-0.4270646)
\psline[linecolor=colour1, linewidth=0.02](1.7049477,0.73737985)(1.4160589,0.43960205)
\psline[linecolor=black, linewidth=0.02](0.99828106,-0.0026201655)(1.4293922,-0.45150906)
\psline[linecolor=colour0, linewidth=0.02](0.71605885,-0.0026202332)(0.28494775,-0.44706467)
\psline[linecolor=colour1, linewidth=0.02](0.99828106,0.0062686554)(1.4160589,0.44626865)
\psline[linecolor=black, linewidth=0.02](0.71605885,0.0062687234)(0.28494775,0.45071316)
\pscircle[linecolor=black, linewidth=0.02, fillstyle=solid,fillcolor=black, dimen=outer](0.7111203,-0.00599374){0.056338027}
\pscircle[linecolor=black, linewidth=0.02, fillstyle=solid,fillcolor=black, dimen=outer](1.006895,-0.00599374){0.056338027}
\rput{-80.12028}(0.7100964,0.8444583){\pscircle[linecolor=white, linewidth=0.18, dimen=outer](0.85717,0.0){0.80035293}}
\rput{-90.0}(0.85272557,0.8684376){\pscircle[linecolor=black, linewidth=0.02, linestyle=dashed, dash=0.17638889cm 0.10583334cm, dimen=outer](0.8605816,0.007856025){0.72}}
\end{pspicture}
}
\end{gathered}
\end{gathered}
\quad \xrightarrow{w_{i_1}} \quad
\begin{gathered}
s_{i_2} \\
\begin{gathered}
\psscalebox{1.0 1.0} 
{
\begin{pspicture}(0,-0.80035293)(1.7121252,0.80035293)
\definecolor{colour0}{rgb}{0.2,0.2,1.0}
\definecolor{colour1}{rgb}{1.0,0.0,0.4}
\psbezier[linecolor=colour0, linewidth=0.02](0.37062737,-0.36726135)(0.53562737,-0.20226134)(0.8356274,-0.13226135)(0.9556274,-0.11726134)
\psbezier[linecolor=colour1, linewidth=0.02](0.97062737,-0.07226135)(1.0706273,0.057738654)(1.1956273,0.23273866)(1.3456273,0.37273866)
\psline[linecolor=black, linewidth=0.02](0.73697454,0.11838881)(0.93736553,-0.08838905)
\psline[linecolor=black, linewidth=0.02](0.011614398,0.73293537)(0.30939218,0.42626873)
\psline[linecolor=colour0, linewidth=0.02](0.007169954,-0.73373127)(0.31828105,-0.41373128)
\psline[linecolor=black, linewidth=0.02](1.7049477,-0.72928685)(1.4116144,-0.4270646)
\psline[linecolor=colour1, linewidth=0.02](1.7049477,0.73737985)(1.4160589,0.43960205)
\pscircle[linecolor=black, linewidth=0.02, fillstyle=solid,fillcolor=black, dimen=outer](0.74112034,0.10900626){0.056338027}
\pscircle[linecolor=black, linewidth=0.02, fillstyle=solid,fillcolor=black, dimen=outer](0.951895,-0.10599374){0.056338027}
\rput{-80.12028}(0.7100964,0.8444582){\pscircle[linecolor=white, linewidth=0.18, dimen=outer](0.85717,0.0){0.80035293}}
\rput{-90.0}(0.8527255,0.8684376){\pscircle[linecolor=black, linewidth=0.02, linestyle=dashed, dash=0.17638889cm 0.10583334cm, dimen=outer](0.8605816,0.007856025){0.72}}
\psbezier[linecolor=black, linewidth=0.02](1.3556274,-0.38226134)(1.2106273,-0.24726135)(1.1006274,-0.19726135)(0.9656274,-0.107261345)
\psbezier[linecolor=black, linewidth=0.02](0.74562734,0.102738656)(0.53562737,0.23273866)(0.5106274,0.24773866)(0.36562738,0.38773865)
\end{pspicture}
}
\end{gathered}
\end{gathered}
\quad \xrightarrow{w_{i_2}} \quad
\begin{gathered}
s_{i_3} \\
\begin{gathered}
\psscalebox{1.0 1.0} 
{
\begin{pspicture}(0,-0.85606265)(1.6007059,0.85606265)
\definecolor{colour1}{rgb}{1.0,0.0,0.4}
\definecolor{colour0}{rgb}{0.2,0.2,1.0}
\psline[linecolor=colour1, linewidth=0.02](0.8029731,0.14221843)(1.2518619,0.5733295)
\psline[linecolor=colour0, linewidth=0.02](0.7940842,-0.1400038)(0.34963974,-0.5711149)
\psline[linecolor=black, linewidth=0.02](0.79408425,0.14221843)(0.35408425,0.5599962)
\psline[linecolor=black, linewidth=0.02](0.80297315,-0.1400038)(1.2474176,-0.5711149)
\psline[linecolor=black, linewidth=0.02](0.06297309,0.8488851)(0.36075085,0.5599962)
\psline[linecolor=black, linewidth=0.02](1.5340842,-0.8488927)(1.2140841,-0.5377816)
\psline[linecolor=colour0, linewidth=0.02](0.06741753,-0.84444827)(0.3740842,-0.54667044)
\psline[linecolor=black, linewidth=0.02](0.80035305,-0.14286627)(0.80035305,0.145081)
\pscircle[linecolor=black, linewidth=0.02, fillstyle=solid,fillcolor=black, dimen=outer](0.80053353,0.14181936){0.056338027}
\pscircle[linecolor=black, linewidth=0.02, fillstyle=solid,fillcolor=black, dimen=outer](0.80053353,-0.14425659){0.056338027}
\psline[linecolor=colour1, linewidth=0.02](1.5296397,0.8488851)(1.2274176,0.55555177)
\rput{9.87972}(0.0120591335,-0.13730845){\pscircle[linecolor=white, linewidth=0.18, dimen=outer](0.80035293,0.0011073303){0.80035293}}
\pscircle[linecolor=black, linewidth=0.02, linestyle=dashed, dash=0.17638889cm 0.10583334cm, dimen=outer](0.7924969,0.004518911){0.72}
\end{pspicture}
}
\end{gathered}
\end{gathered}
\end{equation*}
and is a HI move.
Thus any trivalent spines $s$ and $s^\prime$ for $(\Sigma,\emptyset)$ are related by a sequence of HI moves.

Suppose now that the marked boundary $M$ is nonempty, and $s, s^\prime$ are spines for $(\Sigma,M)$.
By removing from $s$ and $s^\prime$ all vertices in $M$ together with every edge connected to some $v\in M$, we obtain a sequence $Z^\prime$ of HI moves from $s\setminus M$ to $s^\prime \setminus M$, and one can convert this to a sequence $Z$ of HI moves from $s$ to $s^\prime$ by inserting a HI move every time an edge in $Z^\prime$ is slid past the endpoint of some edge connected to a marked boundary point.
\end{proof}

Finally, we define our skein modules for $n$-holed disks.

\begin{defn}
Let $B$ be the set of all spines $s$ for the $n$-holed disk $(\Sigma,M)$, and let $E\xrightarrow{p} B$ be the bundle over $B$ with fiber $p^{-1}(s) = C(s)$.
We define the \textbf{skein module for $(\Sigma,M)$} to be the set $C(\Sigma)$ of all functions
\begin{align*}
f \colon & B \rightarrow E \\
& s \mapsto x \in p^{-1}(s)
\end{align*}
such that for all sequences $Z$ of HI moves from one spine $s$ to another spine $s^\prime$, we have that
\[ \widetilde{Z}(f(s)) = f(s^\prime) \]
where $\widetilde{Z}$ is the isomorphism on $C(s)$ induced by $Z$.
\end{defn}

Then $C(\Sigma) \cong C(s)$ for any fixed $s \in B$, by the map that sends
\begin{align*}
f & \longmapsto f(s), \\
(f \colon s \mapsto x) & \mathrel{\reflectbox{\ensuremath{\longmapsto}}} x.
\end{align*}

For this map to be well-defined we require that the following ``coherence condition'' holds: namely that for all spines $s,s^\prime$ and any two sequences $Z_1$ and $Z_2$ of HI moves sending $s$ to $s^\prime$,
\[ \widetilde{Z_1} = \widetilde{Z_2}. \]
That is, any sequence of HI moves between the same two spines yields the same isomorphism.
This is a consequence of the pentagon (Beidenharn-Elliot) identity for the $q$-$6j$ symbols (see Proposition 10 of \cite{KL1994}).

Defining the skein module $C(\Sigma)$ for $(\Sigma,M)$ as above guarantees its invariance under diffeomorphisms of the surface $\Sigma$.
However in order to do concrete computations one generally chooses a spine $s$ and calculates in $C(s)$.

\appendix
\chapter{Rewriting subsequences to reduce complexity}
In this appendix we give an indication of how one should go about rewriting subsequences as defined by \eqref{eq:d-subseq} in Theorem \ref{thm:skein-module-spine-iso} in order to reduce their complexity.
We prove three of the four cases that need to be considered, leaving the case where $T=1$ and $w_{i_1-1}$ is a slide to the reader.

The following notation for the spine moves will make the case-check a little less painful.

\begin{equation*}
\begin{array}{rccc}
\text{Creation/cancellation (Type I):}\quad
&
\begin{gathered}
\psscalebox{1.0 1.0} 
{
\begin{pspicture}(0,-0.185)(0.9147945,0.185)
\pscircle[linecolor=black, linewidth=0.02, fillstyle=solid,fillcolor=black, dimen=outer](0.6477786,-0.0018133641){0.068584576}
\rput[bl](0.5457427,0.155){$\scriptstyle \dots$}
\psline[linecolor=black, linewidth=0.02](0.65,0.002327803)(0.0,0.002327803)
\rput[bl](0.5457427,-0.185){$\scriptstyle \dots$}
\rput[bl](0.7847945,-0.069863975){$\scriptstyle v$}
\end{pspicture}
}
\end{gathered}
& \substack{cr_v(u) \\ \longrightarrow \\ \longleftarrow \\ cn(u)} &
\begin{gathered}
\psscalebox{1.0 1.0} 
{
\begin{pspicture}(0,-0.2562329)(1.2854794,0.2562329)
\psline[linecolor=black, linewidth=0.02](1.1970313,0.07225409)(0.66359574,0.07225409)
\pscircle[linecolor=black, linewidth=0.02, fillstyle=solid,fillcolor=black, dimen=outer](0.6477786,0.06941951){0.068584576}
\pscircle[linecolor=black, linewidth=0.02, fillstyle=solid,fillcolor=black, dimen=outer](1.2126515,0.06910108){0.068584576}
\rput[bl](0.5484824,0.22623287){$\scriptstyle \dots$}
\psline[linecolor=black, linewidth=0.02](0.65,0.07356068)(0.0,0.07356068)
\rput[bl](0.5484824,-0.2562329){$\scriptstyle \dots$}
\pscircle[linecolor=black, linewidth=0.02, fillstyle=solid,fillcolor=black, dimen=outer](1.2126515,0.06910108){0.068584576}
\psline[linecolor=black, linewidth=0.02](1.1970313,0.07225409)(0.66359574,0.07225409)
\pscircle[linecolor=black, linewidth=0.02, fillstyle=solid,fillcolor=black, dimen=outer](1.2126515,0.06910108){0.068584576}
\pscircle[linecolor=black, linewidth=0.02, fillstyle=solid,fillcolor=black, dimen=outer](1.2126515,0.06910108){0.068584576}
\rput[bl](0.5820548,-0.17397356){$\scriptstyle v$}
\rput[bl](1.1354795,-0.17397356){$\scriptstyle u$}
\end{pspicture}
}
\end{gathered} \\\\
\text{Creation/cancellation (Type II):}\quad
&
\begin{gathered}
\psscalebox{1.0 1.0} 
{
\begin{pspicture}(0,-0.2562329)(1.2323016,0.2562329)
\psline[linecolor=black, linewidth=0.02](1.2323016,0.07428997)(0.7043613,0.07428997)
\pscircle[linecolor=black, linewidth=0.02, fillstyle=solid,fillcolor=black, dimen=outer](0.6451661,0.07656744){0.06787803}
\psline[linecolor=black, linewidth=0.02](0.65000015,0.07738126)(0.0,0.07738126)
\rput[bl](0.5433338,-0.2562329){$\scriptstyle \dots$}
\rput[bl](0.57690626,-0.17397356){$\scriptstyle v$}
\rput[bl](0.5433338,0.22623287){$\scriptstyle \dots$}
\rput[bl](0.9851254,-0.11643932){$\scriptstyle e$}
\end{pspicture}
}
\end{gathered}
& \substack{cr_e(e^\prime,u) \\ \longrightarrow \\ \longleftarrow \\ cn(e^\prime)} &
\begin{gathered}
\psscalebox{1.0 1.0} 
{
\begin{pspicture}(0,-0.30503377)(1.7814093,0.30503377)
\psline[linecolor=black, linewidth=0.02](1.7814091,0.037589908)(1.2666885,0.037589908)
\psline[linecolor=black, linewidth=0.02](1.2323738,0.037589908)(0.71765316,0.037589908)
\pscircle[linecolor=black, linewidth=0.02, fillstyle=solid,fillcolor=black, dimen=outer](0.65955615,0.04349348){0.06617837}
\pscircle[linecolor=black, linewidth=0.02, fillstyle=solid,fillcolor=black, dimen=outer](1.2334944,0.038591377){0.06617837}
\psline[linecolor=black, linewidth=0.02](0.6499999,0.0416776)(0.0,0.0416776)
\rput[bl](0.5576337,-0.30503377){$\scriptstyle \dots$}
\rput[bl](0.5912061,-0.22277446){$\scriptstyle v$}
\rput[bl](0.5576337,0.17743199){$\scriptstyle \dots$}
\rput[bl](1.5309321,-0.1542813){$\scriptstyle e$}
\rput[bl](0.892576,0.095033765){$\scriptstyle e^\prime$}
\rput[bl](1.1583294,-0.21455528){$\scriptstyle u$}
\end{pspicture}
}
\end{gathered} \\\\
\text{Slide:}\quad
&
\begin{gathered}
\psscalebox{1.0 1.0} 
{
\begin{pspicture}(0,-0.48519978)(1.1869475,0.48519978)
\psline[linecolor=black, linewidth=0.02](0.21446268,-0.16540268)(0.21446268,0.48519972)
\psline[linecolor=black, linewidth=0.02](0.97322446,-0.15761289)(0.2309,-0.15761289)
\pscircle[linecolor=black, linewidth=0.02, fillstyle=solid,fillcolor=black, dimen=outer](0.21508288,-0.16044746){0.068584576}
\pscircle[linecolor=black, linewidth=0.02, fillstyle=solid,fillcolor=black, dimen=outer](0.98396206,-0.15746301){0.068584576}
\rput{-90.0}(1.2227437,1.0911511){\rput[bl](1.1569475,-0.065796345){$\scriptstyle \dots$}}
\pscircle[linecolor=black, linewidth=0.02, fillstyle=solid,fillcolor=black, dimen=outer](0.21508288,-0.17079635){0.068584576}
\rput{-90.0}(0.065796345,-0.065796345){\rput[bl](0.0,-0.065796345){$\scriptstyle \dots$}}
\rput[bl](0.2752688,0.11480021){$\scriptstyle e$}
\rput[bl](0.5245839,-0.48519978){$\scriptstyle f$}
\end{pspicture}
}
\end{gathered}
& \xrightarrow{sl_f(e)} &
\begin{gathered}
\psscalebox{1.0 1.0} 
{
\begin{pspicture}(0,-0.48246)(1.1869475,0.48246)
\psline[linecolor=black, linewidth=0.02](0.981586,-0.16814235)(0.981586,0.48246008)
\psline[linecolor=black, linewidth=0.02](0.97322446,-0.15487309)(0.2309,-0.15487309)
\pscircle[linecolor=black, linewidth=0.02, fillstyle=solid,fillcolor=black, dimen=outer](0.21508288,-0.15770768){0.068584576}
\pscircle[linecolor=black, linewidth=0.02, fillstyle=solid,fillcolor=black, dimen=outer](0.98396206,-0.15472323){0.068584576}
\rput{-90.0}(1.220004,1.0938909){\rput[bl](1.1569475,-0.06305655){$\scriptstyle \dots$}}
\pscircle[linecolor=black, linewidth=0.02, fillstyle=solid,fillcolor=black, dimen=outer](0.21508288,-0.16805655){0.068584576}
\rput{-90.0}(0.06305655,-0.06305655){\rput[bl](0.0,-0.06305655){$\scriptstyle \dots$}}
\rput[bl](1.0423921,0.11206055){$\scriptstyle e$}
\rput[bl](0.5245839,-0.48246){$\scriptstyle f$}
\end{pspicture}
}
\end{gathered}
\end{array}
\end{equation*}

\begin{sidewaystable}
\begin{tabular*}{1.1\textwidth}{m{0.55\textwidth}|m{0.55\textwidth}}
\multicolumn{2}{c}{\textbf{Case 1.} $w_{i_1-1}=$ Creation, $w_{i_T}=$ Cancellation.} \\ \hline
Old subsequence: & Replace with the modification: \\ \hline
Create and cancel the same edge via Type I moves. & The moves $w_{i_1}, \dotsc, w_{i_T-1}$ do not involve the created edge-vertex pair in any way, hence we may simply remove $w_{i_1-1}$ and $w_{i_T}$ from the subsequence. \\
$
\begin{array}{cccccccccc}
\text{step:} & \scriptstyle s_{i_1-1} & & \scriptstyle s_{i_1} & & & & \scriptstyle s_{i_T} & & \scriptstyle s_{i_T+1} \\\\
& \begin{gathered}
\psscalebox{1.0 1.0} 
{
\begin{pspicture}(0,-0.17426474)(0.33607042,0.17426474)
\pscircle[linecolor=black, linewidth=0.02, fillstyle=solid,fillcolor=black, dimen=outer](0.1790413,0.06845801){0.06787803}
\rput{-90.0}(-0.17426474,0.17426474){\rput[bl](0.0,0.17426474){$\scriptstyle \dots$}}
\rput[bl](0.20607042,-0.17426474){$\scriptstyle v$}
\end{pspicture}
}
\end{gathered}
& \xrightarrow{cr_v(u)} &
\begin{gathered}
\psscalebox{1.0 1.0} 
{
\begin{pspicture}(0,-0.17545515)(0.8362277,0.17545515)
\psline[linecolor=black, linewidth=0.02](0.74777955,0.07077251)(0.21434395,0.07077251)
\pscircle[linecolor=black, linewidth=0.02, fillstyle=solid,fillcolor=black, dimen=outer](0.19852683,0.06793793){0.068584576}
\pscircle[linecolor=black, linewidth=0.02, fillstyle=solid,fillcolor=black, dimen=outer](0.7633997,0.067619495){0.068584576}
\rput[bl](0.13280304,-0.17545515){$\scriptstyle v$}
\rput[bl](0.6862277,-0.17545515){$\scriptstyle u$}
\rput{-90.0}(-0.17545515,0.17545515){\rput[bl](0.0,0.17545515){$\scriptstyle \dots$}}
\end{pspicture}
}
\end{gathered}
& \xrightarrow{w_{i_1}} &
\cdots
& \xrightarrow{w_{i_T-1}} &
\begin{gathered}
\psscalebox{1.0 1.0} 
{
\begin{pspicture}(0,-0.17545515)(0.8362277,0.17545515)
\psline[linecolor=black, linewidth=0.02](0.74777955,0.07077251)(0.21434395,0.07077251)
\pscircle[linecolor=black, linewidth=0.02, fillstyle=solid,fillcolor=black, dimen=outer](0.19852683,0.06793793){0.068584576}
\pscircle[linecolor=black, linewidth=0.02, fillstyle=solid,fillcolor=black, dimen=outer](0.7633997,0.067619495){0.068584576}
\rput[bl](0.13280304,-0.17545515){$\scriptstyle v$}
\rput[bl](0.6862277,-0.17545515){$\scriptstyle u$}
\rput{-90.0}(-0.17545515,0.17545515){\rput[bl](0.0,0.17545515){$\scriptstyle \dots$}}
\end{pspicture}
}
\end{gathered}
& \xrightarrow{cn(u)} &
\begin{gathered}
\psscalebox{1.0 1.0} 
{
\begin{pspicture}(0,-0.17426474)(0.33607042,0.17426474)
\pscircle[linecolor=black, linewidth=0.02, fillstyle=solid,fillcolor=black, dimen=outer](0.1790413,0.06845801){0.06787803}
\rput{-90.0}(-0.17426474,0.17426474){\rput[bl](0.0,0.17426474){$\scriptstyle \dots$}}
\rput[bl](0.20607042,-0.17426474){$\scriptstyle v$}
\end{pspicture}
}
\end{gathered} \\\\
\deg v: & \scriptstyle d-1 & & \scriptstyle d & & \scriptstyle \cdots & & \scriptstyle d & & \scriptstyle d-1
\end{array}
$
&
$
\begin{array}{ccccc}
\scriptstyle s_{i_1-1} & & & & \scriptstyle s_{i_T+1} \\\\
\begin{gathered}
\psscalebox{1.0 1.0} 
{
\begin{pspicture}(0,-0.17426474)(0.33607042,0.17426474)
\pscircle[linecolor=black, linewidth=0.02, fillstyle=solid,fillcolor=black, dimen=outer](0.1790413,0.06845801){0.06787803}
\rput{-90.0}(-0.17426474,0.17426474){\rput[bl](0.0,0.17426474){$\scriptstyle \dots$}}
\rput[bl](0.20607042,-0.17426474){$\scriptstyle v$}
\end{pspicture}
}
\end{gathered}
& \xrightarrow{w_{i_1}} &
\cdots
& \xrightarrow{w_{i_T-1}} &
\begin{gathered}
\psscalebox{1.0 1.0} 
{
\begin{pspicture}(0,-0.17426474)(0.33607042,0.17426474)
\pscircle[linecolor=black, linewidth=0.02, fillstyle=solid,fillcolor=black, dimen=outer](0.1790413,0.06845801){0.06787803}
\rput{-90.0}(-0.17426474,0.17426474){\rput[bl](0.0,0.17426474){$\scriptstyle \dots$}}
\rput[bl](0.20607042,-0.17426474){$\scriptstyle v$}
\end{pspicture}
}
\end{gathered} \\\\
\scriptstyle d-1 & & \scriptstyle \cdots & & \scriptstyle d-1
\end{array}
$
\\ \hline
Create an edge and cancel a different one via Type I moves.
&
Again, $w_{i_1}, \dotsc, w_{i_T-1}$ do not involve the created edge-vertex pair, hence we may make the following modification. \\
$
\begin{array}{ccccccccc}
\scriptstyle s_{i_1-1} && \scriptstyle s_{i_1} &&&& \scriptstyle s_{i_T} && \scriptstyle s_{i_T+1} \\\\
\begin{gathered}
\psscalebox{1.0 1.0} 
{
\begin{pspicture}(0,-0.17426474)(0.33607042,0.17426474)
\pscircle[linecolor=black, linewidth=0.02, fillstyle=solid,fillcolor=black, dimen=outer](0.1790413,0.06845801){0.06787803}
\rput{-90.0}(-0.17426474,0.17426474){\rput[bl](0.0,0.17426474){$\scriptstyle \dots$}}
\rput[bl](0.20607042,-0.17426474){$\scriptstyle v$}
\end{pspicture}
}
\end{gathered}
& \xrightarrow{cr_v(u)} &
\begin{gathered}
\psscalebox{1.0 1.0} 
{
\begin{pspicture}(0,-0.17545515)(0.8362277,0.17545515)
\psline[linecolor=black, linewidth=0.02](0.74777955,0.07077251)(0.21434395,0.07077251)
\pscircle[linecolor=black, linewidth=0.02, fillstyle=solid,fillcolor=black, dimen=outer](0.19852683,0.06793793){0.068584576}
\pscircle[linecolor=black, linewidth=0.02, fillstyle=solid,fillcolor=black, dimen=outer](0.7633997,0.067619495){0.068584576}
\rput[bl](0.13280304,-0.17545515){$\scriptstyle v$}
\rput[bl](0.6862277,-0.17545515){$\scriptstyle u$}
\rput{-90.0}(-0.17545515,0.17545515){\rput[bl](0.0,0.17545515){$\scriptstyle \dots$}}
\end{pspicture}
}
\end{gathered}
& \xrightarrow{w_{i_1}} &
\cdots
& \xrightarrow{w_{i_T-1}} &
\begin{gathered}
\psscalebox{1.0 1.0} 
{
\begin{pspicture}(0,-0.2562329)(1.2458904,0.2562329)
\psline[linecolor=black, linewidth=0.02](1.1574422,0.083212994)(0.6240067,0.083212994)
\pscircle[linecolor=black, linewidth=0.02, fillstyle=solid,fillcolor=black, dimen=outer](0.6081896,0.08037841){0.068584576}
\pscircle[linecolor=black, linewidth=0.02, fillstyle=solid,fillcolor=black, dimen=outer](1.1730624,0.08005998){0.068584576}
\rput[bl](0.54246575,-0.16301467){$\scriptstyle v$}
\rput[bl](1.0958904,-0.16301467){$\scriptstyle u$}
\rput[bl](0.5088934,-0.2562329){$\scriptstyle \dots$}
\rput[bl](0.5088934,0.22623287){$\scriptstyle \dots$}
\psline[linecolor=black, linewidth=0.02](0.6176003,0.07817162)(0.102879696,0.07817162)
\pscircle[linecolor=black, linewidth=0.02, fillstyle=solid,fillcolor=black, dimen=outer](0.0696856,0.07917309){0.06617837}
\rput[bl](0.0,-0.21232973){$\scriptstyle u^\prime$}
\end{pspicture}
}
\end{gathered}
& \xrightarrow{cn(u^\prime)} &
\begin{gathered}
\psscalebox{1.0 1.0} 
{
\begin{pspicture}(0,-0.17545515)(0.8362277,0.17545515)
\psline[linecolor=black, linewidth=0.02](0.74777955,0.07077251)(0.21434395,0.07077251)
\pscircle[linecolor=black, linewidth=0.02, fillstyle=solid,fillcolor=black, dimen=outer](0.19852683,0.06793793){0.068584576}
\pscircle[linecolor=black, linewidth=0.02, fillstyle=solid,fillcolor=black, dimen=outer](0.7633997,0.067619495){0.068584576}
\rput[bl](0.13280304,-0.17545515){$\scriptstyle v$}
\rput[bl](0.6862277,-0.17545515){$\scriptstyle u$}
\rput{-90.0}(-0.17545515,0.17545515){\rput[bl](0.0,0.17545515){$\scriptstyle \dots$}}
\end{pspicture}
}
\end{gathered} \\\\
\scriptstyle d-1 & & \scriptstyle d & & \scriptstyle \cdots & & \scriptstyle d & & \scriptstyle d-1
\end{array}
$
&
$\begin{array}{ccccccccc}
{\scriptstyle s_{i_{1}-1}} &  &  &  &  &  &  &  & {\scriptstyle s_{i_{T}+1}}\\
\\
\begin{gathered}
\psscalebox{1.0 1.0} 
{
\begin{pspicture}(0,-0.17426474)(0.33607042,0.17426474)
\pscircle[linecolor=black, linewidth=0.02, fillstyle=solid,fillcolor=black, dimen=outer](0.1790413,0.06845801){0.06787803}
\rput{-90.0}(-0.17426474,0.17426474){\rput[bl](0.0,0.17426474){$\scriptstyle \dots$}}
\rput[bl](0.20607042,-0.17426474){$\scriptstyle v$}
\end{pspicture}
}
\end{gathered}
 & \xrightarrow{w_{i_{1}}} & \cdots & \xrightarrow{w_{i_{T}-1}} & 
\begin{gathered}
\psscalebox{1.0 1.0} 
{
\begin{pspicture}(0,-0.20559214)(0.8834983,0.20559214)
\psline[linecolor=black, linewidth=0.02](0.6630855,0.10408729)(0.1483649,0.10408729)
\pscircle[linecolor=black, linewidth=0.02, fillstyle=solid,fillcolor=black, dimen=outer](0.09026789,0.109990865){0.06617837}
\pscircle[linecolor=black, linewidth=0.02, fillstyle=solid,fillcolor=black, dimen=outer](0.66420615,0.10508876){0.06617837}
\rput[bl](0.0,-0.18915378){$\scriptstyle u^\prime$}
\rput{-90.0}(0.6479062,1.0590905){\rput[bl](0.85349834,0.20559214){$\scriptstyle \dots$}}
\rput[bl](0.6082192,-0.20559214){$\scriptstyle v$}
\end{pspicture}
}
\end{gathered}
 & \xrightarrow{cn(u^{\prime})} & 
\begin{gathered}
\psscalebox{1.0 1.0} 
{
\begin{pspicture}(0,-0.17426474)(0.33607042,0.17426474)
\pscircle[linecolor=black, linewidth=0.02, fillstyle=solid,fillcolor=black, dimen=outer](0.1790413,0.06845801){0.06787803}
\rput{-90.0}(-0.17426474,0.17426474){\rput[bl](0.0,0.17426474){$\scriptstyle \dots$}}
\rput[bl](0.20607042,-0.17426474){$\scriptstyle v$}
\end{pspicture}
}
\end{gathered} 
 & \xrightarrow{cr_{v}(u)} & 
\begin{gathered}
\psscalebox{1.0 1.0} 
{
\begin{pspicture}(0,-0.17545515)(0.8362277,0.17545515)
\psline[linecolor=black, linewidth=0.02](0.74777955,0.07077251)(0.21434395,0.07077251)
\pscircle[linecolor=black, linewidth=0.02, fillstyle=solid,fillcolor=black, dimen=outer](0.19852683,0.06793793){0.068584576}
\pscircle[linecolor=black, linewidth=0.02, fillstyle=solid,fillcolor=black, dimen=outer](0.7633997,0.067619495){0.068584576}
\rput[bl](0.13280304,-0.17545515){$\scriptstyle v$}
\rput[bl](0.6862277,-0.17545515){$\scriptstyle u$}
\rput{-90.0}(-0.17545515,0.17545515){\rput[bl](0.0,0.17545515){$\scriptstyle \dots$}}
\end{pspicture}
}
\end{gathered}
 \\
\\
{\scriptstyle d-1} &  & {\scriptstyle \cdots} &  & {\scriptstyle d-1} &  & {\scriptstyle d-2} &  & {\scriptstyle d-1}
\end{array}$
\\ \hline \\
\multicolumn{2}{c}{Note that we do not consider Type II creation/cancellations for $w_{i_1-1},w_{i_T}$ as these do not change the degree of $v$.} \\
\end{tabular*}
\end{sidewaystable}

\begin{sidewaystable}
\begin{tabular*}{1.1\textwidth}{m{0.55\textwidth}|m{0.55\textwidth}}
\multicolumn{2}{c}{\textbf{Case 2.} $w_{i_1-1}=$ Creation, $w_{i_T}=$ Slide.} \\ \hline
Old subsequence: & Replace with the modification: \\ \hline
Create an edge with a Type I move, and slide the created edge off. & First perform $w_{i_1}, \dotsc, w_{i_T-1}$, then create an edge on $u^\prime$.\\
$\begin{array}{ccccccccc}
{\scriptstyle s_{i_{1}-1}} &  & {\scriptstyle s_{i_{1}}} &  &  &  & {\scriptstyle s_{i_{T}}} &  & {\scriptstyle s_{i_{T}+1}}\\
\\
\begin{gathered}
\psscalebox{1.0 1.0} 
{
\begin{pspicture}(0,-0.17426474)(0.33607042,0.17426474)
\pscircle[linecolor=black, linewidth=0.02, fillstyle=solid,fillcolor=black, dimen=outer](0.1790413,0.06845801){0.06787803}
\rput{-90.0}(-0.17426474,0.17426474){\rput[bl](0.0,0.17426474){$\scriptstyle \dots$}}
\rput[bl](0.20607042,-0.17426474){$\scriptstyle v$}
\end{pspicture}
}
\end{gathered} 
 & \xrightarrow{cr_{v}(u)} & 
\begin{gathered}
\psscalebox{1.0 1.0} 
{
\begin{pspicture}(0,-0.48924658)(0.718256,0.48924658)
\rput[bl](0.13554277,-0.48924658){$\scriptstyle v$}
\pscircle[linecolor=black, linewidth=0.02, fillstyle=solid,fillcolor=black, dimen=outer](0.20380263,-0.23870556){0.06787803}
\rput{-90.0}(0.11915819,-0.11915819){\rput[bl](0.0,-0.11915819){$\scriptstyle \dots$}}
\pscircle[linecolor=black, linewidth=0.02, fillstyle=solid,fillcolor=black, dimen=outer](0.650378,0.2626643){0.06787803}
\rput[bl](0.5656797,0.37924656){$\scriptstyle u$}
\rput[bl](0.22595371,0.050479453){$\scriptstyle e$}
\psline[linecolor=black, linewidth=0.02](0.20677564,-0.23719178)(0.623214,0.23952055)
\end{pspicture}
}
\end{gathered} 
  & \xrightarrow{w_{i_{1}}} & \cdots & \xrightarrow{w_{i_{T}-1}} & 
\begin{gathered}
\psscalebox{1.0 1.0} 
{
\begin{pspicture}(0,-0.5148892)(1.2647779,0.5148892)
\rput[bl](0.14824118,-0.4890008){$\scriptstyle v$}
\pscircle[linecolor=black, linewidth=0.02, fillstyle=solid,fillcolor=black, dimen=outer](0.20380263,-0.21404803){0.06787803}
\rput{-90.0}(0.09351559,-0.09351559){\rput[bl](0.0,-0.09351559){$\scriptstyle \dots$}}
\pscircle[linecolor=black, linewidth=0.02, fillstyle=solid,fillcolor=black, dimen=outer](0.650378,0.2883069){0.06787803}
\rput[bl](0.5656797,0.40488917){$\scriptstyle u$}
\rput[bl](0.22595371,0.07612205){$\scriptstyle e$}
\psline[linecolor=black, linewidth=0.02](0.20677564,-0.21154918)(0.623214,0.26516315)
\pscircle[linecolor=black, linewidth=0.02, fillstyle=solid,fillcolor=black, dimen=outer](1.0408659,-0.21404803){0.06787803}
\psline[linecolor=black, linewidth=0.02](0.21982782,-0.20114437)(1.0422997,-0.20114437)
\rput[bl](0.9298317,-0.49651977){$\scriptstyle u^\prime$}
\rput{-90.0}(1.3413843,1.1281716){\rput[bl](1.2347779,-0.10660637){$\scriptstyle \dots$}}
\rput[bl](0.54280764,-0.5148892){$\scriptstyle f$}
\end{pspicture}
}
\end{gathered}  
   & \xrightarrow{sl_{f}(e)} & 
   \begin{gathered}
   \psscalebox{1.0 1.0} 
{
\begin{pspicture}(0,-0.5148892)(1.2647779,0.5148892)
\rput[bl](0.14824118,-0.4890008){$\scriptstyle v$}
\pscircle[linecolor=black, linewidth=0.02, fillstyle=solid,fillcolor=black, dimen=outer](0.20380263,-0.21404803){0.06787803}
\rput{-90.0}(0.09351559,-0.09351559){\rput[bl](0.0,-0.09351559){$\scriptstyle \dots$}}
\pscircle[linecolor=black, linewidth=0.02, fillstyle=solid,fillcolor=black, dimen=outer](0.650378,0.2883069){0.06787803}
\rput[bl](0.5656797,0.40488917){$\scriptstyle u$}
\rput[bl](0.8886403,0.07612205){$\scriptstyle e$}
\pscircle[linecolor=black, linewidth=0.02, fillstyle=solid,fillcolor=black, dimen=outer](1.0408659,-0.21404803){0.06787803}
\psline[linecolor=black, linewidth=0.02](0.21982782,-0.20114437)(1.0422997,-0.20114437)
\rput[bl](0.9298317,-0.49651977){$\scriptstyle u^\prime$}
\rput{-90.0}(1.3413843,1.1281716){\rput[bl](1.2347779,-0.10660637){$\scriptstyle \dots$}}
\rput[bl](0.54280764,-0.5148892){$\scriptstyle f$}
\psline[linecolor=black, linewidth=0.02](0.63310975,0.29893947)(1.0271395,-0.19061276)
\end{pspicture}
}
\end{gathered}      
   \\
\\
{\scriptstyle d-1} &  & {\scriptstyle d} &  & {\scriptstyle \cdots} &  & {\scriptstyle d} &  & {\scriptstyle d-1}
\end{array}$
&
$\begin{array}{cccccccc}
{\scriptstyle s_{i_{1}-1}} &  &  &  &  &  & {\scriptstyle s_{i_{T}+1}}\\
\\
\begin{gathered}
 \psscalebox{1.0 1.0} 
{
\begin{pspicture}(0,-0.17426474)(0.33607042,0.17426474)
\pscircle[linecolor=black, linewidth=0.02, fillstyle=solid,fillcolor=black, dimen=outer](0.1790413,0.06845801){0.06787803}
\rput{-90.0}(-0.17426474,0.17426474){\rput[bl](0.0,0.17426474){$\scriptstyle \dots$}}
\rput[bl](0.20607042,-0.17426474){$\scriptstyle v$}
\end{pspicture}
}
\end{gathered}
& \xrightarrow{w_{i_{1}}} & \cdots & \xrightarrow{w_{i_{T}-1}} & 
\begin{gathered}
\psscalebox{1.0 1.0} 
{
\begin{pspicture}(0,-0.21068679)(1.2647779,0.21068679)
\rput[bl](0.14824118,-0.1847984){$\scriptstyle v$}
\pscircle[linecolor=black, linewidth=0.02, fillstyle=solid,fillcolor=black, dimen=outer](0.20380263,0.09015436){0.06787803}
\rput{-90.0}(-0.21068679,0.21068679){\rput[bl](0.0,0.21068679){$\scriptstyle \dots$}}
\pscircle[linecolor=black, linewidth=0.02, fillstyle=solid,fillcolor=black, dimen=outer](1.0408659,0.09015436){0.06787803}
\psline[linecolor=black, linewidth=0.02](0.21982782,0.10305801)(1.0422997,0.10305801)
\rput[bl](0.9298317,-0.19231738){$\scriptstyle u^\prime$}
\rput{-90.0}(1.037182,1.432374){\rput[bl](1.2347779,0.19759601){$\scriptstyle \dots$}}
\rput[bl](0.54280764,-0.21068679){$\scriptstyle f$}
\end{pspicture}
}
\end{gathered}
 & \xrightarrow{cr_{u^{\prime}}(u)} & 
\begin{gathered}
\psscalebox{1.0 1.0} 
{
\begin{pspicture}(0,-0.5148892)(1.2647779,0.5148892)
\rput[bl](0.14824118,-0.4890008){$\scriptstyle v$}
\pscircle[linecolor=black, linewidth=0.02, fillstyle=solid,fillcolor=black, dimen=outer](0.20380263,-0.21404803){0.06787803}
\rput{-90.0}(0.09351559,-0.09351559){\rput[bl](0.0,-0.09351559){$\scriptstyle \dots$}}
\pscircle[linecolor=black, linewidth=0.02, fillstyle=solid,fillcolor=black, dimen=outer](0.650378,0.2883069){0.06787803}
\rput[bl](0.5656797,0.40488917){$\scriptstyle u$}
\rput[bl](0.8886403,0.07612205){$\scriptstyle e$}
\pscircle[linecolor=black, linewidth=0.02, fillstyle=solid,fillcolor=black, dimen=outer](1.0408659,-0.21404803){0.06787803}
\psline[linecolor=black, linewidth=0.02](0.21982782,-0.20114437)(1.0422997,-0.20114437)
\rput[bl](0.9298317,-0.49651977){$\scriptstyle u^\prime$}
\rput{-90.0}(1.3413843,1.1281716){\rput[bl](1.2347779,-0.10660637){$\scriptstyle \dots$}}
\rput[bl](0.54280764,-0.5148892){$\scriptstyle f$}
\psline[linecolor=black, linewidth=0.02](0.63310975,0.29893947)(1.0271395,-0.19061276)
\end{pspicture}
}
\end{gathered} 
 \\
\\
{\scriptstyle d-1} &  & {\scriptstyle {\scriptstyle \cdots}} &  & {\scriptstyle d-1} &  & {\scriptstyle d-1}
\end{array}$
\\ \hline
Create an edge via a Type I move and slide another edge off along the created edge. & First perform $w_{i_1},\dotsc,w_{i_T-1}$, and then perform a Type II creation to extend the edge $e$. \\
$\begin{array}{ccccccccc}
{\scriptstyle s_{i_{1}-1}} &  & {\scriptstyle s_{i_{1}}} &  &  &  & {\scriptstyle s_{i_{T}}} &  & {\scriptstyle s_{i_{T}+1}}\\
\\
\begin{gathered}
\psscalebox{1.0 1.0} 
{
\begin{pspicture}(0,-0.17426474)(0.33607042,0.17426474)
\pscircle[linecolor=black, linewidth=0.02, fillstyle=solid,fillcolor=black, dimen=outer](0.1790413,0.06845801){0.06787803}
\rput{-90.0}(-0.17426474,0.17426474){\rput[bl](0.0,0.17426474){$\scriptstyle \dots$}}
\rput[bl](0.20607042,-0.17426474){$\scriptstyle v$}
\end{pspicture}
}
\end{gathered}
 & \xrightarrow{cr_{v}(u)} & 
\begin{gathered}
\psscalebox{1.0 1.0} 
{
\begin{pspicture}(0,-0.20970172)(1.0525466,0.20970172)
\psline[linecolor=black, linewidth=0.02](0.97322446,0.11788519)(0.2309,0.11788519)
\pscircle[linecolor=black, linewidth=0.02, fillstyle=solid,fillcolor=black, dimen=outer](0.21508288,0.11505061){0.068584576}
\pscircle[linecolor=black, linewidth=0.02, fillstyle=solid,fillcolor=black, dimen=outer](0.98396206,0.118035056){0.068584576}
\rput{-90.0}(-0.20970172,0.20970172){\rput[bl](0.0,0.20970172){$\scriptstyle \dots$}}
\rput[bl](0.5245839,-0.20970172){$\scriptstyle f$}
\rput[bl](0.90126055,-0.14275509){$\scriptstyle u$}
\rput[bl](0.16873144,-0.13081479){$\scriptstyle v$}
\end{pspicture}
}
\end{gathered} 
  & \xrightarrow{w_{i_{1}}} & \cdots & \xrightarrow{w_{i_{T}-1}} & 
\begin{gathered}
\psscalebox{1.0 1.0} 
{
\begin{pspicture}(0,-0.48843014)(1.0525466,0.48843014)
\psline[linecolor=black, linewidth=0.02](0.223377,-0.1621722)(0.223377,0.4884302)
\psline[linecolor=black, linewidth=0.02](0.97322446,-0.16084325)(0.2309,-0.16084325)
\pscircle[linecolor=black, linewidth=0.02, fillstyle=solid,fillcolor=black, dimen=outer](0.21508288,-0.16367783){0.068584576}
\pscircle[linecolor=black, linewidth=0.02, fillstyle=solid,fillcolor=black, dimen=outer](0.98396206,-0.16069338){0.068584576}
\rput{-90.0}(0.06902671,-0.06902671){\rput[bl](0.0,-0.06902671){$\scriptstyle \dots$}}
\rput[bl](0.2841831,0.11803069){$\scriptstyle e$}
\rput[bl](0.5245839,-0.48843014){$\scriptstyle f$}
\rput[bl](0.90126055,-0.42148352){$\scriptstyle u$}
\rput[bl](0.16873144,-0.40954322){$\scriptstyle v$}
\end{pspicture}
}
\end{gathered}  
   & \xrightarrow{sl_{f}(e)} & 
\begin{gathered}
\psscalebox{1.0 1.0} 
{
\begin{pspicture}(0,-0.49141523)(1.1683623,0.49141523)
\psline[linecolor=black, linewidth=0.02](0.9875561,-0.15918712)(0.9875561,0.4914153)
\psline[linecolor=black, linewidth=0.02](0.97322446,-0.16382833)(0.2309,-0.16382833)
\pscircle[linecolor=black, linewidth=0.02, fillstyle=solid,fillcolor=black, dimen=outer](0.21508288,-0.1666629){0.068584576}
\pscircle[linecolor=black, linewidth=0.02, fillstyle=solid,fillcolor=black, dimen=outer](0.98396206,-0.16367845){0.068584576}
\rput{-90.0}(0.07201178,-0.07201178){\rput[bl](0.0,-0.07201178){$\scriptstyle \dots$}}
\rput[bl](1.0483623,0.121015765){$\scriptstyle e$}
\rput[bl](0.5245839,-0.49141523){$\scriptstyle f$}
\rput[bl](0.90126055,-0.4244686){$\scriptstyle u$}
\rput[bl](0.16873144,-0.4125283){$\scriptstyle v$}
\end{pspicture}
}
\end{gathered}   
   \\
\\
{\scriptstyle d-1} &  & {\scriptstyle d} &  & {\scriptstyle \cdots} &  & {\scriptstyle d} &  & {\scriptstyle d-1}
\end{array}$
&
$\begin{array}{ccccccc}
{\scriptstyle s_{i_{1}-1}} &  &  &  &  &  & {\scriptstyle s_{i_{T}+1}}\\
\\
\begin{gathered}
\psscalebox{1.0 1.0} 
{
\begin{pspicture}(0,-0.17426474)(0.33607042,0.17426474)
\pscircle[linecolor=black, linewidth=0.02, fillstyle=solid,fillcolor=black, dimen=outer](0.1790413,0.06845801){0.06787803}
\rput{-90.0}(-0.17426474,0.17426474){\rput[bl](0.0,0.17426474){$\scriptstyle \dots$}}
\rput[bl](0.20607042,-0.17426474){$\scriptstyle v$}
\end{pspicture}
}
\end{gathered}
 & \xrightarrow{w_{i_{1}}} & \cdots & \xrightarrow{w_{i_{T}-1}} & 
\begin{gathered}
\psscalebox{1.0 1.0} 
{
\begin{pspicture}(0,-0.44898668)(0.40418312,0.44898668)
\psline[linecolor=black, linewidth=0.02](0.223377,-0.20161566)(0.223377,0.44898674)
\pscircle[linecolor=black, linewidth=0.02, fillstyle=solid,fillcolor=black, dimen=outer](0.21508288,-0.20312129){0.068584576}
\rput{-90.0}(0.10847017,-0.10847017){\rput[bl](0.0,-0.10847017){$\scriptstyle \dots$}}
\rput[bl](0.2841831,0.07858723){$\scriptstyle e$}
\rput[bl](0.16873144,-0.44898668){$\scriptstyle v$}
\end{pspicture}
}
\end{gathered} 
  & \xrightarrow{cr_{e}(f,u)} & 
\begin{gathered}
\psscalebox{1.0 1.0} 
{
\begin{pspicture}(0,-0.49141523)(1.1683623,0.49141523)
\psline[linecolor=black, linewidth=0.02](0.9875561,-0.15918712)(0.9875561,0.4914153)
\psline[linecolor=black, linewidth=0.02](0.97322446,-0.16382833)(0.2309,-0.16382833)
\pscircle[linecolor=black, linewidth=0.02, fillstyle=solid,fillcolor=black, dimen=outer](0.21508288,-0.1666629){0.068584576}
\pscircle[linecolor=black, linewidth=0.02, fillstyle=solid,fillcolor=black, dimen=outer](0.98396206,-0.16367845){0.068584576}
\rput{-90.0}(0.07201178,-0.07201178){\rput[bl](0.0,-0.07201178){$\scriptstyle \dots$}}
\rput[bl](1.0483623,0.121015765){$\scriptstyle e$}
\rput[bl](0.5245839,-0.49141523){$\scriptstyle f$}
\rput[bl](0.90126055,-0.4244686){$\scriptstyle u$}
\rput[bl](0.16873144,-0.4125283){$\scriptstyle v$}
\end{pspicture}
}
\end{gathered}  
  \\
\\
{\scriptstyle d-1} &  & {\scriptstyle {\scriptstyle \cdots}} &  & {\scriptstyle d-1} &  & {\scriptstyle d-1}
\end{array}$
\\ \hline
Create an edge via a Type I move and slide another edge off an edge different to the recently-created one. & Replace with the following: \\
$\begin{array}{ccccccccc}
{\scriptstyle s_{i_{1}-1}} &  & {\scriptstyle s_{i_{1}}} &  &  &  & {\scriptstyle s_{i_{T}}} &  & {\scriptstyle s_{i_{T}+1}}\\
\\
\begin{gathered}
\psscalebox{1.0 1.0} 
{
\begin{pspicture}(0,-0.17426474)(0.33607042,0.17426474)
\pscircle[linecolor=black, linewidth=0.02, fillstyle=solid,fillcolor=black, dimen=outer](0.1790413,0.06845801){0.06787803}
\rput{-90.0}(-0.17426474,0.17426474){\rput[bl](0.0,0.17426474){$\scriptstyle \dots$}}
\rput[bl](0.20607042,-0.17426474){$\scriptstyle v$}
\end{pspicture}
}
\end{gathered}
 & \xrightarrow{cr_{v}(u)} & 
\begin{gathered}
\psscalebox{1.0 1.0} 
{
\begin{pspicture}(0,-0.20970172)(1.0525466,0.20970172)
\psline[linecolor=black, linewidth=0.02](0.97322446,0.11788519)(0.2309,0.11788519)
\pscircle[linecolor=black, linewidth=0.02, fillstyle=solid,fillcolor=black, dimen=outer](0.21508288,0.11505061){0.068584576}
\pscircle[linecolor=black, linewidth=0.02, fillstyle=solid,fillcolor=black, dimen=outer](0.98396206,0.118035056){0.068584576}
\rput{-90.0}(-0.20970172,0.20970172){\rput[bl](0.0,0.20970172){$\scriptstyle \dots$}}
\rput[bl](0.90126055,-0.14275509){$\scriptstyle u$}
\rput[bl](0.16873144,-0.13081479){$\scriptstyle v$}
\end{pspicture}
}
\end{gathered} 
  & \xrightarrow{w_{i_{1}}} & \cdots & \xrightarrow{w_{i_{T}-1}} & 
\begin{gathered}
\psscalebox{1.0 1.0} 
{
\begin{pspicture}(0,-0.48918197)(1.5255715,0.48918197)
\psline[linecolor=black, linewidth=0.02](0.6964019,-0.16142038)(0.6964019,0.48918203)
\psline[linecolor=black, linewidth=0.02](1.4462494,-0.16009143)(0.7039249,-0.16009143)
\pscircle[linecolor=black, linewidth=0.02, fillstyle=solid,fillcolor=black, dimen=outer](0.6881077,-0.162926){0.068584576}
\pscircle[linecolor=black, linewidth=0.02, fillstyle=solid,fillcolor=black, dimen=outer](1.4569869,-0.15994155){0.068584576}
\rput[bl](0.77511847,0.16057356){$\scriptstyle f$}
\rput[bl](1.3742855,-0.4207317){$\scriptstyle u$}
\rput[bl](0.6417563,-0.4087914){$\scriptstyle v$}
\psline[linecolor=black, linewidth=0.02](0.65000004,-0.16125716)(0.0,-0.16125716)
\rput[bl](0.80519885,-0.03843571){$\scriptstyle \dots$}
\rput[bl](0.5992287,-0.48918197){$\scriptstyle \dots$}
\rput[bl](0.27064082,-0.42748615){$\scriptstyle g$}
\end{pspicture}
}
\end{gathered}  
   & \xrightarrow{sl_{g}(f)} & 
\begin{gathered}
\psscalebox{1.0 1.0} 
{
\begin{pspicture}(0,-0.24932836)(1.5255715,0.24932836)
\psline[linecolor=black, linewidth=0.02](1.4462494,0.0797622)(0.7039249,0.0797622)
\pscircle[linecolor=black, linewidth=0.02, fillstyle=solid,fillcolor=black, dimen=outer](0.6881077,0.07692762){0.068584576}
\pscircle[linecolor=black, linewidth=0.02, fillstyle=solid,fillcolor=black, dimen=outer](1.4569869,0.07991207){0.068584576}
\rput[bl](1.3742855,-0.18087809){$\scriptstyle u$}
\rput[bl](0.6417563,-0.16893779){$\scriptstyle v$}
\psline[linecolor=black, linewidth=0.02](0.65000004,0.07859646)(0.0,0.07859646)
\rput[bl](0.5962436,0.21932836){$\scriptstyle \dots$}
\rput[bl](0.5992287,-0.24932836){$\scriptstyle \dots$}
\rput[bl](0.27064082,-0.18763252){$\scriptstyle g$}
\end{pspicture}
}
\end{gathered}   
   \\
\\
{\scriptstyle d-1} &  & {\scriptstyle d} &  & {\scriptstyle \cdots} &  & {\scriptstyle d} &  & {\scriptstyle d-1}
\end{array}$
&
$\begin{array}{ccccccccc}
{\scriptstyle s_{i_{1}-1}} &  &  &  &  &  &  &  & {\scriptstyle s_{i_{T}+1}}\\
\\
\begin{gathered}
\psscalebox{1.0 1.0} 
{
\begin{pspicture}(0,-0.17426474)(0.33607042,0.17426474)
\pscircle[linecolor=black, linewidth=0.02, fillstyle=solid,fillcolor=black, dimen=outer](0.1790413,0.06845801){0.06787803}
\rput{-90.0}(-0.17426474,0.17426474){\rput[bl](0.0,0.17426474){$\scriptstyle \dots$}}
\rput[bl](0.20607042,-0.17426474){$\scriptstyle v$}
\end{pspicture}
}
\end{gathered}
 & \xrightarrow{w_{i_{1}}} & \cdots & \xrightarrow{w_{i_{T}-1}} & 
\begin{gathered}
\psscalebox{1.0 1.0} 
{
\begin{pspicture}(0,-0.45833406)(0.91511846,0.45833406)
\psline[linecolor=black, linewidth=0.02](0.6964019,-0.1922683)(0.6964019,0.45833412)
\pscircle[linecolor=black, linewidth=0.02, fillstyle=solid,fillcolor=black, dimen=outer](0.6881077,-0.19377393){0.068584576}
\rput[bl](0.77511847,0.12972564){$\scriptstyle f$}
\rput[bl](0.6417563,-0.43963933){$\scriptstyle v$}
\psline[linecolor=black, linewidth=0.02](0.65000004,-0.19210508)(0.0,-0.19210508)
\rput[bl](0.27064082,-0.45833406){$\scriptstyle g$}
\rput{-90.0}(0.9686568,0.7665563){\rput[bl](0.8676066,-0.10105022){$\scriptstyle \dots$}}
\end{pspicture}
}
\end{gathered} 
  & \xrightarrow{sl_{g}(f)} & 
\begin{gathered}
\psscalebox{1.0 1.0} 
{
\begin{pspicture}(0,-0.17864192)(0.89760655,0.17864192)
\pscircle[linecolor=black, linewidth=0.02, fillstyle=solid,fillcolor=black, dimen=outer](0.6881077,0.08591822){0.068584576}
\rput[bl](0.6417563,-0.15994719){$\scriptstyle v$}
\psline[linecolor=black, linewidth=0.02](0.65000004,0.087587066)(0.0,0.087587066)
\rput[bl](0.27064082,-0.17864192){$\scriptstyle g$}
\rput{-90.0}(0.6889646,1.0462484){\rput[bl](0.8676066,0.17864192){$\scriptstyle \dots$}}
\end{pspicture}
}
\end{gathered}  
   & \xrightarrow{cr_{v}(u)} & 
\begin{gathered}
\psscalebox{1.0 1.0} 
{
\begin{pspicture}(0,-0.24932836)(1.5255715,0.24932836)
\psline[linecolor=black, linewidth=0.02](1.4462494,0.0797622)(0.7039249,0.0797622)
\pscircle[linecolor=black, linewidth=0.02, fillstyle=solid,fillcolor=black, dimen=outer](0.6881077,0.07692762){0.068584576}
\pscircle[linecolor=black, linewidth=0.02, fillstyle=solid,fillcolor=black, dimen=outer](1.4569869,0.07991207){0.068584576}
\rput[bl](1.3742855,-0.18087809){$\scriptstyle u$}
\rput[bl](0.6417563,-0.16893779){$\scriptstyle v$}
\psline[linecolor=black, linewidth=0.02](0.65000004,0.07859646)(0.0,0.07859646)
\rput[bl](0.5962436,0.21932836){$\scriptstyle \dots$}
\rput[bl](0.5992287,-0.24932836){$\scriptstyle \dots$}
\rput[bl](0.27064082,-0.18763252){$\scriptstyle g$}
\end{pspicture}
}
\end{gathered}   
   \\
\\
{\scriptstyle d-1} &  & {\scriptstyle {\scriptstyle \cdots}} &  & {\scriptstyle d-1} &  & {\scriptstyle d-2} &  & {\scriptstyle d-1}
\end{array}$
\\ \hline
\end{tabular*}
\end{sidewaystable}

\begin{sidewaystable}
\begin{tabular*}{1.1\textwidth}{m{0.5\textwidth}|m{0.5\textwidth}}
\multicolumn{2}{c}{\textbf{Case 3.} $T>1$ and $w_{i_1-1}=$ Slide.} \\ \hline
\multicolumn{2}{l}{Then the subsequence has the following form:} \\
\multicolumn{2}{c}{
$\begin{array}{ccccc}
{\scriptstyle s_{i_{1}-1}} &  & {\scriptstyle s_{i_{1}}} &  & {\scriptstyle s_{i_{2}}}\\
\\
\begin{gathered}
\psscalebox{1.0 1.0} 
{
\begin{pspicture}(0,-0.40430105)(1.2528377,0.40430105)
\rput[bl](0.14824118,-0.37841266){$\scriptstyle u$}
\pscircle[linecolor=black, linewidth=0.02, fillstyle=solid,fillcolor=black, dimen=outer](0.20380263,-0.10345992){0.06787803}
\rput{-90.0}(-0.017072527,0.017072527){\rput[bl](0.0,0.017072527){$\scriptstyle \dots$}}
\rput[bl](0.3214761,0.25835195){$\scriptstyle e$}
\pscircle[linecolor=black, linewidth=0.02, fillstyle=solid,fillcolor=black, dimen=outer](1.0408659,-0.10345992){0.06787803}
\psline[linecolor=black, linewidth=0.02](0.21982782,-0.09055626)(1.0422997,-0.09055626)
\rput[bl](0.9298317,-0.38593164){$\scriptstyle v$}
\rput{-90.0}(1.2128857,1.2327895){\rput[bl](1.2228377,0.009951896){$\scriptstyle \dots$}}
\rput[bl](0.54280764,-0.40430105){$\scriptstyle f$}
\psline[linecolor=black, linewidth=0.02](0.19728883,-0.10987539)(0.6569903,0.3975873)
\end{pspicture}
}
\end{gathered}
 & \xrightarrow{sl_{f}(e)} & 
\begin{gathered}
\psscalebox{1.0 1.0} 
{
\begin{pspicture}(0,-0.40402052)(1.2528377,0.40402052)
\rput[bl](0.14824118,-0.37813213){$\scriptstyle u$}
\pscircle[linecolor=black, linewidth=0.02, fillstyle=solid,fillcolor=black, dimen=outer](0.20380263,-0.10317938){0.06787803}
\rput{-90.0}(-0.01735306,0.01735306){\rput[bl](0.0,0.01735306){$\scriptstyle \dots$}}
\rput[bl](0.82296866,0.25266236){$\scriptstyle e$}
\pscircle[linecolor=black, linewidth=0.02, fillstyle=solid,fillcolor=black, dimen=outer](1.0408659,-0.10317938){0.06787803}
\psline[linecolor=black, linewidth=0.02](0.21982782,-0.09027572)(1.0422997,-0.09027572)
\rput[bl](0.9298317,-0.3856511){$\scriptstyle v$}
\rput{-90.0}(1.2126052,1.2330701){\rput[bl](1.2228377,0.010232429){$\scriptstyle \dots$}}
\rput[bl](0.54280764,-0.40402052){$\scriptstyle f$}
\psline[linecolor=black, linewidth=0.02](0.65102017,0.39786783)(1.0331097,-0.09168441)
\end{pspicture}
}
\end{gathered} 
  & \xrightarrow{w_{i_{1}}} & \cdots\\
\\
{\scriptstyle d-1} &  & {\scriptstyle d} &  & {\scriptstyle \cdots}
\end{array}$
} \\
\multicolumn{2}{l}{and there are 3 subcases:} \\ \hline
Old subsequence: & Replace with the modification: \\ \hline
1. $w_{i_1}$ does not involve the edges $e$ or $f$.
&
Then we first perform $w_{i_1}$, then slide $e$ along $f$ to $v$, and continue with $w_{i_2},\dotsc$ as before.
This reduces the length of time for which $v$ has degree $d$ by $1$. \\
$\begin{array}{ccccccc}
{\scriptstyle s_{i_{1}-1}} &  & {\scriptstyle s_{i_{1}}} &  & {\scriptstyle s_{i_{2}}}\\
\\
\begin{gathered}
\psscalebox{1.0 1.0} 
{
\begin{pspicture}(0,-0.40430105)(1.2528377,0.40430105)
\rput[bl](0.14824118,-0.37841266){$\scriptstyle u$}
\pscircle[linecolor=black, linewidth=0.02, fillstyle=solid,fillcolor=black, dimen=outer](0.20380263,-0.10345992){0.06787803}
\rput{-90.0}(-0.017072527,0.017072527){\rput[bl](0.0,0.017072527){$\scriptstyle \dots$}}
\rput[bl](0.3214761,0.25835195){$\scriptstyle e$}
\pscircle[linecolor=black, linewidth=0.02, fillstyle=solid,fillcolor=black, dimen=outer](1.0408659,-0.10345992){0.06787803}
\psline[linecolor=black, linewidth=0.02](0.21982782,-0.09055626)(1.0422997,-0.09055626)
\rput[bl](0.9298317,-0.38593164){$\scriptstyle v$}
\rput{-90.0}(1.2128857,1.2327895){\rput[bl](1.2228377,0.009951896){$\scriptstyle \dots$}}
\rput[bl](0.54280764,-0.40430105){$\scriptstyle f$}
\psline[linecolor=black, linewidth=0.02](0.19728883,-0.10987539)(0.6569903,0.3975873)
\end{pspicture}
}
\end{gathered}
 & \xrightarrow{sl_{f}(e)} & 
\begin{gathered}
\psscalebox{1.0 1.0} 
{
\begin{pspicture}(0,-0.40402052)(1.2528377,0.40402052)
\rput[bl](0.14824118,-0.37813213){$\scriptstyle u$}
\pscircle[linecolor=black, linewidth=0.02, fillstyle=solid,fillcolor=black, dimen=outer](0.20380263,-0.10317938){0.06787803}
\rput{-90.0}(-0.01735306,0.01735306){\rput[bl](0.0,0.01735306){$\scriptstyle \dots$}}
\rput[bl](0.82296866,0.25266236){$\scriptstyle e$}
\pscircle[linecolor=black, linewidth=0.02, fillstyle=solid,fillcolor=black, dimen=outer](1.0408659,-0.10317938){0.06787803}
\psline[linecolor=black, linewidth=0.02](0.21982782,-0.09027572)(1.0422997,-0.09027572)
\rput[bl](0.9298317,-0.3856511){$\scriptstyle v$}
\rput{-90.0}(1.2126052,1.2330701){\rput[bl](1.2228377,0.010232429){$\scriptstyle \dots$}}
\rput[bl](0.54280764,-0.40402052){$\scriptstyle f$}
\psline[linecolor=black, linewidth=0.02](0.65102017,0.39786783)(1.0331097,-0.09168441)
\end{pspicture}
}
\end{gathered} 
  & \xrightarrow{w_{i_{1}}} & 
\begin{gathered}
\psscalebox{1.0 1.0} 
{
\begin{pspicture}(0,-0.40402052)(1.2528377,0.40402052)
\rput[bl](0.14824118,-0.37813213){$\scriptstyle u$}
\pscircle[linecolor=black, linewidth=0.02, fillstyle=solid,fillcolor=black, dimen=outer](0.20380263,-0.10317938){0.06787803}
\rput{-90.0}(-0.01735306,0.01735306){\rput[bl](0.0,0.01735306){$\scriptstyle \dots$}}
\rput[bl](0.82296866,0.25266236){$\scriptstyle e$}
\pscircle[linecolor=black, linewidth=0.02, fillstyle=solid,fillcolor=black, dimen=outer](1.0408659,-0.10317938){0.06787803}
\psline[linecolor=black, linewidth=0.02](0.21982782,-0.09027572)(1.0422997,-0.09027572)
\rput[bl](0.9298317,-0.3856511){$\scriptstyle v$}
\rput{-90.0}(1.2126052,1.2330701){\rput[bl](1.2228377,0.010232429){$\scriptstyle \dots$}}
\rput[bl](0.54280764,-0.40402052){$\scriptstyle f$}
\psline[linecolor=black, linewidth=0.02](0.65102017,0.39786783)(1.0331097,-0.09168441)
\end{pspicture}
}
\end{gathered}  
   & \xrightarrow{w_{i_{2}}} & \cdots\\
\\
{\scriptstyle d-1} &  & {\scriptstyle d} &  & {\scriptstyle d} &  & {\scriptstyle \cdots}
\end{array}$
&
$\begin{array}{ccccccc}
{\scriptstyle s_{i_{1}-1}} &  &  &  & {\scriptstyle s_{i_{2}}}\\
\\
\begin{gathered}
\psscalebox{1.0 1.0} 
{
\begin{pspicture}(0,-0.40430105)(1.2528377,0.40430105)
\rput[bl](0.14824118,-0.37841266){$\scriptstyle u$}
\pscircle[linecolor=black, linewidth=0.02, fillstyle=solid,fillcolor=black, dimen=outer](0.20380263,-0.10345992){0.06787803}
\rput{-90.0}(-0.017072527,0.017072527){\rput[bl](0.0,0.017072527){$\scriptstyle \dots$}}
\rput[bl](0.3214761,0.25835195){$\scriptstyle e$}
\pscircle[linecolor=black, linewidth=0.02, fillstyle=solid,fillcolor=black, dimen=outer](1.0408659,-0.10345992){0.06787803}
\psline[linecolor=black, linewidth=0.02](0.21982782,-0.09055626)(1.0422997,-0.09055626)
\rput[bl](0.9298317,-0.38593164){$\scriptstyle v$}
\rput{-90.0}(1.2128857,1.2327895){\rput[bl](1.2228377,0.009951896){$\scriptstyle \dots$}}
\rput[bl](0.54280764,-0.40430105){$\scriptstyle f$}
\psline[linecolor=black, linewidth=0.02](0.19728883,-0.10987539)(0.6569903,0.3975873)
\end{pspicture}
}
\end{gathered}
 & \xrightarrow{w_{i_{1}}} & 
 \begin{gathered}
\psscalebox{1.0 1.0} 
{
\begin{pspicture}(0,-0.40430105)(1.2528377,0.40430105)
\rput[bl](0.14824118,-0.37841266){$\scriptstyle u$}
\pscircle[linecolor=black, linewidth=0.02, fillstyle=solid,fillcolor=black, dimen=outer](0.20380263,-0.10345992){0.06787803}
\rput{-90.0}(-0.017072527,0.017072527){\rput[bl](0.0,0.017072527){$\scriptstyle \dots$}}
\rput[bl](0.3214761,0.25835195){$\scriptstyle e$}
\pscircle[linecolor=black, linewidth=0.02, fillstyle=solid,fillcolor=black, dimen=outer](1.0408659,-0.10345992){0.06787803}
\psline[linecolor=black, linewidth=0.02](0.21982782,-0.09055626)(1.0422997,-0.09055626)
\rput[bl](0.9298317,-0.38593164){$\scriptstyle v$}
\rput{-90.0}(1.2128857,1.2327895){\rput[bl](1.2228377,0.009951896){$\scriptstyle \dots$}}
\rput[bl](0.54280764,-0.40430105){$\scriptstyle f$}
\psline[linecolor=black, linewidth=0.02](0.19728883,-0.10987539)(0.6569903,0.3975873)
\end{pspicture}
}
\end{gathered}
 & \xrightarrow{sl_{f}(e)} & 
\begin{gathered}
\psscalebox{1.0 1.0} 
{
\begin{pspicture}(0,-0.40402052)(1.2528377,0.40402052)
\rput[bl](0.14824118,-0.37813213){$\scriptstyle u$}
\pscircle[linecolor=black, linewidth=0.02, fillstyle=solid,fillcolor=black, dimen=outer](0.20380263,-0.10317938){0.06787803}
\rput{-90.0}(-0.01735306,0.01735306){\rput[bl](0.0,0.01735306){$\scriptstyle \dots$}}
\rput[bl](0.82296866,0.25266236){$\scriptstyle e$}
\pscircle[linecolor=black, linewidth=0.02, fillstyle=solid,fillcolor=black, dimen=outer](1.0408659,-0.10317938){0.06787803}
\psline[linecolor=black, linewidth=0.02](0.21982782,-0.09027572)(1.0422997,-0.09027572)
\rput[bl](0.9298317,-0.3856511){$\scriptstyle v$}
\rput{-90.0}(1.2126052,1.2330701){\rput[bl](1.2228377,0.010232429){$\scriptstyle \dots$}}
\rput[bl](0.54280764,-0.40402052){$\scriptstyle f$}
\psline[linecolor=black, linewidth=0.02](0.65102017,0.39786783)(1.0331097,-0.09168441)
\end{pspicture}
}
\end{gathered} 
  & \xrightarrow{w_{i_{2}}} & \cdots\\
\\
{\scriptstyle d-1} &  & {\scriptstyle d-1} &  & {\scriptstyle d} &  & {\scriptstyle \cdots}
\end{array}$ \\ \hline
2. $w_{i_1}$ involves $e$. Then $w_{i_1}$ is a Type II creation/cancellation ($\ast$). & Then we first perform ($\ast$) at step $s_{i_1-1}$, then slide the resulting edge $e^\prime$ over $f$ to $v$, then continue with $w_{i_2},\dotsc$. \\
$\begin{array}{ccccccc}
{\scriptstyle s_{i_{1}-1}} &  & {\scriptstyle s_{i_{1}}} &  & {\scriptstyle s_{i_{2}}}\\
\\
\begin{gathered}
\psscalebox{1.0 1.0} 
{
\begin{pspicture}(0,-0.40430105)(1.2528377,0.40430105)
\rput[bl](0.14824118,-0.37841266){$\scriptstyle u$}
\pscircle[linecolor=black, linewidth=0.02, fillstyle=solid,fillcolor=black, dimen=outer](0.20380263,-0.10345992){0.06787803}
\rput{-90.0}(-0.017072527,0.017072527){\rput[bl](0.0,0.017072527){$\scriptstyle \dots$}}
\rput[bl](0.3214761,0.25835195){$\scriptstyle e$}
\pscircle[linecolor=black, linewidth=0.02, fillstyle=solid,fillcolor=black, dimen=outer](1.0408659,-0.10345992){0.06787803}
\psline[linecolor=black, linewidth=0.02](0.21982782,-0.09055626)(1.0422997,-0.09055626)
\rput[bl](0.9298317,-0.38593164){$\scriptstyle v$}
\rput{-90.0}(1.2128857,1.2327895){\rput[bl](1.2228377,0.009951896){$\scriptstyle \dots$}}
\rput[bl](0.54280764,-0.40430105){$\scriptstyle f$}
\psline[linecolor=black, linewidth=0.02](0.19728883,-0.10987539)(0.6569903,0.3975873)
\end{pspicture}
}
\end{gathered}
 & \xrightarrow{sl_{f}(e)} & 
\begin{gathered}
\psscalebox{1.0 1.0} 
{
\begin{pspicture}(0,-0.40402052)(1.2528377,0.40402052)
\rput[bl](0.14824118,-0.37813213){$\scriptstyle u$}
\pscircle[linecolor=black, linewidth=0.02, fillstyle=solid,fillcolor=black, dimen=outer](0.20380263,-0.10317938){0.06787803}
\rput{-90.0}(-0.01735306,0.01735306){\rput[bl](0.0,0.01735306){$\scriptstyle \dots$}}
\rput[bl](0.82296866,0.25266236){$\scriptstyle e$}
\pscircle[linecolor=black, linewidth=0.02, fillstyle=solid,fillcolor=black, dimen=outer](1.0408659,-0.10317938){0.06787803}
\psline[linecolor=black, linewidth=0.02](0.21982782,-0.09027572)(1.0422997,-0.09027572)
\rput[bl](0.9298317,-0.3856511){$\scriptstyle v$}
\rput{-90.0}(1.2126052,1.2330701){\rput[bl](1.2228377,0.010232429){$\scriptstyle \dots$}}
\rput[bl](0.54280764,-0.40402052){$\scriptstyle f$}
\psline[linecolor=black, linewidth=0.02](0.65102017,0.39786783)(1.0331097,-0.09168441)
\end{pspicture}
}
\end{gathered} 
  & \xrightarrow{\displaystyle\ast} & 
\begin{gathered}
\psscalebox{1.0 1.0} 
{
\begin{pspicture}(0,-0.40402052)(1.2528377,0.40402052)
\rput[bl](0.14824118,-0.37813213){$\scriptstyle u$}
\pscircle[linecolor=black, linewidth=0.02, fillstyle=solid,fillcolor=black, dimen=outer](0.20380263,-0.10317938){0.06787803}
\rput{-90.0}(-0.01735306,0.01735306){\rput[bl](0.0,0.01735306){$\scriptstyle \dots$}}
\rput[bl](0.82296866,0.25266236){$\scriptstyle e^\prime$}
\pscircle[linecolor=black, linewidth=0.02, fillstyle=solid,fillcolor=black, dimen=outer](1.0408659,-0.10317938){0.06787803}
\psline[linecolor=black, linewidth=0.02](0.21982782,-0.09027572)(1.0422997,-0.09027572)
\rput[bl](0.9298317,-0.3856511){$\scriptstyle v$}
\rput{-90.0}(1.2126052,1.2330701){\rput[bl](1.2228377,0.010232429){$\scriptstyle \dots$}}
\rput[bl](0.54280764,-0.40402052){$\scriptstyle f$}
\psline[linecolor=black, linewidth=0.02](0.65102017,0.39786783)(1.0331097,-0.09168441)
\end{pspicture}
}
\end{gathered}  
   & \xrightarrow{w_{i_{2}}} & \cdots\\
\\
{\scriptstyle d-1} &  & {\scriptstyle d} &  & {\scriptstyle d} &  & {\scriptstyle \cdots}
\end{array}$
&
$\begin{array}{ccccccc}
{\scriptstyle s_{i_{1}-1}} &  &  &  & {\scriptstyle s_{i_{2}}}\\
\\
\begin{gathered}
\psscalebox{1.0 1.0} 
{
\begin{pspicture}(0,-0.40430105)(1.2528377,0.40430105)
\rput[bl](0.14824118,-0.37841266){$\scriptstyle u$}
\pscircle[linecolor=black, linewidth=0.02, fillstyle=solid,fillcolor=black, dimen=outer](0.20380263,-0.10345992){0.06787803}
\rput{-90.0}(-0.017072527,0.017072527){\rput[bl](0.0,0.017072527){$\scriptstyle \dots$}}
\rput[bl](0.3214761,0.25835195){$\scriptstyle e$}
\pscircle[linecolor=black, linewidth=0.02, fillstyle=solid,fillcolor=black, dimen=outer](1.0408659,-0.10345992){0.06787803}
\psline[linecolor=black, linewidth=0.02](0.21982782,-0.09055626)(1.0422997,-0.09055626)
\rput[bl](0.9298317,-0.38593164){$\scriptstyle v$}
\rput{-90.0}(1.2128857,1.2327895){\rput[bl](1.2228377,0.009951896){$\scriptstyle \dots$}}
\rput[bl](0.54280764,-0.40430105){$\scriptstyle f$}
\psline[linecolor=black, linewidth=0.02](0.19728883,-0.10987539)(0.6569903,0.3975873)
\end{pspicture}
}
\end{gathered}
 & \xrightarrow{{\displaystyle \ast}} & 
\begin{gathered}
\psscalebox{1.0 1.0} 
{
\begin{pspicture}(0,-0.40430105)(1.2528377,0.40430105)
\rput[bl](0.14824118,-0.37841266){$\scriptstyle u$}
\pscircle[linecolor=black, linewidth=0.02, fillstyle=solid,fillcolor=black, dimen=outer](0.20380263,-0.10345992){0.06787803}
\rput{-90.0}(-0.017072527,0.017072527){\rput[bl](0.0,0.017072527){$\scriptstyle \dots$}}
\rput[bl](0.3214761,0.25835195){$\scriptstyle e^\prime$}
\pscircle[linecolor=black, linewidth=0.02, fillstyle=solid,fillcolor=black, dimen=outer](1.0408659,-0.10345992){0.06787803}
\psline[linecolor=black, linewidth=0.02](0.21982782,-0.09055626)(1.0422997,-0.09055626)
\rput[bl](0.9298317,-0.38593164){$\scriptstyle v$}
\rput{-90.0}(1.2128857,1.2327895){\rput[bl](1.2228377,0.009951896){$\scriptstyle \dots$}}
\rput[bl](0.54280764,-0.40430105){$\scriptstyle f$}
\psline[linecolor=black, linewidth=0.02](0.19728883,-0.10987539)(0.6569903,0.3975873)
\end{pspicture}
}
\end{gathered} 
  & \xrightarrow{sl_{f}(e^{\prime})} & 
\begin{gathered}
\psscalebox{1.0 1.0} 
{
\begin{pspicture}(0,-0.40402052)(1.2528377,0.40402052)
\rput[bl](0.14824118,-0.37813213){$\scriptstyle u$}
\pscircle[linecolor=black, linewidth=0.02, fillstyle=solid,fillcolor=black, dimen=outer](0.20380263,-0.10317938){0.06787803}
\rput{-90.0}(-0.01735306,0.01735306){\rput[bl](0.0,0.01735306){$\scriptstyle \dots$}}
\rput[bl](0.82296866,0.25266236){$\scriptstyle e^\prime$}
\pscircle[linecolor=black, linewidth=0.02, fillstyle=solid,fillcolor=black, dimen=outer](1.0408659,-0.10317938){0.06787803}
\psline[linecolor=black, linewidth=0.02](0.21982782,-0.09027572)(1.0422997,-0.09027572)
\rput[bl](0.9298317,-0.3856511){$\scriptstyle v$}
\rput{-90.0}(1.2126052,1.2330701){\rput[bl](1.2228377,0.010232429){$\scriptstyle \dots$}}
\rput[bl](0.54280764,-0.40402052){$\scriptstyle f$}
\psline[linecolor=black, linewidth=0.02](0.65102017,0.39786783)(1.0331097,-0.09168441)
\end{pspicture}
}
\end{gathered}  
 & \xrightarrow{w_{i_{2}}} & \cdots\\
\\
{\scriptstyle d-1} &  & {\scriptstyle d-1} &  & {\scriptstyle d} &  & {\scriptstyle \cdots}
\end{array}$ \\ \hline
\end{tabular*}
\end{sidewaystable}

\begin{sidewaystable}
\begin{tabular*}{1.1\textwidth}{m{0.55\textwidth}|m{0.55\textwidth}}
\hline
3. $w_{i_1}$ involves $f$. Then again $w_{i_1}$ must be a Type II creation/cancellation.
If it is a creation:
&
Then we first perform the creation, and then two successive slides to move $e$ back on to $v$.
Then we continue with $w_{i_2},\dotsc$.
\\
$\begin{array}{ccccccc}
{\scriptstyle s_{i_{1}-1}} &  & {\scriptstyle s_{i_{1}}} &  & {\scriptstyle s_{i_{2}}}\\
\\
\begin{gathered}
\psscalebox{1.0 1.0} 
{
\begin{pspicture}(0,-0.40430105)(1.2528377,0.40430105)
\rput[bl](0.14824118,-0.37841266){$\scriptstyle u$}
\pscircle[linecolor=black, linewidth=0.02, fillstyle=solid,fillcolor=black, dimen=outer](0.20380263,-0.10345992){0.06787803}
\rput{-90.0}(-0.017072527,0.017072527){\rput[bl](0.0,0.017072527){$\scriptstyle \dots$}}
\rput[bl](0.3214761,0.25835195){$\scriptstyle e$}
\pscircle[linecolor=black, linewidth=0.02, fillstyle=solid,fillcolor=black, dimen=outer](1.0408659,-0.10345992){0.06787803}
\psline[linecolor=black, linewidth=0.02](0.21982782,-0.09055626)(1.0422997,-0.09055626)
\rput[bl](0.9298317,-0.38593164){$\scriptstyle v$}
\rput{-90.0}(1.2128857,1.2327895){\rput[bl](1.2228377,0.009951896){$\scriptstyle \dots$}}
\rput[bl](0.54280764,-0.40430105){$\scriptstyle f$}
\psline[linecolor=black, linewidth=0.02](0.19728883,-0.10987539)(0.6569903,0.3975873)
\end{pspicture}
}
\end{gathered}
 & \xrightarrow{sl_{f}(e)} & 
\begin{gathered}
\psscalebox{1.0 1.0} 
{
\begin{pspicture}(0,-0.40402052)(1.2528377,0.40402052)
\rput[bl](0.14824118,-0.37813213){$\scriptstyle u$}
\pscircle[linecolor=black, linewidth=0.02, fillstyle=solid,fillcolor=black, dimen=outer](0.20380263,-0.10317938){0.06787803}
\rput{-90.0}(-0.01735306,0.01735306){\rput[bl](0.0,0.01735306){$\scriptstyle \dots$}}
\rput[bl](0.82296866,0.25266236){$\scriptstyle e$}
\pscircle[linecolor=black, linewidth=0.02, fillstyle=solid,fillcolor=black, dimen=outer](1.0408659,-0.10317938){0.06787803}
\psline[linecolor=black, linewidth=0.02](0.21982782,-0.09027572)(1.0422997,-0.09027572)
\rput[bl](0.9298317,-0.3856511){$\scriptstyle v$}
\rput{-90.0}(1.2126052,1.2330701){\rput[bl](1.2228377,0.010232429){$\scriptstyle \dots$}}
\rput[bl](0.54280764,-0.40402052){$\scriptstyle f$}
\psline[linecolor=black, linewidth=0.02](0.65102017,0.39786783)(1.0331097,-0.09168441)
\end{pspicture}
}
\end{gathered}  
  & \xrightarrow{cr_{f}(f^{\prime},u^{\prime})} & 
\begin{gathered}
\psscalebox{1.0 1.0} 
{
\begin{pspicture}(0,-0.4621732)(1.8498526,0.4621732)
\rput[bl](0.14824118,-0.41742328){$\scriptstyle u$}
\pscircle[linecolor=black, linewidth=0.02, fillstyle=solid,fillcolor=black, dimen=outer](0.20380263,-0.10461564){0.06787803}
\rput{-90.0}(-0.015916798,0.015916798){\rput[bl](0.0,0.015916798){$\scriptstyle \dots$}}
\rput[bl](1.3841627,0.34077832){$\scriptstyle e$}
\pscircle[linecolor=black, linewidth=0.02, fillstyle=solid,fillcolor=black, dimen=outer](1.6378808,-0.12252609){0.06787803}
\psline[linecolor=black, linewidth=0.02](0.17206661,-0.115592584)(0.99453855,-0.115592584)
\rput[bl](1.5268466,-0.41742328){$\scriptstyle v$}
\rput{-90.0}(1.8289669,1.8107383){\rput[bl](1.8198526,-0.00911428){$\scriptstyle \dots$}}
\rput[bl](0.5010166,-0.43530753){$\scriptstyle f$}
\psline[linecolor=black, linewidth=0.02](1.6360948,-0.11103112)(1.206244,0.45613307)
\psline[linecolor=black, linewidth=0.02](1.574569,-0.11821205)(0.97626626,-0.11821205)
\pscircle[linecolor=black, linewidth=0.02, fillstyle=solid,fillcolor=black, dimen=outer](0.91816926,-0.11230848){0.06617837}
\rput[bl](0.8458118,-0.41742328){$\scriptstyle u^\prime$}
\rput[bl](1.1666882,-0.4621732){$\scriptstyle f^\prime$}
\end{pspicture}
}
\end{gathered}  
   & \xrightarrow{w_{i_{2}}} & \cdots\\
\\
{\scriptstyle d-1} &  & {\scriptstyle d} &  & {\scriptstyle d} &  & {\scriptstyle \cdots}
\end{array}$
&
$\begin{array}{ccccccccc}
{\scriptstyle s_{i_{1}-1}} &  &  &  &  &  & \\
\\
\begin{gathered}
\psscalebox{1.0 1.0} 
{
\begin{pspicture}(0,-0.40430105)(1.2528377,0.40430105)
\rput[bl](0.14824118,-0.37841266){$\scriptstyle u$}
\pscircle[linecolor=black, linewidth=0.02, fillstyle=solid,fillcolor=black, dimen=outer](0.20380263,-0.10345992){0.06787803}
\rput{-90.0}(-0.017072527,0.017072527){\rput[bl](0.0,0.017072527){$\scriptstyle \dots$}}
\rput[bl](0.3214761,0.25835195){$\scriptstyle e$}
\pscircle[linecolor=black, linewidth=0.02, fillstyle=solid,fillcolor=black, dimen=outer](1.0408659,-0.10345992){0.06787803}
\psline[linecolor=black, linewidth=0.02](0.21982782,-0.09055626)(1.0422997,-0.09055626)
\rput[bl](0.9298317,-0.38593164){$\scriptstyle v$}
\rput{-90.0}(1.2128857,1.2327895){\rput[bl](1.2228377,0.009951896){$\scriptstyle \dots$}}
\rput[bl](0.54280764,-0.40430105){$\scriptstyle f$}
\psline[linecolor=black, linewidth=0.02](0.19728883,-0.10987539)(0.6569903,0.3975873)
\end{pspicture}
}
\end{gathered}
 & \xrightarrow{cr_{f}(f^{\prime},u^{\prime})} & 
\begin{gathered}
\psscalebox{1.0 1.0} 
{
\begin{pspicture}(0,-0.4656737)(1.8498526,0.4656737)
\rput[bl](0.14824118,-0.4209238){$\scriptstyle u$}
\pscircle[linecolor=black, linewidth=0.02, fillstyle=solid,fillcolor=black, dimen=outer](0.20380263,-0.108116165){0.06787803}
\rput{-90.0}(-0.012416279,0.012416279){\rput[bl](0.0,0.012416279){$\scriptstyle \dots$}}
\rput[bl](0.48267013,0.3372778){$\scriptstyle e$}
\pscircle[linecolor=black, linewidth=0.02, fillstyle=solid,fillcolor=black, dimen=outer](1.6378808,-0.12602662){0.06787803}
\psline[linecolor=black, linewidth=0.02](0.17206661,-0.1190931)(0.99453855,-0.1190931)
\rput[bl](1.5268466,-0.4209238){$\scriptstyle v$}
\rput{-90.0}(1.8324674,1.8072377){\rput[bl](1.8198526,-0.0126148){$\scriptstyle \dots$}}
\rput[bl](0.5010166,-0.43880805){$\scriptstyle f$}
\psline[linecolor=black, linewidth=0.02](0.23310973,-0.08468089)(0.7763933,0.4586027)
\psline[linecolor=black, linewidth=0.02](1.574569,-0.12171257)(0.97626626,-0.12171257)
\pscircle[linecolor=black, linewidth=0.02, fillstyle=solid,fillcolor=black, dimen=outer](0.91816926,-0.115809){0.06617837}
\rput[bl](0.8458118,-0.4209238){$\scriptstyle u^\prime$}
\rput[bl](1.1666882,-0.4656737){$\scriptstyle f^\prime$}
\end{pspicture}
}
\end{gathered} 
  & \xrightarrow{sl_{f}(e)} & 
\begin{gathered}
\psscalebox{1.0 1.0} 
{
\begin{pspicture}(0,-0.48742482)(1.8498526,0.48742482)
\rput[bl](0.14824118,-0.4426749){$\scriptstyle u$}
\pscircle[linecolor=black, linewidth=0.02, fillstyle=solid,fillcolor=black, dimen=outer](0.20380263,-0.12986726){0.06787803}
\rput{-90.0}(0.00933482,-0.00933482){\rput[bl](0.0,-0.00933482){$\scriptstyle \dots$}}
\rput[bl](0.73938656,0.28567597){$\scriptstyle e$}
\pscircle[linecolor=black, linewidth=0.02, fillstyle=solid,fillcolor=black, dimen=outer](1.6378808,-0.1477777){0.06787803}
\psline[linecolor=black, linewidth=0.02](0.17206661,-0.1408442)(0.99453855,-0.1408442)
\rput[bl](1.5268466,-0.4426749){$\scriptstyle v$}
\rput{-90.0}(1.8542185,1.7854867){\rput[bl](1.8198526,-0.034365896){$\scriptstyle \dots$}}
\rput[bl](0.5010166,-0.46055916){$\scriptstyle f$}
\psline[linecolor=black, linewidth=0.02](0.925647,-0.10046184)(0.9241213,0.48739886)
\psline[linecolor=black, linewidth=0.02](1.574569,-0.14346367)(0.97626626,-0.14346367)
\pscircle[linecolor=black, linewidth=0.02, fillstyle=solid,fillcolor=black, dimen=outer](0.91816926,-0.1375601){0.06617837}
\rput[bl](0.8458118,-0.4426749){$\scriptstyle u^\prime$}
\rput[bl](1.1666882,-0.48742482){$\scriptstyle f^\prime$}
\end{pspicture}
}
\end{gathered}
\\
{\scriptstyle d-1} &  & {\scriptstyle d-1} &  & {\scriptstyle d-1} &  & 
\\
& & & {\scriptstyle s_{i_{2}}} \\\\
& & \xrightarrow{sl_{f^{\prime}}(e)} & 
\begin{gathered}
\psscalebox{1.0 1.0} 
{
\begin{pspicture}(0,-0.4621732)(1.8498526,0.4621732)
\rput[bl](0.14824118,-0.41742328){$\scriptstyle u$}
\pscircle[linecolor=black, linewidth=0.02, fillstyle=solid,fillcolor=black, dimen=outer](0.20380263,-0.10461564){0.06787803}
\rput{-90.0}(-0.015916798,0.015916798){\rput[bl](0.0,0.015916798){$\scriptstyle \dots$}}
\rput[bl](1.3841627,0.34077832){$\scriptstyle e$}
\pscircle[linecolor=black, linewidth=0.02, fillstyle=solid,fillcolor=black, dimen=outer](1.6378808,-0.12252609){0.06787803}
\psline[linecolor=black, linewidth=0.02](0.17206661,-0.115592584)(0.99453855,-0.115592584)
\rput[bl](1.5268466,-0.41742328){$\scriptstyle v$}
\rput{-90.0}(1.8289669,1.8107383){\rput[bl](1.8198526,-0.00911428){$\scriptstyle \dots$}}
\rput[bl](0.5010166,-0.43530753){$\scriptstyle f$}
\psline[linecolor=black, linewidth=0.02](1.6360948,-0.11103112)(1.206244,0.45613307)
\psline[linecolor=black, linewidth=0.02](1.574569,-0.11821205)(0.97626626,-0.11821205)
\pscircle[linecolor=black, linewidth=0.02, fillstyle=solid,fillcolor=black, dimen=outer](0.91816926,-0.11230848){0.06617837}
\rput[bl](0.8458118,-0.41742328){$\scriptstyle u^\prime$}
\rput[bl](1.1666882,-0.4621732){$\scriptstyle f^\prime$}
\end{pspicture}
}
\end{gathered}  
 & \xrightarrow{w_{i_{2}}} & \cdots
\\
& & & {\scriptstyle d} &  & {\scriptstyle \cdots}
\end{array}$ \\ \hline
If it is a cancellation: & Then we first slide $e$ across $g$, then cancel $f$, then slide $e$ back across $g$ on to $v$, and continue with $w_{i_2},\dotsc$.
\\
$\begin{array}{ccccccc}
{\scriptstyle s_{i_{1}-1}} &  & {\scriptstyle s_{i_{1}}} &  & {\scriptstyle s_{i_{2}}}\\
\\
\begin{gathered}
\psscalebox{1.0 1.0} 
{
\begin{pspicture}(0,-0.4549811)(1.8498526,0.4549811)
\rput[bl](0.15713006,-0.41023114){$\scriptstyle a$}
\pscircle[linecolor=black, linewidth=0.02, fillstyle=solid,fillcolor=black, dimen=outer](0.20380263,-0.12409018){0.06787803}
\rput{-90.0}(0.0035577407,-0.0035577407){\rput[bl](0.0,-0.0035577407){$\scriptstyle \dots$}}
\rput[bl](0.86489236,0.29908156){$\scriptstyle e$}
\pscircle[linecolor=black, linewidth=0.02, fillstyle=solid,fillcolor=black, dimen=outer](1.6378808,-0.14200063){0.06787803}
\psline[linecolor=black, linewidth=0.02](0.17206661,-0.13506712)(0.99453855,-0.13506712)
\rput[bl](1.5357355,-0.41023114){$\scriptstyle v$}
\rput{-90.0}(1.8484414,1.7912638){\rput[bl](1.8198526,-0.028588818){$\scriptstyle \dots$}}
\rput[bl](0.5099055,-0.4281154){$\scriptstyle g$}
\psline[linecolor=black, linewidth=0.02](0.9308875,-0.10509936)(1.1363933,0.45151755)
\psline[linecolor=black, linewidth=0.02](1.574569,-0.1376866)(0.97626626,-0.1376866)
\pscircle[linecolor=black, linewidth=0.02, fillstyle=solid,fillcolor=black, dimen=outer](0.91816926,-0.13178302){0.06617837}
\rput[bl](0.8547007,-0.41023114){$\scriptstyle u$}
\rput[bl](1.1755772,-0.4549811){$\scriptstyle f$}
\end{pspicture}
}
\end{gathered}
 & \xrightarrow{sl_{f}(e)} & 
\begin{gathered}
\psscalebox{1.0 1.0} 
{
\begin{pspicture}(0,-0.46493173)(1.8498526,0.46493173)
\rput[bl](0.15713006,-0.42018178){$\scriptstyle a$}
\pscircle[linecolor=black, linewidth=0.02, fillstyle=solid,fillcolor=black, dimen=outer](0.20380263,-0.13404082){0.06787803}
\rput{-90.0}(0.0135083785,-0.0135083785){\rput[bl](0.0,-0.0135083785){$\scriptstyle \dots$}}
\rput[bl](1.4320565,0.32495183){$\scriptstyle e$}
\pscircle[linecolor=black, linewidth=0.02, fillstyle=solid,fillcolor=black, dimen=outer](1.6378808,-0.15195127){0.06787803}
\psline[linecolor=black, linewidth=0.02](0.17206661,-0.14501776)(0.99453855,-0.14501776)
\rput[bl](1.5357355,-0.42018178){$\scriptstyle v$}
\rput{-90.0}(1.858392,1.7813132){\rput[bl](1.8198526,-0.038539458){$\scriptstyle \dots$}}
\rput[bl](0.5099055,-0.43806604){$\scriptstyle g$}
\psline[linecolor=black, linewidth=0.02](1.6473054,-0.13296044)(1.2617664,0.45947737)
\psline[linecolor=black, linewidth=0.02](1.574569,-0.14763723)(0.97626626,-0.14763723)
\pscircle[linecolor=black, linewidth=0.02, fillstyle=solid,fillcolor=black, dimen=outer](0.91816926,-0.14173366){0.06617837}
\rput[bl](0.8547007,-0.42018178){$\scriptstyle u$}
\rput[bl](1.1755772,-0.46493173){$\scriptstyle f$}
\end{pspicture}
}
\end{gathered} 
  & \xrightarrow{cn(f)} & 
\begin{gathered}
\psscalebox{1.0 1.0} 
{
\begin{pspicture}(0,-0.45746902)(1.1991063,0.45746902)
\rput[bl](0.15713006,-0.43958476){$\scriptstyle a$}
\pscircle[linecolor=black, linewidth=0.02, fillstyle=solid,fillcolor=black, dimen=outer](0.20380263,-0.1534438){0.06787803}
\rput{-90.0}(0.032911364,-0.032911364){\rput[bl](0.0,-0.032911364){$\scriptstyle \dots$}}
\rput[bl](0.78131026,0.31748915){$\scriptstyle e$}
\pscircle[linecolor=black, linewidth=0.02, fillstyle=solid,fillcolor=black, dimen=outer](0.9871346,-0.15941395){0.06787803}
\psline[linecolor=black, linewidth=0.02](0.17206661,-0.16442074)(0.99453855,-0.16442074)
\rput[bl](0.88498926,-0.42764446){$\scriptstyle v$}
\rput{-90.0}(1.2151085,1.1231042){\rput[bl](1.1691064,-0.046002142){$\scriptstyle \dots$}}
\rput[bl](0.5099055,-0.45746902){$\scriptstyle g$}
\psline[linecolor=black, linewidth=0.02](0.99655914,-0.14042313)(0.61102015,0.45201468)
\end{pspicture}
}
\end{gathered}  
   & \xrightarrow{w_{i_{2}}} & \cdots\\
\\
{\scriptstyle d-1} &  & {\scriptstyle d} &  & {\scriptstyle d} &  & {\scriptstyle \cdots}
\end{array}$
&
$\begin{array}{ccccccccc}
{\scriptstyle s_{i_{1}-1}} &  &  &  &  &  & \\
\\
\begin{gathered}
\psscalebox{1.0 1.0} 
{
\begin{pspicture}(0,-0.4549811)(1.8498526,0.4549811)
\rput[bl](0.15713006,-0.41023114){$\scriptstyle a$}
\pscircle[linecolor=black, linewidth=0.02, fillstyle=solid,fillcolor=black, dimen=outer](0.20380263,-0.12409018){0.06787803}
\rput{-90.0}(0.0035577407,-0.0035577407){\rput[bl](0.0,-0.0035577407){$\scriptstyle \dots$}}
\rput[bl](0.86489236,0.29908156){$\scriptstyle e$}
\pscircle[linecolor=black, linewidth=0.02, fillstyle=solid,fillcolor=black, dimen=outer](1.6378808,-0.14200063){0.06787803}
\psline[linecolor=black, linewidth=0.02](0.17206661,-0.13506712)(0.99453855,-0.13506712)
\rput[bl](1.5357355,-0.41023114){$\scriptstyle v$}
\rput{-90.0}(1.8484414,1.7912638){\rput[bl](1.8198526,-0.028588818){$\scriptstyle \dots$}}
\rput[bl](0.5099055,-0.4281154){$\scriptstyle g$}
\psline[linecolor=black, linewidth=0.02](0.9308875,-0.10509936)(1.1363933,0.45151755)
\psline[linecolor=black, linewidth=0.02](1.574569,-0.1376866)(0.97626626,-0.1376866)
\pscircle[linecolor=black, linewidth=0.02, fillstyle=solid,fillcolor=black, dimen=outer](0.91816926,-0.13178302){0.06617837}
\rput[bl](0.8547007,-0.41023114){$\scriptstyle u$}
\rput[bl](1.1755772,-0.4549811){$\scriptstyle f$}
\end{pspicture}
}
\end{gathered}
 & \xrightarrow{sl_{g}(e)} & 
\begin{gathered}
\psscalebox{1.0 1.0} 
{
\begin{pspicture}(0,-0.47115532)(1.8498526,0.47115532)
\rput[bl](0.15713006,-0.4264054){$\scriptstyle a$}
\pscircle[linecolor=black, linewidth=0.02, fillstyle=solid,fillcolor=black, dimen=outer](0.20380263,-0.14026442){0.06787803}
\rput{-90.0}(0.019731982,-0.019731982){\rput[bl](0.0,-0.019731982){$\scriptstyle \dots$}}
\rput[bl](0.38131028,0.3187282){$\scriptstyle e$}
\pscircle[linecolor=black, linewidth=0.02, fillstyle=solid,fillcolor=black, dimen=outer](1.6378808,-0.15817487){0.06787803}
\psline[linecolor=black, linewidth=0.02](0.17206661,-0.15124136)(0.99453855,-0.15124136)
\rput[bl](1.5357355,-0.4264054){$\scriptstyle v$}
\rput{-90.0}(1.8646157,1.7750895){\rput[bl](1.8198526,-0.04476306){$\scriptstyle \dots$}}
\rput[bl](0.5099055,-0.44428965){$\scriptstyle g$}
\psline[linecolor=black, linewidth=0.02](0.21446958,-0.1332139)(0.65878135,0.46519408)
\psline[linecolor=black, linewidth=0.02](1.574569,-0.15386084)(0.97626626,-0.15386084)
\pscircle[linecolor=black, linewidth=0.02, fillstyle=solid,fillcolor=black, dimen=outer](0.91816926,-0.14795727){0.06617837}
\rput[bl](0.8547007,-0.4264054){$\scriptstyle u$}
\rput[bl](1.1755772,-0.47115532){$\scriptstyle f$}
\end{pspicture}
}
\end{gathered} 
  & \xrightarrow{cn(f)} & 
\begin{gathered}
\psscalebox{1.0 1.0} 
{
\begin{pspicture}(0,-0.45753303)(1.1991063,0.45753303)
\rput[bl](0.15713006,-0.43964878){$\scriptstyle a$}
\pscircle[linecolor=black, linewidth=0.02, fillstyle=solid,fillcolor=black, dimen=outer](0.20380263,-0.15350781){0.06787803}
\rput{-90.0}(0.032975376,-0.032975376){\rput[bl](0.0,-0.032975376){$\scriptstyle \dots$}}
\rput[bl](0.35145953,0.31742513){$\scriptstyle e$}
\pscircle[linecolor=black, linewidth=0.02, fillstyle=solid,fillcolor=black, dimen=outer](0.9871346,-0.15947796){0.06787803}
\psline[linecolor=black, linewidth=0.02](0.17206661,-0.16448475)(0.99453855,-0.16448475)
\rput[bl](0.88498926,-0.42770848){$\scriptstyle v$}
\rput{-90.0}(1.2151724,1.1230402){\rput[bl](1.1691064,-0.046066154){$\scriptstyle \dots$}}
\rput[bl](0.5099055,-0.45753303){$\scriptstyle g$}
\psline[linecolor=black, linewidth=0.02](0.22043973,-0.12854683)(0.61102015,0.45195067)
\end{pspicture}
}
\end{gathered}
\\
{\scriptstyle d-1} &  & {\scriptstyle d-1} &  & {\scriptstyle d-1} &  & 
\\
& & & {\scriptstyle s_{i_{2}}} \\\\
& & \xrightarrow{sl_{g}(e)} & 
\begin{gathered}
\psscalebox{1.0 1.0} 
{
\begin{pspicture}(0,-0.45746902)(1.1991063,0.45746902)
\rput[bl](0.15713006,-0.43958476){$\scriptstyle a$}
\pscircle[linecolor=black, linewidth=0.02, fillstyle=solid,fillcolor=black, dimen=outer](0.20380263,-0.1534438){0.06787803}
\rput{-90.0}(0.032911364,-0.032911364){\rput[bl](0.0,-0.032911364){$\scriptstyle \dots$}}
\rput[bl](0.78131026,0.31748915){$\scriptstyle e$}
\pscircle[linecolor=black, linewidth=0.02, fillstyle=solid,fillcolor=black, dimen=outer](0.9871346,-0.15941395){0.06787803}
\psline[linecolor=black, linewidth=0.02](0.17206661,-0.16442074)(0.99453855,-0.16442074)
\rput[bl](0.88498926,-0.42764446){$\scriptstyle v$}
\rput{-90.0}(1.2151085,1.1231042){\rput[bl](1.1691064,-0.046002142){$\scriptstyle \dots$}}
\rput[bl](0.5099055,-0.45746902){$\scriptstyle g$}
\psline[linecolor=black, linewidth=0.02](0.99655914,-0.14042313)(0.61102015,0.45201468)
\end{pspicture}
}
\end{gathered} 
  & \xrightarrow{w_{i_{2}}} & \cdots
\\
& & & {\scriptstyle d} &  & {\scriptstyle \cdots}
\end{array}$
\\ \hline
\end{tabular*}
\end{sidewaystable}

\cleardoublepage
\phantomsection
\addcontentsline{toc}{chapter}{Bibliography}
\bibliographystyle{alpha}
\bibliography{/home/jayce/Bibtex/thesis}

\end{document}